\documentclass[a4paper, 11pt, oneside]{amsart}

\title{Semistability and asymptotics of geometric flows}
\date{\today}
\author{Shing Tak Lam}
\address{Mathematics Institute, University of Warwick, Coventry CV4 7AL, United Kingdom}
\email{Shing-Tak.Lam@warwick.ac.uk}

\usepackage[margin=2.5cm]{geometry}

\usepackage[UKenglish]{babel}
\usepackage{csquotes}

\usepackage[
    style=alphabetic, 
    isbn=false, 
    doi=false,
    url=false,
    eprint=false,
    maxalphanames=4,
    maxbibnames=4]{biblatex}
\usepackage[ocgcolorlinks, linkcolor=blue, citecolor=cyan]{hyperref}

\usepackage{tikz-cd}

\usepackage{amssymb}
\usepackage{physics}
\usepackage{mathtools}
\usepackage{enumerate}
\usepackage[shortlabels]{enumitem}
\usepackage{caption}
\usepackage{hypcap}

\newcommand{\N}{\mathbb N}
\newcommand{\Z}{\mathbb Z}
\newcommand{\Q}{\mathbb Q}
\newcommand{\R}{\mathbb R}
\newcommand{\C}{\mathbb C}
\renewcommand{\P}{\mathbb P}

\newcommand{\FS}{\mathrm{FS}}

\renewcommand{\grad}{\nabla}
\renewcommand{\epsilon}{\varepsilon}

\DeclareMathOperator{\Ad}{Ad}
\DeclareMathOperator{\pr}{pr}

\DeclareMathOperator{\Rep}{Rep}
\DeclareMathOperator{\GL}{GL}
\DeclareMathOperator{\gr}{gr}
\DeclareMathOperator{\Hom}{Hom}
\DeclareMathOperator{\End}{End}
\DeclareMathOperator{\Aut}{Aut}
\DeclareMathOperator{\id}{id}
\DeclareMathOperator{\Lie}{Lie}
\DeclareMathOperator{\Ham}{Ham}
\DeclareMathOperator{\Cal}{Cal}
\DeclareMathOperator{\Stab}{Stab}

\DeclareMathOperator{\im}{im}
\DeclareMathOperator{\ad}{ad}

\DeclareMathOperator{\Bl}{Bl}

\DeclareMathOperator{\Coh}{Coh}

\DeclareMathOperator{\Met}{Met}

\newcommand{\mfg}{\mathfrak g}
\newcommand{\mfk}{\mathfrak k}
\newcommand{\mfm}{\mathfrak m}
\newcommand{\mft}{\mathfrak t}

\newcommand{\mcA}{\mathcal A}

\newcommand{\mcC}{\mathcal C}
\newcommand{\mcD}{\mathcal D}
\newcommand{\mcE}{\mathcal E}
\newcommand{\mcF}{\mathcal F}
\newcommand{\mcG}{\mathcal G}
\newcommand{\mcH}{\mathcal H}

\newcommand{\mcJ}{\mathcal J}
\newcommand{\mcK}{\mathcal K}
\newcommand{\mcM}{\mathcal M}
\newcommand{\mcN}{\mathcal N}
\newcommand{\mcO}{\mathcal O}
\newcommand{\mcQ}{\mathcal Q}

\newcommand{\mcS}{\mathcal S}
\newcommand{\mcT}{\mathcal T}
\newcommand{\mcU}{\mathcal U}
\newcommand{\mcX}{\mathcal X}
\newcommand{\mcZ}{\mathcal Z}

\renewcommand{\ss}{\mathrm{ss}}
\newcommand{\ps}{\mathrm{ps}}
\newcommand{\st}{\mathrm{s}}

\newcommand{\del}{\partial}
\newcommand{\delbar}{\overline{\partial}}

\newcommand{\sslash}{/\!\!/}

\usepackage{amsthm}
\usepackage{thmtools}
\usepackage[noabbrev]{cleveref}

\newtheorem{theorem}{Theorem}[section]
\newtheorem{proposition}[theorem]{Proposition}
\newtheorem{lemma}[theorem]{Lemma}
\newtheorem{corollary}[theorem]{Corollary}
\newtheorem{conjecture}[theorem]{Conjecture}

\theoremstyle{definition}
\newtheorem{definition}[theorem]{Definition}

\newtheorem{remark}[theorem]{Remark}

\begin{document}

\begin{abstract}
    We prove that the asymptotics of the Hermitian-Yang-Mills flow on a slope semistable holomorphic vector bundle over a compact K\"ahler manifold are determined algebro-geometrically, via the iterated filtration defined by Haiden-Katzarkov-Kontsevich-Pandit. This proves a conjecture of Haiden-Katzarkov-Kontsevich-Pandit in this setting. Moreover, we prove a non-linear analogue, relating the asymptotics of the Calabi flow near a cscK manifold to the iterated balancing filtration of the deformation space. In both settings, we reduce the infinite-dimensional flow to a finite-dimensional flow, following the foundational work of Chen-Sun. In finite dimensions, we prove that the asymptotics of the moment map flow are determined by the iterated balancing filtration, proving a conjecture of Ib\'a\~nez N\'u\~nez.
\end{abstract}

\maketitle

\section{Introduction}

In complex geometry, a guiding principle is that geometric PDEs should arise as moment maps. Two prominent examples are the \emph{constant scalar curvature K\"ahler (cscK)} equation, where the moment map interpretation is due to Donaldson \cite{donaldsonRemarksGaugeTheory1997} and Fujiki \cite{fujikiModuliSpacePolarized1992}, and the \emph{Hermitian-Yang-Mills} equation, where the moment map interpretation is due to Atiyah-Bott \cite{atiyahYangMillsEquationsRiemann1983} and Donaldson \cite{donaldsonSelfdualYangMillsConnections1985}. The moment map property of these equations motivates the link between solutions to the PDEs and algebro-geometric stability conditions, namely the Yau-Tian-Donaldson conjecture for the cscK equation \cite{yauOpenProblemsGeometry1993,tianKahlerEinsteinMetricsPositive1997,donaldsonScalarCurvatureStability2002}, and the Hitchin-Kobayashi correspondence, due to Donaldson-Uhlenbeck-Yau for the Hermitian-Yang-Mills equation \cite{donaldsonSelfdualYangMillsConnections1985,donaldsonInfiniteDeterminantsStable1987,uhlenbeckExistenceHermitianYangMillsConnections1986}. In this paper, we relate the asymptotics of the corresponding gradient flow to algebro-geometric stability, providing a parabolic analogue of the above results.

As part of their programme on \emph{categorical K\"ahler geometry} \cite{haidenCategoricalKahlerGeometry}, Haiden-Katzarkov-Kontsevich-Pandit conjectured the following. Given a triangulated category \(\mcC\), for each object \(E\) of \(\mcC\), we have a space \(\Met(E)\) of \emph{metrics} on \(E\), and a convex function \(S \colon \Met(E) \to \R\). Objects \(E\) such that \(S\) has a critical point correspond to \emph{polystable} objects, with respect to a Bridgeland stability condition \cite{bridgelandStabilityConditionsTriangulated2007}. Moreover, the flow under \(-\grad S\) will decompose \(E\) into its polystable components. However, the Harder-Narasimhan filtration only decomposes \(E\) into \emph{semistable} constituents. Thus, a main part of their programme is to understand the decomposition induced by the flow of a semistable object into its polystable components.

Given an artinian abelian category \(\mcA\), and a homomorphism \(X \colon K_0(\mcA) \to \R\) which is positive on each class of a non-zero object, for each object \(E \in \mcA\), Haiden-Katzarkov-Kontsevich-Pandit \cite{haidenSemistabilityModularLattices2023} define a filtration of \(E\), labelled by \(\R^\infty = \R^{\oplus \N}\) with the lexicographic order, which we will refer to as the \emph{iterated HKKP filtration} in this paper. They conjecture that the asymptotics of the flow are determined by the iterated HKKP filtration, and prove this conjecture for quiver representations \cite{haidenSemistabilityModularLattices2023} and for holomorphic vector bundles over compact Riemann surfaces \cite{haidenIteratedLogarithmsGradient2018}.

We develop a new technique to prove their conjecture in various cases. Our first result is to prove their conjecture for holomorphic vector bundles over smooth projective varieties of arbitrary dimension, assuming the graded object of the Jordan-H\"older filtration is locally free.

Let \((X, \omega)\) be a compact K\"ahler manifold, \(\mcE\) a slope semistable holomorphic vector bundle over \(X\). Associated to \(\mcE\) is a Jordan-H\"older filtration of coherent subsheaves
\[0 = \mcE_0 \subset \mcE_1 \subset \dots \subset \mcE_n = \mcE,\]
such that the \(\mcE_i/\mcE_{i-1}\) are slope stable. The graded object of the Jordan-H\"older filtration is
\[\gr(\mcE) = \bigoplus_i \mcE_i/\mcE_{i-1},\]
which we will assume to be locally free. Thus, it is a slope polystable vector bundle, and hence admits a Hermite-Einstein metric \(h_{\gr(\mcE)}\). Let \(K = \Aut(\gr(\mcE), h_{\gr(\mcE)})\) denote the group of unitary automorphisms of the holomorphic vector bundle \(\gr(\mcE)\).

Given a hermitian metric \(h\), we let \(F_h\) denote the curvature of the Chern connection with respect to \(h\). The \emph{Hermitian-Yang-Mills flow} is given by
\[h^{-1}\pdv{h}{t} = -2\left(i\Lambda_\omega F_h - c\id_\mcE\right),\]
where \(c\) is a cohomological constant. It is a result of Donaldson \cite{donaldsonSelfdualYangMillsConnections1985} that the flow exists for all time. The Hermitian-Yang-Mills flow plays a central role in connecting Hermite-Einstein metrics to algebraic geometry, and also in the geometry of moduli spaces.

To state the result, we recall that the matrix exponential \(\exp\) defines a diffeomorphism from the space of hermitian endomorphisms of \(\mcE\) to the space of hermitian metrics on \(\mcE\), and we denote its inverse by \(\log\).

\begin{theorem}
    [\Cref{thm:ym}]
    \label{thm:ym-intro}

    Suppose \(h(t)\) is a solution to the Hermitian-Yang-Mills flow. Then there exists \(v_1, \dots, v_r \in \Lie(K)\), such that
    \[-i\log(h(t)) = \log(t)v_1 + \log\log(t) v_2 + \dots + \log \cdots \log(t)v_r + O(1),\]
    where \(O(1)\) denotes a bounded term. Moreover, the \(v_i\) pairwise commute, and are given by the iterated HKKP filtration of the lattice of subbundles of \(\mcE\).
\end{theorem}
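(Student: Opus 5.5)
The proof follows the strategy announced in the abstract. First we reduce the infinite-dimensional Hermitian-Yang-Mills flow to a finite-dimensional moment map flow, following Chen-Sun. Then we solve the finite-dimensional problem by proving that the asymptotics of a moment map flow are governed by the iterated balancing filtration.

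\textbf{Step 1: local Kuranishi model.} Since \(\mcE\) is semistable with locally free \(\gr(\mcE)\), the complex structure \(\delbar_\mcE\) lies in the closure of the complex gauge orbit of \(\delbar_{\gr(\mcE)}\), which has the Hermite-Einstein metric \(h_{\gr(\mcE)}\). Let \(G = K^\C = \Aut(\gr(\mcE))\). Around \(\delbar_{\gr(\mcE)}\) we build a \(K\)-equivariant Kuranishi slice \(\Phi\colon B \subset H^1(\End \gr(\mcE)) \to \mcA\). After perturbing the symplectic form on \(B\) as in the Chen-Sun (or Sz\'ekelyhidi) construction, the moment map for \(K\) on \(B\) becomes the projection of \(i\Lambda F - c\id\) onto \(\Lie(K)\).

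\textbf{Step 2: comparison of flows.} Along the flow, uniform estimates and Uhlenbeck compactness show that the gauge-equivalent connections \(A(t)\) converge, up to gauge, to the Hermite-Einstein connection on \(\gr(\mcE)\). This uses that the limit is unique and determined by \(\gr(\mcE)\), by results of Daskalopoulos-Wentworth, Jacob and Sibley. Hence for large \(t\), \(A(t)\) lies in the image of the slice, up to an element of the complex gauge group close to the identity. We decompose the flow as \(h(t) = g(t)^* \exp(\text{small}) g(t)\), with \(g(t)\) in \(G\) acting on the slice point \(x_0 \in B\) corresponding to \(\mcE\). The goal is to show that \(g(t)\) solves the finite-dimensional moment map flow \(g^{-1}\dot g = -i\mu(g\cdot x_0)\), up to an error that is integrable or decays fast enough that it perturbs \(-i\log\) only by \(O(1)\). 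This is the Chen-Sun style argument: the complementary part of the infinite-dimensional gradient is controlled by the spectral gap of the linearised operator on the orthogonal complement of \(\Lie(K)\) and \(H^1\), giving exponential decay there.

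\textbf{Step 3: finite-dimensional asymptotics.} For a linear \(G\)-action with moment map flow starting at \(x_0\), whose orbit closure contains the zero of the moment map (\(0\) in the slice, corresponding to \(\gr\)), we prove that
\[-i\log(g(t)^*g(t)) = \log(t)v_1 + \log\log(t)v_2 + \dots + O(1),\]
with commuting \(v_i\) given by the iterated balancing filtration of Ib\'a\~nez N\'u\~nez. The argument is inductive. The leading term \(v_1\) is the optimal destabilising direction for the norm-like function \(\|\mu\|^2\) on the semistable orbit, that is, the minimiser of the normalised weight (a Kempf-Ness/Hesselink-type result in the semistable regime). Writing \(g(t) = \tilde g(t)\exp(\log(t) v_1/2)\), the flow for \(\tilde g\) becomes, after the reparametrisation \(s = \log t\), a moment map flow for the centraliser of \(v_1\) acting on the limit \(\lim_{\lambda\to 0}\lambda^{v_1}\cdot x_0\), up to errors of order \(O(1/t)\). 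Iterating produces the \(\log\log\) terms, and the process terminates because the rank of the torus generated by the \(v_i\) strictly increases.

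\textbf{Step 4: identification.} Finally we identify the iterated balancing filtration on \(B\) with the iterated HKKP filtration of the lattice of subbundles of \(\mcE\). One-parameter subgroups of \(G\) with limits at \(x_0\) correspond to filtrations of \(\mcE\) by subbundles compatible with the Jordan-H\"older data. The weights \(\langle \mu, v\rangle\) then translate into degree and rank data, matching the HKKP functional.

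\textbf{Main obstacle.} The hardest step is Step 3, together with the error control of Step 2. The error terms in the induction are only polynomially small in \(t\), yet they must not contaminate the subsequent iterated-log scales. The first approach to try is a Lojasiewicz-type inequality for \(\|\mu\|^2\) restricted to each stratum, which yields convergence rates strong enough to absorb \(O(t^{-1})\) errors at every level.
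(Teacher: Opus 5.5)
Your outline has the same architecture as the paper: a Kuranishi slice with a Chen--Sun comparison, then a finite-dimensional iterated analysis, then identification with the iterated balancing and HKKP filtrations. However, the inductive step in Step~3, which is the heart of the proof, does not close as you formulate it.

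\emph{The limit you iterate on is the origin.} In $V$ the limit of $x_0$ under the one-parameter subgroup generated by $v_1$ is $0$, because $v_1$ destabilises $x_0$. So ``the flow of the centraliser on the limit'' is vacuous. You have to pass to $\P(V)$, where $q=[x_0]$ is unstable with Kempf direction $\xi_q$, and continue with $q'=[x']$, where $x'$ is the extremal weight component of $x_0$.

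\emph{The time change must be accurate to $O(1)$.} The affine flow is the flow on $\P(V)$ reparametrised by $ds/dt=\abs{x(t)}^2$. You need $s(t)=C\log t+O(1)$; $s\asymp\log t$, or setting $s=\log t$ by fiat, is not enough. Any unbounded error in $s$ multiplies $\xi_q$ and swamps the $\log\log t$ term. The paper gets this precision from the {\L}ojasiewicz rate $\abs{\widehat F(s)-L}\lesssim s^{-\gamma}$ with $\gamma>1$ (\Cref{prop:reparametrisation}).

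\emph{You need semistability of $q'$ for the smaller group.} The flows from $q$ and $q'$ stay at bounded distance because of the weight gap (\Cref{prop:flow-one-param-limit}). The flow from $q'$ splits as $-s\xi_q$ plus the $G_T/T^c$-flow on $\P(V)_T$ (\Cref{prop:relative-torus-flow}). For that remainder to have iterated-logarithmic asymptotics in $s$, rather than a further linear term (which would just be another multiple of $\log t$), $q'$ must be \emph{semistable} for $G_T/T^c$. This is exactly where optimality of $v_1$ enters, through the moment--weight inequality and the fact that $\xi_q$ stays optimal for $q'$ (\Cref{prop:relative-torus-semistable}). Your sketch never states this.

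\emph{You need to relocalise at each stage.} The remainder is a semistable flow on a projective variety converging to some polystable point. It is not again a linear representation with $0$ in the orbit closure. To extract its $\log s$ growth you must relocalise at the new limit: move into a slice, pass to the stabiliser, and flatten the K\"ahler form (\Cref{prop:move-orbit,prop:bounded-stabiliser,prop:bounded-linear}). Only then can you projectivise again, and the induction runs on $\dim G$. Your ``iterating'' collapses these steps, and without them nothing forces the second term to be of order $\log\log t$.

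There are also smaller problems in the other steps.
\begin{itemize}
\item In Step~2 a spectral gap cannot give exponential decay of the complementary component. That component is driven by the slice component, which decays only like $t^{-1/2}$, so it decays at best polynomially. What you actually need is bounded distance in $G/K$ between the full flow and the reduced flow, or equivalently an integrable error in the equation for $g(t)$. The paper argues this via convexity of the Donaldson functional along the geodesic joining $h(t)$ to $\widehat h(t)$ (\Cref{prop:kuranishi-slice-reduction}).
\item Also in Step~2, the K\"ahler form pulled back to $B$ is not flat, so Step~3 applies only after the comparison in \Cref{prop:bounded-linear}.
\item In Step~1 the orbit-closure relation is reversed: $\delbar_{\gr(\mcE)}$ lies in the closure of the $\mcG$-orbit of $\delbar_\mcE$, not the other way round. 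Moreover, to get $0\in\overline{G\cdot x_0}$ for the finite-dimensional group you need property (iv) of the slice (\Cref{prop:kuranishi-slice-move-orbit}).
\item In Step~4, ``weights translate into degree and rank data'' is not an argument, since every subbundle in play has the same slope. The paper does four things here:
\begin{enumerate}
\item it identifies $H^1$ with $\Rep(Q,d)$ and $G$ with $G(d)$;
\item it shows the Kuranishi map preserves compatibility with $(W_i)$, so subrepresentations of $v$ correspond to same-slope subbundles of $(E,\Phi(v))$ (\Cref{prop:kuranishi-slice-quiver,lem:kuranishi-slice-compatibility});
\item it matches the flow exponents with the iterated balancing filtration (\Cref{thm:iterated-balancing-filtration});
\item it invokes Ib\'a\~nez N\'u\~nez's theorem that for quiver representations this filtration coincides with the HKKP filtration.
\end{enumerate}
\end{itemize}
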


We expect the result to also hold when \(\gr(\mcE)\) is not locally free, where in this setting, one should consider the lattice of subobjects in the quotient category \(\Coh(X) / \Coh^{\ge 2}(X)\), with \(\Coh^{\ge 2}(X)\) the subcategory of coherent sheaves supported in codimension at least 2. We refer to \Cref{rmk:ym-lattice} for more details.

Our technique to prove \Cref{thm:ym-intro} is general, and applies to geometric flows not arising from the linear setting of abelian categories. As a non-linear analogue of the above result, we prove a similar result for the Calabi flow, which is the natural geometric flow involved in the theory of cscK metrics. Let \((X_0, L_0)\) be a smooth projective variety, \(\omega_0 \in c_1(L_0)\) a cscK metric. Suppose \((X, L)\) is a sufficiently small deformation of \((X_0, L_0)\), which we can view as a K-semistable polarised variety, with smooth and cscK K-polystable degeneration. 

The \emph{Calabi flow} on \((X, L)\) is given by
\[\dv{\phi}{t} = S(\phi(t)) - \underline S,\]
where \(\underline S\) is a cohomological constant, and \(S(\phi)\) is the hermitian scalar curvature functional. It is a result of Chen-Sun \cite{chenCalabiFlowGeodesic2014} that the flow exists for all time in this setting. We let \(K = \Ham(X_0, \omega_0) \cap \Aut(X_0)\) denote the group of Hamiltonian isometries of \((X_0, \omega_0)\), and let \(G\) denote the complexification of \(K\).

\begin{theorem}
    [\Cref{thm:calabi-flow}]
    \label{thm:calabi-flow-intro}

    Suppose \(\phi(t)\) is a solution to the Calabi flow. Then there exists \(v_1, \dots, v_k \in \Lie(K)\), such that
    \[\phi(t) = \exp(i\log(t)v_1 + \dots + i\log\cdots\log(t)v_k) + O(1),\]
    where \(v_1, \dots, v_k\) are commuting elements of \(\mfk\), and \(\exp \colon i\mfk \to G/K\) is the exponential map.
\end{theorem}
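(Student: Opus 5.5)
The plan is to reduce the Calabi flow to a finite-dimensional moment map flow on the Kuranishi space of \((X_0, L_0)\), following Chen-Sun, and then invoke the finite-dimensional result on the asymptotics of moment map flows described in the abstract.

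\emph{Step 1: the local slice.} Since \(\omega_0\) is cscK, Kuranishi theory gives a \(K\)-equivariant local slice. Concretely, there is a \(K\)-invariant neighbourhood \(B\) of \(0\) in \(H^1(X_0, TX_0)\) (the space of harmonic infinitesimal deformations) and a \(K\)-equivariant holomorphic map \(\Phi\) from \(B\) to the space of integrable complex structures compatible with \(\omega_0\). The restriction of the scalar curvature moment map to \(\Phi(B)\) defines a moment map \(\mu\colon B \to \mfk^*\), with respect to a Kähler form on \(B\) that is close to the flat one. The deformation \((X, L)\) corresponds to a point \(b \in B\), and K-semistability with polystable degeneration \((X_0, L_0)\) means \(0 \in \overline{G\cdot b}\). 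The Calabi flow, after pulling back by diffeomorphisms, becomes a flow of complex structures \(J(t)\) in the \(\mcG^\C\)-orbit of \(J_b\), where \(\mcG^\C\) is the formal complexification of the Hamiltonian group.

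\emph{Step 2: reduction to a finite-dimensional flow.} Following Chen-Sun, I would show that \(J(t)\) converges smoothly to \(J_0\) modulo diffeomorphisms. This is the \L{}ojasiewicz/stability step near a cscK metric, and I would take it from Chen-Sun. As a consequence, for large \(t\) one can write \(J(t) = f(t)^*\Phi(g(t)\cdot b)\), with \(g(t) \in G\) and \(f(t)\) in the gauge group, and the potential satisfies \(\phi(t) = [g(t)] + O(1)\) in \(G/K\). The key claim is then that \(g(t)\cdot b\) follows the finite-dimensional moment map flow \(\dot x = -I\,\mu(x)^\#_x\) on \(B\), up to an error that is integrable in time, or at least small enough not to affect the iterated logarithmic asymptotics. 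The natural approach is to decompose the Calabi flow velocity into its component tangent to the slice directions \(i\mfk\) and a component orthogonal to them. The orthogonal part is controlled exponentially by the spectral gap of the Lichnerowicz operator on the complement of \(\mfk\). I expect this comparison to be \textbf{the main obstacle}. The finite-dimensional flow decays only polynomially, like powers of \(1/t\) or slower, so the error from the orthogonal directions must be shown to be genuinely subordinate: exponentially small, or bounded by \(|\mu|^{1+\epsilon}\). My first attempt would be to establish a differential inequality for the orthogonal component in which the spectral gap dominates, and then to absorb the remaining perturbation using the fact that the statement only requires the iterated-log terms up to an \(O(1)\) error.

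\emph{Step 3: finite-dimensional asymptotics.} Applying the finite-dimensional theorem on moment map flows, which is proved later in the paper and gives the iterated balancing filtration conjectured by Ibáñez Núñez, to the point \(b\) yields commuting elements \(v_1, \dots, v_k \in \mfk\). These satisfy \(g(t) = \exp(i\log(t)v_1 + \dots + i\log\cdots\log(t)v_k)\,k(t)\,O(1)\) with \(k(t) \in K\). I would also verify that this finite-dimensional result is robust under perturbations of the kind controlled in Step 2. Finally, I would combine this with \(\phi(t) = [g(t)] + O(1)\). Here the exponential map \(i\mfk \to G/K\) is composed with the inclusion of \(G/K\) into the space of Kähler potentials, and this inclusion is distance-nonincreasing for the Mabuchi metric up to bounded error, since the orbit map is a totally geodesic embedding near \(0\). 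This gives the stated expansion.
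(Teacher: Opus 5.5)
Your architecture (Kuranishi slice, then a finite-dimensional flow on \(B\), then the finite-dimensional theorem) matches the paper's. However, the reduction step you flag as the main obstacle is exactly where the proof lies, and the route you sketch for it does not close.

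First, the identity \(\phi(t) = [g(t)] + O(1)\) is asserted, not proved. The slice only puts \(J(t)\) in the \(\mcK_\C\)-leaf of \(\Phi(g(t)\cdot b)\), not in its \(\mcK\)-orbit; points of the leaf generally have a component in the \(i\mfm\) directions transverse to \(G\). So relating \(\phi(t)\) to \([g(t)]\) already requires bounding that transverse gauge.

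Second, even an integrable-in-time error \(e(t)\) in the ODE for \(x(t) = g(t)\cdot b\) would not give the ``robustness'' of your Step 3. Since \(G\) acts linearly on \(B\) and \(x(t) \to 0\), the map \(L^c_{x(t)}\) has norm \(O(\abs{x(t)})\). An error \(e\) on \(B\) therefore corresponds to an error of size up to \(\abs{e}/\abs{x(t)}\) in the velocity of \([g(t)]\) in \(G/K\). Only an integrable error in the \(G/K\)-velocity keeps you, by convexity, within bounded distance of the Kempf-Ness flow line, and the finite-dimensional theorem itself says nothing about perturbed flows.

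Third, the spectral gap will not give exponential decay of the transverse part. That part is forced quadratically by the slice part, which decays like \(t^{-1/2}\), so you only get polynomial decay. This suffices only because it is integrable.

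The missing idea is to compare in \(\mcH\) directly, via convexity of the Mabuchi functional, without ever decomposing the true flow. Following Chen-Sun (their Lemma 6.2), the paper uses the reduced Calabi flow, the gradient flow of \(\abs{\pr_\mfk(S - \overline S)}^2\). Its potential \(\widehat\phi(t)\) lies in the totally geodesic copy of \(G/K\), and is exactly the Kempf-Ness flow of \(v\) for \((B, \widetilde\Omega, \widetilde\mu)\). As in \Cref{prop:flow-one-param-limit} and \Cref{prop:kuranishi-slice-reduction}, convexity gives
\[\frac{\mathrm{d}}{\mathrm{d}t}\, d(\phi(t), \widehat\phi(t)) \le \norm{(S - \overline S) - \pr_\mfk(S - \overline S)},\]
with the right-hand side evaluated along the reduced flow. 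This right-hand side is integrable; in the Yang-Mills analogue it is \(O(t^{-3/2})\). Hence \(d(\phi, \widehat\phi) = O(1)\), with no perturbation theory for the finite-dimensional asymptotics. Then \Cref{prop:bounded-linear} reduces to the linear model; you also skip this step, but it is needed because \(\widetilde\Omega\) is not flat. Finally, \Cref{thm:vector-space} gives the expansion.
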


Up to scaling, the elements \(v_i \in \Lie(K)\) are rational, and hence correspond to (rational) one-parameter subgroups of \(G = \Aut(X_0, L_0)\). Thus, each \(v_i\) corresponds to a test configuration of \((X, L)\), with central fibre \((X_0, L_0)\), see \cite[Proof of Theorem 2]{szekelyhidiKahlerRicciFlowKpolystability2010} and \cite{chenCalabiFlowGeodesic2014}. It would be an interesting question to construct the \(v_i\) directly, in terms of K-stability.

Motivated by the conjectures of Haiden-Katzarkov-Kontsevich-Pandit, and Kirwan's refinements of the Morse stratification in geometric invariant theory \cite{kirwanRefinementsMorseStratification2005}, Ib\'a\~nez N\'u\~nez defines the \emph{iterated balancing filtration} of a good moduli stack \cite{ibaneznunezRefinedHarderNarasimhanFiltrations2024}, which recovers the HKKP filtration for moduli stacks of objects in an abelian category, and Kirwan's refinement of the Morse stratification for GIT quotients. Ib\'a\~nez N\'u\~nez conjectures that in the setting of a GIT quotient, the asymptotics of the moment map flow can be computed from the iterated balancing filtration \cite[Conjecture 1.7.1]{ibaneznunezRefinedHarderNarasimhanFiltrations2024}.

Let \((X, L)\) be a smooth projective variety, \(G\) be a complex reductive Lie group, acting on \(X\) by biholomorphisms, with linearisation \(L\). We will fix a K\"ahler metric \(\omega \in c_1(L)\). Fix a maximal compact subgroup \(K\) of \(G\), and an \(\Ad\)-invariant inner product on \(\mfk\). The \(K\) action on \((X, \omega)\) is Hamiltonian, with moment map \(\mu \colon X \to \mfk\). This is the setting of geometric invariant theory.

Associated to each \(x \in X\), we have a geodesically convex function \(\Phi_x \colon G/K \to \R\), called the \emph{Kempf-Ness function}. Note that it is often referred to as the \emph{log-norm functional}. The GIT stability of \(x\) is encoded in the asymptotic properties of \(\Phi_x\).

We let \(\gamma_x(t)\) denote the solution to the gradient flow
\[\begin{cases}
    \dv{\gamma_x}{t} &= -\grad \Phi_x(\gamma_x(t)) \\
    \gamma_x(0) &= [1] \in G/K.
\end{cases}\]
This flow plays a prominent role in understanding the geometry of GIT quotients.

The exponential map \(\exp \colon i\Lie(K) \to G/K\) is a diffeomorphism, and we denote its inverse by \(\log \colon G/K \to i\Lie(K)\).

\begin{theorem}
    [\Cref{thm:main,thm:iterated-balancing-filtration}]
    \label{thm:main-intro}
    Suppose \(x \in X\) is a semistable point. Then there exists \(v_1, \dots, v_r \in \Lie(K)\), such that
    \[-i\log(\gamma_x(t)) = \log(t)v_1 + \log\log(t)v_2 + \cdots + \log\cdots\log(t)v_r + O(1).\]
    Moreover, the \(v_i\) pairwise commute, and are given by the iterated balancing filtration of Ib\'a\~nez N\'u\~nez.
\end{theorem}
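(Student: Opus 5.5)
We prove \Cref{thm:main-intro} by induction on \(\dim G\), reducing at each stage to a smaller reductive group acting on a smaller variety. Work on \(G/K\) and write \(\gamma_x(t) = \exp(i\xi(t))\) with \(\xi(t) \in \mfk\).

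\emph{Step 1: reduction to a neighbourhood of the polystable orbit.} Since \(x\) is semistable, the closure \(\overline{G\cdot x}\) contains a unique closed orbit \(G\cdot y\), and \(\mu\) has a zero on it. We may assume \(\mu(y)=0\), so the stabiliser \(G_y = K_y^\C\) is reductive. The flow \(\gamma_x\) corresponds to the negative gradient flow of \(|\mu|^2\) starting at \(x\). By Lojasiewicz-type convergence (Neeman, Sjamaar), \(\gamma_x(t)\cdot x\) converges to a point of \(K\cdot y\), which we may take to be \(y\) after translating by \(K\).

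Near \(y\), apply the holomorphic slice theorem: there is a \(G_y\)-invariant slice \(S\) through \(y\) with \(G\times_{G_y} S \to X\) étale onto a neighbourhood. Moreover, the symplectic normal form (Marle–Guillemin–Sternberg) makes the moment map on the slice equal, up to a smooth error vanishing to second order, to the linear moment map \(\mu_V\) of \(K_y\) acting on the linear representation \(V = T_yS\). The main technical point of this step is to show that the flow of \(x\), written in slice coordinates, is a perturbation of the moment map flow for \(G_y\) acting on \(V\) at a point \(v\) with \(0\in\overline{G_y v}\). We will also show that the perturbation changes \(\xi(t)\) only by \(O(1)\). For this we use the convexity of \(\Phi_x\) and the comparison principle that geodesic distance between gradient flows of convex functions is nonincreasing, together with the fact that the error term is bounded along the orbit.

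\emph{Step 2: the linear unstable-cone case.} For \(v\in V\) with \(0\in\overline{G_y v}\), \(v\) is nullcone, so the Kirwan/Hesselink/Kempf theory gives an optimal destabilising \(\beta_1\in\mfk_y\), unique up to \(\mathrm{Ad}\). Decompose \(v=\sum_\lambda v_\lambda\) into \(\beta_1\)-weight spaces. The flow \(\dot\xi=-\mu(\exp(i\xi)v)\) behaves like the model ODE \(\dot s = -c\,e^{-2s}\), where \(s\) is the \(\beta_1\)-component. This gives \(s\sim \tfrac12\log t\), and hence \(v_1\) proportional to \(\beta_1\), with the constant fixed by \(|\beta_1|\).

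After subtracting \(\log(t)v_1\), the remaining motion takes place in the centraliser \(G_{\beta_1}\) acting on the limit \(v_{\beta_1}=\lim \exp(-i s\beta_1)v\), the lowest-weight part. That point is semistable for \(G_{\beta_1}\) modulo the \(\beta_1\)-direction, with the twisted linearisation used in the iterated balancing filtration. We then rescale time \(\tau=\log t\). Since \(\frac{d}{dt}=t^{-1}\frac{d}{d\tau}\), the remaining flow becomes, up to integrable errors, a moment map flow in \(\tau\) for the smaller group \(G_{\beta_1}/\exp(\C\beta_1)\). Applying the induction hypothesis produces \(\log\tau\,v_2+\log\log\tau\,v_3+\cdots\), with every \(v_j\) commuting with \(v_1\). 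The induction terminates because the group dimension strictly drops; the terminal case, when the point becomes polystable, gives convergence and hence the \(O(1)\) term.

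\emph{Step 3: identification with the iterated balancing filtration.} By construction, \(\beta_1\) is the balancing (Kempf-optimal) filtration of Ibáñez Núñez at the first step. The subsequent \(\beta_j\) are the optimal filtrations of the successive associated graded points in the centralisers, which is exactly the recursive definition of the iterated balancing filtration.

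\emph{Main obstacle.} The main obstacle is controlling the error terms in Step 2: the cross terms between the higher \(\beta_1\)-weight components and the slower \(\log\log\) dynamics. The plan is to show that the higher-weight components decay polynomially in \(t\), like \(t^{-\delta}\). After the change of variables \(\tau=\log t\) such errors become \(e^{-\delta\tau}\), which is integrable, so the convexity and distance-comparison argument keeps them to an \(O(1)\) discrepancy at every level of the induction.
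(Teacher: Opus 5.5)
Your skeleton is the same induction as the paper's: slice reduction to a linear nullcone point, peel off the Kempf direction $\beta_1$, replace $v$ by its lowest-weight part $x' = v_{\beta_1}$, then induct on the residual semistable problem for $G_{\beta_1}/\exp(\C\beta_1)$. But the step that produces the iterated-logarithm structure is asserted rather than proved, and your error analysis looks in the wrong place.

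Let $V_{m_0}$ be the $\beta_1$-weight space containing $x'$. Since $\mu$ is homogeneous quadratic and $\mu(V_{m_0}) \subseteq \mfk_{\beta_1}$, the flow from $x'$ splits exactly:
\begin{itemize}
\item the $\beta_1$-coordinate satisfies $\dot s = c\,\abs{g(t)^{-1}x'}^2$;
\item the residual part is the \emph{projective} residual moment map flow, run with the clock $\sigma(t) = \int_0^t \abs{g(\tau)^{-1}x'}^2\,d\tau$. This is \Cref{lem:reparametrisation} combined with \Cref{prop:relative-torus-flow}.
\end{itemize}
So ``since $\frac{d}{dt} = t^{-1}\frac{d}{d\tau}$'' justifies nothing. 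The speed factor is $\abs{g(t)^{-1}x'}^2$, not $t^{-1}$. What you need is $\sigma(t) = C\log t + O(1)$, equivalently $s(t) = C'\log t + O(1)$.

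Your model ODE only yields $s \sim \frac{1}{2}\log t$, and its coefficient is not constant: $\abs{g^{-1}x'}^2 = e^{-\kappa s}\abs{h(t)^{-1}x'}^2$, with $\kappa > 0$ and $h(t)$ the residual component. The deviation $t\abs{g(t)^{-1}x'}^2 - C$ is driven by the slowly converging residual moment map. In general it therefore decays only like a power of $\log t$, not like $t^{-\delta}$. It is present even when $x = x'$, so it is not among the higher-weight cross terms you plan to control. It also sits exactly at the critical scale. A relative error $O(1/\log t)$ in $\abs{g^{-1}x'}^2$ shifts $s(t)$ by $O(\log\log t)$. That shift lands on the $\beta_1$-term with the same size as $\log\log(t)\,v_2$, destroying the expansion.

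The paper closes this gap in \Cref{prop:reparametrisation}. After projectivising, $\gamma_x(t) = \widehat\gamma_q(s(t))$ exactly, with $ds/dt = \abs{x(t)}^2$. One has $\frac{d}{dt}\abs{x(t)}^{-2} = 4\widehat F(s(t))$. The {\L}ojasiewicz rate $\abs{\widehat F(s) - L} \lesssim s^{-\gamma}$, with $\gamma = 1/(2\alpha - 1) > 1$, makes $\abs{x(t)}^2 - \frac{1}{4Lt}$ integrable, so $s(t) = C\log t + O(1)$.

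In your coordinates there is also a direct route. Here $s$ is a constant multiple of $\sigma$. Up to a positive factor and an additive constant, $\log\abs{h(t)^{-1}x'}^2$ is the residual Kempf-Ness function. That function is bounded below by semistability of the residual point (\Cref{prop:relative-torus-semistable}) and non-increasing along its flow. Hence $\frac{d}{dt}e^{\kappa s}$ is pinched between positive constants, and $s = \kappa^{-1}\log t + O(1)$. One of these two arguments must be supplied at every level of the induction.

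Two smaller points.
\begin{itemize}
\item In Step 1, the convexity comparison bounds the growth of the distance by the size of the gradient error. That error must therefore be integrable in time, not merely bounded; Chen-Sun use that it is cubic with $\abs{v(t)} \lesssim t^{-1/2}$. Passing from $G$ to $G_y$ is also a separate step (\Cref{prop:bounded-stabiliser}).
\item Step 3 is not ``by construction''. Ib\'a\~nez N\'u\~nez's first step lives on $\Bl_U N_x$ with polarisation $\mcO(-E)$. You must check that the analytic and \'etale slice points agree, that $\mcO(-E) = p^*\mcO(1)$ gives the same optimal direction as $[v] \in \P(V)$, and that the limit point lies in $E$.
\end{itemize}
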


In this setting, the iterated balancing filtration agrees with the refinement of the Morse stratification by Kirwan \cite[Section 3.6.2]{ibaneznunezRefinedHarderNarasimhanFiltrations2024}. Thus, our work also connects the asymptotics of the moment map flow to the refined Morse stratification. There is a closely related result in the linear setting.

\begin{theorem}
    [{\Cref{thm:vector-space}}]
    \label{thm:vector-space-intro}
    Let \((V, J_0, g_0)\) be a unitary representation of a compact connected Lie group \(K\), which extends to a representation of the complexification \(G\). Let \(\Omega_0\) denote the induced K\"ahler form on \(V\). Let \(\mu \colon V \to \mfk\) denote the associated moment map. Let \(x \in V \setminus \{0\}\) be such that \(0 \in \overline{G \cdot x}\). Let \(\gamma_x(t)\) denote the flow line for \(-\grad \Phi_x\) starting at \([1]\). Then there exist elements \(v_1, \dots, v_k\) of \(\mfk\), such that
    \[-i\log(\gamma_x(t)) = \log(t)v_1 + \log\log(t)v_2 + \log\cdots\log(t)v_k + O(1).\]
    Moreover, the \(v_i\) pairwise commute, and are given by the iterated balancing filtration of Ib\'a\~nez N\'u\~nez.
\end{theorem}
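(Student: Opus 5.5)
We prove \Cref{thm:vector-space-intro} by induction on \(\dim V\), peeling off one logarithmic scale at a time. Write \(\gamma_x(t) = \exp(i\xi(t))[1]\) with \(\xi(t) \in \mfk\), and set \(x(t) = \exp(i\xi(t))^{-1}\cdot x\) (or the appropriate sign convention), so that the flow of \(-\grad\Phi_x\) becomes the negative gradient flow of \(\frac12|\mu|^2\) restricted to \(G\cdot x\). Since \(0 \in \overline{G\cdot x}\), the point \(x\) is unstable in the affine sense.

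\textbf{Step 1: the leading term.} The Kempf--Ness theory (Kirwan, Ness) gives a unique, up to \(K\)-conjugacy, maximally destabilising direction \(\beta \in \mfk\) for \(x\), realised as \(\beta = \lim \mu(x(t))\) up to sign. Decompose \(V = \bigoplus_\lambda V_\lambda\) into weight spaces of \(i\beta\). We show that \(x(t)\) converges to \(\mu^{-1}(\beta)\) with the rate \(\xi(t) = \log(t) v_1 + o(\log t)\), where \(v_1\) is proportional to \(\beta\). To do this, compare the flow with the model flow on the weight spaces: under \(e^{s i\beta}\) the components decay like \(e^{-\lambda s}\), and the equation \(\dot\xi \approx \mu(x(t))\) with \(|x(t)|^2 \sim 1/t\) (the typical decay for a quartic-type potential \(|\mu|^2\) which is homogeneous of degree 4) gives \(\xi \sim \log t \cdot v_1\). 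Concretely, use that \(\frac{d}{dt}|x(t)|^2 = -c|\mu(x(t))|^2\) and \(|\mu(y)| \ge c'|y|^2\) along the Kempf limit (a Łojasiewicz-type inequality from the optimal-destabilisation theorem) to get \(|x(t)|^2 \asymp t^{-1}\).

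\textbf{Step 2: reduction to the centraliser.} Having found \(v_1\), pass to the group \(K_1 = Z_K(v_1)\), its complexification \(G_1\), and the representation \(V_{0}\), the zero weight space of the rescaled leading piece. After this step, rescaling \(x(t)\) by \(t^{1/2}\) and reparametrising time by \(s = \log t\), one gets an asymptotically autonomous gradient flow for the Kempf--Ness function of the limiting point \(x_1 = \lim\) (the associated graded of the first step in the balancing filtration) with respect to \(G_1' = G_1 / \exp(\C v_1)\). The perturbation terms decay like \(O(t^{-\delta})\), which are integrable in \(s\) only after one more exponential, so they contribute \(O(1)\) to \(\xi\). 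If \(x_1\) is polystable for \(G_1'\) we stop, getting a bounded remainder via Łojasiewicz convergence. Otherwise, apply the inductive hypothesis: since \(\dim\) of the group acting effectively or of the relevant representation drops, we obtain \(\log(s)v_2 + \dots\), that is, \(\log\log t\, v_2+\cdots\), with \(v_2 \in \Lie(K_1)\) commuting with \(v_1\). By construction the \(v_i\) coincide with the successive balancing directions, that is, the iterated balancing filtration of Ibáñez Núñez.

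\textbf{Main obstacle.} The hardest step is the error control in Step 2. We need that the non-autonomous perturbation of the rescaled flow does not destroy the inductive asymptotics, and that errors remain \(O(1)\) rather than \(o(\log\log t)\). I would try first to prove a quantitative stability statement in the inductive hypothesis: the expansion holds uniformly for flows with forcing terms bounded by \(C e^{-\delta s}\), with constants depending only on \(C,\delta\), and the initial point in a compact set. This is proved simultaneously in the induction, using convexity of \(\Phi\) along geodesics to compare perturbed and unperturbed trajectories, since the distance in \(G/K\) is nonincreasing under gradient flows of convex functions up to the integrated forcing.
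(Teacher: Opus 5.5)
Your outline (projectivise, peel off the Kempf direction, pass to $G_T/T^c$ on the $T$-fixed locus, recurse) has the same skeleton as the paper, but the recursion does not close as you state it. The point $x_1$ you land on is the top-weight component $x'$, i.e.\ $q'=\lim_{\sigma\to\infty}\exp(i\sigma\xi_q)\cdot[x]$. For $G_T/T^c$ acting on $\P(V)_T$ with the relative moment map $\mu_T$, this point is \emph{semistable}. That fact needs proof; the paper derives it in \Cref{prop:relative-torus-semistable} from the moment--weight inequality applied to $\xi_q+t\eta$. More seriously, semistability is the opposite of the hypothesis $0\in\overline{G\cdot x}$ in your inductive statement. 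Also, $G_T/T^c$ acts only projectively on the weight space containing $x'$, since $T^c$ acts there by a nontrivial scalar. So ``apply the inductive hypothesis'' has nothing to apply to.

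The missing idea is the local normal form at the polystable limit $z$ of the flow from $q'$:
\begin{enumerate}[(i)]
    \item replace the group by the reductive stabiliser $(G_T/T^c)_z$ and the space by the normal slice $N_z$ (holomorphic slice theorem);
    \item move $q'$ within its orbit to a slice point $v$ with $0\in\overline{(G_T/T^c)_z\cdot v}$;
    \item prove that the Kempf--Ness flows for the full group versus the stabiliser, and for the curved versus the linearised moment map, stay at bounded distance in $G/K$ (\Cref{prop:move-orbit,prop:bounded-stabiliser,prop:bounded-linear}, after Chen--Sun).
\end{enumerate}
Only then does an induction on $\dim G$ go through, and this is where each further iterated logarithm actually comes from.

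Relatedly, the ``main obstacle'' you single out (stability of the induction under forcing $Ce^{-\delta s}$) is not what is needed. The paper only ever compares two exact Kempf--Ness gradient flows, for $q$ and $q'$. For the distance $D(\sigma)$ between them, convexity gives $\frac{dD}{d\sigma}\le\abs{\grad\widehat\Phi_{q'}-\grad\widehat\Phi_q}$ evaluated along one trajectory. This is $O(e^{-\delta\sigma})$ by the weight gap between $x'$ and $x''$ (\Cref{prop:flow-one-param-limit}), and the $\mft$-part then splits off exactly (\Cref{prop:relative-torus-flow}).

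The second gap is the $O(1)$ control of the leading term. In the affine flow $x(t)\to 0$, so ``$\beta=\lim\mu(x(t))$'' and ``$x(t)\to\mu^{-1}(\beta)$'' only make sense for $[x(t)]$ and $\widehat\mu=\mu/\abs{x}^2$. More importantly, Step 1 only gives an $o(\log t)$ error in the coefficient of $v_1$. Your claim that all perturbations are $O(t^{-\delta})$ is false for the radial/time-change part.

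The exact relation is $\gamma_x(t)=\widehat\gamma_q(\sigma(t))$ with $d\sigma/dt=\abs{x(t)}^2$ (\Cref{lem:reparametrisation}). The quantity $u=1/\abs{x}^2$ satisfies $du/dt=4\widehat F(\sigma(t))$, where $\widehat F=\frac12\abs{\widehat\mu}^2$ along the projective flow tends to $L>0$. This convergence is only polynomial in $\sigma$, and necessarily so, since the later $\log\log$ terms come from the same slow relaxation. {\L}ojasiewicz gives $\abs{\widehat F-L}\lesssim\sigma^{-\kappa}$ with $\kappa=1/(2\alpha-1)$. Hence $t\abs{x(t)}^2-\frac{1}{4L}=O((\log t)^{-\kappa})$, not $O(t^{-\delta})$. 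The error along $v_1$ is then $\int^t O(\tau^{-1}(\log\tau)^{-\kappa})\,d\tau$, which is bounded precisely because $\kappa>1$. This is \Cref{prop:reparametrisation}, $\sigma(t)=C\log t+O(1)$. Without it you could, for instance, carry a $\log\log t$ error along $v_1$ that shifts $v_2$.

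Finally, ``by construction'' does not identify the $v_i$ with Ib\'a\~nez N\'u\~nez's filtration. That filtration is defined through blow-ups of the maximal-stabiliser locus polarised by $\mcO(-E)$ and $\Theta$-strata. You must check two things: that the Luna slice agrees with the analytic slice used for the flow, and that the Kempf direction of $\widehat v\in\Bl_U N_x$ for $\mcO(-E)$ coincides with that of $[v]\in\P(V)$ (functoriality of Hilbert--Mumford weights under $\Bl_0 V\to\P(V)$).
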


When \(X = \Rep(Q, d)\) is the space of representations of a quiver \(Q\) with dimension vector \(d\) and \(G = G(d)\), Ib\'a\~nez N\'u\~nez proves that the iterated balancing filtration agrees with the HKKP filtration \cite[Theorem 1.6.1]{ibaneznunezRefinedHarderNarasimhanFiltrations2024}. Thus, we recover the main result of Haiden-Katzarkov-Kontsevich-Pandit \cite[Theorem 5.11]{haidenSemistabilityModularLattices2023}, who proved this result for quiver representations, using different techniques.

Finally, we have an analogous result in the unstable case.

\begin{theorem}
    [\Cref{thm:unstable}]
    \label{thm:unstable-intro}
    Suppose \(x \in X\) is unstable. Then there exists \(v_0, v_1, \dots, v_r \in \Lie(K)\), such that
    \[-i\log(\gamma_x(t)) = tv_0 + \log(t)v_1 + \log\log(t)v_2 + \cdots + \log\cdots\log(t)v_r + O(1).\]
    Moreover, the \(v_i\) pairwise commute, and \(v_1, \dots, v_r\) are given by the iterated balancing filtration of Ib\'a\~nez N\'u\~nez.
\end{theorem}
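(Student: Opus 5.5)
The plan is to reduce the unstable case to the semistable case, \Cref{thm:main-intro}, applied to a suitable point in a smaller GIT problem. The reduction uses the Kirwan--Ness stratification.

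\textbf{Step 1: identify \(v_0\).} Since \(x\) is unstable, by Kirwan--Ness theory the flow \(\gamma_x(t)\) escapes linearly. The optimal destabilising direction is \(\beta = \mu(y)\), where \(y\) is a point of the limit set of the flow of \(-\grad\|\mu\|^2\) through \(x\). Equivalently, \(\beta\) is, up to scaling, the unique (up to \(K\)-conjugacy) minimiser of \(\|\mu\|\) on \(\overline{G\cdot x}\), and the Hilbert--Mumford/Kempf optimal one-parameter subgroup. After replacing \(x\) by a point in its \(K\)-orbit, or conjugating, I would set \(v_0 = \beta\) up to the sign and normalisation conventions that make \(-i\log\gamma_x(t) \sim t v_0\). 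I would use the standard estimate \(\Phi_x(e^{-is\beta}) \ge -s\|\beta\|^2 - C\) and convexity to show \(-i\log\gamma_x(t) = tv_0 + o(t)\).

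\textbf{Step 2: pass to the Levi.} Let \(G_\beta\) be the centraliser of \(\beta\), with maximal compact \(K_\beta\). Let \(Z_\beta\) be the relevant connected component of the fixed locus of \(\exp(i\R\beta)\), and let \(p\colon Y_\beta \to Z_\beta\) be the limit map \(p(z) = \lim_{s\to\infty}\exp(-is\beta)z\). Translating \(x\) by an element of the parabolic \(P_\beta\), the point \(x_0 = p(x)\) is semistable for the \(G_\beta\)-action on \(Z_\beta\), with moment map \(\mu - \beta\) (the shifted linearisation). I would write \(\gamma_x(t) = \exp(itv_0)\,\tilde\gamma(t)\). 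The claim is that \(\tilde\gamma\), projected to \(G_\beta/K_\beta\), is an approximate gradient flow of the Kempf--Ness function \(\Phi^\beta_{x_0}\) of the reduced problem. The error comes from the difference \(\Phi_{\exp(-itv_0)x} - \Phi_{x_0}\), which decays exponentially in \(t\) because the weights of \(\beta\) on the unipotent directions are positive. The unipotent part of the \(P_\beta\)-component should also be absorbed: its contribution decays like \(e^{-ct}\), so it lies in \(O(1)\).

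\textbf{Step 3: stability of the expansion under perturbation.} I would apply \Cref{thm:main-intro} to \(x_0\) in \(Z_\beta\) under \(G_\beta\), giving commuting \(v_1,\dots,v_r \in \Lie(K_\beta)\) from the iterated balancing filtration of \(x_0\). These commute with \(v_0=\beta\) automatically. I expect this step to be the main obstacle. It requires showing that a gradient-like flow whose driving vector field differs from \(-\grad\Phi^\beta_{x_0}\) by an integrable (exponentially decaying) error still has the same log-asymptotics up to \(O(1)\).

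I would handle it using the finite-dimensional reduction machinery behind \Cref{thm:main}. The proof there is likely to be a Chen--Sun type comparison of flows that tolerates integrable perturbations. Failing that, I would use the distance-nonincreasing property of gradient flows of geodesically convex functions on the nonpositively curved space \(G_\beta/K_\beta\): a perturbation of size \(\epsilon(t)\) with \(\int\epsilon<\infty\) moves the trajectory by a bounded amount.

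\textbf{Step 4: identify the filtration.} I would check, using \cite{ibaneznunezRefinedHarderNarasimhanFiltrations2024}, that the iterated balancing filtration of an unstable point is by definition the HN filtration \(\beta\) followed by the iterated balancing filtration of the associated graded point \(x_0\) in the centre stack. This identifies \(v_1,\dots,v_r\) as claimed.
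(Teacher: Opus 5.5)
The overall plan is the paper's. You take the Kempf direction $v_0$ (\Cref{thm:kempf}), use that the limit point $x_0=p(x)$ is semistable for the reduced action (\Cref{prop:relative-torus-semistable}), compare flows up to $O(1)$ by convexity, and then invoke the semistable theorem inductively. The gap is in Steps 2--3, where the comparison runs the wrong way.

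You treat the \emph{actual} trajectory $\exp(-itv_0)\gamma_x(t)$, projected to $G_\beta/K_\beta$, as a perturbed gradient flow of $\Phi^\beta_{x_0}$. That forces you to estimate the error along the actual trajectory. At a point $gK$ the error is controlled by $d(g^{-1}\cdot x,g^{-1}\cdot x_0)$. This is exponentially small only if $g=\exp(itv_0)h$ with $h$ commuting with $v_0$ and of subexponential size.

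A priori you know nothing of the sort. \Cref{thm:kempf} only gives $-i\log\gamma_x(t)=tv_0+o(t)$, and the $o(t)$ term need not commute with $v_0$. In non-positive curvature such a transverse error can leave $\exp(-itv_0)\gamma_x(t)$ at distance of order $t$ from $[1]$, and then $g^{-1}$ may undo the $e^{-\delta t}$ contraction. So your assertion that the unipotent part decays like $e^{-ct}$ is essentially what has to be proved. The same goes for the claim that projecting to $G_\beta/K_\beta$ gives an approximate gradient flow of $\Phi^\beta_{x_0}$, since it depends on that unknown coordinate. Separately, no $P_\beta$-translation of $x$ is needed when $\beta$ is the Kempf direction of $x$ itself, and a Levi translation would in general conjugate the logarithmic terms.

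The paper avoids all of this in \Cref{prop:flow-one-param-limit} by swapping roles and working on all of $G/K$ with the convex function $\Phi_x$.

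\textbf{The model curve.} Since $T=\exp(\R v_0)\subseteq K_{x_0}$, \Cref{lem:relative-torus-flow} keeps $\mu$ along the flow from $x_0$ inside $\mfk_T$ with constant $\mft$-component. Hence the \emph{exact} flow $\gamma_{x_0}$ has lifts $g(t)=\exp(i(tv_0+\epsilon(t)))u(t)$ with $[\epsilon(t),v_0]=0$ and $\epsilon(t)=o(t)$. Moreover $\epsilon$ is precisely the reduced flow (\Cref{prop:relative-torus-flow}).

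\textbf{The comparison.} Treat this explicitly known curve as the perturbed trajectory. Convexity of $\Phi_x$ gives $\frac{d}{dt}d(\gamma_x(t),\gamma_{x_0}(t))\le\abs{\grad\Phi_x-\grad\Phi_{x_0}}(\gamma_{x_0}(t))\lesssim d(g(t)^{-1}\cdot x,g(t)^{-1}\cdot x_0)$. Write a lift of $x$ as $x'+x''$, with $x'$ its extremal $v_0$-weight component and weight gap $\delta>0$. Then the right-hand side is $\lesssim e^{-\delta t+o(t)}$, which is integrable, so the distance stays bounded. The decay comes from the weight gap of $v_0$ on the components of $x$, not from weights on unipotent directions, and no projection is needed.

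\textbf{Induction bookkeeping.} The paper derives the semistable theorem from the unstable one for the same group. So for the induction on $\dim G$ to close, the reduced problem must have a strictly smaller group. This is why the paper uses $G_T/T^c$ acting on $X_T$ with moment map $[\mu]_T$, rather than $G_\beta$ with $\mu-\beta$. With $G_\beta$ the dimension does not drop when $\beta$ is central.
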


These results prove a conjecture of Ib\'a\~nez N\'u\~nez \cite[Conjecture 1.7.1]{ibaneznunezRefinedHarderNarasimhanFiltrations2024}. The goal of Ib\'a\~nez N\'u\~nez's iterated balancing filtration is to describe the complete asymptotics of the moment map flow, and \Cref{thm:main-intro,thm:vector-space-intro,thm:unstable-intro} demonstrate that this is the case.

\begin{remark}
    In \cite[Conjecture 1.7.1]{ibaneznunezRefinedHarderNarasimhanFiltrations2024}, Ib\'a\~nez N\'u\~nez allows the linearisation for the \(G\) action on \(V\) to vary, which corresponds to shifting the moment map by a central element. In this setting, it is a result of Harada-Wilkin \cite{haradaMorseTheoryMoment2011} that the moment map flow converges. As the proof of \Cref{thm:main-intro} relies on reducing to a local neighbourhood, the same techniques prove \cite[Conjecture 1.7.1]{ibaneznunezRefinedHarderNarasimhanFiltrations2024} in full generality.
\end{remark}

The proofs of the infinite-dimensional results rely on reduction to a finite-dimensional slice, analogous to the work of Chen-Sun \cite{chenCalabiFlowGeodesic2014}, and then applying \Cref{thm:vector-space-intro}. Thus, the \(v_i\) in the infinite-dimensional settings are given by the iterated balancing filtration of the finite-dimensional deformation space. In \Cref{prop:kuranishi-slice-quiver}, we identify the slice for the Hermitian-Yang-Mills flow with a quiver representation, and relate the lattice of subrepresentations to the lattice of subbundles, and hence show that the \(v_i\) can be computed from the HKKP filtration of the lattice of subbundles.

\subsection{Prior work}

The construction of stratifications of the unstable locus has a long history. Using Kempf's optimal destabilising one-parameter subgroup \cite{kempfInstabilityInvariantTheory1978}, Hesselink defines a stratification of the unstable locus of a GIT quotient \cite{hesselinkUniformInstabilityReductive1978}. In \cite{kirwanCohomologyQuotientsSymplectic1984,nessStratificationNullCone1984} Kirwan and Ness prove that Hesselink's stratification coincides with the Morse stratification, induced by the gradient flow of the norm squared of the moment map. 

In \cite{atiyahYangMillsEquationsRiemann1983}, Atiyah-Bott outlined a programme to use the Yang-Mills functional to define a stratification of the space of unitary connections on a vector bundle over a compact Riemann surface, where the strata are labelled by Harder-Narasimhan type. The programme was completed by Daskalopoulos \cite{daskalopoulosTopologySpaceStable1992} and R{\aa}de \cite{radeYangMillsHeatEquation1992}, who studied the analytic properties of the Yang-Mills flow.

In both the finite- and infinite-dimensional settings, the gradient flow of the norm squared of the moment map defines a deformation retraction, which can be used to compute the cohomology of the quotient space (respectively, moduli space) \cite{kirwanCohomologyQuotientsSymplectic1984,atiyahYangMillsEquationsRiemann1983,daskalopoulosTopologySpaceStable1992}. Motivated by this, using the techniques from \cite{kirwanPartialDesingularisationsQuotients1985}, Kirwan defines a refinement of the Morse stratification, by iteratively stratifying the semistable locus via blow-ups \cite{kirwanRefinementsMorseStratification2005}. Using the same ideas, Kirwan defines refinements of the Morse stratification for the Yang-Mills functional \cite{kirwanModuliSpacesBundles2004}. In particular, for a semistable bundle over a compact Riemann surface, Kirwan's balanced \(\delta\)-filtration of maximal triviality corresponds to the (not iterated) HKKP filtration of the lattice of subbundles of the same slope \cite[Section 3.6.4]{ibaneznunezRefinedHarderNarasimhanFiltrations2024}.

Convergence of the Hermitian-Yang-Mills flow was established by Daskalopoulos \cite{daskalopoulosTopologySpaceStable1992} and R{\aa}de \cite{radeYangMillsHeatEquation1992} for Riemann surfaces, Daskalopoulos-Wentworth \cite{daskalopoulosConvergencePropertiesYangMills2004} for K\"ahler surfaces, and Jacob \cite{jacobLimitYangMillsFlow2015,jacobYangMillsFlowAtiyahBott2016} and Sibley \cite{sibleyAsymptoticsYangMillsFlow2015} for general K\"ahler manifolds. In particular, for a slope semistable vector bundle \(\mcE\) with \(\gr(\mcE)\) locally free, it was proven that the Yang-Mills flow for connections converges to \(\gr(\mcE)\). The iterated HKKP filtration of \(\mcE\) is
\[0 = \mcS_0 \subset \mcS_1 \subset \cdots \subset \mcS_r = \mcE,\]
with \(\mcS_i/\mcS_{i-1}\) \emph{polystable} \cite{haidenIteratedLogarithmsGradient2018}. Thus,
\[\gr_{\mathrm{HKKP}}(\mcE) = \bigoplus_i \frac{\mcS_i}{\mcS_{i-1}}\]
is the same as \(\gr(\mcE)\), and one can choose the Jordan-H\"older filtration to be a refinement of the iterated HKKP filtration. Thus, we can view \Cref{thm:ym-intro} as a refinement of the convergence results of the Hermitian-Yang-Mills flow.

The convergence results for quiver representations were established by Harada-Wilkin \cite{haradaMorseTheoryMoment2011}. In \cite{hoskinsStratificationsAssociatedReductive2014}, Hoskins shows that the Morse theoretic stratification agrees with Hesselink's stratification in this setting.

Finally, in \cite{chenCalabiFlowGeodesic2014}, Chen-Sun study the asymptotics of the gradient flow of the norm squared of the moment map, using the {\L}ojasiewicz inequality. Moreover, they apply the result to the Calabi flow, by reducing to a finite-dimensional slice, proving that the Calabi flow is asymptotic to a geodesic ray. In turn, this result is used to prove the uniqueness of cscK degenerations. The finite-dimensional results from \cite{chenCalabiFlowGeodesic2014} can also be found in \cite{georgoulasMomentweightInequalityHilbertMumford2021}. Many of the analytic results used in this paper, and the reduction to a finite-dimensional slice, are based on the techniques developed by Chen-Sun.

\begin{remark}
    Many of the results used in this paper from Ib\'a\~nez N\'u\~nez's DPhil thesis \cite{ibaneznunezRefinedHarderNarasimhanFiltrations2024} can be found in the preprint \cite{ibaneznunezRefinedHarderNarasimhanFiltrations2023}, which contains the first half of \cite{ibaneznunezRefinedHarderNarasimhanFiltrations2024}. For consistency, throughout all references are to \cite{ibaneznunezRefinedHarderNarasimhanFiltrations2024}.
\end{remark}

\subsection{Outlook}

Many geometric PDEs can be written as moment maps, and we expect the general strategy of reducing to a finite-dimensional slice should apply, proving analogous results for the asymptotics of the flow. We briefly discuss some examples, though the conjectures as stated may be too na\"ive.

The first example is the special Lagrangian equation. It is explained by Thomas \cite{thomasMomentMapsMonodromy2001} that it arises as a moment map, and by Thomas-Yau \cite{thomasSpecialLagrangiansStable2002} that for a specific choice of inner product on the Lie algebra, the gradient flow is Lagrangian mean curvature flow. We also refer to \cite[Section 38.4]{horiMirrorSymmetry2003}. Motivated by mirror symmetry, Solomon constructs a non-positively curved space, along with a geodesically convex functional, whose critical points are the special Lagrangian submanifolds \cite{solomonCalabiHomomorphismLagrangian2013,solomonCurvatureSpacePositive2014}. The following is a variant of a conjecture by Haiden-Katzarkov-Kontsevich-Pandit.

\begin{conjecture}
    \label{conj:lagrangian}
    Let \((X, \omega, \Omega)\) be a compact Calabi-Yau manifold, \(L_1, \dots, L_r\) special Lagrangian submanifolds in \(X\), with the same phase, and intersecting transversely. Let \(L = L_1 \# \cdots \# L_r\) denote the result of performing Lagrangian surgery near the intersection points. Then the asymptotics of the Lagrangian mean curvature flow with initial condition \(L\) are given by sums of iterated logarithms.
\end{conjecture}

Our conjecture is closely related to the Thomas-Yau conjecture, for which we refer to the articles of Joyce \cite{joyceConjecturesBridgelandStability2015} and Li \cite{liThomasYauConjectureHolomorphic2025}, and references therein. The geometric setting in \Cref{conj:lagrangian} is intended to be the semistable case, under an appropriate stability condition. We note that when \(r = 1\), Lagrangian mean curvature flow near a special Lagrangian converges exponentially fast \cite{liConvergenceLagrangianMean2012}. An example of the above set-up with \(r = 2\) was constructed by Su-Tsai-Wood \cite{suInfinitetimeSingularitiesLagrangian2024}, where they prove estimates on the asymptotic behaviour.

In \cite[Conjecture 5.1]{haidenIteratedLogarithmsGradient2018}, Haiden-Katzarkov-Kontsevich-Pandit make a conjecture about a modified curve shortening flow, motivated by stability in partially wrapped Fukaya categories, as an A-side analogue of their result on the Yang-Mills flow on Riemann surfaces.

On the B-side, for the deformed Hermitian-Yang-Mills equation, Collins-Yau \cite{collinsMomentMapsNonlinear2021} show that it can be interpreted as a moment map, as a mirror of Thomas and Solomon's results on the A-side. We also refer to the survey by Collins-Shi \cite{collinsStabilityDeformedHermitianYangMills2022} and references therein. In \cite{takahashiTanconcavityPropertyLagrangian2020}, Takahashi introduces the \emph{tangent Lagrangian Phase flow}, which can be interpreted as the gradient flow of the Kempf-Ness functional. We expect an analogue of \Cref{conj:lagrangian} should hold in this setting. Many other flows have been introduced in this setting, such as the \emph{line bundle mean curvature flow} by Jacob-Yau \cite{jacobSpecialLagrangianType2017}, and the \emph{cotangent flow} by Fu-Yau-Zhang \cite{fuNewFlowSolving2024}. It would be an interesting question to understand the asymptotics of these flows, and also in the more general setting of the \(Z\)-critical equation, introduced by Dervan-McCarthy-Sektnan \cite{dervanZcriticalConnectionsBridgeland2024}, which is related to Bridgeland stability conditions on \(D^b\Coh(X)\).

Finally, the Calabi flow is a fourth-order fully non-linear PDE, which poses many analytic difficulties. The K\"ahler-Ricci flow is a second-order PDE, and we conjecture that the asymptotics of the K\"ahler-Ricci flow can also be computed from the iterated balancing filtration.

\begin{conjecture}
    Let \(X\) be a K-semistable Fano manifold. Then an analogous result holds for the K\"ahler-Ricci flow starting from \(\omega\).
\end{conjecture}

One key obstruction is that the K\"ahler-Ricci flow is not given by the gradient flow of the norm squared of a moment map, and it may be the case that the logarithm is replaced by a different function. However, by a result of Donaldson \cite{donaldsonDingFunctionalBerndtsson2017}, it is the gradient flow of a convex function of a moment map. As a finite-dimensional analogue, we conjecture that a similar result to \Cref{thm:main-intro} should hold for the gradient flow of a convex function of the moment map, as studied by Lee-Sturm-Wang \cite{leeMomentMapConvex2025}.

\subsection{Organisation}

First, in \Cref{sec:lojasiewicz}, we recall the {\L}ojasiewicz inequality for real-analytic functions, and its applications to gradient flows. Next, in \Cref{sec:moment-maps}, we recall the moment map geometry from \cite{georgoulasMomentweightInequalityHilbertMumford2021} required in this paper. In \Cref{sec:normal-forms}, we study the reduction to a standard form, following \cite{chenCalabiFlowGeodesic2014}. In \Cref{sec:finite-dimensional}, we prove \Cref{thm:main-intro,thm:vector-space-intro,thm:unstable-intro}, and in \Cref{sec:iterated-balancing-filtration}, we relate the \(v_i\) to the iterated balancing filtration of Ib\'a\~nez N\'u\~nez. Finally, in \Cref{sec:quiver-representations}, \Cref{sec:ym} and \Cref{sec:calabi-flow}, we apply our main result to quiver representations, Hermitian-Yang-Mills flow and the Calabi flow respectively.

\subsection{Acknowledgements}

I would like to thank my PhD supervisor Ruadha\'i Dervan for suggesting this problem, constant encouragement and many helpful discussions. I would also like to thank Fabian Haiden, Jacopo Stoppa and Peter Topping for helpful discussions on this work, and Maxwell Stolarski for a helpful discussion on Lagrangian mean curvature flow. I would like to thank Andr\'es Ib\'a\~nez N\'u\~nez for discussions on \cite{ibaneznunezRefinedHarderNarasimhanFiltrations2024}, and extensive comments and corrections on an earlier draft of this paper.

I was supported by a PhD studentship associated to Ruadha\'i Dervan's Royal Society University Research Fellowship (URF{\textbackslash}R1{\textbackslash}201041).

\subsection{Notation}

Given two functions \(f, g \colon \R_{>0} \to \R\), we write \(f \lesssim g\) if there exists a constant \(C > 0\) such that \(f(t) \le Cg(t)\) for all \(t\) sufficiently large. We write \(f \asymp g\) if both \(f \lesssim g\) and \(g \lesssim f\). For a parameter \(t\), we write \(O_{t \to \infty}(1)\) to denote a term which is bounded as \(t \to \infty\).

\section{{\L}ojasiewicz inequality}

\label{sec:lojasiewicz}

One important tool in the study of gradient flows is the {\L}ojasiewicz inequality for real-analytic functions \cite{lojasiewiczProprieteTopologiqueSousensembles1963}, which we now recall. The following statements are from Chen-Sun \cite{chenCalabiFlowGeodesic2014}.

\begin{theorem}
    [{\L}ojasiewicz inequality] Suppose \(f\) is a real-analytic function defined on a neighbourhood \(U\) of \(0 \in \R^n\). Suppose \(f(0) = 0\) and \(\grad f(0) = 0\). Then there exists \(C > 0\) and \(\alpha \in [1/2, 1)\), and an open neighbourhood \(V\) of \(0\) contained in \(U\), such that for all \(x \in V\),
    \[\abs{\grad f(x)} \ge C\abs{f(x)}^\alpha.\]
\end{theorem}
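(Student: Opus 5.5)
The plan is to prove the inequality with the curve selection lemma and Puiseux expansions of subanalytic functions, following the classical route of {\L}ojasiewicz. The idea is to control \(\abs{\grad f}\) from below along the level sets of \(f\), and to reduce the worst case to a single real-analytic curve through \(0\).

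First we show that the critical set does not meet nonzero levels near \(0\). Let \(Z = \{x \in U : \grad f(x) = 0\}\), a real-analytic set containing \(0\). Suppose there were critical points \(x_k \to 0\) with \(f(x_k) \neq 0\). The set \(Z \cap \{f \neq 0\}\) is semianalytic and has \(0\) in its closure. By the curve selection lemma there is a real-analytic curve \(\gamma \colon [0, \epsilon) \to U\) with \(\gamma(0) = 0\) and \(\gamma(\tau) \in Z \cap \{f \neq 0\}\) for \(\tau > 0\). But \(\tfrac{d}{d\tau} f(\gamma(\tau)) = \langle \grad f(\gamma(\tau)), \gamma'(\tau)\rangle = 0\), so \(f \circ \gamma \equiv f(0) = 0\), a contradiction. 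Hence on a small closed ball \(\overline{B}_\epsilon\), \(\grad f(x) = 0\) forces \(f(x) = 0\), and the inequality holds trivially wherever \(f = 0\).

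Next we produce the exponent. Define, for small \(s > 0\),
\[g(s) = \inf\{\abs{\grad f(x)} : x \in \overline{B}_\epsilon,\ \abs{f(x)} = s\}.\]
This is a subanalytic function of one variable, and by the first step it is positive for small \(s > 0\). A one-variable subanalytic function has a Puiseux expansion, so \(g(s) = c s^{\theta}(1 + o(1))\) as \(s \to 0^+\) for some \(c > 0\) and rational \(\theta\). It then suffices to show \(\theta < 1\).

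This bound is the main obstacle. Since \(\overline{B}_\epsilon\) is compact, the infimum defining \(g\) is attained. Using the curve selection lemma again, now on the subanalytic set \(\{(x, s) : \abs{f(x)} = s,\ \abs{\grad f(x)} = g(s)\}\), we obtain a real-analytic curve \(\gamma(\tau)\) with \(\abs{\grad f(\gamma(\tau))} = g(\abs{f(\gamma(\tau))})\). If such curves only accumulate at points of \(\partial B_\epsilon\) rather than at \(0\), we first shrink the ball; this is where the argument needs the most care. Assume \(\gamma(0) = 0\). Then \(f(\gamma(\tau)) = a\tau^p + \cdots\) with \(a \neq 0\) and \(p \ge 1\), while \(\abs{\gamma'}\) stays bounded. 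From
\[\abs{pa\tau^{p-1}}(1 + o(1)) = \abs{\tfrac{d}{d\tau}f(\gamma(\tau))} \le \abs{\grad f(\gamma(\tau))}\,\abs{\gamma'(\tau)}\]
we get \(g(\abs{f(\gamma(\tau))}) \gtrsim \tau^{p-1} \asymp \abs{f(\gamma(\tau))}^{1 - 1/p}\). Therefore \(\theta \le 1 - 1/p < 1\). Taking \(V\) a smaller ball and any \(\alpha \ge \theta\) with \(\alpha < 1\) gives \(\abs{\grad f} \ge C\abs{f}^\alpha\) on \(V\).

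Finally, since \(\abs{f} < 1\) on \(V\) after shrinking, increasing \(\alpha\) only weakens the inequality. So we may replace \(\alpha\) by \(\max(\alpha, 1/2)\) and obtain \(\alpha \in [1/2, 1)\).
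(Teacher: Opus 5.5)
The paper does not prove this statement. It quotes it as a black box from {\L}ojasiewicz, in the form used by Chen--Sun, and later uses only the inequality itself and the decay estimates in the corollary that follows. Your argument is the standard subanalytic proof (curve selection, subanalyticity of the extremal function \(g\), one-variable Puiseux expansion), and it is correct. Compared with citing, it shows where \(\alpha < 1\) comes from: the finite order \(p\) of vanishing of \(f\) along an extremal analytic arc. The price is that it relies on three nontrivial inputs: the curve selection lemma for subanalytic sets, the subanalyticity of \(g\) (which needs Gabrielov's complement theorem, or the fact that planar subanalytic sets are semianalytic), and Puiseux expansions of one-variable subanalytic functions.

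The one step to rewrite is the reduction to \(\gamma(0) = 0\). Shrinking the ball does not achieve it. Take \(f(x,y) = y^2(1-x^2)\). On \(\{f = s\}\) one has \(\abs{\grad f}^2 = 4s(1-x^2) + 4s^2x^2(1-x^2)^{-2}\), which for small \(s\) is decreasing in \(\abs{x}\). So for every \(r \in (0,1)\), the minimisers over \(\overline{B}_r\) lie on \(\partial B_r\) and tend to \((\pm r, 0)\). The origin is never in the closure of your curve-selection set.

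However, the assumption \(\gamma(0) = 0\) is never actually used. Take minimisers \(x_k\) realising \(g(s_k)\) with \(s_k \to 0\), and let \(x^* \in \overline{B}_\epsilon\) be a limit point. Apply curve selection to \(\{(x,s) : x \in \overline{B}_\epsilon,\ s > 0,\ \abs{f(x)} = s,\ \abs{\grad f(x)} = g(s)\}\) at \((x^*, 0)\). Your chain-rule estimate needs only three facts:
\begin{itemize}
\item \(f(\gamma(0)) = 0\), which holds because \(s(\tau) \to 0\);
\item \(\gamma'\) is bounded, which holds for any analytic arc on \([0, \eta)\);
\item \(f\) is analytic near \(x^*\), which holds if you fix \(\overline{B}_\epsilon \subset U\) from the start.
\end{itemize}
So \(\theta \le 1 - 1/p < 1\) follows verbatim, wherever \(x^*\) lies. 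This is the reason to define \(g\) over the whole ball: it bounds \(\abs{\grad f}\) from below in terms of \(\abs{f}\) uniformly, including near other critical points on \(\{f = 0\}\). That uniformity is exactly what the conclusion on a neighbourhood \(V\) requires.
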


One can apply the {\L}ojasiewicz inequality to study the asymptotics of gradient flows.

\begin{corollary}
    \label{cor:lojasiewicz}
    Suppose \(f\) is a nonnegative real-analytic function defined in a neighbourhood \(U\) of \(0 \in \R^n\), with \(f(0) = 0\), \(\grad f(0) = 0\) and {\L}ojasiewicz exponent \(\alpha \in (1/2, 1)\). Then there exists a neighbourhood \(V \subseteq U\) of \(0\), such that for all \(x_0 \in V\), the gradient flow
    \[\begin{cases}
        \dv{x}{t} &= -\grad f(x(t)) \\
        x(0) &= x_0
    \end{cases}\]
    converges to a limit \(x_\infty \in U\), with \(f(x_\infty) = 0\). Moreover, we have the following estimates.
    \begin{enumerate}[(i)]
        \item \(f(x(t)) \lesssim t^{-1/(2\alpha - 1)}\),
        \item \(d(x(t), x_\infty) \lesssim t^{-(1 - \alpha)/(2\alpha - 1)}.\)
    \end{enumerate}
\end{corollary}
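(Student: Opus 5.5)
The plan is the standard Łojasiewicz argument, run with the quantity \(f(x(t))\) along the flow. First choose a neighbourhood \(W\subseteq U\) of \(0\) on which the Łojasiewicz inequality \(\abs{\grad f}\ge C f^\alpha\) holds with the given exponent \(\alpha\in(1/2,1)\). Here \(f\ge 0\), so \(\abs{f}=f\). Along a flow line we have
\[\dv{t} f(x(t)) = -\abs{\grad f(x(t))}^2 = -\abs{\dot x(t)}^2 .\]
The key trick is to control arclength through a power of \(f\). As long as \(x(t)\in W\) and \(f(x(t))>0\),
\[-\dv{t} f(x(t))^{1-\alpha} = (1-\alpha) f^{-\alpha}\abs{\grad f}^2 \ge (1-\alpha)C\abs{\grad f} = (1-\alpha)C\abs{\dot x}.\]
If \(f(x(t_0))=0\) at some time, then \(\grad f(x(t_0))=0\), because \(0\) is a minimum of \(f\). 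The flow is then stationary and there is nothing to prove.

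\textbf{Trapping step.} Integrating the inequality above gives
\[\int_0^t\abs{\dot x}\,ds \le \frac{f(x_0)^{1-\alpha}}{(1-\alpha)C}.\]
Fix a ball \(B_\rho(0)\subseteq W\). Since \(f\) is continuous and \(f(0)=0\), we can choose \(V\) so small that for \(x_0\in V\) we have \(\abs{x_0}<\rho/2\) and \(f(x_0)^{1-\alpha}<(1-\alpha)C\rho/2\). A continuity argument on the first exit time then shows the flow never leaves \(B_\rho(0)\). Indeed, the total length travelled is less than \(\rho/2\). This also gives existence for all time, since the trajectory stays in a compact set where the vector field is smooth.

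\textbf{Convergence.} The trajectory has finite length, so \(x(t)\) is Cauchy and converges to some \(x_\infty\in\overline{B_\rho}\subseteq U\). Since \(f(x(t))\) is nonincreasing, it has a limit \(L\ge 0\). Suppose \(L>0\). Then \(\abs{\grad f}\ge CL^\alpha\) along the flow, so \(\dv{t}f\le -C^2L^{2\alpha}\), and \(f\) would become negative, a contradiction. Hence \(f(x_\infty)=0\).

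\textbf{Estimates.} For (i), write \(u(t)=f(x(t))\). Then \(u'\le -C^2u^{2\alpha}\). Since \(2\alpha>1\), integrating \(\dv{t}u^{1-2\alpha}\ge(2\alpha-1)C^2\) gives \(u(t)^{1-2\alpha}\ge(2\alpha-1)C^2t\), so \(u(t)\lesssim t^{-1/(2\alpha-1)}\). For (ii), integrate the arclength inequality from \(t\) to \(\infty\):
\[d(x(t),x_\infty)\le\int_t^\infty\abs{\dot x}\,ds\le \frac{u(t)^{1-\alpha}}{(1-\alpha)C}\lesssim t^{-(1-\alpha)/(2\alpha-1)}.\]

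The main obstacle is the trapping step. One must set up the continuity argument so that the Łojasiewicz inequality is used only while the trajectory lies in \(W\), and the smallness of \(V\) must be chosen from both \(\abs{x_0}\) and \(f(x_0)\). Everything else is a direct ODE comparison.
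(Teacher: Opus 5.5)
Your proof is correct. The paper gives no argument of its own here and simply quotes the statement from Chen--Sun; your argument is exactly the standard {\L}ojasiewicz proof behind that citation: you control arclength by $-\frac{d}{dt} f(x(t))^{1-\alpha}$, trap the trajectory by taking $V$ small in both $\abs{x_0}$ and $f(x_0)$, and then use the comparison $u' \le -C^2 u^{2\alpha}$. The only cosmetic point is that the trajectory stays in the closed ball of radius $\abs{x_0} + f(x_0)^{1-\alpha}/((1-\alpha)C) < \rho$, so $x_\infty \in B_\rho(0) \subseteq U$ without needing $\overline{B_\rho(0)} \subseteq U$.
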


\section{Moment maps}

\label{sec:moment-maps}

In this section, we recall the moment map geometry required for this paper. All of the results can be found in \cite{georgoulasMomentweightInequalityHilbertMumford2021}, which builds on the results of \cite{chenCalabiFlowGeodesic2014} and more classical work \cite{kirwanCohomologyQuotientsSymplectic1984,nessStratificationNullCone1984,mumfordGeometricInvariantTheory1994}. We follow the same conventions as \cite{georgoulasMomentweightInequalityHilbertMumford2021}.

Throughout, suppose \(X \subseteq \P^n\) is a smooth projective variety, \(G\) a reductive complex Lie group acting linearly on \(\P^n\), preserving \(X\). Let \(\omega\) denote the restriction of the Fubini-Study form to \(X\). Suppose \(K\) is a maximal compact subgroup of \(G\), acting by isometries on \(\P^n\). Finally, fix an \(\Ad\)-invariant inner product on \(\mfk = \Lie(K)\), so that we may identify \(\mfk \cong \mfk^*\). We assume that the \(G\) action on \(X\) is on the \emph{left}, and the space \(G/K = \{gK \mid g \in G\}\) denotes the space of \emph{left} cosets.

\begin{remark}
    Most of the results in this section hold for general compact K\"ahler manifolds, though we will not need them in full generality. Other than in \Cref{sec:normal-forms}, we will only apply the results in the case \(X = \P^n\).
    
    Moreover, the compactness assumption is only needed to show various limits exist. We refer to Harada-Wilkin \cite{haradaMorseTheoryMoment2011}, and also \Cref{sec:normal-forms}, for a discussion on moment map flow on vector spaces.
\end{remark}

For \(\xi \in \mfk\), we define
\[v_\xi(x) = \dv{t}\bigg\vert_{t=0}\exp(t\xi) \cdot x\]
for the corresponding vector field on \(X\). We write
\[L_x \colon \mfk \to T_xX\]
for the infinitesimal action of \(\mfk\) at \(x\). This extends naturally to a map \(L_x^c \colon \mfg \to T_xX\) by complex linearity.

\begin{definition}
    A \emph{moment map} for the \(K\) action on \((X, \omega)\) is a \(K\)-equivariant map \(\mu \colon X \to \mfk\), such that for all \(\xi \in \mfk\), we have
    \[\dd\langle \mu, \xi \rangle = \iota_{v_\xi}\omega.\]
\end{definition}

As we have assumed that \(X\) is a projective variety and that the action is linear, a moment map always exists, and is given by the formula
\[\langle \mu(x), \xi \rangle = \frac{\langle v, i \xi \cdot v\rangle}{\langle v, v \rangle},\]
where \(v \in \C^{n+1}\) is a lift of \(x\).

\subsection{Norm squared of the moment map}

We let \(f \colon X \to \R\) be the function
\[f(x) = \frac{1}{2}\abs{\mu(x)}^2.\]

\begin{lemma}
    [{\cite[Lemma 3.1]{georgoulasMomentweightInequalityHilbertMumford2021}}]
    The gradient of \(f\) is given by
    \[\grad f(x) = JL_x\mu(x).\]
\end{lemma}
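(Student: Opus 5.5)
The plan is a direct computation of \(\dd f\), using the defining property of the moment map, and then converting the resulting one-form into a vector via the Kähler metric \(g = \omega(\cdot, J\cdot)\), following the conventions of \cite{georgoulasMomentweightInequalityHilbertMumford2021}.

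First, fix \(x \in X\) and \(v \in T_xX\). Since \(f = \frac12\langle \mu, \mu\rangle\), the chain rule gives
\[\dd f_x(v) = \langle \dd\mu_x(v), \mu(x)\rangle.\]
Freeze \(\xi = \mu(x) \in \mfk\). The moment map identity \(\dd\langle \mu, \xi\rangle = \iota_{v_\xi}\omega\) then yields
\[\dd f_x(v) = \dd\langle \mu, \xi\rangle_x(v) = \omega(v_\xi(x), v) = \omega(L_x\mu(x), v).\]
Note that \(\xi\) is held constant here, so only the derivative of the first factor of \(\mu\) appears. This is legitimate because the pairing is symmetric and the factor \(\frac12\) absorbs the duplicate term.

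Second, rewrite \(\omega(L_x\mu(x), v)\) as \(g(w, v)\) for a suitable \(w\). Using \(g(a,b) = \omega(a, Jb)\) and the \(J\)-invariance of \(\omega\), we get
\[g(JL_x\mu(x), v) = \omega(JL_x\mu(x), Jv) = \omega(L_x\mu(x), v).\]
Hence \(\dd f_x(v) = g(JL_x\mu(x), v)\) for all \(v\), which means \(\grad f(x) = JL_x\mu(x)\).

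The main obstacle is the sign and ordering conventions. These include the convention for \(g\) in terms of \(\omega\) and \(J\), and whether the moment map condition reads \(\iota_{v_\xi}\omega\) or its negative. With the conventions fixed above (\(\dd\langle\mu,\xi\rangle = \iota_{v_\xi}\omega\), \(g = \omega(\cdot, J\cdot)\)), the identity comes out as stated. I would confirm this against the explicit formula \(\langle \mu(x), \xi\rangle = \langle v, i\xi v\rangle/\langle v, v\rangle\) on \(\P^n\), checking that the Fubini–Study conventions are consistent.
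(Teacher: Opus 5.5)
Your proof is correct: with the conventions the paper adopts from Georgoulas--Robbin--Salamon, \(\dd\langle\mu,\xi\rangle = \iota_{v_\xi}\omega\) and \(g = \omega(\cdot, J\cdot)\), you get \(\dd f_x(v) = \omega(L_x\mu(x), v) = g(JL_x\mu(x), v)\), and this is exactly the one-line argument in the cited reference (the paper gives no proof beyond the citation). The cross-check against the Fubini--Study formula is unnecessary, since the lemma uses only the defining identity of the moment map and the metric convention.
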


For \(x_0 \in X\), we let \(x(t)\) denote the solution to the ODE
\[\begin{cases}
    \dv{x}{t} &= -JL_x\mu(x) \\
    x(0) &= x_0.
\end{cases}\]

\begin{lemma}
    [{\cite[Lemma 3.2]{georgoulasMomentweightInequalityHilbertMumford2021}}]
    \label{lem:group-flow}
    Let \(x_0 \in X\). Let \(g_{x_0}(t)\) denote the unique solution to the ODE
    \[\begin{cases}
        g_{x_0}^{-1}\dv{g_{x_0}}{t} &= i\mu(x(t)) \\
        g_{x_0}(0) &= 1.
    \end{cases}\]
    Then
    \[x(t) = g_{x_0}(t)^{-1}\cdot x_0.\]
\end{lemma}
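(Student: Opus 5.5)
The plan is to check directly that \(y(t) := g_{x_0}(t)^{-1}\cdot x_0\) solves the same ODE as \(x(t)\) with the same initial condition, and then conclude by uniqueness. First, the group ODE \(g^{-1}\dot g = i\mu(x(t))\) has a unique global solution. Indeed, \(t \mapsto i\mu(x(t))\) is a smooth curve in \(\mfg\), defined for all \(t\) because \(X\) is compact and so \(x(t)\) exists for all time. A linear (left-invariant) ODE on a Lie group with a continuous coefficient always has a global solution.

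Next, differentiate \(y(t) = g(t)^{-1}\cdot x_0\). Using \(\frac{d}{dt}(g^{-1}) = -g^{-1}\dot g\, g^{-1}\), we get
\[\dv{y}{t} = \dv{s}\Big\vert_{s=0} \exp\bigl(-s\, g^{-1}\dot g\bigr)\, g^{-1}\cdot x_0 = -L^c_{y}\bigl(g^{-1}\dot g\bigr) = -L^c_{y}\bigl(i\mu(x(t))\bigr).\]
Since the action is holomorphic, \(L^c_y(i\xi) = JL_y\xi\). Hence
\[\dv{y}{t} = -JL_{y(t)}\mu(x(t)).\]

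This is not yet the gradient flow equation for \(y\), because the moment map is evaluated at \(x(t)\) rather than at \(y(t)\). So I would argue differently. Consider the ODE for the pair \((g, x)\), or equivalently observe that \(x(t)\) solves the non-autonomous ODE
\[\dot z = -JL_z\,\mu(x(t))\]
with \(z(0) = x_0\), where \(x(t)\) is now treated as a fixed known curve. The previous computation shows that \(y(t)\) also solves this ODE with \(y(0) = x_0\). The vector field \(z \mapsto -JL_z\mu(x(t))\) is smooth in \(z\) and continuous in \(t\), so Picard--Lindel\"of uniqueness gives \(y(t) = x(t)\) for all \(t\).

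The only subtle point is the sign and left/right convention in the derivative of \(g^{-1}\cdot x_0\). One has to confirm that \(g^{-1}\dot g\) (as opposed to \(\dot g g^{-1}\)) is the correct generator appearing when the action is on the left. This follows from \(g(t+s)^{-1} = (1 + s\,g^{-1}\dot g)^{-1} g(t)^{-1} + o(s)\), together with the convention \(L_x\xi = \dv{t}\exp(t\xi)\cdot x\). With that checked, the identity \(L^c_y(i\xi) = JL_y\xi\) (complex linearity of the infinitesimal holomorphic action) completes the argument.
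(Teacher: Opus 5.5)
Your proof is correct and is essentially the standard argument of Georgoulas--Robbin--Salamon; the paper gives no proof of its own and only cites theirs. You differentiate \(y(t)=g_{x_0}(t)^{-1}\cdot x_0\) to get \(\dot y = -L^c_y(g^{-1}\dot g) = -JL_y\mu(x(t))\), note that \(x(t)\) solves the same non-autonomous ODE \(\dot z = -JL_z\mu(x(t))\) with the same initial value, and conclude by uniqueness; your check that \(g^{-1}\dot g\), not \(\dot g g^{-1}\), is the right generator for a left action is also correct.
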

As a corollary, this implies that the flow \(x(t)\) stays in the same \(G\)-orbit.

\subsection{Kempf-Ness function}

Associated to each \(x \in X\), we can define a function \(\Phi_x \colon G/K \to \R\).

\begin{proposition}
    [{\cite[Theorem 4.1]{georgoulasMomentweightInequalityHilbertMumford2021}}]
    Fix an element \(x \in X\). 
    \begin{enumerate}[(i)]
        \item There exists a unique function \(\Phi_x \colon G \to \R\), such that
        \[\dd\Phi_x(g)\zeta = -\langle \mu(g^{-1}x), \Im(g^{-1}\zeta)\rangle\]
        for all \(\zeta \in T_gG\), with \(\Phi_x(u) = 0\) for all \(u \in K\).
        \item Define a map \(a_x \colon G \to X\) by \(a_x(g) = g^{-1}x\). Then for all \(g \in G\),
        \[\dd a_x(g)\left(\grad\Phi_x(g)\right) = \grad f(a_x(g)).\]
    \end{enumerate}
\end{proposition}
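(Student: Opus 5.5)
We will prove part (i) by writing down an explicit formula and checking it, and then obtain part (ii) by differentiating that formula. Choose a lift \(v \in \C^{n+1}\setminus\{0\}\) of \(x\). Since \(G\) acts linearly on \(\C^{n+1}\) and \(K\) acts unitarily, we can define
\[\Phi_x(g) = \log\frac{\abs{g^{-1}v}}{\abs{v}}.\]
This is the log-norm function, suitably normalised. It does not depend on the choice of lift, because rescaling \(v\) cancels. For \(u \in K\) we have \(\abs{u^{-1}v} = \abs{v}\), so \(\Phi_x(u) = 0\). It is also right \(K\)-invariant, since \(\abs{(gu)^{-1}v} = \abs{u^{-1}g^{-1}v} = \abs{g^{-1}v}\). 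Hence it descends to \(G/K\).

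For the derivative, take \(\zeta = g\eta\) with \(\eta \in \mfg\), so that \(g^{-1}\zeta = \eta\), and write \(\eta = \xi_1 + i\xi_2\) with \(\xi_j \in \mfk\), so \(\Im(g^{-1}\zeta) = \xi_2\). Put \(w = g^{-1}v\). Then
\[\dv{s}\Big|_{s=0} (g e^{s\eta})^{-1}v = -\eta w,\]
and therefore
\[\dd\Phi_x(g)\zeta = -\frac{\Re\langle w, \eta w\rangle}{\abs{w}^2}.\]
Since \(\xi_1\) acts skew-hermitianly, \(\Re\langle w, \xi_1 w\rangle = 0\). The remaining term, \(\Re\langle w, i\xi_2 w\rangle\), is real, because \(i\xi_2\) is hermitian. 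By the stated formula for \(\mu\), this gives \(\langle \mu(g^{-1}x), \xi_2\rangle\), as required.

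There is one point to watch here: sign and convention matching. Whether \(\langle\cdot,\cdot\rangle\) is conjugate-linear in the first or the second slot affects the sign of the \(i\xi_2\) term. I would fix the convention so that the given moment map formula is real and consistent, and then recheck the sign.

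Uniqueness in part (i) is straightforward. The condition prescribes \(\dd\Phi_x\) everywhere on \(G\), and \(G\) is connected, being the complexification of a connected \(K\). So any two solutions differ by a constant, and the normalisation \(\Phi_x(u)=0\) on \(K\) forces that constant to be zero.

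For part (ii), I would equip \(G\) with the left-invariant metric obtained from the inner product on \(\mfk\), extended to \(\mfg = \mfk \oplus i\mfk\) so that the two summands are orthogonal. With this metric, part (i) shows directly that \(\grad\Phi_x(g) = -g\,(i\mu(g^{-1}x))\), since \(\dd\Phi_x(g)\) only sees the \(i\mfk\) component \(\xi_2\). Next we compute \(\dd a_x(g)(g\eta)\). Differentiating \((ge^{s\eta})^{-1}x = e^{-s\eta}g^{-1}x\) gives \(-L^c_{g^{-1}x}\eta\). Setting \(\eta = -i\mu(y)\) with \(y = g^{-1}x\), we get
\[\dd a_x(g)\bigl(\grad\Phi_x(g)\bigr) = L^c_y(i\mu(y)) = JL_y\mu(y),\]
using complex linearity \(L^c_y(i\xi) = JL_y\xi\). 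By the earlier lemma, \(JL_y\mu(y) = \grad f(y)\), which is the claim.

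The main obstacle I expect is bookkeeping rather than anything conceptual. The key places are:
\begin{itemize}
\item the choice of left-invariant metric on \(G\), so that the gradient is the one intended;
\item the left-action and inverse conventions, so that the signs of \(\pm i\mu\) are consistent across \Cref{lem:group-flow}, the definition of \(\Phi_x\), and the gradient formula.
\end{itemize}
I would check the whole computation on the simplest case, \(\C^*\) acting on \(\P^1\), to confirm all the signs.
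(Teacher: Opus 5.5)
Your proof is correct, but part (i) follows a different route from the source. The paper gives no proof of this proposition and simply quotes \cite{georgoulasMomentweightInequalityHilbertMumford2021}. That reference works on an arbitrary closed K\"ahler manifold, where no log-norm formula is available. As I understand the standard argument there, one regards the right-hand side of (i) as a \(1\)-form \(\alpha_x\) on \(G\). One then checks that \(\alpha_x\) is closed; this is a computation on left-invariant vector fields using the moment map equation, equivariance of \(\mu\), and \(L^c_y(i\xi) = JL_y\xi\). Finally one observes that \(\alpha_x\) vanishes on \(K\), and integrates it using that \(G \cong K \times \mfk\) retracts onto \(K\).

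Your explicit choice \(\Phi_x(g) = \log(\abs{g^{-1}v}/\abs{v})\) replaces all of this with a one-line differentiation. It gives right \(K\)-invariance and independence of the lift immediately. It also shows convexity along \(t \mapsto g\exp(it\xi)K\), since there \(\Phi_x\) is the logarithm of a sum of exponentials. The price is that it only covers linear actions on \(\P^n\) with exactly the displayed normalisation of \(\mu\); another normalisation of \(\omega_{\FS}\) rescales \(\Phi_x\) by the corresponding constant. The closed-form argument, by contrast, covers the general K\"ahler setting alluded to in the paper's remarks.

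Your part (ii) is the standard computation. Use the left-invariant metric with \(\mfk \perp i\mfk\). Then \(\grad\Phi_x(g) = -g\, i\mu(g^{-1}x)\) and \(\dd a_x(g)(g\eta) = -L^c_{g^{-1}x}\eta\). This is consistent with the flow \(g^{-1}\dot g = i\mu(g^{-1}x)\) of \Cref{lem:group-flow}.

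Two small points.

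First, the sign check you flag in (i) is automatic. In either convention \(\frac{d}{ds}\abs{w}^2 = 2\Re\langle w, \dot w\rangle\). Moreover \(i\xi_2\) acts by a Hermitian operator, so \(\langle w, i\xi_2 w\rangle\) is real and independent of which slot is conjugate-linear. It is therefore literally the expression in the paper's formula for \(\mu\). Conventions genuinely enter only in (ii), through \(\grad f = JL_y\mu(y)\). That step needs the displayed formula to satisfy \(\dd\langle \mu, \xi\rangle = \iota_{v_\xi}\omega\), which the paper asserts.

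Second, uniqueness does not need \(G\) to be connected. By the polar decomposition \(G = K\exp(i\mfk)\), every component of \(G\) meets \(K\), and there the normalisation fixes the constant.
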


One corollary of the definition is that the function \(\Phi_x\) is \(K\)-invariant.

\begin{definition}
    The function \(\Phi_x \colon G/K \to \R\) is called the \emph{Kempf-Ness function} based at \(x\).
\end{definition}

\begin{remark}
    Often, \(\Phi_x\) is also referred to as the \emph{log-norm functional}, due to the expression for \(\Phi_x\) in the case of projective space.
\end{remark}

The main properties of the Kempf-Ness function which we need are the following.

\begin{proposition}
    [{Properties of the Kempf-Ness function, \cite[Theorem 4.3]{georgoulasMomentweightInequalityHilbertMumford2021}}]
    \leavevmode
    \begin{enumerate}[(i)]
        \item The function \(\Phi_x\) is convex along geodesics.
        \item Let \(\pi \colon G \to G/K\) denote the quotient map. Let \(g(t)\) be a curve in \(G\), and let \(\gamma(t) = \pi(g(t))\). Then \(\gamma\) is a flow line for \(-\grad\Phi_x\) if and only if
        \[\Im\left(g^{-1}\dv{g}{t}\right) = \mu(g^{-1}x).\]
    \end{enumerate}
\end{proposition}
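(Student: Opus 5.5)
The plan is to derive both parts directly from the defining formula for \(\dd\Phi_x\) in part (i) of the previous proposition, together with the standard description of the symmetric space \(G/K\). We use the left-invariant metric on \(G/K\) induced by the \(\Ad\)-invariant inner product on \(\mfk\). Its geodesics through \(gK\) are the curves \(t \mapsto g\exp(it\xi)K\) with \(\xi \in \mfk\). For a curve \(g(t)\) in \(G\), we identify the velocity of \(\pi(g(t))\) with \(\Im(g^{-1}\dot g) \in \mfk\), using \(\mfg = \mfk \oplus i\mfk\). This identification depends on the lift only through right multiplication by \(K\). Concretely, if \(g\) is replaced by \(gu(t)\) with \(u(t) \in K\), then \(\Im(g^{-1}\dot g)\) is replaced by \(\Ad_{u^{-1}}\Im(g^{-1}\dot g)\). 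Since \(\mu\) is \(K\)-equivariant, this is compatible with \(\mu((gu)^{-1}x) = \Ad_{u^{-1}}\mu(g^{-1}x)\). So the expressions below are well defined on \(G/K\).

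For (i), fix \(g \in G\) and \(\xi \in \mfk\), and set \(g(t) = g\exp(it\xi)\). Then \(g(t)^{-1}\dot g(t) = i\xi\), so \(\Im(g^{-1}\dot g) = \xi\). The formula for \(\dd\Phi_x\) gives
\[\dv{t}\Phi_x(g(t)) = -\langle \mu(y(t)), \xi\rangle, \qquad y(t) = \exp(-it\xi)\, g^{-1}x.\]
Differentiating once more, \(\dot y = -\dv{s}\big\vert_{s=0}\exp(is\xi)\cdot y = -Jv_\xi(y)\), since \(\mfg\) acts complex-linearly. By the moment map identity,
\[\dv[2]{t}\Phi_x(g(t)) = -\omega(v_\xi, -Jv_\xi) = \omega(v_\xi, Jv_\xi) = \abs{v_\xi(y(t))}^2 \ge 0.\]
The last equality uses \(g(\cdot,\cdot) = \omega(\cdot, J\cdot)\). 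Hence \(\Phi_x\) is convex along every geodesic. The only care needed is to fix sign conventions consistently with \cite{georgoulasMomentweightInequalityHilbertMumford2021}; the nonnegativity is ultimately the statement that the second derivative is a norm squared.

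For (ii), read off the gradient from the same formula. For a tangent vector at \(gK\) represented by \(\zeta\) with \(\Im(g^{-1}\zeta) = \eta\), we have \(\dd\Phi_x(\zeta) = -\langle \mu(g^{-1}x), \eta\rangle\). Under the metric above, this says \(\grad\Phi_x(gK)\) is the vector represented by \(-\mu(g^{-1}x)\), for example the velocity of \(g\exp(-it\mu(g^{-1}x))K\) at \(t = 0\). Hence \(\gamma = \pi\circ g\) satisfies \(\dot\gamma = -\grad\Phi_x(\gamma)\) if and only if \(\Im(g^{-1}\dot g) = \mu(g^{-1}x)\). By the remark on the choice of lift, this condition does not depend on the lift \(g\) of \(\gamma\).

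The main obstacle is justifying the identification of \(T_{gK}(G/K)\) with \(\mfk\) via \(\Im(g^{-1}\zeta)\) and checking that it is an isometry for the chosen metric; this is also what makes the geodesic description hold. Both facts are standard for the symmetric space of noncompact type \(G/K\). The computation in (i) is otherwise a one-line application of the moment map equation.
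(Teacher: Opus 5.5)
Your proof is correct. The paper gives no proof of its own and simply cites \cite[Theorem 4.3]{georgoulasMomentweightInequalityHilbertMumford2021}; your argument is essentially the standard one from that source (differentiating the defining formula for \(\dd\Phi_x\) twice along geodesics, specialised here to the lift \(g\exp(it\xi)\), and reading off \(\grad\Phi_x\) from the metric \(\abs{\Im(g^{-1}\zeta)}\)), and your sign conventions (\(\iota_{v_\xi}\omega = \dd\langle \mu, \xi\rangle\), \(L^c_y(i\xi) = JL_y\xi\), metric \(\omega(\cdot, J\cdot)\)) are consistent with the paper's, e.g.\ with \(\grad f = JL_x\mu\).
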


An immediate consequence of the above is the following.

\begin{corollary}
    For \(x \in X\), let \(g_x(t)\) be the flow in \(G\) defined in \Cref{lem:group-flow}. Let \(\gamma_x(t) = [g_x(t)] \in G/K\). Then \(\gamma_x\) is a flow line for \(-\grad \Phi_x\).
\end{corollary}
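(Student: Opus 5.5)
The statement to prove is the final corollary: with \(g_x(t)\) the curve in \(G\) from \Cref{lem:group-flow}, its projection \(\gamma_x(t) = [g_x(t)]\) is a flow line of \(-\grad\Phi_x\). I will verify the criterion in part (ii) of the properties of the Kempf-Ness function. That criterion says it suffices to check
\[\Im\left(g_x^{-1}\dv{g_x}{t}\right) = \mu(g_x^{-1}x)\]
along the whole curve.

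First, by \Cref{lem:group-flow}, \(g_x\) solves \(g_x^{-1}\dv{g_x}{t} = i\mu(x(t))\) with \(g_x(0) = 1\). Here \(x(t)\) is the flow of \(-\grad f\) starting at \(x\).

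Second, \(\mu(x(t)) \in \mfk\). Using the decomposition \(\mfg = \mfk \oplus i\mfk\), the imaginary part of \(i\mu(x(t))\) is exactly \(\mu(x(t))\). This holds with the convention that \(\Im \colon \mfg \to \mfk\) is projection to the \(i\mfk\) component followed by division by \(i\), which is the convention implicit in part (i) of the Kempf-Ness proposition.

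Third, \Cref{lem:group-flow} also gives \(x(t) = g_x(t)^{-1}\cdot x\), so \(\mu(x(t)) = \mu(g_x(t)^{-1}x)\). Combining the three steps yields the required identity, so part (ii) of the properties proposition shows that \(\gamma_x = \pi\circ g_x\) is a flow line for \(-\grad\Phi_x\).

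Two side remarks. The criterion as stated is for an arbitrary lift \(g(t)\) of \(\gamma\), so no issue about choosing the lift arises. The initial point is \(\gamma_x(0) = [1]\), which matches the flow used in the introduction.

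The only step needing any care is the sign and convention check in the second step: that \(\Im(i\xi) = \xi\) for \(\xi \in \mfk\), consistent with the pairing \(\langle\mu, \Im(g^{-1}\zeta)\rangle\) in the definition of \(\Phi_x\). If the conventions differed by a sign, the flow would be for \(+\grad\Phi_x\). So I would confirm the convention against the formula \(\dd\Phi_x(g)\zeta = -\langle \mu(g^{-1}x), \Im(g^{-1}\zeta)\rangle\): along \(g_x\) this gives \(\dv{t}\Phi_x(g_x(t)) = -\abs{\mu(x(t))}^2 \le 0\), which is consistent with a downward gradient flow.
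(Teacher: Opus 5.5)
Your proposal is correct and is exactly the argument the paper intends. The paper calls the corollary an immediate consequence of \Cref{lem:group-flow} together with part (ii) of the properties of the Kempf-Ness function, and that is precisely the combination you verify: \(\Im(i\mu(x(t))) = \mu(x(t))\) and \(x(t) = g_x(t)^{-1}\cdot x\).
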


\subsection{Stability}

Using the moment map, we can define a notion of stability.

\begin{definition}
    A point \(x \in X\) is called
    \begin{enumerate}[(i)]
        \item \emph{\(\mu\)-unstable} if \(\overline{G \cdot x} \cap \mu^{-1}(0) = \emptyset\).
        \item \emph{\(\mu\)-semistable} if \(\overline{G \cdot x} \cap \mu^{-1}(0) \neq \emptyset\).
        \item \emph{\(\mu\)-polystable} if \(G \cdot x \cap \mu^{-1}(0) \neq \emptyset\).
        \item \emph{\(\mu\)-stable} if \(x\) is \(\mu\)-polystable, and the stabiliser \(G_x\) of \(x\) is discrete.
    \end{enumerate}
\end{definition}

We write \(X^{\ss}, X^{\ps}\) and \(X^{\st}\) for the sets of \(\mu\)-semistable, \(\mu\)-polystable and \(\mu\)-stable points respectively. Moreover, one can define stability in terms of the flow.

\begin{theorem}
    [{\cite[Theorem 7.2]{georgoulasMomentweightInequalityHilbertMumford2021}}]
    Let \(x_0 \in X\), and let \(x(t)\) be the flow line for \(-\grad f\) starting at \(x_0\). Let \(x_\infty = \lim_{t \to \infty} x(t)\). Then
    \begin{enumerate}[(i)]
        \item \(x_0\) is \(\mu\)-semistable if and only if \(\mu(x_\infty) = 0\),
        \item \(x_0\) is \(\mu\)-polystable if and only if \(\mu(x_\infty) = 0\) and \(x_\infty \in G \cdot x_0\),
        \item \(x_0\) is \(\mu\)-stable if and only if \(\mu(x_\infty) = 0\), and the stabiliser \(G_{x_\infty}\) of \(x_\infty\) is discrete.
    \end{enumerate}
\end{theorem}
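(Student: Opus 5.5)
The plan is to work on the symmetric space \(G/K\) with the Kempf-Ness function \(\Phi_{x_0}\), using three inputs: \(\gamma_{x_0}(t)=[g_{x_0}(t)]\) is a flow line of \(-\grad\Phi_{x_0}\); \(\abs{\grad\Phi_{x_0}(\gamma_{x_0}(t))} = \abs{\mu(x(t))}\); and \(\Phi_{x_0}\) is geodesically convex on the Hadamard manifold \(G/K\).

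First, the limit \(x_\infty\) exists. The function \(f=\frac12\abs{\mu}^2\) is real-analytic on the compact manifold \(X\), so the {\L}ojasiewicz inequality gives convergence of \(x(t)\) near every critical point, as in \Cref{cor:lojasiewicz}. Since \(x(t)=g_{x_0}(t)^{-1}x_0 \in G\cdot x_0\), we also get \(x_\infty \in \overline{G\cdot x_0}\). In each part, the implication from the flow condition to the stability condition is then immediate.

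\emph{Part (i).}
\begin{enumerate}[(a)]
\item If \(\mu(x_\infty)=0\), then \(x_\infty \in \overline{G\cdot x_0}\cap\mu^{-1}(0)\), so \(x_0\) is semistable.
\item Conversely, suppose \(x_0\) is semistable. Then \(\inf_{G\cdot x_0}\abs{\mu}=0\), by continuity of \(\mu\) and density of the orbit in its closure. By the formula for \(\dd\Phi_x\), \(\abs{\grad\Phi_{x_0}([g])}=\abs{\mu(g^{-1}x_0)}\). Hence \(\inf_{G/K}\abs{\grad\Phi_{x_0}}=0\).
\item The key claim is a general fact: for a convex function on a Hadamard manifold, the gradient norm along the negative gradient flow is nonincreasing and converges to \(\inf\abs{\grad\Phi}\).
\item To prove (c), take any point \(p\) and compare the flow line with the constant path at \(p\). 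Convexity makes \(t\mapsto \Phi(\gamma(t))\) and the distance to the sublevel sets behave monotonically. From this one gets \(\Phi(\gamma(t)) - \Phi(p) \le d(\gamma(0),p)^2/2t + \ldots\) together with a slope estimate. Equivalently, one can use that gradient flows of convex functions are \(1\)-Lipschitz contractions, so flows from different starting points stay at bounded distance. Flows starting at points \(p\) where \(\abs{\grad\Phi}<\epsilon\) have gradient norm at most \(\epsilon\) for all time. The bounded-distance comparison plus convexity then forces the same asymptotic slope for \(\gamma_{x_0}\).
\item Granting (c), \(\abs{\mu(x(t))}\to 0\), so \(\mu(x_\infty)=0\).
\end{enumerate}
I expect step (c), the asymptotic-slope statement, to be the main technical obstacle. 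I would prove it via the contraction property and the monotonicity of \(\abs{\grad\Phi}\) along the flow, using \(\tfrac{d}{dt}\abs{\grad\Phi}^2 = -2\,\mathrm{Hess}\,\Phi(\grad\Phi,\grad\Phi)\le 0\).

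\emph{Part (ii).}
\begin{enumerate}[(a)]
\item If \(\mu(x_\infty)=0\) and \(x_\infty\in G\cdot x_0\), then \(x_0\) is polystable by definition.
\item Conversely, suppose \(x_0\) is polystable, say \(\mu(h^{-1}x_0)=0\). Then \([h]\) is a critical point, hence a minimiser, of the convex function \(\Phi_{x_0}\).
\item Distance to any minimiser is nonincreasing along the flow, so \(\gamma_{x_0}(t)\) stays bounded.
\item By an Opial-type argument, \(\gamma_{x_0}(t)\) converges to some \([g_\infty]\): every accumulation point is a minimiser, because \(\Phi\) decreases to its infimum, and the distance to each minimiser is monotone.
\item Choose representatives \(g_{x_0}(t)\) converging modulo \(K\). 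Then \(x_\infty=\lim g_{x_0}(t)^{-1}x_0\) equals \(u\, g_\infty^{-1}x_0\) for some \(u\in K\), which lies in \(G\cdot x_0\). Part (i) gives \(\mu(x_\infty)=0\).
\end{enumerate}

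\emph{Part (iii).}
\begin{enumerate}[(a)]
\item If \(x_0\) is stable, then by (ii) \(x_\infty\in G\cdot x_0\). Hence \(G_{x_\infty}\) is conjugate to \(G_{x_0}\) and is discrete.
\item Conversely, suppose \(\mu(x_\infty)=0\) and \(G_{x_\infty}\) is discrete. Then \(\dim G\cdot x_\infty=\dim_\C G\), which is the maximal possible orbit dimension.
\item The boundary \(\overline{G\cdot x_0}\setminus G\cdot x_0\) is a union of orbits of strictly smaller dimension than \(G\cdot x_0\). So if \(x_\infty\) lay in the boundary, we would get \(\dim G\cdot x_0>\dim_\C G\), which is impossible.
\item Therefore \(x_\infty\in G\cdot x_0\), so \(x_0\) is polystable with discrete stabiliser, that is, stable.
\end{enumerate}
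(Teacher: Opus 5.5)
Your proof is correct.

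For (i) you follow the route the paper indicates. The paper gives no argument of its own: it cites Georgoulas--Robbin--Salamon and remarks that (i) follows from the moment limit theorem \(\abs{\mu(x_\infty)} = \inf_{g\in G}\abs{\mu(g x_0)}\). Your step (c) is exactly that theorem, proved through the contraction property of negative gradient flows of the convex function \(\Phi_{x_0}\) on the Hadamard manifold \(G/K\).

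Your write-up of (d) is loose, since the \(d^2/2t\) estimate is not what you need, but the contraction route closes it directly. Let \(c = \lim\abs{\grad\Phi(\gamma(t))}\), let \(\sigma\) be the flow from \(p\) with \(\abs{\grad\Phi(p)}<\epsilon\), and set \(D = d(\gamma(0),p)\). Then
\[\Phi(\gamma(0)) - c^2 t \ge \Phi(\gamma(t)) \ge \Phi(\sigma(t)) - \epsilon D \ge \Phi(p) - \epsilon^2 t - \epsilon D,\]
which forces \(c\le\epsilon\). Part (ii) is the standard convergence of the Kempf--Ness flow when a minimiser exists, and it is fine.

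The genuine difference is in the converse of (iii). You use the closed-orbit lemma: boundary orbits have strictly smaller dimension. This is legitimate here because \(X\subseteq\P^n\) carries a linear, hence algebraic, action, and the Euclidean closure of an orbit equals its Zariski closure.

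The symplectic argument instead uses that \(G_{x_\infty}\) discrete makes \(L_{x_\infty}\) injective. Then the derivative of \(\eta\mapsto\mu(\exp(i\eta)x)\) at \(x=x_\infty\), \(\eta=0\), which is \(L_{x_\infty}^*L_{x_\infty}\), is invertible. The implicit function theorem then makes all points near \(x_\infty\) polystable, so \(x(t)\), and hence \(x_0\), is polystable. Alternatively, \(\abs{\grad f}^2\ge c\abs{\mu}^2\) near \(x_\infty\) gives exponential decay, so \(g_{x_0}(t)\) converges in \(G\).

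That version needs no algebraicity and works for Hamiltonian actions on arbitrary compact K\"ahler manifolds, as the paper's remark about general \(X\) suggests. Yours is shorter but tied to the projective setting.
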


The proof of (i) follows from the following result.

\begin{theorem}
    [{Moment limit, \cite[Theorem 6.4]{georgoulasMomentweightInequalityHilbertMumford2021}}]
    Let \(x_0 \in X\), and let \(x(t) = \psi^t(x_0)\) be the flow line for \(-\grad f\) starting at \(x_0\). Let \(x_\infty = \lim_{t \to \infty} x(t)\). Then
    \[\abs{\mu(x_\infty)} = \inf_{g \in G}\abs{\mu(g \cdot x_0)}.\]
\end{theorem}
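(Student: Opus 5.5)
The plan is to work on the symmetric space \(G/K\) with the Kempf-Ness function \(\Phi_{x_0}\), and to reduce the statement to a general fact about gradient flows of convex functions on Hadamard manifolds. The first step is to record that, for \(g \in G\),
\[\abs{\grad \Phi_{x_0}([g])} = \abs{\mu(g^{-1}x_0)}.\]
This follows from the formula \(\dd\Phi_x(g)\zeta = -\langle \mu(g^{-1}x), \Im(g^{-1}\zeta)\rangle\): after left translation, the tangent space of \(G/K\) at \([g]\) is \(i\mfk\) with its \(\Ad\)-invariant metric. Hence
\[\inf_{g\in G}\abs{\mu(g\cdot x_0)} = \inf_{G/K}\abs{\grad\Phi_{x_0}}.\]
By \Cref{lem:group-flow} and the corollary following the properties of the Kempf-Ness function, \(\gamma(t) = [g_{x_0}(t)]\) is the flow line of \(-\grad\Phi_{x_0}\) from \([1]\), and \(x(t) = g_{x_0}(t)^{-1}x_0\). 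Therefore \(\abs{\grad\Phi_{x_0}(\gamma(t))} = \abs{\mu(x(t))} \to \abs{\mu(x_\infty)}\) by continuity of \(\mu\). The inequality \(\abs{\mu(x_\infty)} \ge \inf_g \abs{\mu(g x_0)}\) is then immediate, since each \(\abs{\mu(x(t))}\) belongs to the set over which the infimum is taken.

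For the reverse inequality, set \(c = \lim_{t\to\infty}\abs{\grad\Phi(\gamma(t))}\), writing \(\Phi = \Phi_{x_0}\). I will show \(\abs{\grad\Phi(p)} \ge c\) for every \(p \in G/K\), using three standard facts about a convex function on the nonpositively curved space \(G/K\).
\begin{enumerate}[(a)]
    \item Along any flow line \(\sigma\) of \(-\grad\Phi\), the quantity \(\abs{\grad\Phi}\) is nonincreasing, because \(\dv{t}\tfrac12\abs{\grad\Phi(\sigma)}^2 = -\mathrm{Hess}\,\Phi(\grad\Phi,\grad\Phi) \le 0\) by convexity.
    \item The distance between two flow lines of \(-\grad\Phi\) is nonincreasing. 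This comes from the first variation formula along the connecting geodesic together with monotonicity of \(\grad\Phi\) along geodesics, which is convexity again; on a Hadamard manifold the geodesic is unique and varies smoothly.
    \item \(\Phi\) is Lipschitz with constant \(C = \sup_X\abs{\mu}\), which is finite because \(X\) is compact.
\end{enumerate}

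With these in hand, fix \(p\), let \(\gamma_p\) be the flow line from \(p\), and set \(D = d([1],p)\). Since \(\abs{\grad\Phi(\gamma)} \ge c\) by (a), we get
\[\Phi(\gamma(t)) \le \Phi([1]) - c^2 t.\]
Along \(\gamma_p\), (a) gives \(\abs{\grad\Phi(\gamma_p)} \le \abs{\grad\Phi(p)}\), so
\[\Phi(\gamma_p(t)) \ge \Phi(p) - \abs{\grad\Phi(p)}^2 t.\]
By (b) and (c),
\[\Phi(\gamma(t)) \ge \Phi(\gamma_p(t)) - CD.\]
Combining these three bounds yields
\[\Phi([1]) - c^2 t \ge \Phi(p) - \abs{\grad\Phi(p)}^2 t - CD \quad \text{for all } t.\]
Dividing by \(t\) and letting \(t\to\infty\) gives \(\abs{\grad\Phi(p)}^2 \ge c^2\). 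Hence \(c \le \inf_{G/K}\abs{\grad\Phi}\), which is the required inequality.

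The main obstacle is step (b), the contraction of distances between flow lines. This needs the geodesic convexity of \(\Phi_x\) from the properties of the Kempf-Ness function, together with the nonpositive curvature of \(G/K\) so that a connecting geodesic depends smoothly on its endpoints. I would prove it by differentiating \(t\mapsto d(\gamma(t),\gamma_p(t))\) via the first variation formula. Its derivative is \(-\langle\grad\Phi(\gamma_p),\dot\sigma(1)\rangle + \langle\grad\Phi(\gamma),\dot\sigma(0)\rangle\) for the unit-speed connecting geodesic \(\sigma\). This is \(\le 0\) because \(s\mapsto(\Phi\circ\sigma)'(s)\) is nondecreasing.
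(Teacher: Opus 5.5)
Your proof is correct. The paper gives no proof of its own and simply imports this statement from \cite[Theorem 6.4]{georgoulasMomentweightInequalityHilbertMumford2021}; your argument is essentially the standard convexity proof on the Hadamard manifold \(G/K\) (the identity \(\abs{\grad\Phi_{x_0}([g])} = \abs{\mu(g^{-1}x_0)}\), monotonicity of \(\abs{\grad\Phi_{x_0}}\) along flow lines, non-expansiveness of flow lines, and the Lipschitz bound used to compare linear decay rates), which as far as I recall is the route taken in that source.
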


When \(X = \P^n\), we have an algebro-geometric characterisation of stability.

\begin{theorem}
    [Kempf-Ness {\cite{kempfLengthVectorsRepresentation1979}, \cite[Theorem 8.5]{georgoulasMomentweightInequalityHilbertMumford2021}}] Let \(v \in V \setminus \{0\}\), and set \(x = [v] \in \P(V)\).
    \begin{enumerate}[(i)]
        \item \(x\) is \(\mu_\FS\)-unstable if and only if \(0 \in \overline{G \cdot v}\).
        \item \(x\) is \(\mu_\FS\)-semistable if and only if \(0 \notin \overline{G \cdot v}\).
        \item \(x\) is \(\mu_\FS\)-polystable if and only if \(G \cdot v\) is closed.
        \item \(x\) is \(\mu_\FS\)-stable if and only if \(G \cdot v\) is closed and the stabiliser \(G_v\) of \(v\) is discrete.
    \end{enumerate}
    In particular, \(\mu_\FS\)-stability is equivalent to the usual GIT stability.
\end{theorem}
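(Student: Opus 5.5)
The plan is to reduce every statement to the convexity properties of the \emph{log-norm function} on a single orbit in the vector space \(V\). The first step is to identify \(\Phi_x\) explicitly when \(X = \P(V)\). For \(x = [v]\) I claim that, up to a positive normalising constant,
\[\Phi_x(g) = \tfrac12\bigl(\log\abs{g^{-1}v}^2 - \log\abs{v}^2\bigr).\]
To check this, differentiate the right-hand side along \(g\exp(s\zeta)\) for \(\zeta \in \mfg\). Using the formula \(\langle \mu(x), \xi\rangle = \langle v, i\xi v\rangle/\langle v,v\rangle\) and the fact that \(K\) acts unitarily, the derivative equals \(-\langle\mu(g^{-1}x), \Im\zeta\rangle\). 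The right-hand side also vanishes on \(K\), so the uniqueness statement in the proposition characterising \(\Phi_x\) gives the identity. Two consequences follow.
\begin{enumerate}[(a)]
    \item For \(\xi \in \mfk\), the function \(s \mapsto \log\abs{\exp(is\xi)w}^2\) is convex. Writing \(w\) in an eigenbasis of \(i\xi\), it is a log of a sum of exponentials. It is strictly convex unless \(\exp(i\xi)\) fixes the line through \(w\) with a single weight.
    \item \(\mu([w]) = 0\) if and only if \(w\) is a critical point of \(g \mapsto \abs{gw}^2\) on \(G\).
\end{enumerate}
Combining (a) and (b) with the Cartan decomposition \(G = K\exp(i\mfk)\), I get the key fact: if \(\mu([w]) = 0\), then \(\abs{gw} \ge \abs{w}\) for all \(g \in G\).

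For (ii), and hence (i) by complementation, suppose first that \(0 \notin \overline{G\cdot v}\). The closed set \(\overline{G\cdot v}\) contains a vector \(w\) of minimal norm, since the norm is proper. Then \(w \ne 0\), and \(w\) minimises the norm on \(G\cdot w \subseteq \overline{G\cdot v}\), so \(\mu([w]) = 0\) by (b). Projection \(V\setminus\{0\} \to \P(V)\) is continuous, so \([w] \in \overline{G\cdot x}\), and \(x\) is semistable.

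Conversely, suppose \([w] \in \overline{G\cdot x}\) with \(\mu([w]) = 0\). By the key fact, \(0 \notin \overline{G\cdot w}\). Since \(\overline{G \cdot w}\) and \(\{0\}\) are disjoint closed invariant sets, there is a homogeneous invariant polynomial \(f\) of some degree \(d\) with \(f(w) \ne 0\). Write \(w = \lim c_n g_n v\) with \(c_n \in \C^*\) and \(g_n \in G\). Then
\[f(w) = \lim c_n^d f(v),\]
so \(f(v) \ne 0\). Hence \(0 \notin \overline{G\cdot v}\).

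For (iii), suppose \(G\cdot v\) is closed. Then the norm attains its minimum on \(G\cdot v\) at some \(w = gv\), and (b) gives \(\mu(g\cdot x) = 0\). Conversely, suppose \(\mu(g\cdot x) = 0\) and set \(w = gv\); I must show \(G\cdot w\) is closed. If it is not, the Hilbert--Mumford criterion gives a one-parameter subgroup \(\lambda\) with \(\lim_{t\to 0}\lambda(t)w = w_0 \notin G\cdot w\). After conjugating by an element of \(G\), which I must arrange compatibly with \(K\) (this is the delicate point), I may take \(\lambda(e^{-s}) = \exp(is\xi)\) with \(\xi \in \mfk\). The function \(s \mapsto \log\abs{\exp(is\xi)w}^2\) is then:
\begin{itemize}
    \item convex;
    \item critical at \(s = 0\), because \(\mu([w]) = 0\);
    \item bounded as \(s \to \infty\), because the limit \(w_0\) exists.
\end{itemize}
A convex function with these three properties is constant on \([0,\infty)\). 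By (a), \(w\) then has only the weight zero for \(\xi\), so \(\lambda\) fixes \(w\) and \(w_0 = w\), a contradiction.

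I expect this converse to be the main obstacle. One cannot conjugate \(\lambda\) into \(K\) while keeping \(w\) fixed. I will handle it as follows. Keep \(\lambda\) arbitrary, write \(\lambda = h\exp(is\xi)h^{-1}\) with \(\xi \in \mfk\) and \(h \in G\), and apply the argument to \(h^{-1}w\). The key fact then replaces the criticality condition by the lower bound \(\abs{gw} \ge \abs{w}\) along the curve \(s \mapsto \lambda(e^{-s}) w\), which with convexity still forces the function to be constant.

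For (iv), assume \(G\cdot v\) is closed, so that by (iii) I may take \(\mu(x) = 0\) after moving within the orbit. There is an exact sequence \(1 \to G_v \to G_x \to \C^*\), where the last map sends \(g\) to the scalar by which it acts on \(v\). Its image is trivial when \(\mu(x) = 0\): for \(g \in G_x\) with \(gv = cv\), the bound \(\abs{g^{\pm1}v} \ge \abs{v}\) gives \(\abs{c} = 1\), and the weight argument from (a) excludes nontrivial characters. Hence \(G_x\) and \(G_v\) agree up to a finite group, and discreteness of one is equivalent to discreteness of the other.
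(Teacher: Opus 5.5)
You have correctly identified the delicate point in (iii), but the way you propose to get around it does not work, so the converse in (iii) has a genuine gap. The paper itself only cites Kempf--Ness and Georgoulas--Robbin--Salamon. The rest of your argument is sound: (a), (b), the key fact, both directions of (ii), the implication from a closed orbit to polystability, and (iv) once (iii) is known.

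\textbf{The gap in (iii).} Write \(\lambda(e^{-s}) = h\exp(is\xi)h^{-1}\). The function that is convex is \(\varphi(s) = \log\abs{\exp(is\xi)h^{-1}w}^2\). The key fact instead bounds a different function from below, namely \(\psi(s) = \log\abs{\lambda(e^{-s})w}^2 = \log\abs{h\exp(is\xi)h^{-1}w}^2\).

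\begin{itemize}
\item For \(h \notin K\), these two functions differ by a bounded but non-constant term.
\item So neither function has all three properties, and \(\psi\) need not be convex.
\item Your conclusion that \(\lambda\) fixes \(w\) is false in general. The hypothesis \(w_0 \notin G\cdot w\) is never used before the last line.
\end{itemize}

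A concrete counterexample: let \(G = SL_2(\C)\) act on \(\mathfrak{sl}_2(\C)\) by conjugation, and take \(w = \mathrm{diag}(1,-1)\). Then \(\mu([w])\) is proportional to \([w,w^*] = 0\). Let \(\lambda(t) = h\,\mathrm{diag}(t,t^{-1})h^{-1}\) with \(h = \begin{pmatrix}1&1\\0&1\end{pmatrix}\). Then \(\lambda(t)\cdot w = \begin{pmatrix}1&2t^2-2\\0&-1\end{pmatrix}\). This converges as \(t\to 0\) and satisfies \(\abs{\lambda(t)\cdot w}^2 = 2+4(1-t^2)^2 \ge \abs{w}^2\), yet \(\lambda\) does not fix \(w\). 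The forward direction of (iv) uses ``polystable implies closed orbit'', so the gap propagates there too.

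\textbf{How to repair it.} The missing idea is to conjugate only by elements of a parabolic subgroup. This is the same mechanism as Mumford's parabolic invariance, which the paper recalls.
\begin{enumerate}
\item Write \(\lambda = g\lambda' g^{-1}\) with \(\lambda'(e^{-s}) = \exp(is\xi)\) and \(\xi \in \mfk\).
\item Since \(K\) acts transitively on \(G/P(\lambda')\), we have \(G = K\,P(\lambda')\). After replacing \(\lambda'\) by a \(K\)-conjugate, you may assume \(g = p \in P(\lambda')\).
\item Then
\[\lambda'(t)w = \bigl(\lambda'(t)p^{-1}\lambda'(t)^{-1}\bigr)^{-1}p^{-1}\lambda(t)w \longrightarrow l^{-1}p^{-1}w_0 \quad (t \to 0),\]
where \(l = \lim_{t\to0}\lambda'(t)p^{-1}\lambda'(t)^{-1} \in G\) exists because \(p^{-1}\in P(\lambda')\).
\item So \(\lambda'(t)w\) converges to a point of \(G\cdot w_0\), which is disjoint from \(G\cdot w\).
\item Your three-property argument now applies to \(s\mapsto\log\abs{\exp(is\xi)w}^2\) at \(w\) itself. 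There criticality at \(s=0\) does hold, and you get the contradiction.
\end{enumerate}

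\textbf{A minor point in (iv).} The image of \(G_x\to\C^*\) need not be trivial, because elements may act on \(w\) by roots of unity. It is, however, an algebraic subgroup of \(\C^*\) contained in the unit circle, hence finite, and that is all you need.
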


\subsection{Weights}

In geometric invariant theory, the \emph{Hilbert-Mumford criterion} gives a characterisation of stability in terms of one-parameter subgroups. There is a corresponding criterion in the moment map setting, which we now recall.

Let us define
\[\mcT^c = \{g\xi g^{-1} \mid g \in G, \xi \in \mfk \setminus \{0\}\}\]
for the set of \emph{toral generators} in \(\mfg\). Similarly, set
\begin{align*}
    \Lambda &= \{\xi \in \mfk \setminus \{0\} \mid \exp(\xi) = 1\}, \\
    \Lambda^c &= \{\xi \in \mfg \setminus \{0\} \mid \exp(\xi) = 1\}.
\end{align*}
By definition, \(\Lambda \subset \Lambda^c \subset \mcT^c\). Note that we have a one-to-one correspondence between \(\Lambda^c\) and the set of one-parameter subgroups of \(G\). The following result follows from considering \(\C^*\)-actions.

\begin{lemma}
    [{\cite[Lemma 5.4]{georgoulasMomentweightInequalityHilbertMumford2021}}]
    \label{lem:one-param-limit}
    For \(x \in X\), \(\zeta \in \mcT^c\), the limits
    \[x_{\pm} = \lim_{t \to \pm\infty}\exp(it\zeta)x\]
    exist, the convergence is exponential, and \(L^c_{x_\pm}\zeta = 0\).
\end{lemma}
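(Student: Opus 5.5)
The plan is to reduce to the case of a circle action and then argue with weight decompositions. Since \(\zeta \in \mcT^c\), we can write \(\zeta = g\xi g^{-1}\) for some \(g \in G\) and \(\xi \in \mfk \setminus \{0\}\). Then
\[\exp(it\zeta)x = g\exp(it\xi)g^{-1}x.\]
Put \(y = g^{-1}x\). Because \(g\) acts as a biholomorphism of \(X\), it is enough to prove the statement for \(\xi\) and \(y\) and then transport everything back by \(g\). Exponential convergence is preserved, since \(g\) is Lipschitz on the compact manifold \(X\). The vanishing condition transforms as
\[L^c_{g y_\pm}(g\xi g^{-1}) = \dd g\bigl(L^c_{y_\pm}\xi\bigr),\]
so it also transfers. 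We may therefore assume \(\zeta = \xi \in \mfk\).

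Next I would reduce to a torus. Let \(T\) be the closure of \(\{\exp(s\xi)\}\) in \(K\); it is a compact torus, and its complexification \(T^c\) acts linearly on \(\C^{n+1}\), since \(G\) does. Decompose \(\C^{n+1} = \bigoplus_\lambda V_\lambda\) into \(T^c\)-weight spaces. On \(V_\lambda\), the element \(i\xi\) acts by the real scalar \(-\lambda(\xi)\), up to a sign convention. Write \(v = \sum_\lambda v_\lambda\) for a lift of \(x\). Then
\[\exp(it\xi)v = \sum_\lambda e^{-t\lambda(\xi)}v_\lambda.\]
Let \(m_-\) and \(m_+\) be the smallest and largest values of \(\lambda(\xi)\) among weights with \(v_\lambda \neq 0\). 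In \(\P^n\), multiply by \(e^{t m_-}\) as \(t \to \infty\), and by \(e^{t m_+}\) as \(t \to -\infty\). This gives
\[x_+ = \Bigl[\sum_{\lambda(\xi) = m_-} v_\lambda\Bigr], \qquad x_- = \Bigl[\sum_{\lambda(\xi) = m_+} v_\lambda\Bigr].\]
The remaining terms decay like \(e^{-t\delta}\), where \(\delta > 0\) is the gap to the next value of \(\lambda(\xi)\). Since the Fubini-Study distance is locally Lipschitz in affine charts, the convergence in \(\P^n\) is exponential. Both \(x_\pm\) lie in \(X\) because \(X\) is closed and invariant.

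Finally, a lift of \(x_\pm\) lies in a single eigenspace of \(i\xi\), and hence also of \(\xi\), acting on \(\C^{n+1}\). So \(\exp(s\xi)\) and \(\exp(is\xi)\) fix \(x_\pm\) in \(\P^n\), which gives \(L^c_{x_\pm}\xi = 0\). The only point that needs care is checking the sign convention and that the eigenvalues of \(i\xi\) are real. Both follow because \(\xi\) is skew-hermitian, which holds since \(K\) acts unitarily.
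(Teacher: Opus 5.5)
Your proof is correct and is a concrete version of what the paper indicates: the paper gives no argument beyond citing Georgoulas--Robbin--Salamon and remarking that the lemma follows from considering \(\C^*\)-actions, and your reduction to \(\xi \in \mfk\) by conjugation, followed by the eigenspace decomposition of the hermitian operator \(i\xi\) on \(\C^{n+1}\), is exactly that computation. Working with the eigenvalues of \(i\xi\) instead of a literal \(\C^*\)-action also covers irrational \(\xi\), where \(\exp(\C\xi)\) is not a closed \(\C^*\); the cited source, by contrast, proves the lemma on an arbitrary compact K\"ahler manifold, not only for linear actions on \(\P^n\).
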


Using this, we can define a notion of \emph{weight}.

\begin{definition}
    The \emph{\(\mu\)-weight} of \((x, \zeta) \in X \times \mcT^c\) is
    \[w_\mu(x, \zeta) = \lim_{t \to \infty}\langle \mu(\exp(it\zeta)x), \Re(\zeta)\rangle.\]
\end{definition}

The \(\mu\)-weight then gives us a characterisation of stability. Analogous results hold for polystability and stability, but we will not need these in this paper.

\begin{theorem}
    [Hilbert-Mumford criterion for semistability, {\cite[Theorem 12.2]{georgoulasMomentweightInequalityHilbertMumford2021}}] 
    \label{thm:hilbert-mumford}
    For \(x_0 \in X\), \(x_0\) is \(\mu\)-semistable if and only if for all \(\zeta \in \mcT^c\), we have \(w_\mu(x_0, \zeta) \ge 0\). In turn, this is equivalent to the condition that for all \(\xi \in \mfk \setminus \{0\}\), we have \(w_\mu(x_0, \xi) \ge 0\).
\end{theorem}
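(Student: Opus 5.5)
The plan is to read the weight as the asymptotic slope of the Kempf-Ness function along a geodesic ray. Both directions then follow from convexity of \(\Phi_{x_0}\) together with the moment limit theorem.

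\emph{Reductions.} For \(\xi \in \mfk\), the curve \(t \mapsto [\exp(-it\xi)] \in G/K\) is a geodesic ray. By the defining formula for \(\dd\Phi_x\), with the sign conventions of the paper,
\[\dv{t}\Phi_{x_0}(\exp(-it\xi)) = \langle \mu(\exp(it\xi)x_0), \xi\rangle,\]
up to a harmless sign convention. Convexity of \(\Phi_{x_0}\) along geodesics (Properties of the Kempf-Ness function, (i)) makes this derivative nondecreasing in \(t\). Its limit is \(w_\mu(x_0,\xi)\), which exists by \Cref{lem:one-param-limit}. Hence \(w_\mu(x_0,\xi) = \lim_{t\to\infty}\Phi_{x_0}(\exp(-it\xi))/t\).

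For a general \(\zeta = g\xi g^{-1} \in \mcT^c\), I would use \(\exp(it\zeta)x_0 = g\exp(it\xi)g^{-1}x_0\). Since \(\mu\) is only \(K\)-equivariant, this does not directly give \(w_\mu(x_0,\zeta) = w_\mu(g^{-1}x_0,\xi)\). One can still compare the two limits: both are limit points of the \(\C^*\)-action, and the weight is computed at a fixed point of \(\exp(i\R\zeta)\). Alternatively, I would simply cite that the \(\mu\)-weight is \(G\)-equivariant in this sense, as in \cite{georgoulasMomentweightInequalityHilbertMumford2021}. Semistability is \(G\)-invariant, being defined through \(\overline{G\cdot x}\). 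This reduces the first equivalence to the second, namely weights along \(\xi \in \mfk\).

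\emph{Semistable implies nonnegative weights.} If \(x_0\) is semistable, I would show \(\Phi_{x_0}\) is bounded below on \(G/K\). The flow line \(\gamma_{x_0}\) of \(-\grad\Phi_{x_0}\) has \(\dv{t}\Phi_{x_0}(\gamma_{x_0}) = -\abs{\mu(x(t))}^2\). By the moment limit theorem and the {\L}ojasiewicz rate in \Cref{cor:lojasiewicz}(i), \(\abs{\mu(x(t))}^2 = 2f(x(t))\) is integrable when the exponent satisfies \(\alpha > 1/2\), and decays exponentially when \(\alpha = 1/2\). So \(\Phi_{x_0}\) stays bounded along its own flow line. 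A convex function that is bounded below along the gradient flow is bounded below everywhere, by the standard infimum argument for convex functions on Hadamard manifolds. Alternatively, I would cite the classical fact that semistability is equivalent to \(\Phi_{x_0}\) being bounded below. A convex function bounded below has nonnegative asymptotic slope along every ray, so \(w_\mu(x_0,\xi) \ge 0\).

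\emph{Unstable implies a negative weight.} This is the main obstacle. If \(x_0\) is unstable, the moment limit theorem gives \(\abs{\mu(x_\infty)} = \inf_G \abs{\mu(g x_0)} = c > 0\). Along the flow line, \(\Phi_{x_0}(\gamma_{x_0}(t)) \le -c^2 t + O(1)\).

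I would then use the Chen-Sun analysis, via {\L}ojasiewicz, of the group flow \(g_{x_0}(t)\) from \Cref{lem:group-flow}. Its velocity \(i\mu(x(t))\) converges to \(i\mu(x_\infty)\) at a polynomial rate. From this I would deduce that \(d\bigl(\gamma_{x_0}(t), [\exp(-it\xi_\infty)]\bigr) = o(t)\) for some \(\xi_\infty \in \mfk\) with \(\abs{\xi_\infty} = c\), essentially \(\Ad\)-conjugate to \(\mu(x_\infty)\).

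Since \(\abs{\dd\Phi_{x_0}} \le \sup_X\abs{\mu}\), the function \(\Phi_{x_0}\) is Lipschitz on \(G/K\). Hence \(\Phi_{x_0}(\exp(-it\xi_\infty)) \le -c^2 t + o(t)\). Combined with the slope formula from the reductions step, this gives \(w_\mu(x_0,\xi_\infty) \le -c^2 < 0\).

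The delicate point is the \(o(t)\) comparison between the flow line and the geodesic ray. I would derive it from the integrability of \(\abs{\mu(x(t)) - \mu(x_\infty)}\) together with the \(K\)-conjugation drift in the group flow, following Chen-Sun.
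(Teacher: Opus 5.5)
Your reductions and the semistable direction are sound. The semistable direction is a legitimate variant of the moment--weight inequality used in \cite{georgoulasMomentweightInequalityHilbertMumford2021}: on a Hadamard manifold, the estimate $\Phi_{x_0}(\gamma_{x_0}(T))\le\Phi_{x_0}(p)+d([1],p)^2/(2T)$ gives $\inf\Phi_{x_0}=\lim_t\Phi_{x_0}(\gamma_{x_0}(t))$.

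The gap is in the unstable direction. It rests entirely on $d(\gamma_{x_0}(t),[\exp(-it\xi_\infty)])=o(t)$, and the route you sketch does not produce this, for three reasons.
First, \Cref{cor:lojasiewicz}(ii) only gives $\abs{\mu(x(t))-\mu(x_\infty)}\lesssim t^{-(1-\alpha)/(2\alpha-1)}$, which is integrable only when $\alpha<2/3$.
Second, even an integrable error in $g^{-1}\dot g=i\mu(x_\infty)+i\epsilon(t)$ does not control the position. Writing $g=\exp(it\mu(x_\infty))\exp(\kappa)$, to first order $\dot\kappa=i\epsilon-\ad(i\mu(x_\infty))\kappa$. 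In general $\ad(i\mu(x_\infty))$ has real eigenvalues of both signs, so $\epsilon$ can be amplified exponentially.
Third, knowing $\xi(t)/t\to\xi_\infty$ only gives convergence of directions at $[1]$, and in nonpositive curvature this does not imply sublinear distance. In a rank-one direction of $G/K$ (curvature $-1$), two points at distance $r$ from $[1]$ making angle $\theta$ are at distance roughly $2r+2\log\sin(\theta/2)$, so you would need $\theta\approx e^{-r(1-o(1))}$.
The sublinear comparison is in fact true, but it requires a genuine argument, and it is far more than the criterion needs.

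The missing idea is to use convexity along the geodesic segment from $[1]$ to $\gamma_{x_0}(t)$, instead of comparing the flow line with a ray. Write $\gamma_{x_0}(t)=[\exp(-i\xi(t))]$ and $M=\sup_X\abs{\mu}$. Three facts hold:
\begin{enumerate}[(i)]
    \item $\abs{\dot\gamma_{x_0}}=\abs{\mu(x(t))}\le M$;
    \item $\Phi_{x_0}$ is $M$-Lipschitz with $\Phi_{x_0}([1])=0$;
    \item $\Phi_{x_0}(\gamma_{x_0}(t))\le -c^2t$.
\end{enumerate}
Together these give $c^2t/M\le\abs{\xi(t)}\le Mt$. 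Convexity along $s\mapsto[\exp(-is\xi(t)/\abs{\xi(t)})]$ for $0\le s\le\abs{\xi(t)}$ then gives
\[\Phi_{x_0}\bigl([\exp(-is\xi(t)/\abs{\xi(t)})]\bigr)\le\frac{s}{\abs{\xi(t)}}\,\Phi_{x_0}(\gamma_{x_0}(t))\le-\frac{c^2}{M}\,s.\]
Choose $t_n\to\infty$ with $\xi(t_n)/\abs{\xi(t_n)}\to\eta$, and let $n\to\infty$ at fixed $s$. This yields $\Phi_{x_0}([\exp(-is\eta)])\le -c^2s/M$ for all $s\ge0$, so your slope formula gives $w_\mu(x_0,\eta)\le-c^2/M<0$. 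This argument needs neither the $o(t)$ estimate nor the existence of $\lim_t\xi(t)/t$.

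Alternatively, cite \Cref{thm:kempf}, which gives $w_\mu(x_0,\xi_\infty)=-m^2<0$ outright. The paper gives no proof of its own and only quotes \cite[Theorem 12.2]{georgoulasMomentweightInequalityHilbertMumford2021}. That source's argument is essentially Kempf existence, combined with the moment--weight inequality for the semistable direction and Mumford's equivariance lemma for passing between $\mcT^c$ and $\mfk$.
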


For \(\zeta \in \mcT^c\), we define the parabolic subgroup
\[P(\zeta) = \{g \in G \mid \lim_{t \to \infty}\exp(it\zeta)g\exp(-it\zeta) \text{ exists}\}.\]

One result which we need is the following.

\begin{lemma}
    Let \(\zeta_1, \zeta_2 \in \mcT^c\). Then \(P(\zeta_1) \cap P(\zeta_2)\) contains a maximal torus of \(G\).
\end{lemma}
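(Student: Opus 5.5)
We reduce the statement to the classical fact that any two parabolic subgroups of a connected reductive group contain a common maximal torus (equivalently, a common Borel-type decomposition via the Bruhat decomposition). The first step is to identify \(P(\zeta)\) as a genuine parabolic subgroup. Write \(\zeta = g\xi g^{-1}\) with \(g \in G\) and \(\xi \in \mfk \setminus \{0\}\). Since
\[\exp(it\zeta)h\exp(-it\zeta) = g\exp(it\xi)\bigl(g^{-1}hg\bigr)\exp(-it\xi)g^{-1},\]
we have \(P(\zeta) = gP(\xi)g^{-1}\). It therefore suffices to understand \(P(\xi)\) for \(\xi \in \mfk\).

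For \(\xi \in \mfk\), the operator \(\ad(i\xi)\) is diagonalisable on \(\mfg\) with real eigenvalues, because \(\ad(\xi)\) is skew-hermitian for a \(K\)-invariant hermitian inner product on \(\mfg\). Decompose \(\mfg = \bigoplus_\lambda \mfg_\lambda\) into eigenspaces. The limit \(\lim_{t\to\infty}\exp(it\xi)h\exp(-it\xi)\) exists precisely when \(h\) lies in the subgroup whose Lie algebra is \(\mfp(\xi) = \bigoplus_{\lambda \le 0}\mfg_\lambda\) (the sign depends on conventions and is irrelevant here). This is the standard description of the parabolic subgroup attached to a (real) cocharacter-like element. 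One checks it by embedding \(G \subseteq \GL(W)\) via a faithful representation on which \(\xi\) acts with imaginary eigenvalues, where the claim reduces to the block-triangular description of the matrices whose conjugates under \(\exp(it\xi)\) converge.

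Moreover, the centraliser of the torus \(\overline{\exp(\R\xi)}\) contains a maximal torus \(T_\xi^c\) of \(G\), namely the complexification of a maximal torus of \(K\) containing \(\xi\). This torus lies in \(P(\xi)\). Hence each \(P(\zeta_j)\) is a subgroup of \(G\) containing a Borel subgroup, and so is parabolic in the algebraic sense. If \(\xi\) is not rational, one can replace it by a rational element of the same maximal torus inducing the same sign pattern on the finitely many roots, without changing \(P(\xi)\).

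With this identification, we invoke the theorem from the structure theory of reductive groups: for parabolics \(P_1, P_2\) of a connected reductive group, \(P_1 \cap P_2\) contains a maximal torus. The argument runs as follows. Choose Borel subgroups \(B_j \subseteq P_j\). By the Bruhat decomposition, \(B_1\) and \(B_2\) share a maximal torus \(T\), so \(T \subseteq B_1 \cap B_2 \subseteq P_1 \cap P_2\). The main obstacle is the middle step, namely verifying carefully that \(P(\xi)\) is exactly the connected subgroup with Lie algebra \(\mfp(\xi)\) and is parabolic, including the issue of irrational \(\xi\). I would handle this via a faithful representation and the root decomposition relative to \(T_\xi^c\).
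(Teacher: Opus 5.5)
The paper gives no proof of this lemma. It is stated as a standard fact from the Georgoulas--Robbin--Salamon framework recalled in the moment map section, so there is no in-paper argument to compare with. Your route is the standard proof and it is correct: conjugate \(\zeta_j\) to \(\xi_j \in \mfk\), show each \(P(\zeta_j)\) contains a Borel subgroup, then use the Bruhat decomposition to see that two Borel subgroups share a maximal torus.

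The step you flag as the main obstacle can be made much lighter, and one sentence in it needs repair.

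\emph{You only need a Borel inside \(P(\xi)\).} You never have to identify \(P(\xi)\) exactly with the connected subgroup with Lie algebra \(\mathfrak p(\xi)\). Note that \(P(\xi)\) is a subgroup, since limits of products and inverses exist. For \(\eta \in \mathfrak p(\xi)\) we have \(\exp(it\xi)\exp(\eta)\exp(-it\xi) = \exp(e^{t\ad(i\xi)}\eta)\), which converges. Hence \(P(\xi)\) contains the connected subgroup generated by \(\exp(\mathfrak p(\xi))\).

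\emph{The ``hence'' needs the root argument.} Containing the torus \(T^c_\xi\) does not by itself give a Borel subgroup. What gives it is that every root \(\alpha\) of \(\mfg\) with respect to \(T^c_\xi\) satisfies \(\alpha(i\xi) \in \R\). So you can choose a positive system with \(\alpha(i\xi) \le 0\) for all positive \(\alpha\), for instance the one defined by a regular element close to \(-i\xi\). This yields a Borel subalgebra \(\mathfrak b \subseteq \mathfrak p(\xi)\), and hence a Borel subgroup inside \(P(\xi)\).

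\emph{Rationality is not needed.} The argument above works for every \(\xi \in \mfk\), so the detour through rational \(\xi\) is unnecessary. Moreover, justifying that \(P(\xi)\) is unchanged by that replacement would itself require the parabolicity you are trying to establish.

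\emph{Disconnected \(G\).} If \(G\) is disconnected, \(P(\xi)\) need not be connected. Running the argument in the identity component \(G^0\) suffices, since maximal tori of \(G\) lie in \(G^0\).
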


Finally, we have the following properties of the \(\mu\)-weight.

\begin{lemma}
    [Mumford, {\cite[Theorem 5.3]{georgoulasMomentweightInequalityHilbertMumford2021}}]
    \leavevmode
    \begin{enumerate}[(i)]
        \item For \(x \in X, g \in G, \zeta \in \mcT^c\),
        \[w_\mu(g\cdot x, g \zeta g^{-1}) = w_\mu(x, \zeta),\]
        \item for \(x \in X, \zeta \in \mcT^c, p \in P(\zeta)\),
        \[w_\mu(x, p\zeta p^{-1}) = w_\mu(x, \zeta).\]
    \end{enumerate}
\end{lemma}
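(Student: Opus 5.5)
The plan is to express the \(\mu\)-weight as the asymptotic slope of the Kempf-Ness function along a ray in \(G/K\). Both statements then reduce to a cocycle identity for \(\Phi\) and a bounded-distance argument for rays.

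\textbf{Step 1: reduce to \(\xi \in \mfk\).} Write \(\zeta = h\xi h^{-1}\) with \(h \in G\) and \(\xi \in \mfk \setminus \{0\}\). Consider the curve \(g(t) = h\exp(-it\xi) = \exp(-it\zeta)h\) in \(G\). Then \(g(t)^{-1}x = \exp(it\xi)h^{-1}x\), and \(g^{-1}\dot g = -i\xi\), so \(\Im(g^{-1}\dot g) = -\xi\). The formula for \(\dd\Phi_x\) in the Kempf-Ness proposition gives
\[\dv{t}\Phi_x(g(t)) = \langle \mu(\exp(it\xi)h^{-1}x), \xi\rangle.\]
By \Cref{lem:one-param-limit}, this converges to \(\langle \mu(y_+), \xi\rangle\), where \(y_+ = \lim_{t\to\infty}\exp(it\xi)h^{-1}x\). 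I would first check, for \(\xi \in \mfk\), that \(w_\mu(x,\xi)\) equals this limit. This is immediate from the definition, since \(\Re(\xi) = \xi\).

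For general \(\zeta\), I would then show that \(w_\mu(x,\zeta)\) equals the asymptotic slope
\[\lim_{t\to\infty}\dv{t}\Phi_x\big(\exp(-it\zeta)hK\big).\]
This slope exists and is monotone because \(\Phi_x\) is geodesically convex and \(t \mapsto h\exp(-it\xi)K\) is a geodesic. To match it with \(\lim_t\langle\mu(\exp(it\zeta)x),\Re\zeta\rangle\), I would use that \(\exp(it\zeta)x\) and \(h\exp(it\xi)h^{-1}x\) coincide. I would then compare \(\Re\zeta\) with the Cartan decomposition of \(\zeta\) along the limit, using that the limit point is fixed by \(\exp(\C\zeta)\) by \(L^c_{x_+}\zeta = 0\). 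The conclusion of this step is \(w_\mu(x,\zeta) = w_\mu(h^{-1}x,\xi)\).

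\textbf{Step 2: property (i).} Given Step 1, (i) says \(w_\mu(gx, g h\xi h^{-1}g^{-1}) = w_\mu((gh)^{-1}gx,\xi) = w_\mu(h^{-1}x,\xi)\). This is the same expression as \(w_\mu(x,\zeta)\), so (i) follows. Equivalently, one can use the cocycle identity \(\Phi_{gx}(g') = \Phi_x(g^{-1}g') + \Phi_{gx}(g)\). This identity follows from uniqueness in the Kempf-Ness proposition, since both sides have the same differential. It shows that slopes along corresponding rays agree.

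\textbf{Step 3: property (ii).} For \(p \in P(\zeta)\), I would compare the rays \(\exp(-it\zeta)pK\) and \(\exp(-it\zeta)K\) in \(G/K\). Write
\[\exp(-it\zeta)p = \big(\exp(-it\zeta)p\exp(it\zeta)\big)\exp(-it\zeta).\]
By the definition of \(P(\zeta)\), the bracketed factor converges. Some care is needed with the sign \(t \to \pm\infty\) in that definition, and I would fix the sign of the ray accordingly. Since \(G\) acts on \(G/K\) by isometries, the two rays stay at bounded distance. Moreover, \(\abs{\dd\Phi_x} \le \sup_X\abs{\mu}\), so \(\Phi_x\) is globally Lipschitz. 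Two convex functions of \(t\) that differ by a bounded amount have the same asymptotic slope. Hence the slopes agree and \(w_\mu(x,p\zeta p^{-1}) = w_\mu(x,\zeta)\).

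\textbf{Main obstacle.} I expect the hardest step to be the identification in Step 1 of \(w_\mu(x,\zeta)\) with the asymptotic slope when \(\zeta\) is not in \(\mfk\). Here \(\Re\zeta\) and \(\Im\zeta\) do not commute, so the definition via \(\Re\zeta\) is not obviously the derivative of \(\Phi_x\). My first attempt would be to choose \(h \in P(\zeta)\) realising the Cartan decomposition \(\zeta = \Re\zeta + i\Im\zeta\) with commuting parts. This relies on the fact that every toral generator is \(P(\zeta)\)-conjugate to \(\Re\zeta\)-type data. I would then use the exponential convergence in \(\exp(it\zeta)x\) from \(\exp(it\Re\zeta)x\) to show that the two limits agree.
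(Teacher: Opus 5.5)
The paper gives no proof of this lemma; it is quoted from Georgoulas--Robbin--Salamon, so I am assessing your argument on its own. Your framework is sound: $\Phi_x$ is $\sup_X\abs{\mu}$-Lipschitz, the cocycle identity holds, and two functions of $t$ that differ by a bounded amount and whose derivatives converge have the same limiting slope. The gaps are in the two steps where the content of the lemma actually sits.

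\textbf{Step 1.} The claim $w_\mu(x,\zeta) = w_\mu(h^{-1}x,\xi)$ is already (i) with $g = h^{-1}$, and the route you sketch for it fails. The element of $\mfk$ in the $P(\zeta)$-class of $\zeta$ is in general not $\Re\zeta$. For example, in $\mathrm{SL}(2,\C)$ the element $\zeta = \begin{pmatrix} i & b\\ 0 & -i\end{pmatrix}$ is $P(\zeta)$-conjugate to $\operatorname{diag}(i,-i)$. But $\Re\zeta = \begin{pmatrix} i & b/2\\ -\bar b/2 & -i\end{pmatrix}$ has eigenvalues $\pm i\sqrt{1+\abs{b}^2/4}$, so it is not even $G$-conjugate to $\zeta$. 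Consequently $\exp(it\zeta)x$ and $\exp(it\Re\zeta)x$ have different limits in general, and there is nothing to compare.

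The missing observation is that no commutation of $\Re\zeta$ and $\Im\zeta$ is needed. For $g(t) = \exp(-it\zeta)$ one has $g^{-1}\dot g = -i\zeta$ and $\Im(-i\zeta) = -\Re\zeta$. The formula for $\dd\Phi_x$ then gives exactly
\[\dv{t}\Phi_x(\exp(-it\zeta)) = \langle \mu(\exp(it\zeta)x), \Re\zeta\rangle .\]
So $w_\mu(x,\zeta)$ is the limiting slope of $\Phi_x$ along the non-geodesic curve $\exp(-it\zeta)K$. Left multiplication by $\exp(-it\zeta)$ is an isometry, so this curve stays at constant distance $d(hK,K)$ from your geodesic $\exp(-it\zeta)hK = h\exp(-it\xi)K$. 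Both slopes converge, so they agree.

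An alternative route: if $L^c_z\zeta = 0$, then $g\mapsto\langle\mu(gz),\Re(g\zeta g^{-1})\rangle$ is constant on $G$. Differentiating, the terms cancel using equivariance and $L_z\Re\zeta = -JL_z\Im\zeta$. Applying this with $z = x_+$ gives (i) directly.

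\textbf{Step 3.} Here you compare the wrong rays. By left-invariance, $d(\exp(-it\zeta)pK,\exp(-it\zeta)K) = d(pK,K)$ for \emph{every} $p\in G$. So the hypothesis $p\in P(\zeta)$ is never used, and your argument would prove (ii) for all $p$, which is false.

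A counterexample: let $\mathrm{SL}(2,\C)$ act on $\P^1$, with $\xi = \operatorname{diag}(i,-i)$, $w\in K$ the Weyl element, and $x = [1:0]$. The rays $\exp(-it\xi)wK$ and $\exp(-it\xi)K$ coincide. However $x$ is fixed by $\exp(\C\xi)$ and $w\xi w^{-1} = -\xi$. Hence $w_\mu(x,\xi) = \langle\mu(x),\xi\rangle = -1$, while $w_\mu(x,w\xi w^{-1}) = w_\mu(x,-\xi) = 1$.

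By the displayed formula, the curve that computes $w_\mu(x,p\zeta p^{-1})$ is $p\exp(-it\zeta)p^{-1}K$. It lies within $d(p^{-1}K,K)$ of
\[p\exp(-it\zeta)K = \exp(-it\zeta)\bigl[\exp(it\zeta)p\exp(-it\zeta)\bigr]K.\]
The bracket converges as $t\to+\infty$ precisely because $p\in P(\zeta)$, so this curve stays within bounded distance of $\exp(-it\zeta)K$, and the slopes agree. No sign flip is needed, and none is permitted: reversing the ray computes $w_\mu(x,-\zeta)$ instead.
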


\subsection{Kempf existence and Moment-Weight inequality}

A result of Kempf \cite{kempfInstabilityInvariantTheory1978} states that if \(x_0\) is an unstable point, then there exists an \emph{optimal destabilising one-parameter subgroup}. The following result relates the optimal destabilising one-parameter subgroup to the flow of \(-\grad f\).

\begin{theorem}
    [Generalised Kempf existence, {\cite[Theorem 10.4]{georgoulasMomentweightInequalityHilbertMumford2021}}] 
    \label{thm:kempf}
    Let \(x_0 \in X\), and suppose \(x_0\) is \(\mu\)-unstable, so that
    \[m = \inf_{g \in G}\abs{\mu(g \cdot x_0)} > 0.\]
    Let \(x(t)\) denote the flow line for \(-\grad f\), let \(x_\infty = \lim_{t \to \infty} x(t)\). Let \(g_{x_0}(t)\) denote the corresponding flow in \(G\). Now define \(\xi(t) \in \mfk\) and \(u(t) \in K\) by
    \[g_{x_0}(t) = \exp(-i\xi(t))u(t).\]
    Then the limit
    \[\xi_\infty = \lim_{t\to \infty}\frac{\xi(t)}{t}\]
    exists, and satisfies
    \[w_\mu(x_0, \xi_\infty) = -m^2,\quad\abs{\xi_\infty} = m.\]
    Moreover, there exists \(u_\infty \in K\) such that
    \[\xi_\infty = u_\infty \mu(x_\infty)u_\infty^{-1}.\]
\end{theorem}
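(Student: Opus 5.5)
We reduce to the classical convex-analysis picture on the Hadamard manifold \(G/K\), using the Kempf-Ness function. Write \(\gamma(t) = \gamma_{x_0}(t) = [g_{x_0}(t)] = [\exp(-i\xi(t))]\), which is a flow line of \(-\grad\Phi_{x_0}\) starting at \([1]\). We have \(\abs{\grad\Phi_{x_0}(\gamma(t))} = \abs{\mu(x(t))}\), since \(\Im(g^{-1}\dot g) = \mu(g^{-1}x_0)\). Because \(x(t)\) is the gradient flow of \(f = \tfrac12\abs{\mu}^2\), the function \(\abs{\mu(x(t))}\) is nonincreasing. It converges to \(\abs{\mu(x_\infty)}\), which equals \(m\) by the moment limit theorem. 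Hence \(\abs{\dot\gamma(t)} \to m\), and so \(d([1], \gamma(t))/t \le \frac1t\int_0^t\abs{\mu(x(s))}\,ds \to m\).

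Next we produce the asymptotic direction. Since \(\Phi_{x_0}\) is geodesically convex on a nonpositively curved space, we use the standard argument for gradient flows of convex functions (as in Chen-Sun). For any fixed point \(p\), comparison geometry gives
\[\frac{d}{dt}\tfrac12 d(p,\gamma(t))^2 \le \Phi_{x_0}(p) - \Phi_{x_0}(\gamma(t)).\]
We also have \(\Phi_{x_0}(\gamma(t)) \le -\int_0^t\abs{\mu(x(s))}^2ds \approx -m^2t\), with equality in the derivative. Together these show that the curve \(\gamma\) has linear speed exactly \(m\) and is asymptotically ``straight''. Concretely, we show \(\Phi_{x_0}(\gamma(t))/t \to -m^2\). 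For \(s<t\), comparing \(\gamma(t)\) with the geodesic through \(\gamma(s)\) in the direction of \(\xi(t)\) gives \(\abs{\xi(t)/t - \xi(s)/s} \to 0\). Thus \(\xi(t)/t\) is Cauchy, and the limit \(\xi_\infty\) exists with \(\abs{\xi_\infty} = m\). I expect this Cauchy property to be the main obstacle. The cleanest route is the CAT(0) estimate showing that two curves with asymptotic slope \(-m^2\) for a convex function with asymptotic speed \(m\) must have angle going to zero. In comparing triangles \([1],\gamma(s),\gamma(t)\), the key point is that any deviation would, by convexity, produce slope \(<-m^2\) along some ray. That contradicts the lower bound on asymptotic slopes discussed next.

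We then identify the weight. The asymptotic slope of \(\Phi_{x_0}\) along the ray \(\exp(-is\eta)\), for unit \(\eta\), equals \(w_\mu(x_0,\eta)\), differentiating using \(\dd\Phi_x\) and the definition of the weight. By convexity, \(\Phi_{x_0}(\exp(-is\xi_\infty/m)) \ge\) a linear function of slope \(w_\mu(x_0,\xi_\infty/m)\). The moment-weight inequality \(-w_\mu(x_0,\eta)\le \inf\abs{\mu(gx_0)}=m\) for unit \(\eta\) follows from convexity and the bound \(\abs{\grad\Phi}\ge m\). Combining it with \(\Phi_{x_0}(\gamma(t))\approx -m^2t\) and \(\gamma(t)\) tracking the ray gives \(w_\mu(x_0,\xi_\infty)=-m^2\).

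Finally, for \(\xi_\infty = u_\infty\mu(x_\infty)u_\infty^{-1}\), write \(\dot\gamma\) in terms of \(\mu(x(t))\). We have \(g^{-1}\dot g = i\mu(x(t))\), so \(\dot g = g\cdot i\mu(x(t))\). Translating back to the identity gives the velocity \(u(t)\mu(x(t))u(t)^{-1}\), up to an error controlled by the curvature. Since \(\xi(t)/t\to\xi_\infty\) and \(\mu(x(t))\to\mu(x_\infty)\), we pass to a subsequence \(u(t_k)\to u_\infty\) by compactness of \(K\). Then \(u_\infty\mu(x_\infty)u_\infty^{-1}\) is a limit of unit-speed-normalized velocities, which must coincide with the asymptotic direction \(\xi_\infty\). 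This uses that \(\abs{\dot\gamma}\to m=\abs{\xi_\infty}\), together with the straightness established above.
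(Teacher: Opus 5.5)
The paper gives no argument for this statement; it is quoted from Georgoulas--Robbin--Salamon. Your sketch therefore has to stand alone, and its central claim, the existence of \(\lim_{t\to\infty}\xi(t)/t\), is not proved.

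\textbf{Existence of the limit.} ``Comparing \(\gamma(t)\) with the geodesic through \(\gamma(s)\) in the direction of \(\xi(t)\)'' is not an estimate. The heuristic that a deviation yields a ray steeper than \(-m\) also cannot be run at finite times. What you have is a lower bound on \emph{asymptotic} slopes of rays, not a pointwise bound \(\Phi_{x_0}(q)\ge -m\,d([1],q)\).

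Step 3 is wrong as written, for two reasons. First, for a convex function on a ray the difference quotients increase to the asymptotic slope, so \(\Phi_{x_0}(\exp(-ir\eta)K)\le r\,w_\mu(x_0,\eta)\) for unit \(\eta\), not \(\ge\). Second, \(d(\gamma(t),\exp(-it\xi_\infty)K)=o(t)\) does not follow from \(\xi(t)/t\to\xi_\infty\). The exponential map at \([1]\) expands distances, and a slowly closing angle in a rank-one direction still gives linear divergence.

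The repair uses the inequality \(\frac{d}{dt}\tfrac12 d(p,\gamma(t))^2\le\Phi_{x_0}(p)-\Phi_{x_0}(\gamma(t))\), which you wrote down but never used. It must be applied after first producing an optimal ray.
\begin{enumerate}[(i)]
    \item Since \(\abs{\mu(x(s))}\ge m\), you have \(-\Phi_{x_0}(\gamma(t))=\int_0^t\abs{\mu(x)}^2\ge m\int_0^t\abs{\mu(x)}\ge m\abs{\xi(t)}\). Also \(\abs{\xi(t)}\to\infty\), because \(\Phi_{x_0}\) is Lipschitz.
    \item Take \(t_k\to\infty\) with \(\xi(t_k)/\abs{\xi(t_k)}\to\hat\eta\). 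Convexity along the segments from \([1]\) to \(\gamma(t_k)\), passed to the limit, gives \(\Phi_{x_0}(\rho(r))\le -mr\) along \(\rho(r)=\exp(-ir\hat\eta)K\).
    \item Put \(p=\rho(mT)\) and integrate over \([0,T]\), using \(\Phi_{x_0}(\gamma(t))=-m^2t+o(t)\):
    \[\tfrac12 d(\gamma(T),\rho(mT))^2\le \tfrac12m^2T^2-m^2T^2+\tfrac12m^2T^2+o(T^2)=o(T^2).\]
    \item Since \(\log\) is \(1\)-Lipschitz on the Hadamard manifold \(G/K\), this gives \(\abs{\xi(T)-mT\hat\eta}=o(T)\).
\end{enumerate}
So the full limit \(\xi_\infty=m\hat\eta\) exists with \(\abs{\xi_\infty}=m\), and \(w_\mu(x_0,\hat\eta)\le -m\). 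The moment-weight inequality supplies the reverse inequality. No Cauchy argument and no uniqueness of the optimal direction is needed. That inequality rests on \(\inf\abs{\grad\Phi_{x_0}}=m\) and on base-point independence of asymptotic slopes, not on the lower bound \(\abs{\grad\Phi_{x_0}}\ge m\). You may simply cite it.

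\textbf{The identity for \(u_\infty\).} Step 4 needs the same care. A subsequence along which merely \(u(t_k)\) converges is not enough, because a priori \(\dot\gamma\) is only radial on average. The isometry \(L_{\exp(-i\xi)}\) carries \([1]\) to \(\gamma(t)\), so by the Gauss lemma
\(\frac{d}{dt}\abs{\xi(t)}=-\langle u\mu(x)u^{-1},\xi/\abs{\xi}\rangle\le\abs{\mu(x(t))}\). Since \(\abs{\xi(T)}/T\to m\) and \(\abs{\mu(x(t))}\to m\), the nonnegative defect has Ces\`aro mean tending to \(0\). You must choose \(t_k\) where the defect itself tends to \(0\), and only then extract \(u(t_k)\to u_\infty\).

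This yields \(\xi_\infty=-u_\infty\mu(x_\infty)u_\infty^{-1}\); with \(g^{-1}\dot g=i\mu(x)\) and \(g=\exp(-i\xi)u\), the sign is forced. For example, if \(x_0\) is a critical point of \(f\) with \(\mu(x_0)\neq0\), then \(g(t)=\exp(it\mu(x_0))\), so \(\xi_\infty=-\mu(x_\infty)\). For a torus \(K\), no conjugate of \(\mu(x_\infty)\) equals \(-\mu(x_\infty)\). So the identity as printed has a sign slip, and your sketch reproduces it because it never fixes the sign of the velocity.
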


As a consequence of Kempf's existence result, we have equality in the moment-weight inequality in the unstable case.

\begin{theorem}
    [Moment-Weight, {\cite[Theorem 10.1]{georgoulasMomentweightInequalityHilbertMumford2021}}]
    \label{thm:moment-weight}
    Let \(x_0 \in X\) be \(\mu\)-unstable. Then
    \[\sup_{0 \ne \xi \in \mfk}\frac{-w_\mu(x_0, \xi)}{\abs{\xi}} = \inf_{g \in G}\abs{\mu(g \cdot x_0)}.\]
    Moreover, the supremum on the left hand side is attained at a unique \(\xi_0 \in \mfk\) up to scaling.
\end{theorem}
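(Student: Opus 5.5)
This statement is Theorem 10.1 of \cite{georgoulasMomentweightInequalityHilbertMumford2021}. I would derive it from the generalised Kempf existence result, \Cref{thm:kempf}, together with a separate weak inequality and a convexity argument for uniqueness.

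\textbf{Step 1: the inequality \(\le\).} Fix \(0 \ne \xi \in \mfk\) and \(g \in G\). The function \(t \mapsto \langle \mu(\exp(it\xi)g x_0), \xi\rangle\) is the derivative of the Kempf-Ness function along the geodesic \(t \mapsto [\exp(-it\xi)]\) based at \(gx_0\), up to sign conventions. By convexity of \(\Phi\) it is nondecreasing in \(t\). Comparing its value at \(t=0\) with the limit as \(t \to \infty\) gives
\[-w_\mu(g x_0, \xi) \le -\langle \mu(gx_0),\xi\rangle \le \abs{\mu(gx_0)}\abs{\xi}.\]
Next I must pass from \(w_\mu(gx_0,\xi)\) to \(w_\mu(x_0,\cdot)\). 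By Mumford's lemma, part (i), \(w_\mu(gx_0,\xi) = w_\mu(x_0, g^{-1}\xi g)\). Since \(g^{-1}\xi g\) lies only in \(\mcT^c\), I would instead fix \(\xi\) and vary \(g\) freely. Taking the infimum over \(g\) of \(\abs{\mu(gx_0)}\) requires the weight to be \(G\)-invariant in the point for a fixed \(\xi\), which fails in general.

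The cleaner route is to write \(g = p u\) with \(u \in K\) and \(p \in P(\xi)\) (Iwasawa-type decomposition \(G = P(\xi)K\)). Then part (ii) of Mumford's lemma together with \(K\)-equivariance gives \(-w_\mu(x_0,\xi') \le \abs{\mu(gx_0)}\abs{\xi'}\) for \(\xi' = u^{-1}\xi u\), which has the same norm. For the right order of quantifiers, I fix \(\xi\) and apply this with \(g\) ranging over \(P(\xi)\) composed with \(K\). Taking the supremum over \(\xi\) and the infimum over \(g\) then yields \(\sup \le \inf\).

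\textbf{Step 2: attainment.} By \Cref{thm:kempf}, with \(m = \inf_g\abs{\mu(gx_0)} > 0\), the element \(\xi_\infty\) satisfies \(-w_\mu(x_0,\xi_\infty)/\abs{\xi_\infty} = m^2/m = m\). Hence the supremum is at least \(m\), and with Step 1 equality holds and is attained at \(\xi_\infty\).

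\textbf{Step 3: uniqueness up to scaling.} This is the main obstacle. I would use that \(\xi \mapsto -w_\mu(x_0,\xi)\) restricted to the Lie algebra of a maximal torus \(\mft\) is the max of finitely many linear functionals, \(-\min_{\text{weights}}\). Suppose \(\xi_1,\xi_2\) are two maximisers of unit norm. By the lemma that \(P(\xi_1)\cap P(\xi_2)\) contains a maximal torus, after conjugating by elements \(p_j \in P(\xi_j)\) I can assume both lie in a common \(\mft\), without changing weights by Mumford (ii). The conjugation may change the points in \(\mfk\), but I would take \(K\)-conjugates of \(p_j\xi_jp_j^{-1}\) lying in \(\mft \subset \mfk\), with the same norm.

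On \(\mft\), \(\xi \mapsto -w_\mu(x_0,\xi)\) is concave: it is the minimum over weights \(\chi\) appearing in \(x_0\) of \(-\chi(\xi)\). If \(\xi_1 \ne \xi_2\), then the midpoint \(\eta = (\xi_1+\xi_2)/2\) has \(\abs{\eta} < 1\) by strict convexity of the Euclidean norm. Concavity gives \(-w(\eta) \ge m\), so \(-w(\eta)/\abs{\eta} > m\), contradicting Step 1. Hence the conjugated elements agree. I then need to transfer this back to \(\xi_1 = \xi_2\). I would do this by noting that \(p\xi p^{-1} \in \mfk\) with \(p \in P(\xi)\) forces \(p\xi p^{-1}\) to be \(K\)-conjugate to \(\xi\) via an element of \(P(\xi)\cap K\). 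Careful bookkeeping of these conjugations is where I expect the difficulty.
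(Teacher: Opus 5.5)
Your three-step outline (weak inequality, equality from Kempf existence, uniqueness on a common torus) is the standard one. The paper gives no proof here: it quotes \cite[Theorem 10.1]{georgoulasMomentweightInequalityHilbertMumford2021} and only remarks that equality in the unstable case follows from Kempf existence, which is your Step 2. Steps 1 and 2 are sound, with one fix to the quantifiers in Step 1. Decompose relative to the target direction: given \(\eta \in \mfk \setminus \{0\}\) and \(g \in G\), write \(g = up\) with \(u \in K\) and \(p \in P(\eta)\). Since \(upu^{-1} \in P(u\eta u^{-1})\), Mumford (i) and (ii) give \(w_\mu(x_0,\eta) = w_\mu(gx_0, u\eta u^{-1})\), and monotonicity then yields \(-w_\mu(x_0,\eta) \le \abs{\mu(gx_0)}\abs{\eta}\) for every \(g\). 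If instead you fix \(\xi\) and decompose relative to \(P(\xi)\), as you do, you still have to check that every \(\eta\) occurs as \(u^{-1}\xi u\) for the given \(g\).

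The step that fails as written is in Step 3: you cannot take \(K\)-conjugates of \(\zeta_j = p_j\xi_jp_j^{-1}\) lying in \(\mfk\). Since \(\Ad(K)\) preserves \(\mfk\), an element of \(\mcT^c\) not already in \(\mfk\) has no \(K\)-conjugate in \(\mfk\). Moreover, conjugating \(\zeta_1\) and \(\zeta_2\) by different elements would destroy the commutativity your midpoint argument needs, and would not give \(\zeta_1 = \zeta_2\) at the end.

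Move the point instead. The maximal compact subgroup \(T_0\) of the common maximal torus satisfies \(g^{-1}T_0g \subseteq K\) for a single \(g \in G\). Set \(y_0 = g^{-1}x_0\) and \(\tilde\zeta_j = g^{-1}\zeta_j g \in \Lie(g^{-1}T_0g) \subseteq \mfk\). Then:
\begin{enumerate}[(i)]
    \item weights are preserved by Mumford (i);
    \item \(\inf_{h \in G}\abs{\mu(hy_0)} = m\);
    \item \(\abs{\tilde\zeta_j} = \abs{\xi_j}\), because \(G\)-conjugate elements of \(\mfk\) are \(K\)-conjugate.
\end{enumerate}
On this abelian subalgebra, \(\zeta \mapsto w_\mu(y_0,\zeta)\) is the maximum of the torus weights occurring in a lift of \(y_0\), hence convex. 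Your first sentence of Step 3 has this sign backwards; \(-w_\mu\) is a minimum, as you later say. Your midpoint argument, with Step 1 applied at \(y_0\), then gives \(\tilde\zeta_1 = \tilde\zeta_2\), i.e.\ \(\zeta_1 = \zeta_2\).

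For the transfer back, the missing observation is \(P(\xi_j) = p_jP(\xi_j)p_j^{-1} = P(\zeta_j)\), so \(P(\xi_1) = P(\xi_2)\). Hence \(p_2^{-1}p_1 \in P(\xi_1)\) and \(\xi_2 = (p_2^{-1}p_1)\xi_1(p_2^{-1}p_1)^{-1}\). Each \(P\)-conjugacy class of toral generators contains a unique element of \(\mfk\) \cite[Theorem D.4]{georgoulasMomentweightInequalityHilbertMumford2021}, so \(\xi_1 = \xi_2\). This is the same bookkeeping the paper uses to show that \(\xi_q\) is the optimal destabilising direction for \(q'\).
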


The direction \(\xi_0\) in \Cref{thm:moment-weight} is the same as the direction \(\xi_\infty\) in \Cref{thm:kempf}, up to scaling. As such, we call \(\xi_0\) the \emph{optimal destabilising direction} for \(x_0\). So far, we have only shown that the optimal destabilising direction \(\xi_0\) exists, but we have not shown that \(\xi_0\) is rational. 

\begin{definition}
    Fix a constant \(\hbar > 0\). An invariant inner product on \(\mfk\) is called \emph{rational} with factor \(\hbar\) if for all \(\xi, \eta \in \Lambda\), with \([\xi, \eta] = 0\), we have that \(\langle \xi, \eta\rangle \in 2\pi\hbar \Z\).
\end{definition}

One example is given by the following.

\begin{lemma}
    \label{lem:lie-rational}
    Suppose \(K\) is a Lie subgroup of \(U(n)\), then the inner product on \(\mfk\) given by
    \[\langle \xi, \eta \rangle = -2\pi\hbar \frac{\tr(\xi\eta)}{4\pi^2}\]
    is rational with factor \(\hbar\).
\end{lemma}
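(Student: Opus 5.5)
The aim is to show that for commuting \(\xi, \eta \in \Lambda\) the number \(-2\pi\hbar\,\tr(\xi\eta)/(4\pi^2)\) lies in \(2\pi\hbar\Z\). Equivalently, we need \(-\tr(\xi\eta) \in 4\pi^2\Z\). The plan is to diagonalise \(\xi\) and \(\eta\) simultaneously and read off their eigenvalues from the condition \(\exp(\xi) = \exp(\eta) = 1\).

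First, since \(K \subseteq U(n)\), we have \(\mfk \subseteq \mathfrak{u}(n)\), so \(\xi\) and \(\eta\) are skew-Hermitian matrices. Because \([\xi, \eta] = 0\), they are commuting normal matrices. Hence there is a single unitary \(u \in U(n)\) with
\[u\xi u^{-1} = \mathrm{diag}(i a_1, \dots, i a_n), \qquad u\eta u^{-1} = \mathrm{diag}(i b_1, \dots, i b_n),\]
where \(a_j, b_j \in \R\).

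Next, the condition \(\exp(\xi) = 1\) in \(K\) means \(\exp(\xi) = 1\) as a matrix, since \(K\) is a Lie subgroup of \(U(n)\) and the exponential maps agree. Conjugating by \(u\), this says \(e^{i a_j} = 1\) for every \(j\), so \(a_j \in 2\pi\Z\). In the same way \(b_j \in 2\pi\Z\).

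Finally, trace is conjugation-invariant, so
\[\tr(\xi\eta) = \sum_j (i a_j)(i b_j) = -\sum_j a_j b_j \in -4\pi^2\Z.\]
Substituting this in gives
\[\langle \xi, \eta \rangle = 2\pi\hbar\,\frac{\sum_j a_j b_j}{4\pi^2} \in 2\pi\hbar\Z,\]
which is exactly the rationality condition with factor \(\hbar\).

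It remains to check that the formula defines an \(\Ad\)-invariant inner product. Invariance under \(\Ad\) follows from conjugation-invariance of the trace. Positivity holds because \(-\tr(\xi^2) = \sum_j \abs{\lambda_j}^2 > 0\) for \(\xi \neq 0\) skew-Hermitian, where \(\lambda_j\) are the eigenvalues of \(\xi\).

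No step here is a real obstacle. The only point that needs care is the simultaneous unitary diagonalisation, which relies on \([\xi,\eta]=0\) together with the normality of \(\xi\) and \(\eta\).
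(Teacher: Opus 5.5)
Your proof is correct: simultaneously unitarily diagonalising the commuting skew-Hermitian matrices \(\xi,\eta\) gives eigenvalues \(2\pi i m_j\) and \(2\pi i n_j\) with \(m_j,n_j \in \Z\), forced by \(\exp(\xi)=\exp(\eta)=1\), and hence \(\langle \xi,\eta\rangle = 2\pi\hbar\sum_j m_j n_j \in 2\pi\hbar\Z\). The paper states this lemma without proof, as the standard example in the Georgoulas--Robbin--Salamon framework, and your computation is exactly the verification that example relies on.
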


Thus, we have the following result.

\begin{proposition}
    [{\cite[Corollary 10.6]{georgoulasMomentweightInequalityHilbertMumford2021}}]
    \label{prop:rationality}
    Let \(x_0 \in X\) be \(\mu\)-unstable. Suppose the invariant inner product on \(\mfk\) is rational with factor \(\hbar \in \Q_{>0}\). Then there exists a positive integer \(l\) such that \(\sqrt{2\pi l \hbar}\xi_0 \in \Lambda\).
\end{proposition}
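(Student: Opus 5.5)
The approach is to reduce to a maximal torus and use the fact that the $\mu$-weight is then piecewise linear with rational data. The optimal direction $\xi_0$ is unique up to scaling by \Cref{thm:moment-weight}, and we may normalise it as in \Cref{thm:kempf}, so that $\abs{\xi_0} = m$ and $w_\mu(x_0,\xi_0) = -m^2$. Choose a maximal torus $T \subseteq K$ with $\xi_0 \in \mft = \Lie(T)$, and let $\Lambda_T = \Lambda \cap \mft$ be its integral lattice. By \Cref{lem:lie-rational}-type rationality, $\langle \xi,\eta\rangle \in 2\pi\hbar\Z$ for $\xi,\eta \in \Lambda_T$, so the inner product restricted to $\Lambda_T \otimes \Q$ is $2\pi\hbar$ times a rational form.

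Next I compute $w_\mu(x_0,\cdot)$ on $\mft$. Here $X \subseteq \P^n$ and the action is linear, so we can write a lift $v = \sum_{\alpha \in A} v_\alpha$ of $x_0$ as a sum of $T$-weight vectors. The set $A \subset \mft^*$ is finite and consists of integral weights, meaning $\alpha(\Lambda_T) \subseteq 2\pi\Z$ up to the fixed normalisation. Using \Cref{lem:one-param-limit} and the explicit formula for $\mu$, the limit $\exp(it\xi)x_0$ lands on the extremal weight space, and
\[ w_\mu(x_0,\xi) = \hbar' \max_{\alpha \in A}\bigl(-\alpha(\xi)\bigr)\]
for a normalising constant $\hbar'$ fixed by the conventions; I will have to track the sign and constant carefully. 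Hence, on $\mft$, the problem $\sup -w_\mu(x_0,\xi)/\abs{\xi}$ becomes a finite-dimensional convex problem. Identify $\mft^* \cong \mft$ via the inner product, so that the $\alpha$ become vectors $a_\alpha$. The maximiser $\xi_0$ is then, up to a positive scalar, the point $p$ of $\operatorname{conv}\{a_\alpha\}$ closest to the origin; this is the standard Kirwan--Ness picture. Since $x_0$ is unstable, $p \neq 0$.

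The rationality step comes next. The point $p$ is the orthogonal projection of $0$ onto the affine span of some face of the convex hull. It therefore solves a linear system whose coefficients are inner products of the $a_\alpha$. After dividing by $2\pi\hbar$, these coefficients lie in $\Q$, and since the vectors $a_\alpha$ are rational combinations of lattice vectors, $p \in \Lambda_T\otimes\Q$. Hence $\xi_0 = \lambda\eta$ for some $\eta \in \Lambda_T$ and $\lambda>0$.

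To pin down $\lambda$, use $w_\mu(x_0,\xi_0) = -\abs{\xi_0}^2$. Since $w_\mu(x_0,\cdot)$ is positively homogeneous, this gives $\lambda\, w_\mu(x_0,\eta) = -\lambda^2\abs{\eta}^2$, so
\[\lambda = \frac{-w_\mu(x_0,\eta)}{\abs{\eta}^2}.\]
Here $\abs{\eta}^2 \in 2\pi\hbar\Z_{>0}$, and $w_\mu(x_0,\eta)$ lies in a fixed lattice (a $\hbar$-multiple of $\Z$) because $\eta$ is integral. Hence $\lambda^2 \in \Q_{>0}\cdot(2\pi\hbar)^{-1}$, and using $\hbar \in \Q$, $\lambda = q/\sqrt{2\pi l_0\hbar}$ for some $q\in\Q_{>0}$ and positive integer $l_0$. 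Clearing the denominator of $q$ by replacing $l_0$ with $l = l_0 N^2$ then gives $\sqrt{2\pi l\hbar}\,\xi_0 \in \Lambda_T \subseteq \Lambda$.

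I expect the main obstacle to be the bookkeeping of normalisations rather than any conceptual step. One must show that the weights $\alpha$, expressed through $\mu$ and $\hbar$, are rational multiples of $\hbar$ on $\Lambda_T$, so that $w_\mu(x_0,\eta)/\abs{\eta}^2$ has exactly the form $(\text{rational})/\sqrt{2\pi\hbar}$ after squaring. This is where the rationality factor $\hbar \in \Q_{>0}$ enters. I would first check this in the model case $K=U(1)$ acting on $\C^{n+1}$ with the trace form of \Cref{lem:lie-rational}.
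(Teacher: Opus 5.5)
Your convex-geometric core is correct, but the final normalisation step fails. The core: because \(\xi_0\) maximises \(-w_\mu(x_0,\cdot)/\abs{\cdot}\) on all of \(\mfk\), it does so on \(\mft\), where \(-w_\mu(x_0,\xi)=\min_\alpha\langle a_\alpha,\xi\rangle\). Hence \(\xi_0\) points along the nearest point \(p\) of \(\operatorname{conv}\{a_\alpha\}\) to the origin. For your chosen \(T\), \(p\neq 0\) follows from \(w_\mu(x_0,\xi_0)<0\), not from \(G\)-instability alone. Finally, \(p\) is rational, being the orthogonal projection of \(0\) onto a rational affine subspace for a rational inner product. So the ray \(\R_{>0}\xi_0\) meets \(\Lambda\), which is all the paper actually uses.

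The gap is your last step, and it is not mere bookkeeping: with your normalisation \(\abs{\xi_0}=m\), \(w_\mu(x_0,\xi_0)=-m^2\), the conclusion is false. If \(\eta=\sqrt{2\pi l\hbar}\,\xi_0\in\Lambda\), then \(2\pi l\hbar\abs{\xi_0}^2=\abs{\eta}^2\in 2\pi\hbar\Z\), so \(\abs{\xi_0}^2\) would have to be rational, while \(m^2\) generally involves \(\pi\). For instance, let \(K=\{\operatorname{diag}(e^{i\theta},e^{2i\theta})\}\subset U(2)\) act on \(\P^1\) with the trace form of \Cref{lem:lie-rational}, and take \(x_0=[1:0]\). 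Writing \(\xi=\operatorname{diag}(ia,2ia)\), one has \(\abs{\xi}^2=5\hbar a^2/(2\pi)\), \(\Lambda\) corresponds to \(a\in 2\pi\Z\setminus\{0\}\), and \(w_\mu(x_0,\xi)=-a\). So your \(\xi_0\) has \(a=m^2=2\pi/(5\hbar)\), and \(\sqrt{2\pi l\hbar}\,a\in 2\pi\Z\) would force \(\pi\in\Q\). Correspondingly, the arithmetic in your final paragraph does not close. The pairing \(\langle\mu(x),\eta\rangle=\langle v,i\eta v\rangle/\abs{v}^2\) does not involve the inner product, so \(w_\mu(x_0,\eta)\in 2\pi\Z\) for \(\eta\in\Lambda\), and hence \(\lambda\in\hbar^{-1}\Q_{>0}\). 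Even granting your guess \(w_\mu(x_0,\eta)\in\hbar\Z\), you would only get \(\lambda\in\Q/(2\pi)\), not \(\lambda^2\in\Q_{>0}(2\pi\hbar)^{-1}\).

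The paper's statement fixes \(\xi_0\) only up to scaling, but the factor \(\sqrt{2\pi l\hbar}\) pins it down. The proposition holds exactly for the unit-length maximiser, and \(\sqrt{2\pi l\hbar}\) is then just \(\abs{\eta}\) for the lattice vector \(\eta\) on the ray. With \(\abs{\xi_0}=1\), your last step becomes one line. Your convex-hull argument, after clearing denominators, gives \(\xi_0=\lambda\eta\) with \(\eta\in\Lambda\) and \(\lambda>0\). Rationality applied to the commuting pair \((\eta,\eta)\) gives \(\abs{\eta}^2=2\pi l\hbar\) with \(l\in\Z_{>0}\). Then \(\abs{\xi_0}=1\) forces \(\lambda=1/\sqrt{2\pi l\hbar}\), i.e. \(\sqrt{2\pi l\hbar}\,\xi_0=\eta\in\Lambda\). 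No value of \(w_\mu(x_0,\eta)\) is needed. The hypothesis \(\hbar\in\Q\) enters only to place the \(a_\alpha\) in \(\Lambda_T\otimes\Q\); without it they lie in \(\hbar^{-1}(\Lambda_T\otimes\Q)\), which still gives a rational direction.
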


\begin{remark}
    We note that when the inner product on \(\mfk\) is rational, then the optimal destabilising direction can be computed algebro-geometrically, and is the same as Kempf's optimal destabilising one-parameter subgroup, up to scaling \cite{kirwanCohomologyQuotientsSymplectic1984,nessStratificationNullCone1984}.
\end{remark}

In what follows, we will not need the precise constant, and just the fact that the torus
\[T = \{\exp(t\xi) \mid t \in \R\}\]
is one-dimensional.

\begin{remark}
    We expect that many of the results in this paper should hold without a rationality assumption, and for general compact K\"ahler manifolds, but we will not pursue this here.
\end{remark}

\subsection{Moment maps relative to a torus}

\label{sec:relative-torus}

In this section, a \emph{torus} means a compact connected abelian Lie group. The results are based on the thesis of Sz\'ekelyhidi \cite{szekelyhidiExtremalMetricsKstability2006}, though we follow the presentation in \cite{georgoulasMomentweightInequalityHilbertMumford2021}. In the subsequent sections, we only need the result when \(T = \{\exp(t\xi) \mid t \in \R\}\) is a one-dimensional compact torus contained in \(K\), and in this setting, many of the results can be found in works of Kirwan \cite{kirwanCohomologyQuotientsSymplectic1984,kirwanPartialDesingularisationsQuotients1985,kirwanRefinementsMorseStratification2005} and Ness \cite{nessStratificationNullCone1984}.

Let \(x \in X\), and let \(T \subseteq G_x\) be a compact torus, with Lie algebra \(\mft \subseteq \mfg\). Let \(G_T\) denote the identity component of the centraliser of \(T\). This has Lie algebra
\[\mfg_T = \{\zeta \in \mfg \mid [\zeta, \tau] = 0 \text{ for all }\tau \in \mft\}.\]
For \(x \in X\), \(\zeta \in \mcT^c \cap \mfg_T \setminus \mft\), the \((\mu, T)\)-weight is defined by
\[w_{\mu, T}(x, \zeta) = \lim_{t\to\infty}\langle\mu(\exp(it\zeta)\cdot x),\Re(\zeta - \pr_T(\zeta))\rangle.\]
Here, \(\pr_T \colon \mfg \to \mft\) is the projection map, defined as follows. On \(\mfg\), we have a \(G\)-invariant pairing
\[\langle \zeta_1, \zeta_2\rangle_\mfg = \langle \Re(\zeta_1), \Re(\zeta_2)\rangle - \langle \Im(\zeta_1), \Im(\zeta_2)\rangle.\]
This is positive definite on \(\mft\), and so there exists a unique linear map \(\pr_T \colon \mfg \to \mft\), such that for all \(\zeta \in \mfg\), \(\tau \in \mft\), we have
\[\langle \zeta - \pr_T(\zeta), \tau\rangle_\mfg = 0.\]

In our setting, \(T\) will in fact be a torus in \(K_x\), and so we now restrict ourselves to this setting. We let \(K_T\) be the identity component of the centraliser of \(T\) in \(K\). This has Lie algebra
\[\mfk_T = \{\xi \in \mfk \mid [\xi, \tau] = 0 \text{ for all }\tau \in \mft\}.\]

In this case, \(K_T/T\) is a compact Lie group, with Lie algebra \(\mfk_T/\mft\). For \(\xi \in \mfk_T\), we write \([\xi]_T = \xi + \mft \in \mfk_T/\mft\). The group \(K_T/T\) acts on the space
\[X_T = \{y \in X \mid T \subseteq K_y\}\]
of fixed points of \(T\).

The following results describe the \(K_T/T\) action on \(X_T\), and how it relates to the original \(K\) action on \(X\). First of all, we have a natural moment map for the \(K_T/T\) action on \(X_T\).

\begin{lemma}
    [{\cite[Lemma 13.9]{georgoulasMomentweightInequalityHilbertMumford2021}}]
    \leavevmode
    \begin{enumerate}[(i)]
        \item The set \(X_T\) is a closed \(K_T\)-invariant complex submanifold of \(X\), and \(\mu(X_T) \subseteq \mfk_T\).
        \item The group \(K_T/T\) acts on \(X_T\) by K\"ahler isometries, with moment map
        \begin{align*}
            \mu_T\colon X_T &\to \mfk_T/\mft \\
            \mu_T(y) &= [\mu(y)]_T.
        \end{align*}
    \end{enumerate}
\end{lemma}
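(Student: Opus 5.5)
The plan is to verify each claim directly from the definitions, using only that \(T\) is a compact connected torus acting by holomorphic isometries and that \(\mu\) is \(K\)-equivariant. For (i), write \(X_T = \bigcap_{\tau \in T}\{y \mid \tau \cdot y = y\}\), an intersection of closed sets, so \(X_T\) is closed.

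For the submanifold structure, fix \(y \in X_T\). Since \(T\) fixes \(y\) and acts by isometries, the Riemannian exponential map \(\exp_y\) restricted to a small ball in \(T_yX\) is \(T\)-equivariant, where \(T\) acts on \(T_yX\) by the linearised (isotropy) action. Near \(y\) this identifies \(X_T\) with the fixed subspace \((T_yX)^T\), so \(X_T\) is a smooth submanifold. Because the action is holomorphic, the isotropy action commutes with \(J\), so \((T_yX)^T\) is \(J\)-invariant. Hence \(X_T\) is a complex submanifold.

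\(K_T\)-invariance is immediate. If \(k \in K_T\), \(\tau \in T\) and \(y \in X_T\), then \(\tau k y = k \tau y = k y\).

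For \(\mu(X_T) \subseteq \mfk_T\), use equivariance: \(\mu(y) = \mu(\tau y) = \Ad_\tau \mu(y)\) for all \(\tau \in T\). So \(\mu(y)\) is fixed by \(\Ad(T)\), and differentiating gives \([\tau, \mu(y)] = 0\) for all \(\tau \in \mft\).

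For (ii), \(T\) acts trivially on \(X_T\), so the \(K_T\)-action descends to \(K_T/T\). It acts by Kähler isometries because \(X_T\) carries the restricted Kähler structure. To make sense of \(\mu_T\), identify \(\mfk_T/\mft\) with \(\mft^\perp \cap \mfk_T\) via the restricted inner product; then \([\mu(y)]_T\) corresponds to the orthogonal projection of \(\mu(y)\) onto \(\mft^\perp\). There are two things to check.

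\emph{Equivariance.} For \(k \in K_T\), \(\Ad_k\) fixes \(\mft\) pointwise, since \(K_T\) is connected and centralises \(T\). By \(\Ad\)-invariance of the inner product, \(\Ad_k\) then preserves \(\mft^\perp\) and commutes with the projection. Combining this with \(K\)-equivariance of \(\mu\) gives \(\mu_T(ky) = \Ad_k\mu_T(y)\), which is equivariance for the \(K_T/T\)-action.

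\emph{Moment map identity.} For \(\xi \in \mfk_T\), decompose \(\xi = \xi^\perp + \xi^{\mft}\) with \(\xi^\perp \in \mft^\perp\) and \(\xi^{\mft} \in \mft\). Then
\[\langle \mu_T(y), [\xi]_T\rangle = \langle \mu(y), \xi^\perp\rangle = \langle \mu(y), \xi\rangle - \langle \mu(y), \xi^{\mft}\rangle.\]
On \(X_T\), \(\dd\langle\mu,\xi\rangle = \iota_{v_\xi}\omega\). Also \(\dd\langle \mu, \xi^{\mft}\rangle = \iota_{v_{\xi^{\mft}}}\omega = 0\), because \(v_{\xi^{\mft}}\) vanishes identically on \(X_T\). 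Moreover \(v_\xi\) restricted to \(X_T\) is exactly the fundamental vector field of \([\xi]_T\) for the quotient action. Hence \(\dd\langle \mu_T, [\xi]_T\rangle = \iota_{v_{[\xi]_T}}\omega|_{X_T}\), as required.

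The only real subtlety is this identification of \(\mfk_T/\mft\) with its dual, and checking that the projection is compatible with both the \(\Ad\)-action and the differential identity. The key input there is that \(\langle\mu,\tau\rangle\) is locally constant on \(X_T\) for \(\tau \in \mft\). Everything else is routine. No compactness of \(X\) is needed beyond the existence of the \(T\)-invariant metric, which is given.
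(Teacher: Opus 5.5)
Your argument is correct and is essentially the standard direct verification behind the result of Georgoulas--Robbin--Salamon, which the paper cites without reproducing. The steps match: $X_T$ is the fixed-point set of a compact group of holomorphic isometries, $\mu(X_T)\subseteq\mfk_T$ follows by equivariance, and the moment map identity for $[\mu]_T$ comes from the orthogonal splitting $\mfk_T = \mft \oplus (\mft^\perp\cap\mfk_T)$ together with the vanishing of $v_\tau$ on $X_T$ for $\tau\in\mft$. The one step you state tersely is that a submanifold with $J$-invariant tangent spaces is a complex submanifold; this is standard, since locally it is a graph over its tangent space in holomorphic coordinates with complex-linear differential, hence holomorphic.
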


Next, we have the following result about the toral generators.

\begin{lemma}
    [{\cite[Lemma 13.11]{georgoulasMomentweightInequalityHilbertMumford2021}}]
    Let \(T^c\) (resp. \(\mft^c\)) denote the complexification of \(T\) (resp. \(\mft\)). Then the group \(G_T/T^c\) is reductive, with Lie algebra \(\mfg_T/\mft^c\). In fact, \(G_T/T^c\) is the complexification of \(K_T/T\).
    
    Moreover, the set of toral generators in \(\mfg_T/\mft^c\) is given by
    \[\mcT^c_{\mfg_T/\mft^c} = \{[\zeta]_{T^c} \mid \zeta \in \mcT^c \cap \mfg_T \setminus \mft\}.\]
\end{lemma}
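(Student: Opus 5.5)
The plan is to prove the two assertions separately. First, that \(G_T/T^c\) is the reductive complexification of \(K_T/T\). Second, the description of its toral generators, whose key step is conjugating a toral generator of \(\mfg\) that centralises \(\mft\) into \(\mfk_T\) by an element of \(G_T\).

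For the first part, I would start from the fact that \(G_T\) is the identity component of the centraliser of the torus \(T^c\) in the reductive group \(G\). Such a centraliser is a Levi-type subgroup, hence reductive, and it is the complexification of \(K_T\), the identity component of the centraliser of \(T\) in \(K\). Concretely, the Cartan decomposition \(G = K\exp(i\mfk)\) restricts to
\[G_T = K_T\exp(i\mfk_T).\]
This holds because if \(u\exp(i\xi)\) commutes with \(T\), then uniqueness of the polar decomposition forces \(u\) and \(\exp(i\xi)\) to commute with \(T\) separately, and hence \(\xi \in \mfk_T\).

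Since \(T^c\) is a central closed subgroup of \(G_T\), the quotient \(G_T/T^c\) is a Lie group with Lie algebra \(\mfg_T/\mft^c\). The quotient of a reductive group by a central torus is again reductive. The decomposition above descends to
\[G_T/T^c = (K_T/T)\exp\bigl(i\,\mfk_T/\mft\bigr),\]
and this exhibits \(G_T/T^c\) as the complexification of \(K_T/T\).

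For the toral generators, the easy inclusion goes as follows. Take a toral generator of \(\mfg_T/\mft^c\), written \([g][\xi][g]^{-1}\) with \(g \in G_T\) and \(\xi \in \mfk_T\) whose class is nonzero. Then \(\zeta = g\xi g^{-1}\) lies in \(\mcT^c \cap \mfg_T\), its class is the given element, and \(\zeta \notin \mft\) because the class is nonzero. Here I would note that the exclusion should really be \(\zeta \notin \mft^c\), so that the class is nonzero.

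For the converse, let \(\zeta = g\xi g^{-1} \in \mfg_T\) with \(g \in G\), \(\xi \in \mfk\) and \([\zeta]_{T^c} \ne 0\). I would proceed in three steps:
\begin{enumerate}[(i)]
    \item The closure \(S\) of \(\{\exp(s\zeta) \mid s \in \R\}\) is \(g\) times a compact torus in \(K\) times \(g^{-1}\), so \(S\) is a compact torus in \(G\).
    \item Since \(\zeta\) centralises \(\mft\), \(S\) commutes with \(T\). Hence the product \(S \cdot T\) is the image of the compact group \(S \times T\) under multiplication, so it is a compact abelian subgroup of the centraliser of \(T\). Its identity component is a compact torus lying in \(G_T\).
    \item Every compact subgroup of the reductive group \(G_T\) is conjugate into the maximal compact subgroup \(K_T\). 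So there is \(h \in G_T\) with \(h^{-1}(S\cdot T)h \subseteq K_T\), and therefore \(\eta := h^{-1}\zeta h \in \mfk_T\).
\end{enumerate}
Then \([\zeta] = [h][\eta][h]^{-1}\), and \([\eta] \ne 0\) since \([\zeta] \ne 0\) and \(h\) centralises \(\mft^c\). So \([\zeta]\) is a toral generator of \(\mfg_T/\mft^c\).

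I expect the main obstacle to be step (iii), the conjugacy of compact subgroups into \(K_T\) within \(G_T\) rather than within \(G\). The point of steps (i) and (ii) is precisely to include \(T\) in the compact group before conjugating. That way the conjugating element can be chosen in the centraliser, which is what makes the argument work; the remainder is routine Lie theory.
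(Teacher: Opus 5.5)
The paper does not prove this lemma; it quotes it from Georgoulas--Robbin--Salamon, so there is no in-text argument to compare against. Your proof is correct. I have two small remarks.

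First, your suggested correction of \(\setminus \mft\) to \(\setminus \mft^c\) is unnecessary. Suppose \(\zeta = \tau_1 + i\tau_2 \in \mcT^c \cap \mft^c\) with \(\tau_1, \tau_2 \in \mft\). Then \(\exp(s\zeta) = \exp(s\tau_1)\exp(is\tau_2)\). By your step (i), \(\{\exp(s\zeta) \mid s \in \R\}\) is relatively compact, so uniqueness of the polar decomposition forces \(\tau_2 = 0\). Hence \(\mcT^c \cap \mft^c = \mft \setminus \{0\}\), and the lemma holds exactly as written. You need this one line in the converse to pass from the stated hypothesis \(\zeta \notin \mft\) to \([\zeta]_{T^c} \neq 0\).

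Second, the conjugating element lands in \(G_T\) because you apply the conjugacy theorem for compact subgroups inside \(G_T\). You can use Cartan--Iwasawa--Malcev, or equivalently Cartan's fixed point theorem on the Hadamard manifold \(G_T/K_T\); either way, \(K_T\) is the maximal compact subgroup of \(G_T\) by your polar decomposition. For this it is enough that \(S \subseteq G_T\), which holds because \(S\) is connected and commutes with \(T\). Adjoining \(T\) in step (ii) is harmless, and \(S \cdot T\) is already connected, but it is not what makes the argument work.
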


Using the above, we can relate the \(\mu\)-weight of \((x, \zeta)\) to the \(\mu_T\)-weight of \((x, [\zeta]_{T^c})\).

\begin{lemma}
    [{\cite[Lemma 13.12]{georgoulasMomentweightInequalityHilbertMumford2021}}]
    \label{lem:relative-torus-weight}
    Let \(x \in X\), and suppose \(T \subseteq K_x\) is a torus. Then
    \begin{enumerate}[(i)]
        \item For \(\zeta \in \mcT^c \cap \mfg_T \setminus \mft\), we have
        \[w_{\mu, T}(x, \zeta) = w_\mu(x, \zeta) - \langle \mu(x), \pr_T(\zeta)\rangle.\]
        \item For \(\zeta \in \mcT^c \cap \mfg_T \setminus \mft\), we have
        \[w_{\mu, T}(x, \zeta) = w_{\mu_T}(x, [\zeta]_{T^c}).\]
        The right hand side denotes the \(\mu_T\) weight of \((x, [\zeta]_{T^c})\) with respect to the \(K_T/T\) action on \(X_T\).
    \end{enumerate}
\end{lemma}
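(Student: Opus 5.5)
Both parts come from one observation: every point $\exp(it\zeta)\cdot x$ lies in the fixed locus $X_T$, and both $T$ and $T^c$ act trivially there. Assume, as in the definitions, that $\zeta\in\mcT^c\cap\mfg_T\setminus\mft$, and write $\zeta = a+ib$ with $a,b\in\mfk$.

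\textbf{Preliminary.} First compute $\pr_T$ explicitly. For $\tau\in\mft$ we have $\Im(\tau)=0$, so
\[\langle\zeta,\tau\rangle_\mfg=\langle a,\tau\rangle.\]
Hence $\pr_T(\zeta)=\pi(a)$, where $\pi\colon\mfk\to\mft$ is the orthogonal projection for the fixed inner product. In particular $\pr_T(\zeta)$ is real, so
\[\Re(\zeta-\pr_T(\zeta)) = a-\pi(a).\]

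Next, since $\zeta$ commutes with $\mft$, the one-parameter group $\exp(is\zeta)$ lies in $G_T$. Because $T$ is abelian and connected, it lies in $K_T\subseteq G_T$. So $\exp(is\zeta)$ commutes with $T$ and preserves $X_T$. Since $x\in X_T$ (as $T\subseteq K_x$), the whole curve $y(s)=\exp(is\zeta)\cdot x$ lies in $X_T$. By Lemma~\ref{lem:one-param-limit} its limit $x_+$ also lies in $X_T$, since $X_T$ is closed.

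\textbf{Part (i).} For $\tau\in\mft$, the moment map equation gives
\[\dd\langle\mu,\tau\rangle=\iota_{v_\tau}\omega .\]
The vector field $v_\tau$ vanishes identically on $X_T$, so $\langle\mu,\tau\rangle$ has zero derivative along $X_T$. It is therefore constant along the connected curve $y(s)$, which joins $x$ to $x_+$. Taking $\tau=\pr_T(\zeta)$ gives
\[\langle\mu(\exp(it\zeta)x),\pr_T(\zeta)\rangle=\langle\mu(x),\pr_T(\zeta)\rangle\quad\text{for all } t.\]
Since $\pr_T(\zeta)$ is real, $\Re(\zeta-\pr_T(\zeta)) = \Re(\zeta)-\pr_T(\zeta)$. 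Splitting the limit defining $w_{\mu,T}$ linearly into its $\Re(\zeta)$ part and its $\pr_T(\zeta)$ part then yields (i). Both limits exist because $y(t)\to x_+$ and $\mu$ is continuous.

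\textbf{Part (ii).} The $\mu_T$-weight of $[\zeta]_{T^c}$ may be computed with any representative. I would choose
\[\zeta'=\zeta-\pr_T(\zeta)\in\mfg_T .\]
The $G_T/T^c$ action on $X_T$ is induced from $G_T$, and $\exp(it[\zeta']_{T^c})$ acts as $\exp(it\zeta')$. Because $\pr_T(\zeta)\in\mft$ commutes with $\zeta$,
\[\exp(it\zeta')=\exp(it\zeta)\exp(-it\pr_T(\zeta)).\]
Now $T$ fixes $X_T$ pointwise and acts holomorphically, so its complexification $T^c$ also fixes $X_T$ pointwise. Hence $\exp(it\zeta')\cdot x=\exp(it\zeta)\cdot x$.

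For the pairing, identify $\mfk_T/\mft$ with $\mft^\perp\cap\mfk_T$ using the quotient inner product. Then
\[\langle[\mu(y)]_T,[c]_T\rangle=\langle\mu(y),c-\pi(c)\rangle\quad\text{for } c\in\mfk_T .\]
With $c=\Re(\zeta')=a-\pi(a)$, the preliminary computation shows that the integrand in the definition of $w_{\mu_T}(x,[\zeta]_{T^c})$ is exactly $\langle\mu(\exp(it\zeta)x),\Re(\zeta-\pr_T(\zeta))\rangle$. Taking $t\to\infty$ gives (ii).

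\textbf{Main obstacle.} The only delicate point is bookkeeping. One must check that the weight is independent of the chosen representative of $[\zeta]_{T^c}$, which holds because $T^c$ acts trivially on $X_T$. One must also check that the inner product on $\mfk_T/\mft$ used to identify it with its dual is the quotient one, matching the convention in Lemma~13.9 for $\mu_T$. With those conventions fixed, the argument is purely formal.
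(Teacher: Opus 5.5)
Your proof is correct. The paper gives no proof of its own for this lemma (it is quoted from Georgoulas--Robbin--Salamon), but your route is the standard one: $\exp(it\zeta)\cdot x$ stays in $X_T$, $\langle \mu, \tau\rangle$ is constant there for $\tau \in \mft$ because $v_\tau$ vanishes on $X_T$, and $\pr_T(\zeta)$ is the orthogonal projection of $\Re(\zeta)$ onto $\mft$; this is the same mechanism as \Cref{lem:relative-torus-moment-map}(iii), which the paper uses in \Cref{prop:relative-torus-semistable}. In (ii), the detour through the representative $\zeta - \pr_T(\zeta)$ is harmless but unnecessary: any lift of $[\zeta]_{T^c}$ gives the same curve in $X_T$, and $\Re([\zeta]_{T^c}) = [\Re(\zeta)]_T$ in every case.
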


To conclude, we have the following results about the moment map \(\mu\) in \(G_T\)-orbits.

\begin{lemma}
    [{\cite[Lemma 13.13]{georgoulasMomentweightInequalityHilbertMumford2021}}]
    \label{lem:relative-torus-moment-map}
    Let \(x \in X\), and suppose \(T \subseteq K_x\) is a torus. Then for all \(g \in G_T\),
    \begin{enumerate}[(i)]
        \item \(T \subseteq K_{gx}\),
        \item \(\mu(gx) \in \mfk_T\),
        \item \(\mu(gx) - \mu(x) \in \mft^\perp\).
    \end{enumerate}
\end{lemma}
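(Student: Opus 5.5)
We need to prove Lemma 13.13: for \(x\) with \(T\subseteq K_x\) and \(g \in G_T\), (i) \(T\subseteq K_{gx}\); (ii) \(\mu(gx)\in\mfk_T\); (iii) \(\mu(gx)-\mu(x)\in\mft^\perp\).

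For (i), fix \(u \in T\). Since \(G_T\) centralises \(T\), we have \(ugx = gux = gx\), so \(u \in G_{gx}\). As \(u \in K\), this gives \(u \in K_{gx}\).

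For (ii), use \(K\)-equivariance of the moment map: for \(u \in T \subseteq K_{gx}\),
\[\Ad(u)\mu(gx) = \mu(ugx) = \mu(gx).\]
So \(\mu(gx)\) is fixed by \(\Ad(T)\). Differentiating along \(T\) gives \([\tau,\mu(gx)]=0\) for all \(\tau\in\mft\), i.e.\ \(\mu(gx)\in\mfk_T\). This is just Lemma 13.9(i) applied to the point \(gx \in X_T\).

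For (iii), fix \(\tau\in\mft\) and show that \(\langle\mu(gx),\tau\rangle\) is independent of \(g\in G_T\). Since \(G_T\) is connected, it suffices to show the derivative of \(g\mapsto \langle \mu(gx),\tau\rangle\) vanishes at every point \(y = gx\) of the orbit. The tangent vectors there are \(L^c_y\zeta = L_y\xi + JL_y\eta\) for \(\zeta=\xi+i\eta\in\mfg_T\), \(\xi,\eta\in\mfk_T\). By the moment map equation,
\[\dd\langle\mu,\tau\rangle(v) = \omega(v_\tau(y), v).\]
By (i), \(T\subseteq K_y\), so \(v_\tau(y)=0\). Hence the derivative vanishes in every direction, and \(\langle\mu(gx),\tau\rangle=\langle\mu(x),\tau\rangle\) for all \(g\in G_T\). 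Thus \(\mu(gx)-\mu(x)\perp\mft\).

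The only point needing care is this connectedness and derivative argument. It is handled by joining \(g\) to \(1\) by a path \(g(s)\) in \(G_T\) and differentiating \(\langle\mu(g(s)x),\tau\rangle\). The derivative equals \(\omega\bigl(v_\tau(g(s)x),\cdot\bigr)=0\), since (i) applies at every point \(g(s)x\) of the path.
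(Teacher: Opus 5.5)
Your proof is correct: (i) is the commutation \(ugx = gux = gx\); (ii) is \(K\)-equivariance of \(\mu\) at the \(T\)-fixed point \(gx\); and (iii) holds because \(\dd\langle\mu,\tau\rangle = \iota_{v_\tau}\omega\) vanishes on the fixed locus \(X_T\), which contains the connected orbit \(G_T\cdot x\). The paper gives no proof of its own here, only the citation \cite[Lemma 13.13]{georgoulasMomentweightInequalityHilbertMumford2021}, and your argument is the standard direct proof of this fact.
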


Finally, we have the following result about the flow of \(-\grad f\) and the moment map \(\mu\) relative to a torus.

\begin{lemma}
    [{\cite[Lemma 13.15]{georgoulasMomentweightInequalityHilbertMumford2021}}]
    \label{lem:relative-torus-flow}
    Let \(x_0 \in X\), and let \(x(t)\) denote the flow line for \(-\grad f\) starting at \(x_0\). Let \(T \subseteq K_{x_0}\) be a torus, with Lie algebra \(\mft\). Let \(K_T, \mfk_T\) be as above. Then
    \begin{enumerate}[(i)]
        \item \(\mu(x(t)) \in \mfk_T\) for all \(t\).
        \item \(\mu(x(t)) - \mu(x_0) \in \mft^\perp\) for all \(t\).
        \item \(x(t) \in G_T \cdot x_0\) for all \(t\).
    \end{enumerate}
\end{lemma}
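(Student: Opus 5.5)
The plan is to prove the three statements in the order (i), (iii), (ii). The one real input is that the flow of \(-\grad f\) commutes with the \(K\)-action, which forces \(x(t)\) to stay in the fixed-point set \(X_T\). Everything else follows from the Lie algebra ODE of \Cref{lem:group-flow} and \Cref{lem:relative-torus-moment-map}.

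\emph{Step 1: \(x(t) \in X_T\) for all \(t\).} The function \(f = \frac12\abs{\mu}^2\) is \(K\)-invariant, because \(\mu\) is \(K\)-equivariant and the inner product on \(\mfk\) is \(\Ad\)-invariant. The K\"ahler metric is also \(K\)-invariant, since \(K\) acts by isometries. Hence for each \(u \in K\) the vector field \(\grad f\) is \(u\)-invariant, and \(t \mapsto u \cdot x(t)\) is again a flow line of \(-\grad f\), starting at \(u\cdot x_0\). Now take \(u \in T \subseteq K_{x_0}\), so \(u \cdot x_0 = x_0\). Uniqueness of solutions of the ODE (\(X\) is compact, so the flow exists for all time) gives \(u\cdot x(t) = x(t)\). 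Thus \(T \subseteq K_{x(t)}\), that is, \(x(t) \in X_T\). By \cite[Lemma 13.9]{georgoulasMomentweightInequalityHilbertMumford2021}(i), \(\mu(X_T) \subseteq \mfk_T\), which proves (i).

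\emph{Step 2: the group flow lies in \(G_T\).} Let \(g(t) = g_{x_0}(t)\) be the solution of \(g^{-1}\dot g = i\mu(x(t))\), \(g(0)=1\), from \Cref{lem:group-flow}. By Step 1, the right-hand side \(i\mu(x(t))\) lies in \(i\mfk_T \subseteq \mfg_T\) for all \(t\). So \(g\) solves a left-invariant ODE whose velocity stays in the Lie algebra of the connected subgroup \(G_T\), starting at \(1\). Consider the same ODE posed inside \(G_T\): its solution exists and is also a solution in \(G\), so by uniqueness \(g(t) \in G_T\) for all \(t\). Then \(x(t) = g(t)^{-1}\cdot x_0 \in G_T\cdot x_0\), which is (iii).

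\emph{Step 3: (ii).} This now follows immediately from \Cref{lem:relative-torus-moment-map}(iii), applied with \(g = g(t)^{-1} \in G_T\). There is also a direct check. For \(\tau \in \mft\),
\[\dv{t}\langle \mu(x(t)),\tau\rangle = \omega\bigl(v_\tau(x(t)), -JL_{x(t)}\mu(x(t))\bigr) = 0,\]
since \(v_\tau(x(t)) = 0\) by Step 1. Hence \(\langle \mu(x(t)) - \mu(x_0), \tau\rangle = 0\) for all \(\tau \in \mft\).

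\emph{Main obstacle.} The only step needing care is Step 1: the equivariance of the gradient flow together with the uniqueness argument. Once it is established that \(x(t)\) is \(T\)-fixed, the remaining steps are formal.
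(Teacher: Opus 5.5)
Your proof is correct. The paper gives no argument of its own for this statement; it simply quotes \cite[Lemma 13.15]{georgoulasMomentweightInequalityHilbertMumford2021}. Your route is the standard one and uses exactly the auxiliary lemmas the paper lists alongside it: \(K\)-equivariance of the gradient flow plus uniqueness gives \(T \subseteq K_{x(t)}\), hence \(\mu(x(t)) \in \mfk_T\); the group ODE of \Cref{lem:group-flow} then has velocity in \(i\mfk_T \subseteq \mfg_T\), so \(g_{x_0}(t) \in G_T\); and (ii) follows from \Cref{lem:relative-torus-moment-map}(iii), or directly from \(\omega(v_\tau(x(t)), \dot x(t)) = 0\).
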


\section{Normal forms}

\label{sec:normal-forms}

In this section, we recall the results of \cite{chenCalabiFlowGeodesic2014}, which allow us to reduce the general case to the case of a linear action on a vector space. We defer the proofs to \cite{chenCalabiFlowGeodesic2014}, but remark that the key ingredient is the Marle-Guillemin-Sternberg normal form for Hamiltonian group actions \cite{marleModeleDactionHamiltonienne1985,guilleminNormalFormMoment1984}.

\subsection{Standard case}

Let \((V, J_0, g_0)\) be a unitary representation of a compact connected Lie group \(K\). Then \((V, J_0)\) is naturally a representation of the complexification \(G\). Let \(\Omega_0\) denote the induced K\"ahler form on \(V\). Then the \(K\) action on \(V\) has moment map
\[\langle \mu(v), \xi \rangle = \frac{1}{2}\Omega_0(v, \xi \cdot v).\]

\subsection{Linear case}

Now suppose \(K\) acts linearly on \((V, J_0, \Omega)\), where \(\Omega\) is a real-analytic symplectic form. This has a real-analytic moment map \(\mu \colon V \to \mfk\), with \(\mu(0) = 0\). On the other hand, we have a linearised \(K\)-action, on \((V = T_0V, J_0, \Omega_0)\). Let \(\widehat\mu \colon V \to \mfk\) denote the corresponding moment map.

We obtain two flow lines \(\gamma_v(t)\) and \(\widehat\gamma_v(t)\) on \(G/K\), starting from \([1]\).

\begin{proposition}
    [{\cite[Proposition 4.5]{chenCalabiFlowGeodesic2014}}]
    \label{prop:bounded-linear}
    The distance \(d(\gamma_v(t), \widehat\gamma_v(t))\) is uniformly bounded.
\end{proposition}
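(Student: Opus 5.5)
We will compare the two flows in the group, via \Cref{lem:group-flow}, and control their difference through the convexity of the Kempf-Ness functions. Write \(\gamma_v = [g(t)]\) and \(\widehat\gamma_v = [\widehat g(t)]\). Here \(g^{-1}\dot g = i\mu(g^{-1}v)\) and \(\widehat g^{-1}\dot{\widehat g} = i\widehat\mu(\widehat g^{-1}v)\). The distance \(d(t) = d(\gamma_v(t), \widehat\gamma_v(t))\) on the nonpositively curved symmetric space \(G/K\) is convex along pairs of geodesics. Hence, wherever \(d(t) > 0\),
\[\dv{t}d(t) \le \abs{\grad\Phi_v(\gamma_v) - P\grad\widehat\Phi_v(\widehat\gamma_v)},\]
where \(P\) denotes parallel transport along the geodesic joining the two points. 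The plan is to bound the right-hand side by an integrable function of \(t\), modulo a sign term that convexity makes nonpositive.

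\textbf{Step 1: reduce to a neighbourhood of \(0\).} First I would use that \(\Omega\) and \(\Omega_0\) agree at \(0\), and that \(\mu(0) = \widehat\mu(0) = 0\). Together these give \(\mu - \widehat\mu = O(\abs{y}^2)\) and \(\Omega - \Omega_0 = O(\abs{y})\) near \(0\).

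If \(v\) is unstable for the linear action, the flows leave the region where things are controlled, so some localisation is needed. For the application we may assume \(0 \in \overline{G\cdot v}\), and work in a small ball where the flows \(x(t) = g^{-1}v\) and \(\widehat x(t) = \widehat g^{-1}v\) remain, using that the flows decrease \(\abs{\mu}^2\).

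In the standard case the flow \(\widehat x(t)\) tends to \(0\). Since \(\widehat f\) is homogeneous, its {\L}ojasiewicz exponent is explicit, and \Cref{cor:lojasiewicz} gives the polynomial decay \(\abs{\widehat x(t)} \lesssim t^{-\beta}\) with \(\beta = (1-\alpha)/(2\alpha-1)\). The same holds for \(x(t)\), since \(f\) is a small real-analytic perturbation.

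\textbf{Step 2: split the derivative of \(d\).} I would write the difference of gradients as a sum of two terms. The first is \(\grad\Phi_v(\gamma_v) - P\grad\Phi_v(\widehat\gamma_v)\). Geodesic convexity of \(\Phi_v\) makes its contribution along the geodesic direction nonpositive. The second is the error \(\grad\Phi_v(\widehat\gamma_v) - \grad\widehat\Phi_v(\widehat\gamma_v)\), which equals \(\mu(\widehat x) - \widehat\mu(\widehat x)\) after translating by \(\widehat g\). Its size is \(O(\abs{\widehat x(t)}^2)\).

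\textbf{Step 3: integrability, the main obstacle.} What remains is to show that \(\int^\infty \abs{\widehat x(t)}^2\,\dd t < \infty\). This is the step I expect to be delicate, because \(\abs{\widehat x(t)}^2 \lesssim t^{-2\beta}\) need not be integrable when the exponent is poor.

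My first attempt would be to use homogeneity of degree \(2\) in the standard case. Along the flow, \(\dv{t}\abs{\widehat x}^2 = -4\abs{\widehat\mu(\widehat x)}^2\) up to a constant. The moment-weight inequality, or a {\L}ojasiewicz inequality on the unit sphere for the projectivised problem, then gives \(\abs{\widehat\mu(\widehat x)} \gtrsim \abs{\widehat x}^2\). This yields \(\abs{\widehat x(t)}^2 \lesssim t^{-1}\), which is borderline and not integrable.

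The refinement, following Chen-Sun, would be to measure the error more cleverly. The correction \(\mu - \widehat\mu\) should be compared against \(\abs{\mu(\widehat x)}\) times \(\abs{\widehat x}\), rather than against \(\abs{\widehat x}^2\). The integral \(\int \abs{\mu}\,\abs{\widehat x}\) is then finite, by Cauchy-Schwarz combined with the energy identity \(\int \abs{\grad f}^2 = f(0) - f(\infty)\) and the decay of \(\abs{\widehat x}\).

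If this still fails, I would instead first apply an equivariant Darboux/Moser change of coordinates near \(0\), as in the normal form, so that \(\Omega = \Omega_0\) exactly. After that, the two flows differ only by the real-analytic coordinate change, whose effect on the group element is bounded.
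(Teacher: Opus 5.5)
Your skeleton matches the paper's, which follows Chen--Sun (cf.\ the proof of \Cref{prop:flow-one-param-limit}). In the first variation of \(L(t) = d(\gamma_v(t), \widehat\gamma_v(t))\), geodesic convexity of \(\Phi_v\) gives \(\dv{L}{t} \le \abs{\mu(\widehat x(t)) - \widehat\mu(\widehat x(t))}\), where \(\widehat x(t) = \widehat g(t)^{-1}v\) is the standard flow. Keep the inner products against \(\exp^{-1}\) until convexity has been used; your first display takes absolute values immediately and so discards the sign you need.

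The gap is in Step 1. The bound \(\mu - \widehat\mu = O(\abs{y}^2)\) is one order too weak. As you note yourself, together with \(\abs{\widehat x(t)}^2 \asymp t^{-1}\) it only gives \(L(t) \lesssim \log t\). That is useless here, since \(\log t\) is the size of the leading term in \Cref{thm:vector-space}.

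The missing idea is that linearity of the action forces cubic vanishing. For \(\xi \in \mfk\), \(\dd\langle \mu - \widehat\mu, \xi\rangle_y = \iota_{\xi\cdot y}(\Omega_y - \Omega_0)\). Both \(\Omega_y - \Omega_0\) and the fundamental vector field \(\xi\cdot y\) are \(O(\abs{y})\), so this differential is \(O(\abs{y}^2)\). Integrating radially from \(0\), where both moment maps vanish, gives \(\mu - \widehat\mu = O(\abs{y}^3)\).

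Your decay \(\abs{\widehat x(t)} \lesssim t^{-1/2}\) follows from \(\dv{t}\abs{\widehat x}^2 = -c\abs{\widehat\mu(\widehat x)}^2\) together with \(\abs{\widehat\mu(\widehat x)} \ge m\abs{\widehat x}^2\), where \(m > 0\) because \([v]\) is unstable in \(\P(V)\). Combined with the cubic bound, this gives \(\dv{L}{t} \lesssim t^{-3/2}\), which is integrable. The paper uses the same \(t^{-3/2}\) mechanism in the proof of \Cref{prop:kuranishi-slice-reduction}.

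Neither fallback closes the gap.

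\emph{The ``refined'' comparison.} Along the flow \(\abs{\widehat\mu(\widehat x)} \asymp \abs{\widehat x}^2\), so \(\abs{\mu - \widehat\mu} \lesssim \abs{\widehat\mu(\widehat x)}\abs{\widehat x}\) is just the cubic bound restated, and you give no reason for it. Your integrability argument for it also fails. The energy identity controls \(\int \abs{\widehat\mu(\widehat x)}^2 \dd t\), but \(\int \abs{\widehat x}^2 \dd t = \infty\), so Cauchy--Schwarz yields nothing. The integral is in fact finite, but only because the integrand is \(O(t^{-3/2})\).

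\emph{The Darboux route.} An equivariant Moser diffeomorphism \(\psi\) with \(\psi^*\Omega = \Omega_0\) is not holomorphic. In the new coordinates the complex structure is \(\psi^*J_0 \neq J_0\), and the complexified action is no longer linear. The two gradient flows and their lifts to \(G/K\) are therefore not intertwined by \(\psi\). The discrepancy just moves from \(\Omega\) into \(J\) and still has to be estimated. This is exactly what happens in the paper's Yang--Mills reduction, where after Moser's trick one must still control \(\widehat J - J_0\). Finally, the claim that the coordinate change has bounded effect on the group element is the proposition itself.
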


\subsection{General case}

Let \(X \subseteq \P^n\) be a smooth projective variety, and let \(G\) be a complex reductive Lie group, acting linearly on \(\P^n\) and preserving \(X\). Let \(K\) be a maximal compact subgroup of \(G\), acting by isometries. Let \(\mu \colon X \to \mfk\) be a moment map for the \(K\)-action.

\begin{remark}
    The assumption that \(X\) is projective can be weakened, though a real-analyticity assumption is necessary. We refer to \cite{chenCalabiFlowGeodesic2014} for details.
\end{remark}

Suppose \(y \in X\) is semistable but not polystable. Letting \(y(t)\) denote the flow line for \(-\grad f\) starting at \(y\), \(y(t)\) converges to a polystable point \(x\). The following result is classical.

\begin{lemma}
    The stabiliser \(G_x\) is reductive, with maximal compact subgroup \(K_x\).
\end{lemma}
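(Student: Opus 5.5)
The plan follows Matsushima's argument. Use that \(x\) is polystable with \(\mu(x) = 0\), and apply the local structure of the moment map near a zero.

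First, show that \(\mfg_x\) is the complexification of \(\mfk_x\), where \(\mfg_x = \{\zeta \in \mfg \mid L^c_x\zeta = 0\}\) and \(\mfk_x = \mfg_x \cap \mfk\). Write \(\zeta = \xi + i\eta \in \mfg_x\) with \(\xi, \eta \in \mfk\). Then \(L_x\xi + JL_x\eta = 0\). Pairing with \(JL_x\eta\) in the K\"ahler metric gives
\[\abs{L_x\eta}^2 = -g(L_x\xi, JL_x\eta) = \omega(L_x\xi, L_x\eta) = \dd\langle\mu,\xi\rangle(L_x\eta).\]
By equivariance this equals \(\langle \mu(x), [\eta,\xi]\rangle\) up to sign, which vanishes since \(\mu(x) = 0\). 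Hence \(L_x\eta = 0\), and then \(L_x\xi = 0\). So \(\mfg_x = \mfk_x \oplus i\mfk_x\).

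Next, pass to the group level. \(K_x\) is a closed subgroup of the compact group \(K\), so it is compact. Consider the Cartan decomposition \(G = K\exp(i\mfk)\). The claim is that every \(g \in G_x\) can be written as \(g = u\exp(i\eta)\) with \(u \in K_x\) and \(\eta \in \mfk_x\). Write \(g = u\exp(i\eta)\) with \(u \in K\) and \(\eta \in \mfk\). The key step is to show \(\exp(i\eta)\) fixes \(x\).

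To do this, note that \(\exp(i\eta)x = u^{-1}x\), and \(\mu(u^{-1}x) = \Ad_{u^{-1}}\mu(x) = 0\). Consider the function \(\Phi_x\) restricted to the geodesic \(s \mapsto [\exp(-is\eta)]\) in \(G/K\). Its derivative is \(-\langle \mu(\exp(is\eta)x), \eta\rangle\) up to sign conventions. This derivative vanishes at \(s = 0\) and at \(s = 1\), because \(\mu\) vanishes at both endpoints. The function is convex, and its second derivative is \(\abs{L_{\exp(is\eta)x}\eta}^2 \ge 0\). Two critical points of a convex function force the second derivative to vanish on \([0,1]\). So \(L_x\eta = 0\), which gives \(\eta \in \mfk_x\), \(\exp(i\eta) \in G_x\), and hence \(u \in G_x \cap K = K_x\).

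This yields a diffeomorphism \(G_x \cong K_x \times i\mfk_x\), so \(G_x\) is the complexification of the compact group \(K_x\). It follows that \(G_x\) is reductive, and \(K_x\) is a maximal compact subgroup, since every compact subgroup of \(K_x\exp(i\mfk_x)\) is conjugate into \(K_x\).

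The main obstacle I expect is the second step, namely controlling the sign conventions of \(\Phi_x\) along the geodesic and justifying the second-derivative formula. That formula follows from the stated gradient formula for \(\Phi_x\) together with \(\dd\langle\mu,\eta\rangle(JL\eta) = \abs{L\eta}^2\).
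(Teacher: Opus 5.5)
Your argument is correct. It is the standard Matsushima-type proof that the paper has in mind when it calls this lemma classical; the paper itself gives no proof. The one input to state explicitly is where \(\mu(x)=0\) comes from: it holds because \(x\) is the limit of the flow started at a semistable point, by the paper's theorem that \(x_0\) is semistable if and only if \(\mu(x_\infty)=0\). Polystability alone is not enough, since for a polystable point with \(\mu(x)\neq 0\) the subgroup \(K_x\) need not be maximal compact in \(G_x\).
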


The following result follows from a normal form theorem for the \(G\)-action near \(x\), which will be explained at the end of this section.

\begin{proposition}
    [{\cite[Proposition 4.9]{chenCalabiFlowGeodesic2014}}]
    \label{prop:move-orbit}
    There exists a point \(\widehat y \in G \cdot y\), such that \(x \in \overline{G_x \cdot \widehat y}\).
\end{proposition}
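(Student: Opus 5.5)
The plan is to get this from a slice theorem at the polystable limit point \(x\), rather than from the flow directly. Since \(y\) is semistable, we may pass to the affine GIT setting. Replacing \(L\) by a power, choose a \(G\)-invariant affine open neighbourhood \(U \subseteq X^{\ss}\) of \(x\) with good quotient \(\pi \colon U \to U \sslash G\). The orbit \(G \cdot x\) is closed in \(U\) because \(x\) is polystable (Kempf-Ness). By the classical lemma above, \(G_x\) is reductive. So Luna's étale slice theorem applies. It gives a \(G_x\)-invariant locally closed affine subvariety \(S \ni x\) such that:
\begin{enumerate}[(a)]
    \item the induced map \(\psi \colon G \times_{G_x} S \to U\) is étale with open image;
    \item the map \(S \sslash G_x \to U \sslash G\) is étale;
    \item \(\psi\) is strongly étale, i.e.\ \(G \times_{G_x} S \cong \psi(G\times_{G_x}S) \times_{U\sslash G} S\sslash G_x\).
\end{enumerate}
(Alternatively, as the paper indicates, one can use the holomorphic Marle-Guillemin-Sternberg normal form, but Luna's theorem packages exactly what we need.)

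The first key step is to move a point of \(G\cdot y\) into the slice. By \Cref{lem:group-flow}, \(y(t) = g_y(t)^{-1} \cdot y \in G \cdot y\) for all \(t\). Since \(y(t) \to x\) and \(\psi(G\times_{G_x}S)\) is an open neighbourhood of \(x\), for \(t\) large we have \(y(t) = g \cdot s\) with \(g \in G\) and \(s \in S\). Moreover, \(s\) may be taken arbitrarily close to \(x\): near \([1, x]\), \(\psi\) is a local biholomorphism, and \(y(t) \to x = \psi([1,x])\). Set \(\widehat y = s\); then \(\widehat y \in G\cdot y\).

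The second step is to show \(x \in \overline{G_x \cdot \widehat y}\). Because \(x \in \overline{G\cdot y}\), we have \(\pi(\widehat y) = \pi(y) = \pi(x)\). Let \(\pi_S \colon S \to S\sslash G_x\) be the quotient. The images \(\pi_S(\widehat y)\) and \(\pi_S(x)\) both map to \(\pi(x)\) under the étale map \(S\sslash G_x \to U\sslash G\). An étale map is injective on a neighbourhood of \(\pi_S(x)\), and \(\pi_S(\widehat y)\) is close to \(\pi_S(x)\) by continuity. Hence \(\pi_S(\widehat y) = \pi_S(x)\). The fibre \(\pi_S^{-1}(\pi_S(x))\) contains a unique closed \(G_x\)-orbit, namely \(G_x \cdot x = \{x\}\), since \(x\) is fixed by \(G_x\). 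The closure of every orbit in this fibre contains that closed orbit, so \(x \in \overline{G_x\cdot \widehat y}\).

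The step I expect to be the main obstacle is the local injectivity argument. It needs \(\widehat y\) to lie in a neighbourhood of \(x\) on which \(S\sslash G_x \to U\sslash G\) is injective, and "close in \(S\)" must be checked to give "close in \(S\sslash G_x\)". If this is delicate, I would fall back on strong étaleness (c) directly. The fibre of \(\psi\) over \(\pi^{-1}(\pi(x))\) is then identified with \(G\times_{G_x}\pi_S^{-1}(\pi_S(x'))\), summed over the preimages \(x'\) of \(\pi(x)\). After shrinking \(S\) (Luna allows shrinking to a saturated neighbourhood), the only such preimage is \(\pi_S(x)\), which gives the same conclusion.
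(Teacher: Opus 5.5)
Your argument is correct, but it takes a different route from the paper's. The paper, following Chen--Sun, uses Sjamaar's holomorphic slice. A \(K_x\)-equivariant chart on the normal space \(N_x\) extends to a \(G\)-equivariant map \(\Phi \colon G \times_{G_x} S \to X\) that is a \emph{biholomorphism} onto an open \(G\)-invariant set. Hence \(G\)-orbits there correspond bijectively to \(G_x\)-orbits in \(S \subseteq N_x\). Writing \(y(t_j) = g(t_j)\cdot\Phi(1, v(t_j))\) with \(v(t_j) \to 0\), injectivity forces \(v(t_j) \in G_x \cdot v\). That is already the conclusion, with \(\widehat y = \Phi(1, v)\), and no quotient is ever formed. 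You instead use Luna's algebraic \'etale slice, where \(\psi\) is only \'etale, so orbit-injectivity is lost; you recover what is needed through the good quotient. Local injectivity of the \'etale map \(S \sslash G_x \to U \sslash G\) near \(\pi_S(x)\) gives \(\pi_S(\widehat y) = \pi_S(x)\). Your fallback works too: after shrinking to a saturated neighbourhood whose fibre over \(\pi(x)\) is a single point, any point of \(G\cdot y \cap S\) will do. Uniqueness of the closed orbit \(\{x\}\) in that fibre then finishes the argument, since Zariski and Euclidean orbit closures agree for the constructible set \(G_x \cdot \widehat y\). Your route is purely algebraic and needs only \(x \in \overline{G\cdot y}\), not the flow. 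It is also close in spirit to the stacky slice of \Cref{lem:stack-luna-slice} used in \Cref{sec:iterated-balancing-filtration}. The paper's route has the advantage that \(\widehat y\) arrives as a point \(v\) of the linear model \(N_x\) with its pulled-back K\"ahler form, which is exactly the input for the next reduction (\Cref{prop:bounded-linear}). With Luna's \(S\) you would still need a local analytic inverse of the \'etale map \(S \to N_x\), as the paper does in \Cref{subsec:proof-iterated-balancing-filtration}. The paper's version is also the one that carries over to the Kuranishi slices of \Cref{sec:ym,sec:calabi-flow}, where no GIT quotient is available.
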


Given this, let \(\widetilde\gamma(t)\) and \(\widehat\gamma(t)\) denote the flow lines starting from \([1]\) for the Kempf-Ness function for \(G\) and \(G_x\) respectively. The space \(G_x/K_x\) is a totally geodesic submanifold of \(G/K\), and so we may regard \(\widehat\gamma(t)\) as a path in \(G/K\).

\begin{proposition}
    [{\cite[Proposition 4.10]{chenCalabiFlowGeodesic2014}}]
    \label{prop:bounded-stabiliser}
    The distance \(d(\widetilde\gamma(t), \widehat\gamma(t))\) is uniformly bounded.
\end{proposition}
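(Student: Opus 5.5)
The plan is to compare the two flows through the convexity of \(\Phi_{\widehat y}\) on the nonpositively curved space \(G/K\), and to show that the only discrepancy between them is a normal component of the moment map, which is integrable in time. Throughout, \(\widetilde\gamma\) and \(\widehat\gamma\) denote the flow lines of the Kempf-Ness function for \(G\) and \(G_x\) respectively, as in the statement.

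\textbf{Reduction.} By \Cref{prop:move-orbit}, replacing \(y\) by \(\widehat y\) only changes the starting point \([1]\) by a fixed group element. So it suffices to work with \(\widehat y\). I will assume \(\widehat y\) lies in a local slice at \(x\), which is what the normal form construction produces.

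\textbf{Local model.} Near the polystable point \(x\) (with \(\mu(x)=0\)), I use the Marle-Guillemin-Sternberg normal form. A \(G\)-invariant neighbourhood of \(x\) is modelled on \(G\times_{G_x} W\), where \(W\) is the symplectic slice, a unitary \(G_x\)-representation. I take \(\widehat y\in W\). Then \(G_x\cdot\widehat y\) stays in the slice, and along it the moment map has the form
\[\mu = \mu_W + E, \qquad \mu_W\in\mfk_x,\]
where \(E\) is an error term measuring the failure of the slice to be simultaneously holomorphic and symplectic. This error takes values (at leading order) in \(\mfk_x^\perp\), and I expect \(\abs{E(w)}\lesssim \abs{w}^2\).

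\textbf{Geometric comparison.} \(G_x/K_x\) is totally geodesic in \(G/K\), and \(\Phi_{\widehat y}^{G_x}\) is the restriction of \(\Phi_{\widehat y}^G\) (both are defined by integrating the same \(\mu\), paired only with \(i\mfk_x\) in the first case). Hence on \(G_x/K_x\), the gradient of \(\Phi^{G_x}\) is the tangential projection of the gradient of \(\Phi^G\). The normal part at \(\widehat\gamma(t)=[\widehat g(t)]\) is \(\pr_{\mfk_x^\perp}\mu(\widehat g(t)^{-1}\widehat y)\).

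Since \(\Phi^G\) is geodesically convex and \(G/K\) is CAT(0), I compute the derivative of \(d(\widetilde\gamma(t),\widehat\gamma(t))\) by first variation along the geodesic joining the two points. Monotonicity of the gradient of a convex function along that geodesic kills the tangential contribution. This leaves
\[\dv{t}d(\widetilde\gamma,\widehat\gamma)\le \abs{\pr_{\mfk_x^\perp}\mu(\widehat g(t)^{-1}\widehat y)}.\]
Therefore it suffices to show that the right-hand side is integrable on \([0,\infty)\).

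\textbf{Decay.} The point \(w(t)=\widehat g(t)^{-1}\widehat y\) follows the \(G_x\)-moment map flow on \(W\), up to the error \(E\). It converges to \(x\) (that is, \(w=0\)) because \(x\in\overline{G_x\cdot\widehat y}\). By \Cref{cor:lojasiewicz}, \(\abs{w(t)}\lesssim t^{-\beta}\) for some \(\beta>0\). The normal component is then bounded by \(\abs{E(w(t))}\lesssim t^{-2\beta}\).

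\textbf{Main obstacle.} The hardest step is getting enough decay: I need \(2\beta>1\), or an improved quadratic or higher-order vanishing of the normal error. The first thing I would try is to exploit the fact that for the linear model the moment map is homogeneous quadratic. This gives \(\abs{\mu(w)}\asymp\abs{w}^2\) and \(f(w(t))\lesssim 1/t\) at worst, with the Łojasiewicz exponent being \(1/2\)-type rates in \(\abs{w}^2\).

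If the direct bound fails, I would instead follow Chen-Sun. I would first compare with the linearised action using \Cref{prop:bounded-linear}, which already costs only bounded distance. In the exactly linear unitary model, \(\mu_W\) takes values in \(\mfk_x\) with no normal error, so the two flows coincide. The remaining discrepancy between the actual flow and the linear model is then controlled by \Cref{prop:bounded-linear}, which gives the uniform bound.
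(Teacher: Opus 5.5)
Your first-variation step is correct. It is the same device the paper uses in the proof of \Cref{prop:flow-one-param-limit}, and again in the proof of \Cref{prop:kuranishi-slice-reduction}, which is the paper's infinite-dimensional version of this statement; for \Cref{prop:bounded-stabiliser} itself the paper only cites Chen--Sun. Since \(\Phi^{G_x}_{\widehat y}\) is the restriction of \(\Phi^{G}_{\widehat y}\), convexity gives \(\dv{\ell}{t}\le\abs{\nu(t)}\). Here \(\ell=d(\widetilde\gamma,\widehat\gamma)\), \(\nu(t)=\pr_{\mfm}\mu(\widehat y(t))\), \(\mfm=\mfk_x^\perp\) and \(\widehat y(t)=\widehat g(t)^{-1}\widehat y\). (The slice must be the holomorphic one of \Cref{sec:normal-forms}; the symplectic slice is not \(G_x\)-invariant, so \(G_x\cdot\widehat y\) does not stay in it.)

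The gap is everything after this step: you never produce an integrable bound for \(\abs{\nu}\). Your rate is sharp. In the linear model \(\dv{t}\abs{w}^2=-4f\), with \(f/\abs{w}^4\) pinched between positive constants, so \(\abs{w(t)}\asymp t^{-1/2}\). A quadratic error therefore gives \(t^{-1}\), hence only \(\ell=O(\log t)\). That is the size of the leading term \(\log(t)v_1\) in \Cref{thm:main}, so it is useless. The fallback is not an argument either. \Cref{prop:bounded-linear} compares flows of one group on one vector space for two symplectic forms, whereas on \(N_x\) only \(G_x\) acts. The \(\mfm\)-directions, which are exactly what this proposition must control before \Cref{prop:bounded-linear} can be invoked, never appear there.

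The paper's infinite-dimensional version tries to get the extra decay dynamically. In Marle--Guillemin--Sternberg coordinates \([g,\rho,v]\) the normal component is \(\Ad(g)\rho\), and the model flow does not move \(\rho\). Comparing with the model complex structure and using \(\rho\to0\), the paper bootstraps to \(\abs{\rho}\lesssim t^{-3/2}\). Be aware, however, that \(\abs{\nu}\) is not integrable in general, so no argument that only integrates \(\abs{\nu}\) can succeed. For example, take \(T^2\) acting on \(\P^3\) with weights \(0,(1,0),(-1,0),(1,1)\), with \(x=[1:1:1:0]\) and \(\widehat y=[1:1:1:1]\). The \(G_x\)-flow is \([1:1:1:e^{b(t)}]\) with \(e^{-2b(t)}\asymp t\), so \(\abs{\nu}\asymp e^{2b}\asymp t^{-1}\). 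Yet an explicit computation in the flat space \(G/K\cong\R^2\) shows \(\ell(t)\to0\). So the finite-dimensional analogue of that \(t^{-3/2}\) rate fails too.

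The missing idea is to keep the Hessian term you dropped. Write the geodesic as \(s\mapsto[\widehat g\exp(is\zeta)]\) with \(\abs{\zeta}=\ell\), and set \(z(s)=\exp(-is\zeta)\widehat y(t)\). Then
\[
\ell\dv{\ell}{t}\le-\int_0^1\abs{L_{z(s)}\zeta}^2\,ds+\abs{\zeta_{\mfm}}\abs{\nu}.
\]
Three facts control the integrand: \(L_x\) kills \(\mfk_x\), it is injective on \(\mfm\), and \(\abs{\widehat y(t)-x}\lesssim t^{-1/2}\). Together they give, for \(s\le\min(1,c/\ell)\),
\[
\abs{L_{z(s)}\zeta}\ge c_0\abs{\zeta_{\mfm}}-Ct^{-1/2}\ell.
\]
Absorbing the cross term then yields
\[
\dv{\ell}{t}\lesssim\abs{\nu}^2+t^{-1/2}\abs{\nu}\lesssim t^{-3/2}\quad\text{whenever }\ell\ge c.
\]
This is integrable using only \(\abs{\nu}\lesssim\abs{w}^2\lesssim t^{-1}\). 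That bound holds because \(\mu(x)=0\) and the differential of \(\mu\) at \(x\) vanishes on \(N_x=(\mfg\cdot x)^\perp\).
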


Finally, we reduce to the linear case. We first explain the proof of \Cref{prop:move-orbit}. Consider the orthogonal decomposition
\[T_xX = \mfg \cdot x \oplus N_x.\]
Then \(N_x\) is a \(G_x\)-invariant complex subspace of \(T_xX\). Choose a \(K_x\)-equivariant holomorphic embedding \(\Psi \colon U \to X\), where \(U \subseteq T_xX\) is a \(K_x\)-invariant neighbourhood of \(0\), with \(\Psi(0) = x\) and \(\dd\Psi(0) = \id\). Let \(B = U \cap N_x\), \(B' = \Psi(B)\). This map can be extended uniquely to a \(G\)-equivariant map
\[\Phi \colon G \times_{G_x} S \to X\]
which is biholomorphic onto an open subset of \(X\) \cite[Theorem 1.12]{sjamaarHolomorphicSlicesSymplectic1995}. Here, \(S = G_x \cdot B\) is a \(G_x\)-invariant neighbourhood of \(0 \in N_x\).

Thus, for any \(y\) close to \(x\), there exists \((g, v) \in G \times N_x\), such that \(y = g \cdot \Phi(\id, v)\). By assumption, the Kempf-Ness flow \(y(t)\) starting from \(y\) converges to \(x\). Choosing a sequence \(t_j \to \infty\), we obtain \((g(t_j), v(t_j)) \in G \times N_x\), such that \(y(t_j) = g(t_j) \cdot \Phi(\id, v(t_j))\) and \(\lim_{j \to \infty} v(t_j) = 0\). Since \(y(t_j) \in G \cdot y\), \(v(t_j) \in G_x \cdot v\). The point \(\widehat y\) is then defined to be \(\Phi(\id, v)\).

By this construction, we have now reduced to the case of \(G_x\) acting linearly on \((N_x, \Psi^*\omega)\), and a point \(v \in N_x\) with \(0 \in \overline{G_x \cdot v}\).

\section{Finite-dimensional result}

\label{sec:finite-dimensional}

Let \(X \subseteq \P^n\) be a smooth projective variety, and let \(G\) be a complex reductive Lie group, acting on \(\P^n\) by a linear action, and preserving \(X\). Let \(\omega\) denote the restriction of the Fubini-Study metric to \(X\). Let \(K\) be a maximal compact subgroup of \(G\), acting by isometries. Let \(\mu \colon X \to \mfk\) be a moment map for the \(K\)-action. Moreover, we assume that the invariant inner product on \(\mfk\) is rational with factor \(\hbar \in \Q_{>0}\).

We let \(\log \colon G/K \to \mfk\) denote the inverse of the map \(\xi \mapsto \exp(i\xi)K\). For \(y \in X\), we have the Kempf-Ness function \(\Phi_y \colon G/K \to \R\). Let \(\gamma_y(t)\) denote the flow line for \(-\grad\Phi_y\) starting at \(\pi(1)\). We are interested in the asymptotics of \(\gamma_y(t)\) as \(t \to \infty\).

\begin{theorem}
    \label{thm:main}
    Let \(y \in X\) be semistable but not polystable. Then there exist elements \(v_1, \dots, v_k\) of \(\mfk\), such that
    \[\log(\gamma_y(t)) = \log(t)v_1 + \log\log(t)v_2 + \log\cdots\log(t)v_k + O_{t \to \infty}(1).\]
    Moreover, \([v_i, v_j] = 0\) for all \(i, j\).
\end{theorem}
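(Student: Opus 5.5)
First, I reduce to a linear problem. By \Cref{prop:move-orbit}, after replacing \(y\) by a point \(\widehat y\) in its \(G\)-orbit, we may assume \(x \in \overline{G_x \cdot \widehat y}\), where \(x\) is the polystable limit. Moving the base point within the orbit changes \(\gamma_y\) only by a bounded amount, since \(\Phi_{g y}\) differs from a translate of \(\Phi_y\) and the flow is contracting on the nonpositively curved space \(G/K\). By \Cref{prop:bounded-stabiliser}, we may then replace \(G\) by \(G_x\) up to a bounded error, and \(G_x/K_x\) is totally geodesic in \(G/K\). The slice construction at the end of \Cref{sec:normal-forms} identifies the problem with \(G_x\) acting linearly on \((N_x, \Psi^*\omega)\), with a point \(v\) satisfying \(0 \in \overline{G_x \cdot v}\). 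By \Cref{prop:bounded-linear}, we may further replace \(\Psi^*\omega\) by the flat form \(\Omega_0\), again up to bounded error. It therefore suffices to prove the statement in the standard case \(G\) acting unitarily on \(V\), with \(v \neq 0\) and \(0 \in \overline{G\cdot v}\). This is \Cref{thm:vector-space-intro}, which I prove by induction on \(\dim K\).

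In the standard case, \(v\) viewed in \(\P(V \oplus \C)\), or equivalently as \([v] \in \P(V)\) under the scaling-extended group, is unstable in the sense of Kempf. Applying \Cref{thm:kempf} and \Cref{thm:moment-weight} to the projectivised action (with the extra \(\C^*\) weight handled so that moment map values match) gives an optimal destabilising direction \(\xi_0 \in \mfk\). By \Cref{prop:rationality}, \(\xi_0\) generates a one-dimensional torus \(T\). In the flat case the Kempf-Ness flow is explicit along the weight decomposition of \(\xi_0\). The flow \(x(t)\) on \(V\) tends to \(0\) polynomially: \(f\) is homogeneous quartic, so \(\abs{x(t)}^2 \sim 1/t\). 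The group part \(-i\log\gamma\) therefore grows like \(\log(t)\,v_1\), with \(v_1\) proportional to \(\xi_0\). To make this precise, I would write \(\gamma(t) = \exp(i\log(t) v_1)\gamma'(t)\) and change variables \(s = \log t\). The rescaled point \(t^{1/2}\exp(-i\log(t)v_1)\cdot x(t)\) should then satisfy an asymptotically autonomous flow in \(s\) that converges to the critical set of \(\abs{\mu - \xi_0}^2\) on the \(T\)-fixed data.

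The residual \(\gamma'\) then lives in \(G_T/K_T\), up to bounded error. For this step I would use \Cref{lem:relative-torus-flow} and \Cref{lem:relative-torus-moment-map}: the \(K_T/T\)-action on \(V_T\), with moment map \(\mu_T\) as in \Cref{lem:relative-torus-weight}, satisfies the same hypotheses in lower dimension. The limiting point \(y_1\) is either \(\mu_T\)-polystable, in which case the remainder is \(O(1)\) and we stop, or semistable and not polystable. In the latter case the induction hypothesis gives iterated logarithms in the new time variable \(s = \log t\). Substituting \(s = \log t\) turns \(\log s\) into \(\log\log t\), which produces the nested logarithms. Commutativity holds because each subsequent \(v_i\) lies in \(\mfk_T\), which centralises \(v_1\), and the induction keeps the later ones commuting with each other.

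I expect the main obstacle to be controlling the error in passing from the actual flow to the flow in \(G_T/K_T\) after the time reparametrisation. The difference between the true flow and the model flow decays only like a power of \(1/t\), and I must show that it integrates to an \(O(1)\) term in \(G/K\) rather than producing extra logarithmic drift. For this I would use \Cref{cor:lojasiewicz} with exponent \(\alpha>1/2\) for the rescaled functional, giving a rate \(s^{-\beta}\) with \(\beta>1\) after reparametrisation. If that fails, I would combine the distance-nonincreasing property of gradient flows of convex functions on \(G/K\) with a comparison to the product flow, in the spirit of the proofs of \Cref{prop:bounded-linear} and \Cref{prop:bounded-stabiliser}.
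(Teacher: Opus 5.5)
Your reduction to a unitary representation follows the paper (\Cref{prop:move-orbit}, \Cref{prop:bounded-stabiliser}, the slice, \Cref{prop:bounded-linear}), with one imprecision. Replacing $y$ by $g\cdot y$ does \emph{not} move $\gamma_y$ a bounded distance: already for $k\in K$ one has $\gamma_{ky}=k\gamma_y$. What is true, after a short argument, is that the shape of the expansion survives with the $v_i$ conjugated.

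The genuine gap is the standard case itself. Your claim that, after removing $\log(t)v_1$, ``the residual lives in $G_T/K_T$ up to bounded error'' is the entire difficulty, and the tools you name would not establish it. There are three problems.
\begin{enumerate}[(i)]
\item \Cref{cor:lojasiewicz} gives an integrable rate, $s^{-1/(2\alpha-1)}$, only for the value of the functional. The discrepancy between two Kempf--Ness flows is driven by differences of $\mu$, i.e.\ by distances, and the distance rate $s^{-(1-\alpha)/(2\alpha-1)}$ need not be integrable. Moreover the residual flow itself diverges like $\log s$, so you need a comparison of two divergent flows in $G/K$, not a convergence rate.
\item The residual problem is not a $K_T/T$-action on $V_T$. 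The leading piece $x'$ of $x$ lies in a weight space $V_{m_0}$ with $m_0<0$, so it is not $T$-fixed in $V$; it is $T$-fixed only in $\P(V)$. Staying in $V$ leaves you with a flow for a shifted moment map (your $\abs{\mu-\xi_0}^2$), which your induction hypothesis (unshifted, $0\in\overline{G\cdot x}$) does not cover.
\item You assume the limit point is semistable for the smaller group. This is exactly where optimality of $\xi_0$ must enter.
\end{enumerate}
Also, $[v:1]\in\P(V\oplus\C)$ with $G$ acting trivially on the extra factor is semistable, not unstable. The correct object is $[v]\in\P(V)$ for $G$ itself, with $\widehat\mu([v])=\mu(v)/\abs{v}^2$.

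The paper closes these gaps as follows.
\begin{enumerate}[(1)]
\item With $q=[x]$ and $ds/dt=\abs{x(t)}^2$, one has $\gamma_x(t)=\widehat\gamma_q(s(t))$ \emph{exactly} (\Cref{lem:reparametrisation}). {\L}ojasiewicz is used only here: integrating $\widehat F(s)-L=O(s^{-\gamma})$ with $\gamma>1$ gives $s(t)=C\log t+O(1)$ (\Cref{prop:reparametrisation}).
\item On $\P(V)$ the flow from $q$ is compared with the flow from $q'=\lim_{s\to\infty}\exp(is\xi_q)q=[x']$. Convexity of $\widehat\Phi_q$ gives $L'(s)\le\abs{\widehat\mu(\widehat g_{q'}(s)^{-1}q')-\widehat\mu(\widehat g_{q'}(s)^{-1}q)}$, with both gradients evaluated at the same point. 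Here $\widehat g_{q'}(s)=\exp(-is\xi_q+\epsilon(s))u(s)$ with $[\epsilon(s),\xi_q]=0$ and $\epsilon(s)=o(s)$; this uses that $\xi_q$ is also optimal for $q'$. Hence $x''$ is suppressed relative to $x'$ by a factor $e^{-\delta s}$, where $\delta$ is a weight gap, and so $\widehat\gamma_q=\widehat\gamma_{q'}+O(1)$ (\Cref{prop:flow-one-param-limit}).
\item Since $q'$ is $T$-fixed, $\widehat\gamma_{q'}(s)=-s\xi_q+\widehat\gamma_{G_T/T^c,q'}(s)$ exactly (\Cref{prop:relative-torus-flow}).
\item $q'$ is $G_T/T^c$-semistable on $\P(V)_T$ (\Cref{prop:relative-torus-semistable}). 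If some $\eta\in\mfk_T\cap\mft^\perp$ had negative weight, tilting $\xi_q$ towards $\eta$ would contradict \Cref{thm:moment-weight}.
\item The induction runs through the projective statement, \Cref{thm:main} for $G_T/T^c$ on $\P(V)_T$, which slices back down to a representation of a smaller group. This gives \Cref{thm:unstable}, and hence the vector-space case.
\end{enumerate}

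Your overall architecture is right: the Kempf direction gives the leading $\log t$, the centraliser of $T$ carries the rest, and $s=C\log t+O(1)$ nests the logarithms. But without (1)--(4) the $O(1)$ remainder is not established.
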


To start, let \(y(t)\) denote the flow line for \(-\grad f\) starting at \(y\), and let \(x = \lim_{t \to \infty} y(t)\). By \Cref{prop:bounded-stabiliser}, we may replace \(G\) with \(G_x\) and \(K\) with \(K_x\). Next, as explained after \Cref{prop:bounded-stabiliser}, we may replace \(X\) with a neighbourhood of \(0\) in \(N_x\). Finally, by \Cref{prop:bounded-linear}, we may reduce to the standard setting. Thus, \Cref{thm:main} follows from the following result.

\begin{theorem}
    \label{thm:vector-space}
    Let \((V, J_0, g_0)\) be a unitary representation of a compact connected Lie group \(K\), which extends to a representation of the complexification \(G\). Let \(\Omega_0\) denote the induced K\"ahler form on \(V\). Let \(\mu \colon V \to \mfk\) denote the associated moment map. Let \(x \in V \setminus \{0\}\) be such that \(0 \in \overline{G \cdot x}\). Let \(\gamma_x(t)\) denote the flow line for \(-\grad f\) starting at \(x\). Then there exist elements \(v_1, \dots, v_k\) of \(\mfk\), such that
    \[\log(\gamma_x(t)) = \log(t)v_1 + \log\log(t)v_2 + \log\cdots\log(t)v_k + O_{t \to \infty}(1).\]
    Moreover, \([v_i, v_j] = 0\) for all \(i, j\).
\end{theorem}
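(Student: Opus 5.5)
The plan is to induct on $\dim K$. Each step first produces the leading $\log(t)$ term by passing to the projectivisation, and then reduces the remainder to the same problem for a smaller group, $K_T/T$. In that reduced problem time is measured in $\log t$, which is what turns $\log$ into $\log\log$, and so on.

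\emph{Step 1: projectivise and reparametrise.} Since $0 \in \overline{G\cdot x}$, the point $[x]\in\P(V)$ is $\mu_\FS$-unstable by the Kempf-Ness theorem. For the standard moment map we have $\mu(v)=c\abs{v}^2\mu_\FS([v])$ for a universal constant $c>0$. Consequently the flow $x(t)$ of $-\grad f$ on $V$ satisfies two things. First, its projection $[x(t)]$ is a time-reparametrisation $[x](s(t))$ of the projective flow, with $\dd s/\dd t=c\abs{x(t)}^2$. Second, the group flows of \Cref{lem:group-flow} coincide after this reparametrisation: $g_x(t)=g_{[x]}(s(t))$ modulo the scalars acting trivially on $\P(V)$. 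These scalars do not enter, because $\mu$ takes values in $\mfk$.

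Next I would compute $\dd\abs{x}^2/\dd t$. It equals $-2\langle JL_x\mu(x),x\rangle$, which is a positive multiple of $-\abs{\mu(x)}^2$, and hence a positive multiple of $-\abs{x}^4\abs{\mu_\FS([x])}^2$. By \Cref{thm:kempf}, $\abs{\mu_\FS([x](s))}\to m>0$, and the Łojasiewicz estimates (\Cref{cor:lojasiewicz}, applied near $[x]_\infty$) give a polynomial rate. Together these should give $\abs{x(t)}^{-2}=Cm^2t+o(t)$, hence $s(t)=\frac{1}{Cm^2}\log t+O(1)$. Combining with \Cref{thm:kempf}, written as $\xi(s)=s\xi_\infty+o(s)$, I would set $v_1=\xi_\infty/(Cm^2)$. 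By \Cref{prop:rationality}, $\xi_\infty$ generates a one-dimensional compact torus $T$.

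\emph{Step 2: reduce to $K_T/T$.} After conjugating by $u_\infty$, the limit $[x]_\infty$ is fixed by $T$, and $\mu_\FS([x]_\infty)$ is proportional to $\xi_\infty$. I would apply the normal-form reductions of \Cref{sec:normal-forms} (\Cref{prop:bounded-stabiliser,prop:bounded-linear}) around $[x]_\infty$ for the projective flow with the shifted moment map $\mu_\FS-\mu_\FS([x]_\infty)$. This is the relative setting of \Cref{sec:relative-torus}. By \Cref{lem:relative-torus-flow}, the $\mft$-component of $\mu$ is frozen along the flow, and the $\mft^\perp$-part is the moment map flow of $K_T/T$ on $X_T$. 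By \Cref{lem:relative-torus-weight}, the $\mu_T$-weights of the relevant point are the $(\mu,T)$-weights, and optimality of $\xi_\infty$ forces these to be nonnegative. So the reduced point is $\mu_T$-semistable. Two cases arise. If it is polystable, the remaining components of $\log\gamma$ are $O(1)$. Otherwise the normal forms reduce it to the vector-space situation for $K_T/T$ acting on a normal slice, and $\dim(K_T/T)<\dim K$. The induction hypothesis then gives terms $\log(s)v_2+\log\log(s)v_3+\cdots$ with $v_i\in\mfk_T$. Since $\log s=\log\log t+O(1)$, this produces the claimed expansion. The $v_i$ commute with $v_1$ because they lie in $\mfk_T$, and with each other by induction.

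\emph{Main obstacle.} I expect the hard part to be error control, specifically showing that the $\mft$-component of $\log g_{[x]}(s)$ is exactly $s\xi_\infty+O(1)$, and that decoupling the $\mft$- and $\mft^\perp$-directions costs only $O(1)$ in distance on $G/K$. The first thing I would try is to write $g=\exp(-is\xi_\infty)h(s)$ with $h$ in $G_T$. Then $\mu(x(s))-\xi_\infty$ is controlled by the Łojasiewicz rate $s^{-(1-\alpha)/(2\alpha-1)}$ in the normal directions, and the goal is to bound $\int\abs{\pr_T(\mu-\xi_\infty)}\,\dd s$. This integral is finite by \Cref{lem:relative-torus-flow}(ii), provided one works in the frame where $T$ is fixed. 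The non-commutativity between $\exp(is\xi_\infty)$ and $h$ should be absorbed using nonpositive curvature of $G/K$, as in the proofs of \Cref{prop:bounded-linear,prop:bounded-stabiliser}. The same estimate is needed for $s(t)=\frac{1}{Cm^2}\log t+O(1)$: a merely $o(\log t)$ error would spoil the $\log\log t$ term.
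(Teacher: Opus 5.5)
Your Step 1 is the paper's argument (\Cref{lem:reparametrisation,prop:reparametrisation}). One caveat: the $O(1)$ error in $s(t)$ comes from the {\L}ojasiewicz rate for the \emph{value}, $\abs{\widehat F(s)-L}\lesssim s^{-1/(2\alpha-1)}$. Its exponent exceeds $1$, so $\int^t\tau^{-1}\log(\tau)^{-1/(2\alpha-1)}\,\dd\tau$ stays bounded. The distance rate $s^{-(1-\alpha)/(2\alpha-1)}$ you quote later need not be integrable.

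The genuine gap is in Step 2. \Cref{lem:relative-torus-flow} requires the \emph{initial} point of the flow to be $T$-fixed. But $q=[x]$ is $T$-fixed only if $x$ is a single weight vector for $\xi_q$. Conjugating by $u_\infty$ makes the \emph{limit} $[x]_\infty$ $T$-fixed, not the flow line. So in general, along the flow from $q$:
the moment map does not take values in $\mfk_T$;
the group flow does not factor as $\exp(-is\xi_\infty)h(s)$ with $h(s)\in G_T$;
its $\mft^\perp$-part is not a $K_T/T$-flow on $\P(V)_T$.
Nor can you use \Cref{prop:bounded-stabiliser,prop:bounded-linear} at $[x]_\infty$ with the shifted moment map $\mu_\FS-\mu_\FS([x]_\infty)$. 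Those results concern flows converging to a zero of a $K$-equivariant moment map. Subtracting the non-central element $\mu_\FS([x]_\infty)$ leaves only $K_T$-equivariance, and the flow from $q$ is not a $G_T$-flow. You also never identify the $T$-fixed point whose $\mu_T$-semistability drives the induction. The limit $[x]_\infty$ satisfies $\mu_T([x]_\infty)=0$, so it is $\mu_T$-polystable and carries no information about the lower-order terms. Your proposed fix, bounding $\int\abs{\pr_T(\mu-\xi_\infty)}\,\dd s$ via \Cref{lem:relative-torus-flow}(ii) ``in the frame where $T$ is fixed'', presupposes exactly the $T$-invariance that fails.

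The missing idea is to replace $q$ by its limit along the optimal destabilising direction, $q'=\lim_{s\to\infty}\exp(is\xi_q)\cdot q=p(x')$. Here $x'$ is the component of $x$ in the eigenspace of $i\xi_q$ with the largest eigenvalue occurring in $x$ (necessarily negative), and $x''=x-x'$. You then need three facts.

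First, $\xi_q$ is still optimal for $q'$ (Ramanan--Ramanathan, Ness). Since $T\subseteq K_{q'}$, \Cref{thm:kempf} and \Cref{lem:relative-torus-flow} now apply legitimately and give $\log\widehat\gamma_{q'}(s)=-s\xi_q+\epsilon(s)$ with $[\epsilon(s),\xi_q]=0$. \Cref{prop:relative-torus-flow} then gives the \emph{exact} splitting $\widehat\gamma_{q'}(s)=-s\xi_q+\widehat\gamma_{G_T/T^c,q'}(s)$, so there is no decoupling error to estimate.

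Second, \Cref{prop:flow-one-param-limit}: the flows from $q$ and $q'$ stay at bounded distance. Convexity of $\widehat\Phi_q$ gives $\dd L/\dd s\le\abs{\widehat\mu(\widehat g_{q'}(s)^{-1}q')-\widehat\mu(\widehat g_{q'}(s)^{-1}q)}$. Because $\epsilon(s)$ commutes with $\xi_q$, one gets $\norm{\widehat g_{q'}(s)^{-1}x''}/\norm{\widehat g_{q'}(s)^{-1}x'}\lesssim e^{-\delta s}$, which is integrable.

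Third, \Cref{prop:relative-torus-semistable}: $q'$ is semistable for $G_T/T^c$ acting on $\P(V)_T$. This needs optimality of $\xi_q$ for $q'$ itself, not merely for $q$, via the first variation of $w_{\widehat\mu}(q',\xi_q+t\eta)/\abs{\xi_q+t\eta}$ at $t=0$.

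Only with these three facts does your induction through the smaller group go through.
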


Finally, we have an analogous result in the unstable case.

\begin{theorem}
    \label{thm:unstable}
    Let \(y \in X\) be unstable. Then there exist elements \(v_0, \dots, v_k\) of \(\mfk\), such that
    \[\log(\gamma_y(t)) = tv_0 + \log(t)v_1 + \log\log(t)v_2 + \log\cdots\log(t)v_k + O_{t \to \infty}(1).\]
    Moreover, \([v_i, v_j] = 0\) for all \(i, j\).
\end{theorem}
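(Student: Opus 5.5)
We split off the linear growth using generalised Kempf existence, and then run the semistable analysis relative to a torus.

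\emph{Step 1: linear term.} Let \(y(t)\) be the flow line for \(-\grad f\), with limit \(x_\infty\), and let \(m = \inf_{g}\abs{\mu(g\cdot y)} > 0\). By \Cref{thm:kempf}, \(\xi(t)/t \to \xi_\infty\), and \(\xi_\infty = u_\infty\mu(x_\infty)u_\infty^{-1}\). We set \(v_0 = \xi_\infty\) up to the sign conventions relating \(\log\) and \(g_{y}(t)\). By \Cref{prop:rationality}, \(\xi_\infty\) generates a one-dimensional compact torus \(T\). Replacing \(y\) by \(u_\infty^{-1}\cdot y\), which changes \(\log\gamma_y\) only by a bounded amount via conjugation, we may assume \(\mu(x_\infty) = \xi_\infty\). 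Then \(T \subseteq K_{x_\infty}\), since \(x_\infty\) is a critical point of \(f\), so \(L_{x_\infty}\mu(x_\infty) = 0\).

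\emph{Step 2: reduction relative to \(T\).} Near the critical point \(x_\infty\), we apply the normal form of \Cref{sec:normal-forms} to the \(G_{x_\infty}\)-action. Write \(\xi_\infty\) for the central element of \(\mfk_{x_\infty}\) determined by the shifted moment map \(\mu - \xi_\infty\). By \Cref{lem:relative-torus-moment-map,lem:relative-torus-flow}, once the flow lies in the \(G_T\)-orbit (after an analogue of \Cref{prop:move-orbit}), the component of \(\mu\) along \(\mft\) is constant, and equals \(\xi_\infty\) up to bounded errors. It follows that \(\log\gamma_y(t) - t\xi_\infty\) is, up to \(O(1)\), the Kempf–Ness flow of \([y]\) for the \(K_T/T\)-action on \(X_T\) near \(x_\infty\), with moment map \(\mu_T\). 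Concretely:
\begin{enumerate}[(i)]
    \item Since \(T\) is central in \(G_T\), the \(\mft\)- and \(\mft^\perp\)-parts of \(i\mu\) in the ODE of \Cref{lem:group-flow} decouple.
    \item The \(\mft\)-part integrates to \(t\xi_\infty + O(1)\).
    \item The \(\mft^\perp\)-part is the \(G_T/T^c\) flow.
\end{enumerate}
By \Cref{lem:relative-torus-weight}, \(y\) is \(\mu_T\)-semistable in the reduced problem, because \(\mu_T(x_\infty) = 0\).

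\emph{Step 3: apply the semistable result.} Applying \Cref{thm:vector-space} (via \Cref{prop:bounded-stabiliser,prop:bounded-linear}) to the linearised \(K_{x_\infty}/T\)-action on the slice gives
\[\log t\, v_1 + \cdots + \log\cdots\log t\, v_k + O(1),\]
with commuting \(v_i \in \mfk_T\) modulo \(\mft\). Lifting the \(v_i\) to \(\mfk_T\), they commute with \(v_0\), since \(\mfk_T\) is the centraliser of \(\mft\), and they commute with each other. The polystable case of the reduced problem corresponds to \(k = 0\).

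The main obstacle is Step 2. The flow for \(\mu\) differs from the flow for the shifted moment map \(\mu - \xi_\infty\) on a slice at \(x_\infty\), and \(y(t)\) need not lie in \(X_T\). So we must show that the off-\(T\)-fixed components decay fast enough, exponentially via the negative weights of \(\xi_\infty\) on the normal directions, that the discrepancy in \(\log\gamma\) stays bounded. I would prove this by adapting the proofs of \Cref{prop:bounded-linear,prop:bounded-stabiliser} to the shifted moment map \(\mu - \xi_\infty\). The key input is the Łojasiewicz estimate \Cref{cor:lojasiewicz} for \(f\) near \(x_\infty\), which gives \(\abs{\mu(y(t)) - \xi_\infty} \lesssim t^{-\delta}\). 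This estimate controls the interaction terms and shows they are integrable, apart from those already accounted for by the reduced flow.
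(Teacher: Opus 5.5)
\textbf{There is a genuine gap, and it is exactly the step you flag.} The decoupling (i)--(iii) in Step 2 and \Cref{lem:relative-torus-flow} both need the starting point of the flow to be fixed by \(T\). Your \(y\) is not \(T\)-fixed, and in general no point of \(G \cdot y\) is. For example, for \(\mathrm{SL}_2\) acting on binary quintics, \([x^3y(x+y)]\) is unstable with finite stabiliser. So no analogue of \Cref{prop:move-orbit} can place the flow in a \(G_T\)-orbit of a \(T\)-fixed point.

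Localising at \(x_\infty\) does not rescue this. The normal forms of \Cref{sec:normal-forms}, and \Cref{prop:move-orbit,prop:bounded-stabiliser}, use \(\mu(x) = 0\) and reductivity of \(G_x\). At a critical point with \(\mu(x_\infty) \neq 0\) the stabiliser can fail to be reductive: for \(\mathrm{SL}_2\) acting on \(\P^1\) it is a Borel subgroup.

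The {\L}ojasiewicz input is also too weak. \Cref{cor:lojasiewicz} only gives \(d(y(t), x_\infty) \lesssim t^{-(1-\alpha)/(2\alpha-1)}\), and this exponent need not exceed \(1\). It therefore cannot make the discrepancy integrable. Since the answer itself contains \(\log t\) terms, any comparison of flows needs a much faster rate.

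Two smaller errors:
\begin{enumerate}[(i)]
\item Conjugating \(y\) by \(u \in K\) conjugates both \(\xi_\infty\) and \(\mu(x_\infty)\). So Step 1 cannot arrange \(\mu(x_\infty) = \xi_\infty\); the \(T\)-fixed critical point is \(u_\infty x_\infty\), and \(\mu(y(t))\) tends to \(\mu(x_\infty)\), not \(\xi_\infty\).
\item \(\mu_T\) is only defined on \(X_T\), so the statement that \(y\) is \(\mu_T\)-semistable has no meaning.
\end{enumerate}

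\textbf{The missing idea is to change the starting point rather than localise.} Set \(y' = \lim_{s\to\infty}\exp(is\xi)\cdot y \in X_T\), the top-weight part of \(y\) along the optimal destabilising direction \(\xi\). Convexity of the Kempf--Ness function, as in \Cref{prop:flow-one-param-limit}, bounds \(\dv{L}{s}\) by a multiple of \(d(g_{y'}(s)^{-1}y', g_{y'}(s)^{-1}y)\). Here \(g_{y'}(s) = \exp(-is\xi + \epsilon(s))u(s)\), with \(\epsilon(s) = o(s)\) commuting with \(\xi\). Hence the lower-weight components of \(y\) are suppressed relative to the top one by \(e^{-\delta s}\) (up to \(e^{o(s)}\)), where \(\delta\) is the weight gap. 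This exponential rate is what keeps \(\gamma_y\) and \(\gamma_{y'}\) at bounded distance.

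For \(y'\) the splitting into \(-s\xi\) plus the \(G_T/T^c\)-flow is exact (\Cref{prop:relative-torus-flow}). Semistability of \(y'\) for \(G_T/T^c\) acting on \(X_T\) (\Cref{prop:relative-torus-semistable}) uses the optimality of \(\xi\), which your argument never invokes beyond the linear term. Suppose some \(\eta \in \mfk_T \cap \mft^\perp\) had \(w_\mu(y', \eta) < 0\). Then \(\xi + t\eta\) would beat \(\xi\) in \Cref{thm:moment-weight} for small \(t > 0\), a contradiction. One then applies \Cref{thm:main} to \(G_T/T^c\) on \(X_T\), inducting on \(\dim G\).
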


The proofs of \Cref{thm:main,thm:vector-space,thm:unstable} proceed by induction on \(\dim(G)\). We have shown that by reducing to a local slice, \Cref{thm:vector-space} implies \Cref{thm:main}. The proof of \Cref{thm:vector-space} relies on \Cref{thm:unstable} holding for \(X = \P(V)\). Finally, \Cref{thm:unstable} follows from \Cref{thm:main}, applied to a semistable point for a \(G_T/T^c\)-action, where \(T\) is the torus generated by the optimal destabilising direction \(v_0\). As \(\dim(G_T/T^c) < \dim(G)\), this process terminates.

Suppose we have a unitary representation \((V, J_0, g_0)\) of a compact connected Lie group \(K\), which extends to a representation of the complexification \(G\). Let \(\Omega_0\) denote the induced K\"ahler form on \(V\). Let \(\mu \colon V \to \mfk\) denote the associated moment map. 

Let \(x \in V \setminus \{0\}\) be such that \(0 \in \overline{G \cdot x}\). Let \(p \colon V \setminus \{0\} \to \P(V)\) denote the quotient map, and let \(q = p(x)\). On \((\P(V), \omega_\FS)\) with the induced action, we have a moment map
\[\widehat\mu([v]) = \frac{\mu(v)}{\abs{v}^2}.\]
We let \(\gamma_x(t)\), \(\widehat\gamma_q(s)\) denote the corresponding Kempf-Ness flows. Similarly, we define \(f = \abs{\mu}^2/2\), \(\widehat f = \abs{\widehat\mu}^2/2\). Let \(q(s)\) denote the flow line for \(-\grad \widehat f\) starting at \(q\), and let \(x(t)\) denote the flow line for \(-\grad f\) starting at \(x\).

\begin{lemma}
    We have that \(q(s) = p(x(t(s)))\), where
    \[\dv{s}{t} = \abs{x(t)}^2.\]
\end{lemma}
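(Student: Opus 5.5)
We will show that \(p(x(t))\) satisfies the ODE \(\dv{}{s} q = -\grad\widehat f(q)\) once we reparametrise by \(s(t) = \int_0^t \abs{x(\tau)}^2\,\dd\tau\). Uniqueness of ODE solutions then gives \(q(s) = p(x(t(s)))\). The argument is a direct computation with the explicit formulas for the two gradients, \(\grad f(v) = JL_v\mu(v)\) on \(V\) and \(\grad \widehat f([v]) = JL_{[v]}\widehat\mu([v])\) on \(\P(V)\).

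\textbf{Step 1: the flow on \(V\).} In the standard linear case the infinitesimal action is \(L_v\xi = \xi\cdot v\), so the flow reads
\[\dv{x}{t} = -J(\mu(x)\cdot x) = -i\mu(x)\cdot x.\]
Here \(\mu(x)\cdot x\) denotes the action of \(\mu(x)\in\mfk\) on the vector \(x\), extended complex-linearly.

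\textbf{Step 2: push forward to \(\P(V)\).} The map \(p\) is \(G\)-equivariant, so \(\dd p_v(\zeta\cdot v) = L^c_{p(v)}\zeta\) for every \(\zeta \in \mfg\). Taking \(\zeta = -i\mu(x)\), we get
\[\dv{}{t}p(x(t)) = -J L_{p(x)}\mu(x(t)).\]
Now \(\mu(x) = \abs{x}^2\,\widehat\mu(p(x))\) by the definition of \(\widehat\mu\). Using linearity of \(L_{p(x)}\), this becomes
\[\dv{}{t}p(x(t)) = -\abs{x(t)}^2\,JL_{p(x)}\widehat\mu(p(x)) = -\abs{x(t)}^2\,\grad\widehat f(p(x(t))).\]

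\textbf{Step 3: reparametrise.} Change variables by \(\dv{s}{t} = \abs{x(t)}^2\). This is positive because \(x(t) \neq 0\): the flow stays in \(G\cdot x\) by \Cref{lem:group-flow}, and \(x \neq 0\). Then \(p(x(t(s)))\) solves the \(\widehat f\)-flow with initial point \(q\), and uniqueness identifies it with \(q(s)\).

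The one point needing care is normalisation. We must check that \(\widehat\mu([v]) = \mu(v)/\abs{v}^2\) is a moment map for \(\omega_\FS\) with the same constant conventions as the paper's definition \(\langle\mu(v),\xi\rangle = \tfrac12\Omega_0(v,\xi v)\). We must also check that \(J\) on \(\P(V)\) is intertwined with \(i\) under \(\dd p\). A factor mismatch here, say \(2\) or \(\pi\), would only change the constant in \(\dv{s}{t}\). So this step is where I expect the main obstacle: fixing conventions for \(\omega_\FS\) so that the constant is exactly \(1\). This requires comparing with the projective formula \(\langle\mu(x),\xi\rangle = \langle v, i\xi v\rangle/\langle v,v\rangle\) stated earlier.

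The horizontal/vertical decomposition of \(T_vV\) plays no role. Only the equivariance of \(p\) and the homogeneity \(\mu(\lambda v) = \abs{\lambda}^2\mu(v)\) are used.
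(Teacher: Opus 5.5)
Your proposal is correct and follows the paper's proof, which simply asserts \(p_*(\grad f(v)) = \abs{v}^2\grad\widehat f(p(v))\). Your Steps 1--2 derive exactly this identity from the equivariance of \(p\) and \(\mu(v) = \abs{v}^2\widehat\mu(p(v))\), and your Step 3 is the same reparametrisation; the normalisation you flag is not an obstacle, because \(\grad\widehat f = JL\widehat\mu\) holds for the K\"ahler metric on \(\P(V)\) for which \(\widehat\mu\) is the moment map, and that is how \(\omega_\FS\) is normalised here, so your computation gives the constant \(1\) exactly.
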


\begin{proof}
    The result follows from the fact that
    \[p_*(\grad f(v)) = \abs{v}^2\grad\widehat f(p(v)).\]
\end{proof}

\begin{lemma}
    \label{lem:reparametrisation}
    Under the same change of variables as in the previous lemma, we have \(\widehat\gamma_q(s) = \gamma_x(t(s)).\)
\end{lemma}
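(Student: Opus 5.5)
Both $\gamma_x(t)$ and $\widehat\gamma_q(s)$ are images in $G/K$ of curves in $G$ starting at $1$, so the plan is to compare the group-valued flows rather than the flows in $G/K$ directly. By \Cref{lem:group-flow}, applied on $V$ and on $\P(V)$, we have $\gamma_x(t) = [g_x(t)]$ with $g_x^{-1}\dot g_x = i\mu(x(t))$, and $\widehat\gamma_q(s) = [\widehat g_q(s)]$ with $\widehat g_q^{-1}\,\dv{\widehat g_q}{s} = i\widehat\mu(q(s))$. It therefore suffices to show that $h(s) := g_x(t(s))$ satisfies the same ODE in $s$ as $\widehat g_q$, with the same initial condition $h(0) = 1$; uniqueness of ODE solutions then gives $h = \widehat g_q$, and projecting to $G/K$ gives the claim.

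The computation goes as follows. By the chain rule and $\dv{s}{t} = \abs{x(t)}^2$,
\[h(s)^{-1}\dv{h}{s} = g_x^{-1}\dot g_x\,\dv{t}{s} = \frac{i\mu(x(t(s)))}{\abs{x(t(s))}^2} = i\widehat\mu\bigl(p(x(t(s)))\bigr) = i\widehat\mu(q(s)).\]
The third equality uses $\widehat\mu([v]) = \mu(v)/\abs{v}^2$, and the last uses the previous lemma, $q(s) = p(x(t(s)))$. Since $t(0) = 0$ and $g_x(0) = 1$, we get $h(0) = 1$, and the identification follows.

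The only point needing care is that the change of variables is legitimate. We have $x(t) \ne 0$ for all $t$: the flow stays in $G\cdot x$ (\Cref{lem:group-flow}), and $x \ne 0$. Hence $\dv{s}{t} > 0$ and $t \mapsto s$ is a smooth increasing diffeomorphism onto its image. A priori the image is an interval $[0, s_\infty)$ with $s_\infty = \int_0^\infty \abs{x(t)}^2\,\dd t$, which may be finite because $x(t) \to 0$. So the identity $\widehat\gamma_q(s) = \gamma_x(t(s))$ holds on $[0, s_\infty)$, and this range of $s$ is exactly what the lemma asserts. If one also wants uniqueness on the maximal interval, it is the standard local uniqueness for ODEs on the Lie group $G$ applied to the smooth right-hand side $i\widehat\mu(q(s))$.
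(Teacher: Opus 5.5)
Your proposal is correct and follows the paper's proof. Both arguments lift to the group-valued flows $g_x$ and $\widehat g_q$, then use $\widehat\mu([v]) = \mu(v)/\abs{v}^2$, the previous lemma $q(s) = p(x(t(s)))$ and the chain rule to show that $g_x(t(s))$ solves the ODE defining $\widehat g_q$. Your extra care about ODE uniqueness and the domain of the reparametrisation is fine, though the worry about $s_\infty < \infty$ is moot: the paper later shows $s(t) \asymp \log(t)$, so $s_\infty = \infty$.
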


\begin{proof}
    Let \(g_x(t)\) and \(\widehat g_q(s)\) denote the corresponding flows in \(G\). Then
    \begin{align*}
        \widehat g_q^{-1}\dv{\widehat g_q}{s} &= i\widehat\mu(q(s)), \\
        &= i\frac{\mu(x(t(s)))}{\abs{x(t(s))}^2}, \\
        &= \abs{x(t(s))}^{-2} g_x^{-1}\dv{g_x}{t}.
    \end{align*}
    The result then follows by chain rule.
\end{proof}

Since \(0 \in \overline{G \cdot x}\), the point \(q\) is \(\mu\)-unstable. Thus, by \Cref{thm:kempf}, there exists \(\xi_q \in \mfk\), such that
\[\log(\widehat\gamma_q(s)) = -s\xi_q + o_{s \to \infty}(s).\]
We let
\[q' = \lim_{s \to \infty}\exp(is\xi_q) \cdot q.\]
By \Cref{lem:one-param-limit}, the limit exists, and \(L_{q'}\xi_q = 0\). By construction of the action on \(\P(V)\), we have that
\[\lim_{s \to \infty}\exp(is\xi_q) \cdot q = \lim_{s \to \infty}p(\exp(is\xi_q) \cdot x).\]

\begin{lemma}
    The direction \(\xi_q\) is rational, i.e., it generates a subgroup
    \[T = \{\exp(t\xi_q) \mid t \in \R\}\] 
    of \(K\). Moreover,
    \[\lim_{s \to \infty}\exp(is\xi_q)\cdot x = 0.\]
\end{lemma}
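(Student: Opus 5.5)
The plan has two parts: first obtain rationality of \(\xi_q\) from \Cref{prop:rationality}, then compute the \(\widehat\mu\)-weight of \(q\) in the direction \(\xi_q\) explicitly, using a weight-space decomposition of \(x\).

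\textbf{Rationality.} Since \(q\) is \(\widehat\mu\)-unstable, \(\xi_q\) is, up to a positive scalar, the optimal destabilising direction \(\xi_0\) of \Cref{thm:moment-weight} for \(q \in \P(V)\). This uses the remark after \Cref{thm:moment-weight} identifying the direction of \Cref{thm:kempf} with \(\xi_0\); note that \(\xi_q = -\xi_\infty\) in the notation of \Cref{thm:kempf}, because of the sign in \(\log(\widehat\gamma_q(s)) = -s\xi_q + o(s)\).
\begin{itemize}
\item Since \(K\) acts unitarily on \(V\), we may regard \(K\) as a subgroup of \(U(V)\), after quotienting by the finite or compact kernel of the action, which does not affect the flow.
\item I will assume, as in the standing assumption of this section, that the invariant inner product on \(\mfk\) is rational. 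This holds, for example, for the trace form of \Cref{lem:lie-rational}.
\item \Cref{prop:rationality} then gives a positive integer \(l\) with \(\sqrt{2\pi l\hbar}\,\xi_0 \in \Lambda\). Hence \(\exp(t\xi_q)\) is periodic in \(t\), and \(T = \{\exp(t\xi_q) \mid t \in \R\}\) is a circle subgroup of \(K\).
\end{itemize}

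\textbf{Weight decomposition.} Since \(\xi_q \in \mfk\) acts skew-hermitianly on \(V\), the operator \(i\xi_q\) is hermitian. So \(V = \bigoplus_\lambda V_\lambda\) splits orthogonally into real eigenspaces, with \(\exp(is\xi_q)\) acting by \(e^{s\lambda}\) on \(V_\lambda\). By rationality the \(\lambda\) lie in a discrete set, although this is not needed for the limit.
\begin{itemize}
\item Write \(x = \sum_\lambda x_\lambda\), and let \(\lambda_* = \max\{\lambda \mid x_\lambda \ne 0\}\).
\item Then \(\exp(is\xi_q)\cdot x = \sum_\lambda e^{s\lambda} x_\lambda\), so \(p(\exp(is\xi_q)x) \to [x_{\lambda_*}] = q'\).
\item The second claim is therefore equivalent to \(\lambda_* < 0\). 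In that case every component decays like \(e^{s\lambda}\) with \(\lambda \le \lambda_* < 0\), so \(\exp(is\xi_q)\cdot x \to 0\) exponentially.
\end{itemize}

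\textbf{Sign of \(\lambda_*\).} To show \(\lambda_* < 0\), compute the weight. By the definition of \(w_{\widehat\mu}\) and continuity of \(\widehat\mu\),
\[
w_{\widehat\mu}(q,\xi_q) = \langle \widehat\mu(q'),\xi_q\rangle = \frac{\langle \mu(x_{\lambda_*}),\xi_q\rangle}{\abs{x_{\lambda_*}}^2}.
\]
From the formula \(\langle\mu(v),\xi\rangle = \tfrac12\Omega_0(v,\xi v)\), we get \(\langle\mu(x_{\lambda_*}),\xi_q\rangle = c\,\lambda_*\abs{x_{\lambda_*}}^2\) for a universal nonzero constant \(c\) fixed by the conventions. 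On the other side, \Cref{thm:kempf} together with \(\xi_q = -\xi_\infty\) and the normalisation \(\abs{\xi_\infty} = m\) gives the sign of \(w_{\widehat\mu}(q,\xi_q)\). Comparing the two yields \(\lambda_* < 0\).

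The main obstacle is this bookkeeping of signs, because several conventions interact:
\begin{itemize}
\item left versus right actions, with \(x(t) = g^{-1}x_0\);
\item the minus sign relating \(\xi_q\) to \(\xi_\infty\);
\item the sign convention in \(\tfrac12\Omega_0(v,\xi v)\) versus \(\langle v, i\xi v\rangle\).
\end{itemize}
I would first fix all conventions on the simplest example, \(K = S^1\) acting on \(V = \C\) with weight \(-1\). There \(0 \in \overline{\C^*\cdot x}\), the flow is explicit, and one can read off the sign \(\operatorname{sgn}(c)\) together with the sign of \(w_{\widehat\mu}(q,\xi_q)\). As an independent check, one can argue directly. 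If some component of \(x\) had \(\lambda \ge 0\), then \(\exp(is\xi_q)x\) would stay bounded away from \(0\). But \(\widehat\gamma_q(s)\) tracks \(\exp(-is\xi_q)\) to first order, and by \Cref{lem:reparametrisation} and the \(\abs{x(t)}^2\) reparametrisation the flow \(x(t)\) tends to \(0\) along this direction. That would contradict \(\xi_q\) being the strictly destabilising direction.
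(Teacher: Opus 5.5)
Your rationality step matches the paper's: both use \Cref{prop:rationality}. For the limit you take a different route. The paper simply cites \cite[Proposition 4.4]{chenCalabiFlowGeodesic2014}. Your weight-space argument is a self-contained replacement: it reduces $\exp(is\xi_q)\cdot x\to 0$ to $\lambda_*<0$, and reads off that sign from Kempf's identity via $w_{\widehat\mu}(q,\xi_q)=\langle\widehat\mu([x_{\lambda_*}]),\xi_q\rangle=c\lambda_*$. It uses only results recalled in \Cref{sec:moment-maps} and gives exponential decay. As a by-product it also proves what the paper asserts right after the lemma without proof, namely $x_m=0$ for $m\ge 0$ and $q'=p(x_{m_0})$. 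The citation buys only brevity.

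You do need to fix one sign, because as written it breaks the decisive step. Since $\log$ inverts $\xi\mapsto\exp(i\xi)K$ and \Cref{thm:kempf} writes $\widehat g_q(s)=\exp(-i\xi(s))u(s)$, you get $\log(\widehat\gamma_q(s))=-\xi(s)=-s\xi_\infty+o(s)$. So $\xi_q=+\xi_\infty$, not $-\xi_\infty$.

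With your sign, \Cref{thm:kempf} says nothing about $w_{\widehat\mu}(q,\xi_q)=w_{\widehat\mu}(q,-\xi_\infty)$. That weight equals $-c\lambda_{\min}(i\xi_\infty)$, which is governed by the lowest occupied weight. The claimed limit would then actually be false, since $\exp(is\xi_\infty)\cdot x\to 0$ forces $\exp(-is\xi_\infty)\cdot x$ to diverge. With the correct sign you get $c\lambda_*=-m^2<0$.

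The sign of $c$ is not a free convention either. Under the conventions for which \Cref{thm:kempf,thm:moment-weight} hold as quoted, $s\mapsto\langle\mu(\exp(is\xi)v),\xi\rangle$ is nondecreasing, with derivative $\abs{\xi\cdot\exp(is\xi)v}^2$. This monotonicity is what gives $w_\mu(x,\xi)\ge\langle\mu(x),\xi\rangle$, the easy half of the moment-weight inequality. For $v\in V_\lambda$ it reads $2c\lambda^2=\lambda^2$, so $c=\tfrac12>0$.

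Equivalently, $\dd\langle\mu,\xi\rangle=\iota_{v_\xi}\Omega_0$ forces $\langle\mu(v),\xi\rangle=\tfrac12\Omega_0(\xi v,v)$, matching the formula for $\mu_N$ in \Cref{sec:ym}. The printed $\tfrac12\Omega_0(v,\xi v)$ in \Cref{sec:normal-forms} has the opposite sign, which explains the confusion you flagged. Hence $\lambda_*<0$, and no toy example is needed.

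Drop the ``independent check''. The $o(s)$ error in $\log\widehat\gamma_q(s)$ need not commute with $\xi_q$, so first-order tracking does not by itself control $\exp(is\xi_q)\cdot x$.
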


\begin{proof}
    Rationality follows from \Cref{prop:rationality}. For the limit, we refer to \cite[Proposition 4.4]{chenCalabiFlowGeodesic2014}.
\end{proof}

Let \(n_q > 0\) be minimal, such that \(\exp(n_q\xi_q) = 1\). Let us now consider the \(T^c\)-action on \(V\), which decomposes into weight spaces
\[V = \bigoplus_{m \in \Z}V_m,\]
where for \(v \in V_m\),
\[\exp(is n_q\xi_q) \cdot v = e^{ms} v.\]
If we write \(x = \sum_m x_m\), we must then have that \(x_m = 0\) for all \(m \ge 0\). Let \(m_0 < 0\) be maximal, such that \(x_{m_0} \neq 0\). Set \(x' = x_{m_0}\), and \(x'' = x - x'\). Then \(q' = p(x')\).

\begin{lemma}
    [{\cite[Proposition 1.9]{ramananRemarksInstabilityFlag1984}, \cite[Theorem 9.3]{nessStratificationNullCone1984}}]
    The point \(q'\) is \(\widehat\mu\)-unstable. Moreover, the direction \(\xi_q\) is the optimal destabilising direction for \(q'\).
\end{lemma}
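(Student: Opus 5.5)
We will prove both claims through the moment-weight characterisation in \Cref{thm:moment-weight}. The plan is to compare the \(\widehat\mu\)-weights of \(q'\) with those of \(q\), and then use the uniqueness of the optimal destabilising direction. Throughout, recall that the weight of \([v]\) along \(\xi\) is computed from the lowest-weight component of \(v\): if \(v = \sum_\lambda v_\lambda\) is the weight decomposition for a torus containing \(\exp(\R\xi)\), then \(w_{\widehat\mu}([v], \xi)\) equals \(-\min\{\lambda(\xi) : v_\lambda \ne 0\}\), up to the normalisation constant. This follows from \Cref{lem:one-param-limit} and the formula for \(\widehat\mu\) on \(\P(V)\).

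\textbf{Step 1: the weight of \(q'\) along \(\xi_q\).} Since \(x' = x_{m_0}\) is the component of \(x\) that dominates as \(s \to \infty\), we have \(q' = p(x')\) and \(L_{q'}\xi_q = 0\). Because \(x'\) is a single weight vector,
\[w_{\widehat\mu}(q', \xi_q) = \langle \widehat\mu(q'), \xi_q\rangle = w_{\widehat\mu}(q, \xi_q).\]
By \Cref{thm:kempf} (with the sign convention there), this common value is \(-\abs{\xi_q}^2\), which is negative. So \(q'\) is destabilised by \(\xi_q\), and in particular \(q'\) is \(\widehat\mu\)-unstable by \Cref{thm:hilbert-mumford}. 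It also shows that
\[\sup_{\xi}\frac{-w(q',\xi)}{\abs{\xi}} \ge \sup_{\xi}\frac{-w(q,\xi)}{\abs{\xi}} = m.\]

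\textbf{Step 2: the reverse inequality and optimality.} We would show \(\inf_g \abs{\widehat\mu(g q')} \le \abs{\widehat\mu(q')}\). Since \(\widehat\mu(q')\) is fixed by the flow direction, \(\abs{\widehat\mu(q')} \le m\) would follow from \(\widehat\mu(q') = \xi_q\)-type information. More robustly, we argue as follows.
\begin{itemize}
\item The point \(q'\) lies in \(\overline{G\cdot q}\).
\item For any \(\xi\), lower semicontinuity of weights under degeneration gives \(w(q',\xi') \ge w(q,\xi)\) for suitable conjugates \(\xi'\). Concretely, for \(\xi \in \mfk\), choose \(p \in P(\xi_q)\) so that \(p\xi p^{-1}\) commutes with \(\xi_q\); this is possible by the lemma that \(P(\zeta_1)\cap P(\zeta_2)\) contains a maximal torus.
\item Degenerating along \(\xi_q\) can only raise the minimum weight, because the components of \(x'\) form a subset of the limits of the components of \(x\).
\item Mumford's lemma (ii) then gives \(w(q', \xi) \ge w(q, p^{-1}\xi p)\)-type comparisons.
\end{itemize}
Hence \(\sup_\xi -w(q',\xi)/\abs{\xi} \le m\). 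Combined with Step 1, the supremum for \(q'\) equals \(m\) and is attained at \(\xi_q\). The uniqueness part of \Cref{thm:moment-weight} then identifies \(\xi_q\) as the optimal destabilising direction of \(q'\).

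\textbf{Main obstacle.} The hard part is the semicontinuity inequality \(w(q',\xi) \ge w(q,\cdot)\) in Step 2, which requires a careful choice of common torus. As a fallback, we would use the classical Kempf/Ness argument. Since \(\widehat\mu(q')\) is the limit of \(\widehat\mu(\exp(is\xi_q)q)\), we directly compute \(\widehat\mu(q') = \xi_q/\abs{\xi_q}\cdot(\text{norm } m)\) for a suitably normalised \(\xi_q\), using that the Kempf–Ness flow of \(q\) aligns with \(\xi_q\) by \Cref{thm:kempf}. Then \(q'\) satisfies \(\abs{\widehat\mu(q')} = m = -w(q',\xi_q)/\abs{\xi_q}\). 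The moment-weight inequality forces \(q'\) to be a minimiser of \(\abs{\widehat\mu}\) on its orbit closure, with optimal direction \(\widehat\mu(q')\) proportional to \(\xi_q\). This is exactly the content of \cite[Theorem 9.3]{nessStratificationNullCone1984}.
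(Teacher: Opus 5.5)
\textbf{There is a genuine gap in Step 2.} Your Step 1 is correct and is the paper's argument for instability: \(q'\) is fixed by \(\exp(is\xi_q)\), so \(w_{\widehat\mu}(q',\xi_q)=\langle\widehat\mu(q'),\xi_q\rangle=w_{\widehat\mu}(q,\xi_q)=-m^2\). This gives instability and the lower bound \(\sup_\zeta -w_{\widehat\mu}(q',\zeta)/\abs{\zeta}\ge m\).

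The monotonicity you invoke for the reverse bound points the wrong way. For \(\zeta\) in a torus containing \(\xi_q\), the weights of \(x'\) are a subset of those of \(x\), so their minimum is larger. In your own convention this means \(w_{\widehat\mu}(q',\zeta)\le w_{\widehat\mu}(q,\zeta)\), i.e.\ \(-w_{\widehat\mu}(q',\zeta)\ge -w_{\widehat\mu}(q,\zeta)\). That is the lower bound of Step 1 again: degenerations are in general \emph{more} unstable. For example, with weights \(\{e_1,e_2\}\subset\R^2\) the optimal direction is \((1,1)\), with value \(1/\sqrt2\). Degenerating along the non-optimal direction \((1,\tfrac{1}{10})\) keeps only \(e_2\), whose value is \(1>1/\sqrt2\).

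So the bound \(\sup_\zeta -w_{\widehat\mu}(q',\zeta)/\abs{\zeta}\le m\) cannot come from semicontinuity; it must use that \(\xi_q\) is \emph{optimal} for \(q\). In the torus case this is the convexity fact behind \cite[Theorem 9.3]{nessStratificationNullCone1984}. The point \(\beta\) of \(\operatorname{conv}(\Lambda)\) nearest the origin (which is \(\xi_q\) up to sign and scale) is a convex combination of weights lying on the supporting hyperplane orthogonal to \(\beta\). Those weights are exactly the weights of \(x'\), so the two distances to the origin coincide.

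For non-abelian \(G\) the paper reduces to this case. It takes the optimal direction \(\eta\) of \(q'\) and a maximal torus in \(P(\xi_q)\cap P(\eta)\), and conjugates each direction by an element of its \emph{own} parabolic. Your single \(p\in P(\xi_q)\) with \([p\xi p^{-1},\xi_q]=0\) need not exist: it already fails in \(SL_2(\C)\) when \(\xi_q\) is diagonal and \(\xi\in\mathfrak{su}(2)\) is not, since a triangular \(p\) fixes a coordinate line. Moreover, Mumford's lemma (ii) preserves \(w(\cdot,\xi)\) only under conjugation by \(P(\xi)\), not by \(P(\xi_q)\).

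\textbf{The fallback does not repair this, because its key computation is false.} In general \(\widehat\mu(q')\) is neither proportional to \(\xi_q\) nor of norm \(m\), and \(q'\) is not a minimiser of \(\abs{\widehat\mu}\) on its orbit. Only \(\langle\widehat\mu(q'),\xi_q\rangle=-m^2\) is forced. The component in \(\mfk_T\cap\mft^\perp\) is the \(K_T/T\)-moment map at \(q'\), and \(q'\) is merely \(G_T/T^c\)-semistable (the next proposition).

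For a concrete case, let \(G=\C^*\times SL_2(\C)\) act on binary quartics, with \(\C^*\) acting by scalars. Take \(x\) an \(SL_2\)-semistable but non-polystable quartic. Then \(m\) is the constant \(\mathfrak{u}(1)\)-component of \(\widehat\mu\), \(\xi_q\) spans the \(\mathfrak{u}(1)\) factor, and \(q'=q\). Yet \(\abs{\widehat\mu(q')}>m\), since the \(\mathfrak{su}(2)\)-component of \(\widehat\mu(q)\) is non-zero when \(q\) is not polystable.

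The statement \(\xi(s)/s\to\xi_q\) in \Cref{thm:kempf} does not let you replace the flow \(q(s)\) by \(\exp(is\xi_q)\cdot q\). The \(o(s)\) correction and the unitary factor \(u(s)\) matter: the flow limit \(q_\infty\) satisfies \(\abs{\widehat\mu(q_\infty)}=m\), but it is in general not \(q'\). Relating \(q_\infty\) to \(\overline{G\cdot q'}\) is precisely the nontrivial Kirwan--Ness content. So the fallback amounts to citing the statement rather than proving it.
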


\begin{proof}
    The fact that \(q'\) is \(\widehat\mu\)-unstable follows from the Hilbert-Mumford criterion, as \(w_{\widehat\mu}(q', \xi_q) = w_{\widehat\mu}(q, \xi_q) < 0\). Thus, it remains to show that the optimal destabilising direction for \(q'\) is \(\xi = \xi_q\).

    Let \(\eta\) denote the optimal destabilising direction for \(q'\). Let \(P(\xi), P(\eta)\) denote the corresponding parabolic subgroups of \(G\). The intersection \(P(\xi) \cap P(\eta)\) contains a maximal torus \(T\) of \(G\). This means that there exists \(g \in P(\xi), h \in P(\eta)\), such that \(\eta' = g\eta g^{-1}\) and \(\xi' = h \xi h^{-1}\) are in \(\mft\). By construction, \(\xi'\) is the optimal destabilising direction for \(r = h \cdot q\), and
    \[\lim_{s \to \infty}\exp(is\xi') \cdot r = h \cdot q' = q''.\]
    So \(q' = h^{-1} \cdot q''\). Suppose we know the result holds for \(G = T\).

    First, by parabolic invariance,
    \[w_{\widehat\mu}(q', \eta) = w_{\widehat\mu}(q', \eta'),\]
    and so the optimal destabilising direction for \(q'\) with respect to the \(T\) and \(G\)-actions are the same. On the other hand, by the result for \(G = T\), \(\xi'\) is the optimal destabilising direction for \(q''\) with respect to the \(T\)-action. But now
    \[w_{\widehat\mu}(q'', \xi') = w_{\widehat\mu}(q', \xi) = w_{\widehat\mu}(q', \xi')\]
    by equivariance and parabolic invariance. Thus, \(\xi'\) is the optimal destabilising direction for \(q'\) with respect to the \(T\)-action as well. Thus, there exists \(k \in P = P(\xi) = P(\xi')\), such that \(\eta' = k\xi' k^{-1}\). But now this means that \(P = P(\eta') = P(\eta)\) as well, and so
    \[\xi = h^{-1}\xi' h = h^{-1}k^{-1}\eta'kh = (h^{-1}k^{-1}g)\eta(h^{-1}k^{-1}g)^{-1}.\]
    But each equivalence class contains a unique element of \(\mfk\) \cite[Theorem D.4]{georgoulasMomentweightInequalityHilbertMumford2021}, and so \(\xi = \eta\), as required.

    For the torus case, we refer to \cite[Theorem 9.3]{nessStratificationNullCone1984} and \cite{szekelyhidiExtremalMetricsKstability2006}, which follows a convex-geometric argument.
\end{proof}

In particular, \(T = \{\exp(t\xi_q) \mid t \in \R\} \subseteq K_{q'}\) is a one-dimensional compact torus, and so we may now apply the theory of moment maps relative to a torus, as in \Cref{sec:relative-torus}. From \Cref{thm:kempf} and \Cref{lem:relative-torus-flow},
\[\log(\widehat\gamma_{q'}(s)) = -s\xi_q + \epsilon(s),\]
where \(\epsilon(s) = o_{s\to\infty}(s)\) satisfies \([\epsilon(s), \xi_q] = 0\).

For the proof of the next proposition, we need the following classical result about the Fubini-Study metric.
\begin{lemma}
    For \(v, w \in V\) non-zero,
    \[\cos(d(p(v), p(w))) = \frac{\abs{\langle v, w \rangle}}{\norm{v}\norm{w}}.\]
\end{lemma}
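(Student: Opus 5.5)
The statement to prove is the classical formula for the Fubini-Study distance: for non-zero \(v, w \in V\),
\[\cos(d(p(v), p(w))) = \frac{\abs{\langle v, w \rangle}}{\norm{v}\norm{w}}.\]
The plan is to reduce to a computation on a complex projective line.

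First, both sides are invariant under rescaling \(v\) and \(w\) by non-zero complex scalars, so I normalise \(\norm{v} = \norm{w} = 1\) and rotate \(w\) by a phase so that \(\langle v, w\rangle = \abs{\langle v, w\rangle} \ge 0\). Both sides are also invariant under \(U(V)\), since \(U(V)\) acts on \(\P(V)\) by isometries of \(\omega_\FS\). Hence I may take \(v = e_1\) and \(w = \cos\theta\, e_1 + \sin\theta\, e_2\) with \(\theta \in [0, \pi/2]\), and the claim becomes \(d(p(v), p(w)) = \theta\). (If \(v\) and \(w\) are proportional, then \(\theta = 0\) and there is nothing to prove.)

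Second, I show that the projective line \(\P(\operatorname{span}(e_1, e_2))\) is totally geodesic. It is the fixed point set of the isometry of \(\P(V)\) induced by the unitary map that is \(+1\) on \(\operatorname{span}(e_1, e_2)\) and \(-1\) on its orthogonal complement. A component of the fixed point set of an isometry is totally geodesic. With the normalisation \(\omega_\FS\) restricting to the round metric of curvature \(4\) (diameter \(\pi/2\)), this line is a round sphere of radius \(1/2\).

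Third, I identify the minimising geodesic. The curve \(c(\theta) = p(\cos\theta\, e_1 + \sin\theta\, e_2)\) is the horizontal projection of a unit-speed great circle in \(S^{2n+1}\), because its velocity \(-\sin\theta\, e_1 + \cos\theta\, e_2\) is orthogonal to \(\C \cdot(\cos\theta\, e_1 + \sin\theta\, e_2)\). Since the Hopf map \(S^{2n+1} \to \P(V)\) is a Riemannian submersion, \(c\) is a unit-speed geodesic. For \(\theta \le \pi/2\), at most the diameter, it is minimising. One way to see this is that a horizontal lift of any curve from \(p(v)\) to \(p(w)\) has the same length and ends somewhere on the circle \(e^{i\phi}w\); the spherical distance from \(v\) to that circle is \(\arccos\max_\phi \Re\langle v, e^{i\phi}w\rangle = \arccos\abs{\langle v,w\rangle} = \theta\). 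This gives \(d = \theta\) directly.

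The only real care needed is the normalisation of \(\omega_\FS\): with a different scaling constant, the formula holds with \(d\) replaced by a fixed multiple. I would check this against the convention for \(\widehat\mu\) used above. Beyond that, the argument is a routine computation.
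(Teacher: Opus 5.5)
Your proof is correct and complete. The paper gives no proof of this lemma; it quotes the formula as a classical fact about the Fubini--Study metric, so there is no argument to compare yours against. Your horizontal-lift argument via the Hopf submersion from the unit sphere of $V$ to $\P(V)$ is the standard proof.

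Two small remarks:

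\begin{enumerate}[(i)]
    \item Your second step (total geodesy of the projective line) and the claim that $c$ is a geodesic are not needed. The curve $c$ is the projection of a horizontal unit-speed curve of length $\theta$, and this alone gives $d \le \theta$. Your lifting argument gives $d \ge \theta$. The phrase ``at most the diameter'' would not by itself justify minimality, so it is good that you supply the lifting argument.
    \item Your normalisation caveat is right. The identity holds as stated only when the Hopf map from the unit sphere is a Riemannian submersion; otherwise $d$ must be rescaled. This does not affect the paper, because the lemma is only used in \Cref{cor:fs-distance-bound}, where any such rescaling is absorbed into the constant $C$.
\end{enumerate}
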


As a corollary, we have the following estimate for the Fubini-Study distance.

\begin{corollary}
    \label{cor:fs-distance-bound}
    We have the inequality \(d(p(v), p(v + w)) \le C\frac{\norm{w}}{\norm{v + w}}\). Moreover, for \(\norm{w} \le \frac{1}{2}\norm{v}\),
    \[d(p(v), p(v + w)) \le C\frac{\norm{w}}{\norm{v}}.\]
\end{corollary}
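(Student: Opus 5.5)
The plan is to derive both inequalities directly from the preceding lemma, by bounding \(\sin d\) rather than \(d\) itself. Write \(u = v + w\) and \(d = d(p(v), p(u))\). The preceding lemma gives \(\cos d = \abs{\langle v, u\rangle}/(\norm{v}\norm{u}) \in [0,1]\), so we may take \(d \in [0, \pi/2]\). On this interval concavity of \(\sin\) gives \(\sin d \ge \frac{2}{\pi} d\). Hence it suffices to bound \(\sin d\), and the constant can be taken to be \(C = \pi/2\) for the first inequality.

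To bound \(\sin d\), compute
\[\sin^2 d = 1 - \frac{\abs{\langle v, u\rangle}^2}{\norm{v}^2\norm{u}^2} = \frac{1}{\norm{u}^2}\left(\norm{u}^2 - \frac{\abs{\langle v, u\rangle}^2}{\norm{v}^2}\right).\]
The bracket is \(\norm{u - \pi_v u}^2\), where \(\pi_v\) denotes orthogonal projection onto the complex line \(\C v\); this is Pythagoras for the decomposition \(u = \pi_v u + (u - \pi_v u)\). Since \(\pi_v u\) is the closest point of \(\C v\) to \(u\), and \(v \in \C v\), we get
\[\norm{u - \pi_v u} \le \norm{u - v} = \norm{w}.\]
Therefore \(\sin d \le \norm{w}/\norm{v+w}\), and so
\[d(p(v), p(v + w)) \le \frac{\pi}{2}\frac{\norm{w}}{\norm{v + w}}.\]

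For the second statement, assume \(\norm{w} \le \frac{1}{2}\norm{v}\). The triangle inequality gives \(\norm{v + w} \ge \norm{v} - \norm{w} \ge \frac{1}{2}\norm{v}\). Substituting into the first bound yields \(d(p(v), p(v+w)) \le \pi\norm{w}/\norm{v}\), so the claim holds with \(C = \pi\).

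There is no real obstacle. The only point requiring care is identifying \(\sin^2 d\) with the squared distance from \(u/\norm{u}\) to the line \(\C v\), which is the Pythagoras step above. Everything else is elementary, and both constants are universal, independent of \(v\), \(w\) and \(V\).
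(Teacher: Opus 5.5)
Your proof is correct and is essentially the paper's argument. The paper's vector \(u' = w - \frac{\langle v,w\rangle}{\norm{v}^2}v\) is exactly your \(u - \pi_v u\), and its bound \(\norm{u'} \le \norm{w}\) is your closest-point step; after that you use the same lower bound on \(\sin d\) and the same triangle inequality, though your remark that \(\cos d \ge 0\) forces \(d \in [0,\pi/2]\), so that \(\sin d \ge \frac{2}{\pi}d\) holds globally, is a small tightening of the paper's ``\(\sin(d) \ge Cd\) for \(d\) small''.
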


\begin{proof}
    Set \(u = v + w\), and \(u' = w - \frac{\langle v, w \rangle}{\norm{v}^2}v\), so that
    \[u = \frac{\langle v, u \rangle}{\norm{v}^2}v + u'.\]
    Taking the norm of both sides, and clearing denominators,
    \[\norm{v}^2\norm{u}^2 = \abs{\langle v, u \rangle}^2 + \norm{v}^2\norm{u'}^2.\]
    By construction, \(\norm{u'} \le \norm{w}\).
    
    Let \(d = d(p(v), p(u))\). By the lemma,
    \[\cos(d) = \frac{\abs{\langle v, u \rangle}}{\norm{v}\norm{u}},\]
    and so
    \[\sin(d)^2 = 1 - \cos(d)^2 
        = 1 - \frac{\abs{\langle v, u \rangle}^2}{\norm{v}^2\norm{u}^2} 
        = \frac{\norm{v}^2\norm{u}^2 - \abs{\langle v, u \rangle}^2}{\norm{v}^2\norm{u}^2} 
        \le \frac{\norm{w}^2}{\norm{u}^2}.\]
    The result then follows from the fact that \(\sin(d) \ge Cd\) for \(d\) small. The final result follows from \(\norm{v + w} \ge \norm{v} - \norm{w} \ge \frac{1}{2}\norm{v}\).
\end{proof}

We can now prove the following result, which states that the flow lines \(\widehat\gamma_q(s)\) and \(\widehat\gamma_{q'}(s)\) are asymptotic to each other. The first half of the proof follows the proof of \cite[Proposition 4.5]{chenCalabiFlowGeodesic2014}.

\begin{proposition}
    \label{prop:flow-one-param-limit}
    As \(s \to \infty\), we have
    \[\widehat\gamma_q(s) = \widehat\gamma_{q'}(s) + O_{s \to \infty}(1).\]
\end{proposition}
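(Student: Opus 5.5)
We would compare the two Kempf-Ness flows through the standard comparison argument for gradient flows of convex functions on the nonpositively curved space \(G/K\), as in the proof of \Cref{prop:bounded-linear}. The key fact is that if \(\Phi, \Psi\) are geodesically convex on a Hadamard manifold and \(\gamma, \sigma\) are their negative gradient flow lines, then
\[\dv{s} d(\gamma(s), \sigma(s)) \le \abs{\grad\Phi(\sigma(s)) - \grad\Psi(\sigma(s))}.\]
This holds because the distance function is convex, and convexity of \(\Phi\) gives monotonicity of \(\grad\Phi\) along geodesics. We apply this with \(\Phi = \Phi_q\), \(\Psi = \Phi_{q'}\) and \(\sigma = \widehat\gamma_{q'}\). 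It then suffices to show that \(\abs{\grad\Phi_q - \grad\Phi_{q'}}\) evaluated along \(\widehat\gamma_{q'}(s)\) is integrable in \(s\).

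At a point \([g] \in G/K\), the gradient of \(\Phi_q\) is given by \(\widehat\mu(g^{-1}q)\), up to the identification \(T_{[g]}G/K \cong \mfk\) via left translation, by property (ii) of the Kempf-Ness function. Since \(\widehat\mu\) is Lipschitz on the compact \(\P(V)\), we get
\[\abs{\grad\Phi_q - \grad\Phi_{q'}}([g]) \le C\, d(g^{-1}q, g^{-1}q').\]
Writing \(\widehat\gamma_{q'}(s) = [g(s)]\), \Cref{cor:fs-distance-bound} with \(v = g^{-1}x'\) and \(w = g^{-1}x''\) bounds this by \(C\norm{g(s)^{-1}x''}/\norm{g(s)^{-1}x'}\).

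To estimate this ratio, we use the structure of the flow for \(q'\). We have \(\log\widehat\gamma_{q'}(s) = -s\xi_q + \epsilon(s)\) with \([\epsilon(s), \xi_q] = 0\). So up to a unitary factor, which does not change norms, \(g(s)^{-1} = \exp(-i\epsilon(s))\exp(is\xi_q)\). Since \(\epsilon(s)\) commutes with \(\xi_q\), it preserves each weight space \(V_m\).

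\begin{itemize}
\item The component \(x'\) lies in \(V_{m_0}\), and \(x''\) lies in \(\bigoplus_{m<m_0}V_m\).
\item The factor \(\exp(is\xi_q)\) scales \(V_m\) by \(e^{ms/n_q}\), so the ratio gains a factor of at most \(e^{-s/n_q}\).
\item It remains to control the \(\exp(-i\epsilon(s))\) factor. Both flows stay in the \(G_T\)-orbit, and \(\exp(-i\epsilon(s))\) acts on each \(V_m\) block.
\end{itemize}

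The worry is that \(\epsilon(s) = o(s)\) could stretch \(V_m\) for \(m<m_0\) relative to \(V_{m_0}\) by a factor \(e^{o(s)}\). This is still dominated by \(e^{-s/n_q}\) for large \(s\), since \(\norm{\exp(-i\epsilon)}\norm{\exp(i\epsilon)} \le e^{2c\abs{\epsilon(s)}}\) with \(\abs{\epsilon(s)}/s \to 0\). Hence the ratio is at most \(Ce^{-s/(2n_q)}\), which is integrable. Integrating the differential inequality then gives \(d(\widehat\gamma_q(s), \widehat\gamma_{q'}(s)) \le C\), as required.

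The main obstacle is this last step: quantitatively controlling \(\epsilon(s)\). We only know \(\epsilon(s) = o(s)\) from \Cref{thm:kempf}, and sublinearity must beat the exponential gap coming from the weight difference \(m_0 - m \ge 1\). If this estimate is too crude, we would instead use the unitary-invariance of \(\mu\) restricted to \(V_{m_0}\) to bound \(\epsilon\) more precisely.
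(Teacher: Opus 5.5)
Your proposal is correct and follows the paper's proof essentially step for step: the convexity comparison giving \(\dv{L}{s} \le \abs{\grad\widehat\Phi_{q'} - \grad\widehat\Phi_q}\) along \(\widehat\gamma_{q'}(s)\), the Lipschitz bound for \(\widehat\mu\), \Cref{cor:fs-distance-bound}, and the weight gap between \(x'\) and \(x''\). The step you flag as the main obstacle is already closed by your own argument, since \(\epsilon(s) = o(s)\) gives \(e^{2c\abs{\epsilon(s)} - s/n_q} \le e^{-s/(2n_q)}\) for large \(s\). This is in fact more careful than the paper's write-up, which passes from \(e^{\nu s + \lambda_1(\epsilon(s))}\) to \(\gtrsim e^{\nu s}\) as though \(\epsilon(s)\) were bounded.
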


\begin{proof}
    Let \(L(s)\) denote the distance between \(\widehat\gamma_q(s)\) and \(\widehat\gamma_{q'}(s)\). We let \(\alpha_s(\tau)\) denote the geodesic from \(\widehat\gamma_q(s)\) to \(\widehat\gamma_{q'}(s)\). From this,
    \[L(s)\dv{L}{s} = \langle\alpha_s'(1), -\grad\widehat\Phi_{q'}(\widehat\gamma_{q'}(s))\rangle - \langle \alpha_s'(0), -\grad\widehat\Phi_{q}(\widehat\gamma_q(s))\rangle.\]
    Let \(p_0 = \alpha_s(0) = \widehat\gamma_q(s)\) and \(p_1 = \alpha_s(1) = \widehat\gamma_{q'}(s)\). Noting that \(\alpha_s'(1) = -\exp_{p_1}^{-1}(p_0)\) and \(\alpha_s'(0) = \exp_{p_0}^{-1}(p_1)\), the above can be rewritten as
    \begin{align*}
        L(s)\dv{L}{s} &= \langle \exp_{p_1}^{-1}(p_0), \grad\widehat\Phi_{q'}(p_1)\rangle + \langle \exp_{p_0}^{-1}(p_1), \grad\widehat\Phi_{q}(p_0)\rangle, \\
        &= \langle \exp_{p_1}^{-1}(p_0), \grad\widehat\Phi_q(p_1)\rangle + \langle \exp_{p_0}^{-1}(p_1), \grad\widehat\Phi_q(p_0)\rangle \\
        &\quad + \langle \exp_{p_1}^{-1}(p_0), \grad\widehat\Phi_{q'}(p_1) - \grad\widehat\Phi_q(p_1)\rangle.
    \end{align*}
    By the geodesic convexity of \(\widehat\Phi_q\),
    \[\langle \exp_{p_1}^{-1}(p_0), \grad\widehat\Phi_q(p_1)\rangle + \langle \exp_{p_0}^{-1}(p_1), \grad\widehat\Phi_q(p_0)\rangle \le 0.\]
    Thus,
    \[L(s)\dv{L}{s} \le \langle \exp_{p_1}^{-1}(p_0), \grad\widehat\Phi_{q'}(p_1) - \grad\widehat\Phi_q(p_1)\rangle.\]
    By Cauchy-Schwarz, and the fact that \(\abs{\exp_{p_1}^{-1}(p_0)} = L(s)\), we obtain
    \[\dv{L}{s} \le \abs{\grad\widehat\Phi_{q'}(\widehat\gamma_{q'}(s)) - \grad\widehat\Phi_q(\widehat\gamma_{q'}(s))}.\]

    Let \(\widehat g_{q'}(s)\) denote the corresponding flow in \(G\). Then
    \[\dv{L}{s} \le \abs{\widehat\mu(\widehat g_{q'}(s)^{-1} \cdot q') - \widehat\mu(\widehat g_{q'}(s)^{-1} \cdot q)}.\]
    Since \(\widehat\mu \colon \P(V) \to \mfk\) is a smooth function on a compact manifold, it is Lipschitz. Thus,
    \[\dv{L}{s} \lesssim d(\widehat g_{q'}(s)^{-1} \cdot q', \widehat g_{q'}(s)^{-1} \cdot q).\]
    It thus suffices to show that
    \[d(\widehat g_{q'}(s)^{-1} \cdot q', \widehat g_{q'}(s)^{-1} \cdot q) \lesssim e^{-\delta s}\]
    for some \(\delta > 0\).

    We have that \(\widehat g_{q'}(s) = \exp(-is\xi_q + \epsilon(s))u(s)\), where \(u(s)\) is a path in \(K\), and \(\epsilon(s) = o_{s\to\infty}(s)\), with \([\xi_q, \epsilon(s)] = 0\). We may rewrite the above bound as
    \[d(p(\widehat g_{q'}(s)^{-1} \cdot x'), p(\widehat g_{q'}(s)^{-1} \cdot x)) \lesssim e^{-\delta s}.\]
    By \Cref{cor:fs-distance-bound}, and using the fact that \(x = x' + x''\), it suffices to show that
    \[\frac{\norm{\widehat g_{q'}(s)^{-1} \cdot x''}}{\norm{\widehat g_{q'}(s)^{-1} \cdot x'}} \lesssim e^{-\delta s}.\]
    Since \(u(s)\) is unitary, this just becomes
    \[\frac{\norm{\exp(is\xi_q - \epsilon(s)) \cdot x''}}{\norm{\exp(is\xi_q - \epsilon(s)) \cdot x'}} \lesssim e^{-\delta s}.\]

    Let us arrange the eigenvalues of \(i\xi_q\) acting on \(V\) as
    \[\lambda_1(i\xi_q) \ge \cdots \ge \lambda_n(i\xi_q), n = \dim V.\]
    Suppose that \(\lambda_1(i\xi_q), \dots, \lambda_r(i\xi_q) \ge 0\), and \(\lambda_{r+1}(i\xi_q), \dots, \lambda_n(i\xi_q) < 0\). Let \(V_i\) denote the eigenspace with eigenvalue \(\lambda_i(i\xi_q)\). Then
    \[x \in \bigoplus_{i=r+1}^n V_i.\]
    Letting
    \[\nu = \max\{\lambda_i \mid x_i \ne 0\} < 0,\]
    we set
    \[x' = \sum_{i \text{ s.t. }\lambda_i(i\xi_q) = \nu}x_i,\]
    and
    \[x'' = \sum_{i \text{ s.t. }\lambda_i(i\xi_q) < \nu}x_i.\]
    Let
    \[\delta = \nu - \max\{\lambda_i \mid x_i \ne 0, \lambda_i < \nu\} > 0.\]
    Since \([\xi_q, \epsilon(s)] = 0\), it follows that
    \[\exp(is\xi_q - \epsilon(s))\cdot x' = \exp(-\epsilon(s))e^{\nu s}x'.\]
    In particular, 
    \[\norm{\exp(is\xi_q - \epsilon(s)) \cdot x'} \ge e^{\nu s + \lambda_1(\epsilon(s))}\norm{x'} \gtrsim e^{\nu s}\norm{x'}.\]
    The same argument applies to \(x''\), to give an upper bound
    \[\norm{\exp(is\xi_q - \epsilon(s)) \cdot x''} \lesssim e^{(\nu - \delta)s}\norm{x''}.\]
    Combining these two bounds, we obtain the desired result.
\end{proof}

\begin{proposition}
    \label{prop:relative-torus-flow}
    For all \(s\), we have
    \(\widehat\gamma_{q'}(s) = -s\xi_q + \widehat\gamma_{G_T/T^c, q'}(s)\).
\end{proposition}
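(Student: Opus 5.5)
The plan is to split the $G$-flow of $q'$ into a linear flow along $T^c$ and the Kempf--Ness flow of the reduced group $G_T/T^c$. This works because $q'$ is fixed by $T = \{\exp(t\xi_q)\}$ and $T$ is central in $G_T$. I read the statement with $\log$ on both sides, that is,
\[\log(\widehat\gamma_{q'}(s)) = -s\xi_q + \log(\widehat\gamma_{G_T/T^c, q'}(s)).\]
Here the second term is the flow for the $K_T/T$-action on $\P(V)_T$ with moment map $\widehat\mu_T = [\widehat\mu]_T$. It is viewed inside $\mfk_T \cap \mft^\perp \cong \mfk_T/\mft$ via the orthogonal splitting $\mfk_T = \mft \oplus (\mfk_T \cap \mft^\perp)$.

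\textbf{Step 1 (the $\mft$-component is constant and equals the Kempf direction).} Let $q'(s)$ be the flow line of $-\grad\widehat f$ starting at $q'$. By \Cref{lem:relative-torus-flow}, $\widehat\mu(q'(s)) \in \mfk_T$, the difference $\widehat\mu(q'(s)) - \widehat\mu(q')$ lies in $\mft^\perp$, and $q'(s) \in G_T \cdot q'$. Hence $\pr_T\widehat\mu(q'(s)) = \pr_T\widehat\mu(q') =: \tau$ for all $s$.

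I claim that, in the paper's sign conventions, $\tau$ is the element whose flow gives the $-s\xi_q$ term. Since $T$ fixes $q'$, the weight satisfies $w_{\widehat\mu}(q', \xi_q) = \langle \widehat\mu(q'), \xi_q\rangle = \langle \tau, \xi_q\rangle$. Because $\xi_q$ is optimal for $q'$ (previous lemma), \Cref{thm:kempf} and \Cref{thm:moment-weight} give $w_{\widehat\mu}(q',\xi_q) = -m^2$, where $m = \abs{\xi_q} = \lim_s \abs{\widehat\mu(q'(s))}$. On the other hand, $\abs{\tau} \le \abs{\widehat\mu(q'(s))}$ for every $s$, so $\abs{\tau} \le m$. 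Equality in Cauchy--Schwarz then forces $\tau = -\xi_q$.

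\textbf{Step 2 (factorising the group flow).} Let $\widehat g_{q'}(s)$ solve $\widehat g^{-1}\dv{\widehat g}{s} = i\widehat\mu(q'(s))$ with $\widehat g(0) = 1$. Using Step 1, write
\[i\widehat\mu(q'(s)) = i\tau + i\bigl(\widehat\mu(q'(s)) - \tau\bigr),\]
where the second summand lies in $i(\mfk_T \cap \mft^\perp)$.

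Let $h(s) \in G_T$ solve $h^{-1}\dv{h}{s} = i(\widehat\mu(q'(s)) - \tau)$ with $h(0) = 1$. Since $\exp(is\tau)$ is central in $G_T$, the product $\exp(is\tau)h(s)$ solves the same ODE as $\widehat g_{q'}$, so $\widehat g_{q'}(s) = \exp(is\tau)h(s)$. Moreover $\exp(is\tau)$ fixes $q'$, so $q'(s) = h(s)^{-1}q'$.

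Under the identification $\mfk_T \cap \mft^\perp \cong \mfk_T/\mft$, the element $\widehat\mu(q'(s)) - \tau$ equals $\widehat\mu_T(h(s)^{-1}q')$. The projection of $h$ to $G_T/T^c$ therefore solves exactly the ODE of \Cref{lem:group-flow} for the $K_T/T$-action on $\P(V)_T$. By uniqueness, its image in $(G_T/T^c)/(K_T/T)$ is $\widehat\gamma_{G_T/T^c,q'}(s)$.

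\textbf{Step 3 (splitting the symmetric space).} Because $T^c$ is central in $G_T$, we have $G_T/K_T \cong (T^c/T) \times (G_T/T^c)/(K_T/T)$ as symmetric spaces, with the first factor flat. Also $G_T/K_T$ is totally geodesic in $G/K$, so $\log$ restricted to it is the inverse of $\exp(i\cdot)$ on $i\mfk_T$. Since $\tau$ commutes with $\mfk_T$, we get
\[\log\bigl(\exp(is\tau)h(s)K\bigr) = -s\tau' + \log(h(s)K),\]
where the sign and normalisation follow $g = \exp(-i\xi)u$ as in \Cref{thm:kempf}. The term $\log(h(s)K)$ lies in $\mfk_T \cap \mft^\perp$, and under the identification it is $\log\widehat\gamma_{G_T/T^c,q'}(s)$. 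Substituting Step 1 gives the proposition.

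The main obstacle is Step 1: identifying the constant $\mft$-component with $\xi_q$ exactly, with the correct sign, rather than only up to $o(s)$. The Cauchy--Schwarz equality argument is the tool I would use there. Step 3 is a careful but routine check of the product structure of the symmetric space.
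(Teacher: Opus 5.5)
Your proposal is correct and follows the same route as the paper: you split $\widehat\mu(q'(s)) \in \mfk_T$ along $\mfk_T = \mft \oplus (\mfk_T \cap \mft^\perp)$ into a constant $\mft$-part, which produces the linear term, and a $\mft^\perp$-part, which is the moment map for the $K_T/T$-action on $\P(V)_T$ and produces the reduced flow; you also check details the paper leaves implicit, namely that the constant is exactly $-\xi_q$, the factorisation $\widehat g_{q'}(s) = \exp(is\tau)h(s)$, and the product structure of $G_T/K_T$. Two small simplifications are available: since $\mft = \R\xi_q$ is one-dimensional, $\tau = -\xi_q$ follows directly from $\langle \widehat\mu(q'), \xi_q\rangle = w_{\widehat\mu}(q, \xi_q) = -\abs{\xi_q}^2$ without Cauchy--Schwarz, and in Step 3 the formula is simply $\log(\exp(is\tau)h(s)K) = s\tau + \log(h(s)K)$, so the undefined $\tau'$ should be dropped.
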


\begin{proof}
    For this, we observe that the inner product gives a canonical splitting
    \[\mfk_T = \mft \oplus \mfk_T/\mft.\]
    Moreover, by \Cref{lem:relative-torus-flow}, if \(q'(s)\) is the moment map flow from \(q'\), then
    \[\widehat\mu(q'(s)) \in \mfk_T\]
    for all \(s\). Moreover, the \(\mft\) component is constant, accounting for the \(-s\xi_q\) term. 
    
    Next, the \(\mfk_T/\mft\) component of \(\widehat \mu(q')\) is, under the orthogonal splitting, the moment map for the \(K_T/T\)-action on \(((\P(V))_T, \omega)\). Thus, if we let \(\widetilde q'(s)\) denote the flow for the \(K_T/T\)-action, starting from \(q'\), then \(q'(s) = \widetilde q'(s)\) for all \(s\), and this accounts for the \(\widehat\gamma_{G_T/T^c, q'}(s)\) term.
\end{proof}

\begin{proposition}
    \label{prop:relative-torus-semistable}
    The point \(q' \in \P(V)_T\) is semistable with respect to the \(G_T/T^c\)-action on \((\P(V)_T, \omega)\).
\end{proposition}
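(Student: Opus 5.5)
We argue via the Hilbert-Mumford criterion (\Cref{thm:hilbert-mumford}) applied to the reductive group \(G_T/T^c\) acting on \(\P(V)_T\) with moment map \(\widehat\mu_T = [\widehat\mu]_T\). By \Cref{lem:relative-torus-weight}(ii), the \(\widehat\mu_T\)-weight of \((q', [\zeta]_{T^c})\) equals the relative weight \(w_{\widehat\mu,T}(q',\zeta)\) for \(\zeta \in \mcT^c\cap\mfg_T\setminus\mft\). By (i) of the same lemma, this is \(w_{\widehat\mu}(q',\zeta) - \langle\widehat\mu(q'),\pr_T(\zeta)\rangle\). It therefore suffices to show that for every \(\zeta \in \mfk_T\setminus\mft\) (the second form of the criterion lets us restrict to \(\mfk\)),
\[w_{\widehat\mu}(q',\zeta) \ge \langle \widehat\mu(q'), \pr_T(\zeta)\rangle.\]

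First we identify \(\pr_T\widehat\mu(q')\). Since \(q'\) is fixed by \(T\), we have \(\widehat\mu(q')\in\mfk_T\). Its \(\mft\)-component is \(\langle\widehat\mu(q'),\xi_q\rangle\xi_q/\abs{\xi_q}^2\). Because \(q'\) is a \(T\)-fixed point in the weight space \(V_{m_0}\), the pairing \(\langle\widehat\mu(q'),\xi_q\rangle = w_{\widehat\mu}(q',\xi_q)\) is constant along the \(\xi_q\)-orbit. By the preceding lemma, \(\xi_q\) is the optimal destabilising direction for \(q'\), so \(w_{\widehat\mu}(q',\xi_q) = -m\abs{\xi_q}\) with \(m = \inf_g\abs{\widehat\mu(gq')}\). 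After rescaling \(\xi_q\) so that \(\abs{\xi_q}=m\) (as in \Cref{thm:kempf}), this gives \(\pr_T\widehat\mu(q') = -\xi_q\), since \(\langle\widehat\mu(q'),\xi_q\rangle = -m^2\).

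Suppose, for contradiction, that \(\zeta\in\mfk_T\setminus\mft\) satisfies \(w_{\widehat\mu}(q',\zeta) < \langle\widehat\mu(q'),\pr_T\zeta\rangle = -\langle\xi_q,\zeta\rangle\). Consider \(\eta_\epsilon = \xi_q + \epsilon\zeta'\), where \(\zeta' = \zeta - \pr_T\zeta \perp \xi_q\). The weight of \(\zeta\) differs from that of \(\zeta'\) by \(\langle\widehat\mu(q'),\pr_T\zeta\rangle\), since \(\pr_T\zeta\) is central in \(\mfk_T\) and fixes \(q'\). Hence we may assume \(\zeta=\zeta'\perp\xi_q\) with \(w_{\widehat\mu}(q',\zeta')<0\).

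Since \(\xi_q\) and \(\zeta'\) commute, they lie in a common torus. Weights are then piecewise linear: \(w_{\widehat\mu}(q',\eta)=-\min\{\langle\chi,\eta\rangle\}\) over the characters \(\chi\) appearing in \(q'\), up to sign convention. Every character of \(q'\) pairs with \(\xi_q\) to the same value \(m_0\), so for small \(\epsilon\) the function is exactly linear:
\[w(q',\xi_q+\epsilon\zeta') = w(q',\xi_q)+\epsilon\, w(q',\zeta').\]
Moreover \(\abs{\xi_q+\epsilon\zeta'} = \sqrt{m^2+\epsilon^2\abs{\zeta'}^2} = m + O(\epsilon^2)\). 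Therefore
\[\frac{-w(q',\eta_\epsilon)}{\abs{\eta_\epsilon}} = \frac{m^2 - \epsilon\, w(q',\zeta')}{m+O(\epsilon^2)} > m\]
for small \(\epsilon>0\). This contradicts \Cref{thm:moment-weight} applied to \(q'\), and hence proves semistability.

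The main obstacle is justifying the exact linearity of \(w(q',\cdot)\) along \(\xi_q+\epsilon\zeta'\) for small \(\epsilon\). We plan to handle it by diagonalising the commuting pair on \(V_{m_0}\) and using the explicit limit formula for \(\widehat\mu(\exp(it\eta)q')\) on projective space, where the weight is an extremal eigenvalue among the components of \(x'\). The other delicate points are sign conventions and the normalisation \(\abs{\xi_q}=m\), which must be matched with \Cref{thm:kempf}.
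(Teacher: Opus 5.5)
Your proposal is correct and is essentially the paper's argument: the Hilbert--Mumford criterion for $K_T/T$ together with \Cref{lem:relative-torus-weight} reduces the claim to $w_{\widehat\mu}(q',\zeta)\ge\langle\widehat\mu(q'),\pr_T(\zeta)\rangle$; you then split off the $\mft$-component of $\zeta$ and perturb the optimal direction to $\xi_q+\epsilon\zeta'$ to contradict \Cref{thm:moment-weight}. The linearity you single out as the main obstacle needs no character computation: apply your own splitting identity to $\xi_q+\epsilon\zeta'$ (whose $\mft$-part is $\xi_q$), note that this identity rests on $\langle\widehat\mu,\xi_q\rangle$ being constant along $G_T\cdot q'$ as in \Cref{lem:relative-torus-moment-map}, and use positive homogeneity of the weight; this already gives $w_{\widehat\mu}(q',\xi_q+\epsilon\zeta')=w_{\widehat\mu}(q',\xi_q)+\epsilon\,w_{\widehat\mu}(q',\zeta')$ for every $\epsilon>0$, which is exactly how the paper proceeds.
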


\begin{remark}
    The same result has been proven by Ramanan-Ramanathan \cite{ramananRemarksInstabilityFlag1984}, Kirwan \cite{kirwanCohomologyQuotientsSymplectic1984}, Bruasse-Teleman \cite{bruasseHarderNarasimhanFiltrationsOptimal2005} and Ness \cite[Theorem 9.4]{nessStratificationNullCone1984} in different levels of generality. We include the proof for completeness, and because the result is central to our inductive argument.
\end{remark}

\begin{proof}
    We will apply the Hilbert-Mumford criterion (\Cref{thm:hilbert-mumford}). Recall that the toral generators in \(\mfg_T/\mft^c\) are given by
    \[\mcT^c_{\mfg_T/\mft^c} = \{\zeta + \mft^c \mid \zeta \in \mcT^c \cap \mfg_T \setminus \mft\}.\]
    By \Cref{lem:relative-torus-weight}, for \(\zeta \in \mcT^c \cap \mfg_T \setminus \mft\),
    \[w_{\mu_T}(q', \zeta + \mft^c) = w_{\widehat\mu, T}(q', \zeta) = w_{\widehat\mu}(q', \zeta) - \langle \widehat\mu(q'), \pr_T(\zeta)\rangle.\]
    Thus, it suffices to show that
    \[w_{\widehat\mu}(q', \zeta) \ge \langle {\widehat\mu}(q'), \pr_T(\zeta) \rangle.\]
    
    We write \(\zeta = a\xi_q + \eta\), where \(a \in \R\) and \(\eta \in \mft^\perp\). As \([\eta, \xi_q] = 0\), and that \(L_q'\xi_q = 0\), it follows that
    \[\exp(is\zeta)q' = \exp(is\eta)q'.\]
    Moreover, by \Cref{lem:relative-torus-moment-map},
    \[\widehat\mu(\exp(is\eta)q') - \widehat\mu(q') \perp \xi_q.\]
    Let \(q'' = \lim_{s \to \infty}\exp(is\eta)q'\). Thus,
    \begin{align*}
        w_{\widehat\mu}(q', \zeta) &= \lim_{s \to \infty}\langle\widehat\mu(q''), \zeta\rangle \\
        &= \langle \widehat\mu(q'') - \widehat\mu(q'), \zeta\rangle + \langle \widehat\mu(q'), \zeta\rangle, \\
        &= \langle \widehat\mu(q''), \eta\rangle - \langle \widehat\mu(q'), \eta\rangle + a\langle \widehat\mu(q'), \xi_q\rangle + \langle \widehat\mu(q'), \eta\rangle, \\
        &= -a\abs{\xi_q}^2 + \langle\widehat\mu(q''), \eta\rangle, \\
        &= \langle \widehat\mu(q'), \pr_T(\zeta) \rangle + w_{\widehat\mu}(q', \eta).
    \end{align*}
    Fix \(\eta \in \mfk_T \cap \mft^\perp\), and now consider \(\zeta_t = \xi_q + t\eta\), for \(t > 0\). The above computation shows that
    \[w_{\widehat\mu}(q', \zeta_t) = -\abs{\xi_q}^2 + tw_{\widehat\mu}(q', \eta).\]
    Moreover, \(\abs{\zeta_t}^2 = \abs{\xi_q}^2 + t^2\abs{\eta}^2\). Then
    \[\varphi(t) = \frac{w_{\widehat\mu}(q', \zeta_t)}{\abs{\zeta_t}} = \frac{-\abs{\xi_q}^2 + tw_{\widehat\mu}(q', \eta)}{\sqrt{\abs{\xi_q}^2 + t^2\abs{\eta}^2}} = \frac{-m^2 + ta}{\sqrt{m^2 + t^2b^2}}.\]
    On the other hand, the moment weight inequality \Cref{thm:moment-weight} implies that
    \[\frac{w_{\widehat\mu}(q', \zeta_t)}{\abs{\zeta_t}} \ge \frac{w_{\widehat\mu}(q', \xi_q)}{\abs{\xi_q}} = -m.\]
    We claim that if \(a = w_{\widehat\mu}(q', \eta) < 0\), then there exists \(t > 0\) such that \(\varphi(t) < -m\), which would contradict \Cref{thm:moment-weight}. For this, observe that
    \[\varphi'(t) = \frac{m^2(a + b^2t)}{(m^2 + t^2b^2)^{3/2}},\]
    and so
    \[\varphi(0) = -m \quad\text{and}\quad\varphi'(0) = \frac{a}{m}.\]

    Thus, we must have that \(w_{\widehat\mu}(q', \eta) \ge 0\), and so
    \[w_{\widehat\mu}(q', \zeta) \ge \langle \widehat\mu(q'), \pr_T(\zeta) \rangle\]
    as required.
\end{proof}

We may now prove \Cref{thm:unstable}.

\begin{proof}
    [Proof of \Cref{thm:unstable}]
    First of all, we note that despite being stated for \(X = \P(V)\), the proofs of \Cref{prop:relative-torus-flow} and \Cref{prop:relative-torus-semistable} hold for any smooth projective variety. The result then follows immediately from \Cref{prop:relative-torus-flow}, \Cref{prop:relative-torus-semistable} and \Cref{thm:main} for the \(G_T/T^c\)-action on \(X_T\).
\end{proof}

Thus, it remains to prove \Cref{thm:vector-space} from \Cref{thm:unstable}. For this, we will need to understand the reparametrisation \(s(t)\) in \Cref{lem:reparametrisation}.

\begin{proposition}
    \label{prop:reparametrisation}
    There exists a constant \(C > 0\), such that
    \[s(t) = C\log(t) + O_{t\to\infty}(1).\]
\end{proposition}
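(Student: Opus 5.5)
We relate $s$ and $t$ through the norm $\abs{x(t)}^2$, using $\dv{s}{t} = \abs{x(t)}^2$ from the previous lemma. The key computation is the evolution of the norm along the flow of $-\grad f$ on $V$. Since $\dv{x}{t} = -JL_x\mu(x) = -i\mu(x)\cdot x$ with $\mu$ quadratic, we have
\[\dv{t}\abs{x}^2 = 2\langle x, -i\mu(x)\cdot x\rangle = -c_0\abs{\mu(x)}^2,\]
for a positive normalising constant $c_0$ coming from $\langle \mu(v), \xi\rangle = \tfrac12\Omega_0(v,\xi v)$. Writing $r = \abs{x}^2$ and using $\widehat\mu(p(x)) = \mu(x)/\abs{x}^2$, this becomes
\[\dv{r}{t} = -c_0\abs{\widehat\mu(q(s(t)))}^2 r^2.\]

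Next we feed in the projective asymptotics. By \Cref{thm:kempf} applied to the unstable point $q$, the flow $q(s)$ converges to $q_\infty$ with $\abs{\widehat\mu(q_\infty)} = m > 0$. We want this convergence at a rate: $\abs{\widehat\mu(q(s))}^2 = m^2 + O(e^{-\delta s})$, or at least $m^2 + O(s^{-1-\epsilon})$. The Łojasiewicz estimate in \Cref{cor:lojasiewicz}, applied to $\widehat f - \widehat f(q_\infty)$ near the critical point $q_\infty$ (which is real-analytic on $\P(V)$), gives $\widehat f(q(s)) - m^2/2 \lesssim s^{-1/(2\alpha-1)}$. This rate is integrable in $s$ because $1/(2\alpha-1) > 1$ when $\alpha<1$; the case $\alpha = 1/2$ gives exponential decay. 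We then write $\abs{\widehat\mu(q(s))}^2 = m^2 + \rho(s)$ with $\int^\infty \abs{\rho(s)}\,ds < \infty$.

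Finally we change variables to $s$. Since $\dv{r}{s} = \dv{r}{t}/r$, we get
\[\dv{s}\log r = -c_0(m^2 + \rho(s)),\]
so $r(s) = A e^{-c_0 m^2 s + O(1)}$, that is $r \asymp e^{-c_0 m^2 s}$ with the ratio converging. Then
\[\dv{t}{s} = r(s)^{-1} = A^{-1}e^{c_0m^2 s}(1 + o(1)),\]
with the error $e^{O(1)}$ converging. Integrating gives $t(s) = B e^{c_0 m^2 s}\bigl(1 + o(1)\bigr)$, and indeed $\log t = c_0m^2 s + O(1)$, since bounded multiplicative errors suffice. Inverting yields $s(t) = C\log t + O(1)$ with $C = (c_0 m^2)^{-1}$.

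The main obstacle is the integrability of $\rho$. Without a rate, one only gets $\log r = -c_0m^2 s + o(s)$, which does not give an $O(1)$ error. So the crucial step is applying \Cref{cor:lojasiewicz} at the non-minimal critical level $m^2/2$. We do this locally near $q_\infty$, using that $q(s)$ eventually enters the neighbourhood $V$ and stays there. Since $\widehat f$ is decreasing with limit $m^2/2$, the function $\widehat f - m^2/2$ is nonnegative along the flow, which is all the Łojasiewicz argument needs.
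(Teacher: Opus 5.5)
Your argument is correct. It uses the same two ingredients as the paper: the identity $\dv{r}{t} = -c\,\widehat F(s(t))\,r^2$ coming from homogeneity of $f$, and the integrable {\L}ojasiewicz rate $\abs{\widehat F(s) - L} \lesssim s^{-\gamma}$ with $\gamma > 1$. The difference is the variable in which you integrate.

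The paper stays in $t$. It sets $u = 1/r$, so that $\dv{u}{t} = 4L + 4\epsilon(t)$. To control $\epsilon$, it first proves the a priori comparison $s(t) \asymp \log t$, which converts $s^{-\gamma}$ into $\log(t)^{-\gamma}$. It then splits $\int_{t_0}^t \log(\tau)^{-\gamma}\dd\tau$ at $\sqrt t$, inverts to get $r = \frac{1}{4L}t^{-1} + O(t^{-1}\log(t)^{-\gamma})$, and integrates once more.

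By passing to $s$, you turn the equation into the linear one $\dv{s}\log r = -c_0\abs{\widehat\mu(q(s))}^2$. Integrability of the rate in $s$ then immediately gives $\log r = -c_0 m^2 s + O(1)$, and $\dv{t}{s} = 1/r$ gives $\log t = c_0 m^2 s + O(1)$ directly. This removes the need for the preliminary lemma $s \asymp \log t$ and the integral-splitting estimate, at no extra cost.

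Your remark on the {\L}ojasiewicz step is also worth keeping. The argument only needs $\widehat f - m^2/2 \ge 0$ along the trajectory, not on a whole neighbourhood as in \Cref{cor:lojasiewicz}. The paper passes over this silently, even though $q_\infty$ is a critical point at a non-minimal level, where nonnegativity on a neighbourhood generally fails.
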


To prove the result, we begin by establishing some preliminary estimates on the growth of \(s(t)\). Define
\begin{align*}
    r(t) &= \abs{g_x(t)^{-1} \cdot x}^2, \\
    \widehat r(s) &= \abs{\widehat g_q(s)^{-1} \cdot x}^2, \\
    F(t) &= f(g_x(t)^{-1} \cdot x), \\
    \widehat F(s) &= \widehat f(\widehat g_q(s)^{-1} \cdot q).    
\end{align*}
Recall that \(\dv{s}{t} = r(t)\).

\begin{lemma}
    For all \(t\), we have
    \[\dv{r}{t} = -4F(t).\]
\end{lemma}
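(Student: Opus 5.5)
The identity is a direct computation with no induction or asymptotics, so I will just differentiate. Write \(x(t) = g_x(t)^{-1}\cdot x\), so that by \Cref{lem:group-flow} \(x(t)\) is the flow line of \(-\grad f\) starting at \(x\). Then \(r(t) = \abs{x(t)}^2\) and \(F(t) = \frac12\abs{\mu(x(t))}^2\).

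Rather than use the gradient expression \(\grad f = JL_x\mu(x)\) and track the sign convention relating \(g_0\), \(J_0\), \(\Omega_0\), I will differentiate using the group ODE \(g_x^{-1}\dv{g_x}{t} = i\mu(x(t))\) from \Cref{lem:group-flow}. Since
\[\dv{t}\,g_x(t)^{-1} = -g_x^{-1}\dv{g_x}{t}\,g_x^{-1} = -i\mu(x(t))\,g_x(t)^{-1},\]
we get
\[\dv{x}{t} = -i\mu(x(t))\cdot x(t),\]
where \(i\mu(x(t))\in i\mfk\) acts by the complexified linear representation on \(V\).

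Next I differentiate the norm. With \(\langle\cdot,\cdot\rangle\) the hermitian inner product whose real part is \(g_0\),
\[\dv{r}{t} = 2\Re\langle x(t), \dot x(t)\rangle = -2\Re\langle x(t), i\mu(x(t))\cdot x(t)\rangle.\]
Since \(\xi\in\mfk\) acts skew-hermitianly, \(i\xi\) acts hermitianly. Hence \(\langle v, i\xi\cdot v\rangle\) is real, and it equals \(\Omega_0(v,\xi\cdot v)\) up to the fixed convention \(g_0(u,w) = \Omega_0(u,J_0w)\).

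I then use the moment map formula \(\langle\mu(v),\xi\rangle = \frac12\Omega_0(v,\xi\cdot v)\) with \(\xi = \mu(x(t))\). This gives \(\Omega_0(x,\mu(x)\cdot x) = 2\abs{\mu(x)}^2 = 4F\). Substituting into the previous step, and keeping track of the factor relating \(\langle v, i\xi v\rangle\) to \(\Omega_0(v,\xi v)\), I expect to obtain exactly \(\dv{r}{t} = -4F(t)\).

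The only real obstacle is bookkeeping of constants and signs. These are the sign in \(g_0 = \Omega_0(\cdot, J_0\cdot)\), the factor \(\frac12\) in the moment map, and whether \(\abs{\cdot}^2\) denotes the real or hermitian norm. I would fix these once by checking the case \(K = S^1\) acting on \(V = \C\) with weight \(-1\). There \(\mu(v)\) is a multiple of \(\abs{v}^2\), and the flow and \(r(t)\) can be computed explicitly, which pins the constant to \(-4\).

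The sign is at least forced conceptually. Semistable points with \(0\in\overline{G\cdot x}\) should have \(r\) decreasing, and the term \(\Re\langle x, i\mu(x)x\rangle = \langle\mu(x),\mu(x)\rangle\) (suitably normalised) is manifestly nonnegative, so \(r\) is nonincreasing.
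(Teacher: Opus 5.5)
Your last step is where the argument fails. Beyond \(\dot r \propto -\abs{\mu(x)}^2\), the only content of the lemma is the constant, and you assert it (``I expect to obtain exactly'') rather than derive it. It does not come out as you expect.

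Use the conventions you fix yourself: \(\Re\langle\cdot,\cdot\rangle = g_0\), \(g_0 = \Omega_0(\cdot, J_0\cdot)\), and \((i\xi)\cdot v = J_0(\xi\cdot v)\). Then \(\langle x, i\mu(x)\cdot x\rangle = g_0(x, J_0\mu(x)\cdot x) = -\Omega_0(x,\mu(x)\cdot x) = -2\abs{\mu(x)}^2 = -4F\). There is no extra factor \(\frac12\) available, so your chain gives \(\dv{r}{t} = +8F\). The sign is wrong because the linear formula \(\frac12\Omega_0(v,\xi\cdot v)\) satisfies \(\dd\langle\mu,\xi\rangle = -\iota_{v_\xi}\Omega_0\), the opposite of the moment-map convention. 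The magnitude is also \(8F\), not \(4F\).

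Your proposed \(S^1\) calibration confirms the magnitude. Take weight \(\pm1\) on \(\C\) and the standard inner product on \(\mfk = \R\). Then \(\mu(v) = \pm\frac12\abs{v}^2\), \(f = \frac18\abs{v}^4\) and \(\grad f(v) = \frac12\abs{v}^2 v\), so \(\dot r = -r^2 = -8F\). No bookkeeping will pin the constant to \(-4\).

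With \(f = \frac12\abs{\mu}^2\) and \(r = \abs{x}^2\) as written, the correct identity is \(\dv{r}{t} = -8F(t)\). The discrepancy is harmless: \Cref{prop:reparametrisation} only uses \(\dot r = -cF\) for some fixed \(c>0\). Changing \(c\) replaces \(4L\) by \(8L\) and changes the constant \(C\) in \(s(t) = C\log(t) + O(1)\).

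The paper's route avoids all of this. Its one-line proof is Euler's identity for the homogeneous quartic \(f\):
\[\dv{r}{t} = 2g_0(x,\dot x) = -2g_0(x, \grad f(x)) = -2\,\dd f_x(x) = -8f(x).\]
This needs only two facts: the flow is the \(g_0\)-gradient flow of a degree-\(4\) homogeneous function, and \(r = g_0(x,x)\). It uses nothing about the sign or normalisation of \(\mu\), \(\Omega_0\) or the group ODE, which are exactly the fragile parts of your route. Carried out with the normalisations as written, it gives \(-8f\), so the factor \(4\) in the statement appears to be a normalisation slip.

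Euler's identity also gives \(\dot r \le 0\) for every \(x\) at once, so your appeal to semistability for the sign is unnecessary. Replace your final step with this computation and state the result with the constant \(8\), or as \(-cF\) with \(c>0\).
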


\begin{proof}
    This follows from the definition of the gradient flow, and the fact that \(f \colon V \to \R\) is a homogeneous polynomial of degree 4. 
\end{proof}

Next, notice that
\[\widehat F(s(t)) = \frac{F(t)}{r(t)^2},\]
and that the limit
\[\lim_{s \to \infty} \widehat F(s) = L\]
exists and is non-zero.

\begin{lemma}
    There exists \(C > 0\), such that for \(s\) sufficiently large,
    \[\frac{1}{C}\log(t) \le s(t) \le C \log(t).\]
\end{lemma}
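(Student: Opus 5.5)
We will derive a closed differential relation for \(r\) alone. Since \(\dv{s}{t} = r(t)\) and \(\dv{r}{t} = -4F(t)\), and \(F(t) = r(t)^2\widehat F(s(t))\), we obtain
\[\dv{r}{t} = -4\widehat F(s(t))\, r(t)^2.\]
Because \(\lim_{s\to\infty}\widehat F(s) = L > 0\), for \(t\) large we have \(\widehat F(s(t)) \in [L/2, 2L]\). This uses that \(s(t) \to \infty\) as \(t \to \infty\), which we check first (see below). On that range the relation reads
\[\dv{t}\left(\frac{1}{r}\right) = 4\widehat F(s(t)) \in [2L, 8L].\]
Integrating from a fixed large \(t_0\) gives \(2L(t - t_0) \le 1/r(t) - 1/r(t_0) \le 8L(t-t_0)\). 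Hence \(r(t) \asymp 1/t\) for large \(t\).

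Next we integrate \(\dv{s}{t} = r(t)\). From \(\frac{1}{8Lt + c_1} \le r(t) \le \frac{1}{2Lt - c_2}\) for \(t \ge t_1\), integration yields
\[\tfrac{1}{8L}\log t - C' \le s(t) \le \tfrac{1}{2L}\log t + C'.\]
For \(t\) large the additive constants are absorbed, giving \(\frac1C\log t \le s(t)\le C\log t\).

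It remains to justify \(s(t)\to\infty\), and that \(r\) stays positive. Positivity holds because \(x(t) \in G\cdot x\) by \Cref{lem:group-flow} and \(x \neq 0\), so \(r(t) > 0\) for all \(t\). Since \(r\) is decreasing (as \(F \ge 0\)), \(r(t)\le r(0)\), so the flow is defined for all time.

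For divergence of \(s\), suppose instead \(s(t)\to s_\infty<\infty\). Then \(\widehat F(s(t))\) stays bounded below by some \(c>0\) on \([0,\infty)\). Indeed, \(\widehat F(s) > 0\) for every finite \(s\), because \(q\) is unstable and so \(\widehat\mu\) never vanishes along its orbit; by continuity it is bounded below on the compact interval \([0,s_\infty]\). The same ODE argument then gives \(r \lesssim 1/t\). Still, \(\int r\,\dd t<\infty\) remains possible under this bound alone, so the argument must be closed differently. Instead, use \(1/r(t) \le 1/r(0) + 4\sup\widehat F\cdot t\), where \(\widehat F\) is bounded above since \(\widehat f\) is continuous on the compact \(\P(V)\). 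This gives \(r(t)\gtrsim 1/t\), and hence \(s(t) = \int_0^t r \to \infty\). This lower bound is in fact valid for all \(t\) independently, so it settles divergence directly.

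The main point of care is this ordering: first obtain \(s\to\infty\) from the a priori upper bound on \(\widehat F\), then use the limit \(L>0\) to get two-sided bounds. The positivity \(L>0\) itself comes from \Cref{thm:kempf}: \(2L = m^2\), where \(m\) is the infimum of \(|\widehat\mu|\) over the orbit of \(q\), which is positive since \(q\) is unstable.
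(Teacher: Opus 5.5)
Your proof is correct and is essentially the paper's argument: both rest on \(\dv{t}\left(\frac{1}{r}\right) = 4\widehat F(s(t))\) with two-sided bounds on \(\widehat F\), integrated twice. The only difference is that the paper uses that \(\widehat F\) is non-increasing, being \(\widehat f\) along its own negative gradient flow. This gives \(L \le \widehat F(s) \le \widehat F(0)\) for all \(s\), so the bound \(4L \le (1/r)' \le 4\widehat F(0)\) holds globally, and your preliminary step proving \(s(t) \to \infty\) (together with the abandoned contradiction attempt) is unnecessary.
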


\begin{proof}
    As \(\widehat F(s)\) is non-increasing, it follows that \(0 < L \le \widehat F(s(t)) \le C\) for some \(C > 0\). Thus,
    \[-4Lr(t)^2 \ge \dv{r}{t} \ge -4C r(t)^2.\]
    Setting \(u(t) = 1/r(t)\), this implies that
    \[4L \le \dv{u}{t} \le 4C,\]
    and so \(u(t) \asymp t\). Thus, \(r(t) \asymp 1/t\), and so \(s(t) \asymp \log(t)\).
\end{proof}

\begin{proof}
    [Proof of \Cref{prop:reparametrisation}]
    By the {\L}ojasiewicz inequality, there exists \(\gamma = 1/(2\alpha - 1) > 1\) such that
    \[\abs{\widehat F(s) - L} \lesssim s^{-\gamma} \lesssim \log(t)^{-\gamma}.\]
    Set \(\epsilon(t) = \widehat F(s(t)) - L\), and \(u(t) = 1/r(t)\). Then
    \[\dv{u}{t} = 4u(t)^2 F(t) = 4 \widehat F(s(t)) = 4L + 4\epsilon(t).\]
    We can decompose
    \begin{align*}
        \abs{\int_{t_0}^t\epsilon(\tau)\dd\tau} &\lesssim \int_{t_0}^t\log(\tau)^{-\gamma}\dd\tau, \\
        &= \int_{t_0}^{\sqrt t}\log(\tau)^{-\gamma}\dd\tau + \int_{\sqrt t}^t\log(\tau)^{-\gamma}\dd\tau.
    \end{align*}
    For the first term, on \([t_0, \sqrt t]\), we have \(\log(\tau) \ge \log(t_0)\), and so \(\log(\tau)^{-\gamma} \le \log(t_0)^{-\gamma}\). Thus,
    \[\int_{t_0}^{\sqrt t}\log(\tau)^{-\gamma}\dd\tau \lesssim \sqrt t.\]
    For the second term, on \([\sqrt t, t]\), we have \(\log(\tau) \ge \log(\sqrt t) = \frac{1}{2}\log(t)\), and so \(\log(\tau)^{-\gamma} \le 2^\gamma\log(t)^{-\gamma}\). Thus,
    \[\int_{\sqrt t}^t\log(\tau)^{-\gamma}\dd\tau \lesssim t\log(t)^{-\gamma}.\]
    Combining these two bounds, \(\epsilon(t) \lesssim t\log(t)^{-\gamma}\). Thus,
    \[u(t) = 4Lt(1 + O_{t\to\infty}(\log(t)^{-\gamma})).\]
    Inverting this, we obtain
    \[r(t) = \frac{1}{4L}t^{-1} + \theta(t),\]
    where \(\theta(t) = O_{t\to\infty}(t^{-1}\log(t)^{-\gamma})\). Now
    \begin{align*}
        \abs{\int_{t_0}^t\theta(\tau)\dd\tau} &\lesssim \int_{t_0}^t\tau^{-1}\log(\tau)^{-\gamma}\dd\tau, \\
        &= \int_{\log(t_0)}^{\log(t)}u^{-\gamma}\dd u, \\
        &\lesssim \log(t)^{1-\gamma} \lesssim 1.
    \end{align*}
    Thus,
    \[s(t) = C\log(t) + O_{t\to\infty}(1).\]
\end{proof}

We may now prove \Cref{thm:vector-space}.

\begin{proof}
    [Proof of \Cref{thm:vector-space}]
    
    First, \Cref{lem:reparametrisation} implies that
    \[\gamma_x(t) = \widehat\gamma_q(s(t)),\]
    for some function \(s(t)\). \Cref{prop:reparametrisation} tells us that
    \[s(t) = C\log(t) + O_{t\to\infty}(1).\]
    This means that \(O_{s\to\infty}(1) = O_{t\to\infty}(1)\), and we write this as \(O(1)\). 
    The result then follows from \Cref{thm:unstable}.

    \end{proof}

\section{Iterated balancing filtration}

\label{sec:iterated-balancing-filtration}

In \cite{ibaneznunezRefinedHarderNarasimhanFiltrations2024}, Ib\'a\~nez N\'u\~nez defines a notion of an \emph{iterated balancing filtration} for a good moduli stack \cite{alperGoodModuliSpaces2013}. In the setting of GIT, let \(X\) be a separated scheme of finite type over \(\C\), and \(G\) a reductive linear algebraic group over \(\C\) acting on \(X\). Suppose that the stack \([X/G]\) admits a good moduli space, and that \(G\) has a norm on cocharacters \cite[Definition 2.3.8]{ibaneznunezRefinedHarderNarasimhanFiltrations2024}. Associated to a \(\C\)-point \(x \in X(\C)\), we obtain a sequence \(\lambda_1, \dots, \lambda_k\) of commuting one-parameter subgroups, through the iterated balancing filtration, as we shall see in \Cref{subsec:iterated-balancing-filtration-construction}. For each one-parameter subgroup \(\lambda_i\), we obtain a rational element
\[\eta_i = \dv{t}\bigg\vert_{t=1}\lambda_i(t) \in \mfk.\]
Note that the \(\lambda_i\) are only defined up to conjugation by the corresponding parabolic subgroups, and we make the choice such that \(\lambda_i(S^1) \subseteq K\). We refer to \cite[Remark 3.2.16]{ibaneznunezRefinedHarderNarasimhanFiltrations2024} for further details.

The goal of this section is to prove the following result.

\begin{theorem}
    \label{thm:iterated-balancing-filtration}
    Let \(v_j\) be the elements of \(\mfk\) in \Cref{thm:main} (resp.\ \Cref{thm:vector-space}). Then there exists \(C_j > 0\), such that
    \[v_j = C_j \eta_j.\]
\end{theorem}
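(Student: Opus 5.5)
The plan is to run the same induction on \(\dim(G)\) that produced the \(v_j\) in \Cref{thm:main,thm:vector-space,thm:unstable}, and to check at each stage that the construction of the iterated balancing filtration in \Cref{subsec:iterated-balancing-filtration-construction} performs exactly the same step. First I will unwind the recursion behind the \(v_j\). Start from \(y\) semistable. The local slice reduction (\Cref{prop:bounded-stabiliser,prop:move-orbit,prop:bounded-linear}) replaces \((X,G,y)\) by the linear \(G_x\)-action on \(N_x\) at a point \(v\) with \(0 \in \overline{G_x \cdot v}\). Passing to \(\P(N_x)\), \(v_1\) is a positive multiple of \(-\xi_q\), the optimal destabilising direction of \(q = [v]\); this uses \Cref{thm:kempf} together with \Cref{prop:reparametrisation}, which turns \(s\) into \(C\log t\). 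Then \(v_2, \dots, v_k\) are obtained by applying \Cref{thm:main} to \(q'\), which is semistable for \(G_T/T^c\) acting on \(\P(N_x)_T\) by \Cref{prop:relative-torus-semistable}. The aim is to match this recursion, step for step, with Ib\'a\~nez N\'u\~nez's recursion.

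Second, I will identify the first step. In Ib\'a\~nez N\'u\~nez's construction, \(\lambda_1\) is the balancing (optimal) cocharacter of \(y\). By definition this is the cocharacter of the stabiliser \(G_x\) of the closed point \(x\) in the closure of the orbit. It maximises the normalised \(\C^*\)-weight on the cone over the fibre through \(y\), working étale-locally through Luna's slice \([N_x/G_x]\) of the good moduli stack. Via the Luna étale slice this is the Kempf optimal destabilising one-parameter subgroup of \(0 \in \overline{G_x \cdot v}\) in \(N_x\), i.e.\ the Hesselink/Kempf direction of \([v]\in\P(N_x)\) for the norm. The Remark after \Cref{prop:rationality} says that, for a rational inner product, the moment-map optimal direction coincides up to scaling with Kempf's subgroup, and \Cref{thm:moment-weight} gives uniqueness. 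Hence \(\eta_1\) is a positive multiple of \(-\xi_q\), and therefore of \(v_1\). The sign conventions need tracking: \(\log\) versus \(-i\log\), and \(\exp(is\xi)\) versus \(\lambda(t)\). I also need to check that the norm on cocharacters in Ib\'a\~nez N\'u\~nez matches the chosen \(\Ad\)-invariant inner product restricted to \(\mft \subset \mfk_x\).

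Third, I will identify the inductive step. Ib\'a\~nez N\'u\~nez's next step passes to the limit point \(\lim_{t\to 0}\lambda_1(t)\cdot y\). This corresponds to our \(q' = \lim \exp(is\xi_q) q\), i.e.\ the lowest-weight component \(x'\). The group becomes the centraliser of \(\lambda_1\) modulo \(\lambda_1\), i.e.\ \(G_T/T^c\), acting on the fixed locus \(X_T\) with the induced norm on \(\mfk_T/\mft \cong \mft^\perp\). The balancing filtration is then iterated there. I will verify that the induced norm agrees with the orthogonal splitting used in \Cref{prop:relative-torus-flow}. I will also check that the lifts of later cocharacters to \(\mfk_T\) are the \(\mft^\perp\)-representatives; this is what forces the \(\eta_j\) to commute and to match our \(v_j\). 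Once this is done, induction on \(\dim G\) finishes the proof.

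The main obstacle is the second step: identifying Ib\'a\~nez N\'u\~nez's stacky balancing cocharacter, defined via the degeneration fan and the linear form and norm on cocharacters, with the moment-map optimal destabilising direction on \(\P(N_x)\). This requires care in three places. The first is the passage through the étale slice, where the good moduli map must be compatible with the normal form \(\Phi\). The second is the choice of linearisation on the slice, where the weight is the \(\C^*\)-weight of the \(\det\)/\(\mathcal O(1)\) normalisation. The third is the fact that balancing is performed for semistable points relative to the closed orbit rather than for unstable points. I would first try to quote Ib\'a\~nez N\'u\~nez's comparison with Kirwan's refined stratification \cite[Section 3.6.2]{ibaneznunezRefinedHarderNarasimhanFiltrations2024}, since Kirwan's construction is precisely the Ness/Kirwan optimal direction on the projectivised normal space at each blow-up step.
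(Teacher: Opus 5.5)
Your outline has the same skeleton as the paper's proof: induction on \(\dim G\), matching each optimal direction in the moment-map recursion with a balancing cocharacter. However, the two steps that carry the content are either asserted or misdescribed.

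First, the balancing cocharacter is not ``by definition'' Kempf's optimal one-parameter subgroup for \(0 \in \overline{G_x \cdot v}\). After the closed immersion into \(\mcN_x = [N_x/G_x]\), the construction blows up along the maximal-stabiliser locus \(\mcN_x^{\mathrm{max}} = [U/G_x]\). Here \(N_x = U \oplus V\), with \(U\) the \(G_x^0\)-trivial part, so \(v \in V\). It then takes the Harder--Narasimhan filtration of the lift \(\widehat v\) for the \(\Theta\)-stratification induced by \(\mcO(-\mcE)\). Identifying this with the optimal direction of \([v] \in \P(N_x)\) is exactly what must be proved, and you leave it as the ``main obstacle''. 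The paper's argument is short:
\begin{enumerate}[(i)]
    \item \(\Bl_U W \cong U \times \Bl_0 V\), with \(G_x\) acting trivially on \(U\);
    \item on \(\Bl_0 V \subseteq V \times \P(V)\) one has \(\mcO(-E) = p^*\mcO(1)\), so by functoriality of Hilbert--Mumford weights \(\widehat v\) and \([v]\) have the same weights;
    \item \(\P(V) \subseteq \P(W)\) is an invariant subvariety, so the optimal directions on \(\P(V)\) and \(\P(W)\) coincide.
\end{enumerate}
This also settles your question about which linearisation to use on the slice.

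Second, your inductive step fails as stated. The limit \(\lim_{t \to 0}\lambda_1(t)\cdot y\) is just the closed point \(x\) (in the slice, \(0 \in N_x\)), so nothing new happens on \(X_T\). Ib\'a\~nez N\'u\~nez's recursion instead continues from \(\gr f\), the limit of \(\widehat v\) in the blow-up. The missing ingredient is Kirwan's lemma \cite[Lemma 7.6]{kirwanPartialDesingularisationsQuotients1985} that this limit lies in the exceptional divisor \(E = U \times \P(V)\). Only then is it a point of \(\P(V)_T\), namely your \(q'\), which is semistable for \((G_x)_T/T^c\) by \Cref{prop:relative-torus-semistable}, so that the induction applies. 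Citing the comparison with Kirwan's refined stratification does not let you bypass this. That comparison is phrased on the same blow-ups, so you would still need the passage from \(\mcO(-E)\) on the blow-up to \(\mcO(1)\) on \(\P(N_x)\) described above to match directions with the flow.
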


\begin{remark}
    The constants \(C_j\) arise from the various normalisations in this paper and in \cite{ibaneznunezRefinedHarderNarasimhanFiltrations2024}. They can be computed explicitly, but we do not do so in this paper.
\end{remark}

In \Cref{subsec:iterated-balancing-filtration-construction}, we will recall the construction of the iterated balancing filtration, and then in \Cref{subsec:proof-iterated-balancing-filtration}, we will prove \Cref{thm:iterated-balancing-filtration}.

\subsection{Construction of the iterated balancing filtration}

\label{subsec:iterated-balancing-filtration-construction}

We will follow the construction in \cite[Section 4]{ibaneznunezRefinedHarderNarasimhanFiltrations2024}, where Ib\'a\~nez N\'u\~nez defines a notion of a \emph{chain of stacks}. We will restrict ourselves to the case when \(X \subseteq \P^n\) is a smooth projective variety, where \(G\) acts linearly on \(\P^n\) preserving \(X\). As part of the definition of the iterated balancing filtration, we will need a \emph{norm} on the quotient stack \([X/G]\) \cite[Definition 4.1.12]{halpernleistner2022structureinstabilitymodulitheory}. In our setting, this will be given by a \emph{norm on cocharacters} of \(G\), which we will now recall the definition of.

\begin{definition}
    [{\cite[Definition 1.3]{hesselinkUniformInstabilityReductive1978}}]
    Let \(G\) be a smooth affine algebraic group over \(\C\). A \emph{norm on cocharacters} of \(G\) is a map \(q \colon \Gamma(G) \to \Q\), which is invariant under the action of \(G(\C)\) on \(\Gamma(G)\) by conjugation, and such that for any torus \(T\) of \(G\), the restriction of \(q\) to \(\Gamma(T)\) is the quadratic form associated to a positive definite inner product on \(\Gamma(T)\).
\end{definition}

Here, \(\Gamma(G)\) denotes the set of one-parameter subgroups \(\lambda \colon \C^* \to G\) of \(G\). This is equivalent to a Weyl-invariant rational inner product on \(\Gamma(T)\) \cite[\S 1.4]{hesselinkUniformInstabilityReductive1978}. In our setting, one is given by fixing a rational \(K\)-invariant inner product on \(\mfk\) \cite[\S 12.4]{kirwanCohomologyQuotientsSymplectic1984}.

Let \(y \in X^{\mathrm{ss}}(\C)\) be a semistable point, such that the moment map flow converges to \(x \in X^{\mathrm{ps}}(\C)\). We let
\[\phi \colon X^{\mathrm{ss}} \to X \sslash G\]
denote the GIT quotient map, and
\[\pi \colon \mcX = [X^{\mathrm{ss}}/G] \to X \sslash G\]
the corresponding good moduli space. We let \(\mcF = \pi^{-1}\pi(x)\) denote the fibre of \(\pi\) over \(\pi(x)\), which is given by \(\mcF = [F/G]\), where \(F = \phi^{-1}\phi(x)\).

\begin{lemma}
    [{\cite[Theorem 1.2]{alperLunaEtaleSlice2020}}, {\cite[Proposition 7.4.1]{ibaneznunezRefinedHarderNarasimhanFiltrations2024}}] 
    \label{lem:stack-luna-slice}
    
    There is a pointed closed immersion \(\iota \colon (\mcF, x) \to (\mcN_x, 0)\), where \(\mcN_x = [N_x/G_x]\).
\end{lemma}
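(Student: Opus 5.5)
The plan is to combine the Luna étale slice theorem for good moduli spaces with the stacky description of the fibre. Let \(\mcF = [F/G]\), with \(F = \phi^{-1}\phi(x)\). By the Luna étale slice theorem of Alper-Hall-Rydh, there is an affine \(G_x\)-invariant locally closed subscheme \(W \subseteq X^{\ss}\) through \(x\), together with a \(G_x\)-equivariant étale morphism \(W \to N_x\) sending \(x \mapsto 0\). The induced morphism \([W/G_x] \to [X^{\ss}/G]\) is étale and strongly étale, meaning it is cartesian over the good moduli spaces \(W\sslash G_x \to X\sslash G\).

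I would then pass to the fibres over \(\pi(x)\). Base-changing along \(\pi(x) \to X \sslash G\), the strongly étale condition identifies \(\mcF\) with the fibre of \([W/G_x] \to W\sslash G_x\) over the image of \(x\). The same argument applied to \([W/G_x]\to[N_x/G_x]\), which is also strongly étale near \(0\) after shrinking, identifies this fibre with the fibre of \([N_x/G_x]\) over \(\pi_{N_x}(0)\). That fibre is \([\mcN_0/G_x]\), where \(\mcN_0 \subseteq N_x\) is the nullcone, namely the closed \(G_x\)-invariant subscheme of points whose orbit closure contains \(0\). The composite \(\mcF \cong [\mcN_0/G_x] \hookrightarrow [N_x/G_x] = \mcN_x\) is then a closed immersion sending \(x \mapsto 0\), which gives the map \(\iota\).

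The main obstacle is making the strongly étale step precise. Strongly étale morphisms only induce isomorphisms on fibres over points of the good moduli space. We need the map on geometric fibres over the closed point to be an isomorphism, and not merely étale. This follows because the map \(W\sslash G_x \to X \sslash G\) is étale and \([W/G_x] = W\sslash G_x \times_{X\sslash G}[X^{\ss}/G]\). The fibre over the point lying over \(\pi(x)\) is therefore literally the fibre of \(\pi\). One caveat is that the reduced structure may be needed. I would state the result with the scheme-theoretic fibres defined via pullback, so that the equality is exact; this matches the usage in \cite[Proposition 7.4.1]{ibaneznunezRefinedHarderNarasimhanFiltrations2024}. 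In our smooth setting, the normal-form map \(\Phi \colon G\times_{G_x} S \to X\) from \Cref{sec:normal-forms} supplies \(W = \Psi(S)\) directly, and \(N_x\) coincides with the slice used there.
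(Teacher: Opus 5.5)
Your argument is correct and is essentially what the paper delegates to its two citations: the Alper--Hall--Rydh (Luna) slice gives strongly \'etale maps $[W/G_x]\to[X^{\ss}/G]$ and $[W/G_x]\to\mcN_x$ near $x$. Base-changing both cartesian squares to the relevant closed points of the good moduli spaces identifies $\mcF$ with the nullcone stack $[N_x\times_{N_x\sslash G_x}\{0\}/G_x]$, which is closed in $\mcN_x$; in this smooth setting you even get an isomorphism onto it, which is more than the closed immersion claimed.

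The one thing to discard is your closing remark. The map from \Cref{sec:normal-forms} (your $\Psi(S)$) is only a holomorphic slice, not an algebraic locally closed subscheme, so it cannot serve as $W$ in this algebraic argument. The paper, in \Cref{subsec:proof-iterated-balancing-filtration}, goes the other way: it builds the analytic slice from the algebraic Luna slice.
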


As the balancing stratification is compatible with pullback under closed immersions \cite[Proposition 3.5.5]{ibaneznunezRefinedHarderNarasimhanFiltrations2024}, we may now replace \(y \in F \subseteq X^{\mathrm{ss}}(\C)\) with a point \(v \in N_x\), with \(0 \in \overline{G_x \cdot v}\).

Next, we consider the blow-up 
\[b \colon \mcM = \Bl_{\mcN_x^{\mathrm{max}}}\mcN_x \to \mcN_x,\]
where \(\mcN_x^{\mathrm{max}}\) is the closed substack of \(\mcN_x\) consisting of points with maximal stabiliser dimension, with the stack structure defined by Edidin-Rydh \cite[Appendix C]{edidinCanonicalReductionStabilizers2021}. Let \(G_x^0\) denote the identity component of \(G_x\). By reductivity, we may decompose
\[N_x = U \oplus V,\]
where \(U\) is the sum of the trivial \(G_x^0\)-subrepresentations, and \(V\) the sum of the non-trivial \(G_x^0\)-subrepresentations. 

\begin{lemma}
    The point \(v\) is in \(V\).
\end{lemma}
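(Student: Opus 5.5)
We need to show that \(v \in V\), i.e.\ that the component \(v_U\) of \(v = v_U + v_V\) in \(U\) vanishes. The plan is to exploit the fact that the decomposition \(N_x = U \oplus V\) is \(G_x^0\)-invariant and that \(G_x^0\) acts trivially on \(U\), so projection to \(U\) is a \(G_x^0\)-invariant linear map. Combined with the condition \(0 \in \overline{G_x \cdot v}\), this should force \(v_U = 0\).

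First, I reduce from \(G_x\) to \(G_x^0\). Since \(G_x\) has finitely many components, \(G_x = \bigcup_j g_j G_x^0\) for finitely many \(g_j\), so
\[\overline{G_x \cdot v} = \bigcup_j g_j\,\overline{G_x^0 \cdot v}.\]
As \(0\) is fixed by every \(g_j\), the condition \(0 \in \overline{G_x \cdot v}\) gives \(0 \in \overline{G_x^0 \cdot v}\).

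Next, I note that \(U\) and \(V\) are both \(G_x^0\)-invariant subspaces. Hence the projection \(\pr_U \colon N_x \to U\) along \(V\) is \(G_x^0\)-equivariant, and because \(G_x^0\) acts trivially on \(U\), it satisfies \(\pr_U(g \cdot w) = \pr_U(w)\) for all \(g \in G_x^0\). Therefore \(\pr_U\) is constant, equal to \(v_U\), on \(G_x^0 \cdot v\). Since \(\pr_U\) is continuous, it is also constant on the closure \(\overline{G_x^0 \cdot v}\). Evaluating at the point \(0\) of this closure gives \(v_U = \pr_U(0) = 0\), so \(v \in V\).

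There is no real obstacle here. The only points needing care are the passage from \(G_x\) to its identity component and the invariance of the complement \(V\). Invariance of \(V\) holds because it is defined as the sum of the nontrivial isotypic components, which is canonical. Alternatively, \(V\) is the orthogonal complement of \(U\) with respect to the \(K_x\)-invariant Hermitian metric on \(N_x\), since \(G_x^0\) is reductive.
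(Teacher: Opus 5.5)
Your argument is correct and is the same as the paper's, whose proof simply cites \(0 \in \overline{G_x \cdot v}\) and the finiteness of \(G_x/G_x^0\). You have supplied the details the paper leaves implicit: passing from \(G_x\) to \(G_x^0\), then observing that the projection onto \(U\) along the canonical complement \(V\) is \(G_x^0\)-invariant and continuous, hence constant on the orbit closure, which forces \(v_U = 0\).
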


\begin{proof}
    This follows from the fact that \(0 \in \overline{G_x \cdot v}\), and that \(G_x/G_x^0\) is finite.
\end{proof}

In this case, \(\mcN_x^{\mathrm{max}} = [U/G_x]\), and so
\[\mcM = \Bl_{[U/G_x]}[N_x/G_x] = [\Bl_U N_x / G_x].\]
As \(0 \in \overline{G_x \cdot v}\), the point \(v\) is not in \(\mcN_x^{\mathrm{max}}\), and so lifts to a point \(\widehat v \in \mcM \setminus \mcE\), where \(\mcE = [(U \times \P(V))/G_x]\) denotes the exceptional divisor. The polarisation \(\mcO(-\mcE)\) induces a \(\Theta\)-stratification of \(\mcM\), and we let \(\mcS_c\) denote the unique \(\Theta\)-stratum containing \(\widehat v\). In particular, this defines a Harder-Narasimhan filtration \(f\) for \(\widehat v\). Letting \(\mcZ\) denote the centre of \(\mcS_c\), we then obtain a point \(\gr f \in \mcZ(\C)\). The process then proceeds by induction, with the point \(\gr f \in \mcZ(\C)\).

We will conclude this discussion by describing the \(\Theta\)-stratification in terms of GIT. The point \(\widehat v \in \Bl_U N_x \setminus E\) is unstable, with respect to the polarisation \(\mcO(-E)\). Thus, by Kempf, there exists an optimal destabilising direction \(\xi \in \mfk_x\), which generates a one-dimensional torus \(T_\xi\). We let
\[w = \lim_{t \to \infty}\exp(it\xi)\cdot \widehat v,\]
which is a semistable point for the \((G_x)_{T_\xi} / T_\xi^c\)-action on \((\Bl_U N_x)_{T_\xi}\), as in \Cref{prop:relative-torus-semistable}. A direct computation proves the following.

\begin{lemma}
    [{\cite[Lemma 7.6]{kirwanPartialDesingularisationsQuotients1985}}]
    The point \(w\) lies in the exceptional divisor \(E\).
\end{lemma}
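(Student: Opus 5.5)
The plan is to read off the weight of the optimal destabilising direction \(\xi\) on the fibre of the polarisation \(\mcO(-E)\) at the limit point \(w\). The key input is that \(\mcO(E)\) carries a canonical \(G_x\)-invariant section \(s_E\) vanishing exactly on \(E\). Away from \(E\), this section trivialises \(\mcO(-E)\) equivariantly, so every one-parameter subgroup has weight zero on the fibre there. Strict instability of \(\widehat v\) then forces the limit to lie in \(E\).

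\textbf{Step 1: model of the blow-up.} I would first write
\[\Bl_U N_x = \{(u, v', \ell) \in U \times V \times \P(V) \mid v' \in \ell\},\]
the total space of the tautological bundle \(\mcO_{\P(V)}(-1)\) pulled back to \(U \times \P(V)\), with \(E\) the zero section \(\{v' = 0\}\). Under this identification \(\mcO(-E)\) is the pullback of \(\mcO_{\P(V)}(1)\), with its natural \(G_x\)-linearisation. The canonical section \(s_E\) of \(\mcO(E)\) is \(G_x\)-invariant, since \(E\) is the preimage of the \(G_x\)-invariant subspace \(U\). Hence \(s_E\) restricts to a nowhere-vanishing invariant section over \(\Bl_U N_x \setminus E\), giving a \(G_x\)-equivariant trivialisation of \(\mcO(\pm E)\) there with trivial character.

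\textbf{Step 2: strict negativity of the weight.} The point \(\widehat v\) is unstable for \(\mcO(-E)\), and \(\xi\) is its optimal destabilising direction. By \Cref{thm:kempf} and \Cref{thm:moment-weight}, the weight \(w_\mu(\widehat v, \xi)\) equals \(-m^2 < 0\) for the moment map attached to \(\mcO(-E)\). By the algebraic Hilbert--Mumford description (Mumford's \(\mu\)-invariant), this weight is, up to a positive normalising constant and a sign fixed by convention, the weight of the \(\C^*\)-action generated by \(\xi\) on the fibre \(\mcO(-E)_w\) at the fixed point \(w = \lim_{t\to\infty}\exp(it\xi)\cdot\widehat v\). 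In particular this fibre weight is nonzero.

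\textbf{Step 3: conclusion.} Suppose \(w \notin E\). Then, by Step 1, \(\exp(it\xi)\) acts trivially on \(\mcO(-E)_w\), so the fibre weight is \(0\), contradicting Step 2. Therefore \(w \in E\).

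As a concrete check, decompose \(v = \sum_m v_m\) into \(\xi\)-weight spaces and let \(m_0\) be the top weight with \(v_{m_0} \neq 0\); \(\exp(it\xi)\) fixes \(U\) because \(\exp(it\xi) \in G_x^0\). If \(m_0 > 0\), the \(v'\)-coordinate diverges, so the limit does not exist. If \(m_0 = 0\), the limit is \((u, v_0, [v_0]) \notin E\) with weight \(0\), which is excluded by Step 2. So \(m_0 < 0\), and the limit is \((u, 0, [v_{m_0}]) \in E\).

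The main obstacle I expect is Step 2. The space \(\Bl_U N_x\) is not compact, so the results of \Cref{sec:moment-maps} do not apply verbatim. One must ensure that the weight defined through the moment map for \(\mcO(-E)\) agrees, with the correct sign, with the algebraic fibre weight at the limit. I would handle this as in the reduction in \Cref{sec:normal-forms}: compute directly with the explicit linear weights on \(V\), where \(\xi\) acts trivially on \(U\), and compare with the \(\P(V)\) computation already carried out before \Cref{prop:flow-one-param-limit}.
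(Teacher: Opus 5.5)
Your proof is correct, but it rests on a different key observation from the one the paper has in mind.

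\textbf{What the paper does.} The paper gives no argument beyond citing Kirwan and saying that ``a direct computation'' suffices. That computation is essentially your concrete check:
\begin{enumerate}[(i)]
    \item Since \(v \in V\) and \(\exp(it\xi) \in G_x^0\) acts trivially on \(U\), one has \(\exp(it\xi)\cdot\widehat v = (0, \exp(it\xi)v, [\exp(it\xi)v])\) in \(U \times V \times \P(V)\).
    \item Because \(\mcO(-E) = p^*\mcO(1)\), the optimal direction for \(\widehat v\) coincides with that of \([v] \in \P(V)\). This is the comparison lemma in the proof of \Cref{thm:iterated-balancing-filtration}.
    \item For \([v]\), the rationality lemma in \Cref{sec:finite-dimensional} (via Chen--Sun) gives \(\exp(is\xi_q)\cdot x \to 0\). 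So every \(\xi\)-weight occurring in \(v\) is negative, and \(w = (0, 0, [v_{m_0}]) \in E\).
\end{enumerate}

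\textbf{What your route buys.} Your main argument avoids the detour through \(\P(V)\) and Chen--Sun. The invariant canonical section \(s_E\) forces the fibre weight of \(\mcO(-E)\) to vanish at every fixed point off \(E\), while the optimal destabilising weight is nonzero. This is shorter, independent of sign conventions, and works verbatim for any blow-up along a smooth invariant centre. The direct computation instead exhibits \(w\) explicitly as \([v_{m_0}]\) in \(E \cong U \times \P(V)\), which is what the paper uses next. Your concrete check recovers this as well.

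\textbf{Two small remarks.}
\begin{enumerate}[(i)]
    \item Invariance of \(s_E\) is a property of the \emph{canonical} linearisation of \(\mcO(-E)\), as the ideal sheaf of \(E\) (equivalently \(p^*\mcO(1)\) with its natural linearisation). Your stated reason, that \(E\) is an invariant divisor, only gives semi-invariance up to a character of \(G_x\). After such a twist the weights off \(E\) would no longer vanish, so say explicitly that this is the linearisation in use.
    \item The obstacle you anticipate in Step 2 does not arise. The paper invokes Kempf in the algebraic sense. For \(\Bl_U N_x\), which is projective over the affine \(N_x\), the Hilbert--Mumford weight of a one-parameter subgroup whose limit exists is \emph{defined} as the weight on the fibre of \(\mcO(-E)\) at the limit. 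Instability of \(\widehat v\) (i.e.\ \(v\) lies in the null cone) makes the minimal normalised weight strictly negative. No moment map on the non-compact blow-up is needed.
\end{enumerate}
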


Thus, we obtain a point \(w \in \P(V)_{T_\xi}\), which is semistable with respect to the \((G_x)_{T_\xi}/T_\xi^c\)-action.

\subsection{Proof of \Cref{thm:iterated-balancing-filtration}}

\label{subsec:proof-iterated-balancing-filtration}

We now prove \Cref{thm:iterated-balancing-filtration}, by passing to a slice. First, we will show that the point \(v \in N_x\) obtained in \Cref{lem:stack-luna-slice} agrees with the one constructed in \Cref{sec:normal-forms}. By Luna's \'etale slice theorem \cite{lunaSlicesEtales1973}, we have \(G\)-equivariant morphisms
\[\begin{tikzcd}
	{(G \times_{G_x}N_x, [1, 0])} & {(G \times_{G_x}Y, [1, x])} & (X, x).
	\arrow["\phi"', from=1-2, to=1-1]
	\arrow["\sigma", from=1-2, to=1-3]
\end{tikzcd}\]
which are \'etale at \([1, x]\). We observe that \(\phi\) arises from a \(G_x\)-equivariant morphism \(Y \to N_x\) which is \'etale at \(x\). Thus, in the \emph{analytic} category, \(\phi\) is locally invertible, say with local inverse \(\psi\). One may then use \(\sigma \circ \psi\) to define an analytic slice, following Sjamaar \cite{sjamaarHolomorphicSlicesSymplectic1995}. Once we fix these choices, the resulting points in \(N_x\) are the same.

Thus, we have now reduced to the case of a representation. Let \(G\) be a reductive linear algebraic group, \(W\) a representation of \(G\). Let \(W = U \oplus V\), where \(U\) is the sum of the trivial \(G\)-subrepresentations, and \(V\) the sum of the non-trivial \(G\)-subrepresentations. Let \(v \in V\) be a point such that \(0 \in \overline{G \cdot v}\).

\begin{lemma}
    The optimal destabilising directions for
    \begin{enumerate}[(i)]
        \item \(\widehat v \in \Bl_U V\), with respect to the polarisation \(\mcO(-E)\),
        \item \([v] \in \P(V)\), with respect to the polarisation \(\mcO(1)\),
        \item \([v] \in \P(W)\), with respect to the polarisation \(\mcO(1)\)
    \end{enumerate}
    agree.
\end{lemma}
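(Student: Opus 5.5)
The plan is to prove that all three optimal destabilising directions equal the one obtained from the Hilbert--Mumford weights of \(v\) under the linear \(G\)-action. Each weight will be computed explicitly, and the three normalised weight functions will turn out to be positive multiples of one another. The moment-weight characterisation in \Cref{thm:moment-weight} then gives the claim. Indeed, the optimal destabilising direction is the unique maximiser, up to scaling, of \(-w(\xi)/\abs{\xi}\) over \(\xi \in \mfk\). Multiplying \(w\) by a positive constant does not change the maximiser.

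First compare (ii) and (iii). Embed \(\P(V) \subseteq \P(W)\) as the \(G\)-invariant linear subspace \(\{[0 : v]\}\). The Fubini--Study moment map on \(\P(W)\) restricts to the Fubini--Study moment map on \(\P(V)\): the formula \(\langle v, i\xi v\rangle/\abs{v}^2\) only involves \(v\), and \(U\) is orthogonal to \(V\). Likewise, one-parameter limits \(\lim \exp(it\xi)[v]\) stay in \(\P(V)\). So \(w_{\mu_{\P(W)}}([v], \xi) = w_{\mu_{\P(V)}}([v], \xi)\) for every \(\xi\). Concretely, both weights equal \(-\nu(\xi)\), where \(\nu(\xi)\) is the maximal eigenvalue of \(i\xi\) on a nonzero component of \(v\), up to a sign convention. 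Hence the two optimisation problems are identical.

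For (i), identify \(\Bl_U W\) with the total space of \(\mcO_{\P(V)}(-1) \times U\) over \(\P(V) \times U\). Then \(E = U \times \P(V)\), and \(\mcO(-E)\) restricts on \(E\) to \(\mcO_{\P(V)}(1)\), pulled back from \(\P(V)\); \(G\) acts trivially on \(U\). Since \(v \in V\), the lift \(\widehat v\) lies over \(0 \in U\). By the preceding lemma of Kirwan, the limit \(\lim_{t\to\infty}\exp(it\xi)\widehat v\) along any destabilising \(\xi\) lies in \(E\), over the point \(\lim \exp(it\xi)[v] \in \P(V)\). The Hilbert--Mumford weight of \((\widehat v, \xi)\) for \(\mcO(-E)\) is the \(\C^*\)-weight on the fibre of \(\mcO(-E)\) at this limit. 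By the restriction property, it equals the weight of \(\mcO_{\P(V)}(1)\) at \(\lim \exp(it\xi)[v]\), which is the weight in (ii).

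The remaining subtlety is for \(\xi\) such that the limit of \(\exp(it\xi)\widehat v\) is not in \(E\), namely when \(v\) has nonnegative weights; then the limit is in \(\Bl_U W \setminus E \cong W \setminus U\). In that case I expect the \(\mcO(-E)\)-weight to be nonnegative, since \(\mcO(-E)\) is trivial with trivial linearisation away from \(E\), while the (ii)-weight is also nonnegative. Neither situation affects the infimum, because the optimal value is strictly negative by instability (\Cref{thm:moment-weight}). Strictly speaking \(\mcO(-E)\) is only relatively ample, so the weight and Kempf theory should be interpreted via the \(\Theta\)-stratification/numerical invariant of \cite{ibaneznunezRefinedHarderNarasimhanFiltrations2024}. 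That numerical invariant is \(-w/\abs{\xi}\) with the same norm, so uniqueness of its maximiser yields equality of the directions.

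The main obstacle is the (i) comparison: handling the non-projectivity of \(\Bl_U W\) and getting the sign conventions for the \(\mcO(-E)\) weights correct.
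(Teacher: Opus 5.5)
Your proposal is correct and essentially follows the paper's argument. For (ii) versus (iii) you use the fact that \(\P(V)\) is a \(G\)-invariant linear subvariety of \(\P(W)\), and for (i) versus (ii) you use \(\Bl_U W \cong U \times \Bl_0 V\) and compare \(\mcO(-E)\)-weights with \(\mcO(1)\)-weights at the limit points. The paper streamlines your step (i) by using the global equivariant identification \(\mcO(-E) \cong p^*\mcO(1)\) for the projection \(p \colon \Bl_0 V \to \P(V)\) together with functoriality of Hilbert--Mumford weights, which removes your case split according to whether the limit lies in \(E\); like you, it tacitly ignores one-parameter subgroups along which no limit exists in the non-proper \(\Bl_U W\).
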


\begin{proof}
    The equality in (ii) and (iii) follows from computing the Hilbert-Mumford weights, or by observing that \(\P(V)\) is a \(G\)-invariant subvariety of \(\P(W)\). Next, observe that we have a natural \(G\)-equivariant isomorphism
    \[\Bl_U W \cong U \times \Bl_0 V.\]
    As \(G\) acts trivially on \(U\), we may assume without loss of generality that \(U = 0\), and that \(V = W\).

    Let us embed \(\Bl_0V\) into \(V \times \P(V)\), and we let \(p \colon \Bl_0V \to \P(V)\) denote the projection. Then the polarisation \(\mcO(-E)\) is given by \(p^*\mcO(1)\). Then for any one-parameter subgroup \(\lambda\) of \(G\), we have that
    \[\mu_{\mcO(-E)}(\widehat v, \lambda) = \mu_{p^*\mcO(1)}(\widehat v, \lambda) = \mu_{\mcO(1)}(p(\widehat v), \lambda) = \mu_{\mcO(1)}([v], \lambda),\]
    by functoriality properties of the Hilbert-Mumford weight \cite[49]{mumfordGeometricInvariantTheory1994}.
\end{proof}

Thus, the proof of \Cref{thm:iterated-balancing-filtration} follows by induction. At each step, the optimal destabilising directions agree. In \Cref{sec:finite-dimensional}, we obtain as the limit for the one-parameter subgroup a point \(q \in \P(N_x)_{T_\xi}\). The above argument shows that in fact the point \(q\) lies in the \((G_x)_{T_\xi}/T_\xi^c\)-invariant subvariety \(\P(V)_{T_\xi} \subseteq \P(N_x)_{T_\xi}\), and is a semistable point.

\section{Quiver representations}

\label{sec:quiver-representations}

Let \(Q = (Q_0, Q_1)\) be a quiver, \(d \in \Z_{\ge 0}^{Q_0}\) a dimension vector. Let
\[X = \Rep(Q, d) = \bigoplus_{(i \to j) \in Q_1} \Hom(\C^{d_i}, \C^{d_j}),\]
denote the space of representations of \(Q\) with dimension vector \(d\). Let
\[G = \prod_{i \in Q_0}\GL(d_i, \C)\]
act on \(X\), via
\[(g_i) \cdot (A_{ij}) = g_j A_{ij} g_i^{-1}.\]
Let \(K = \prod_{i \in Q_0}U(d_i)\) denote the maximal compact subgroup of \(G\).

Let \(\mu : X \to \mfk\) denote the canonical moment map, for the linear action of \(K\) on \(X\). However, when \(Q\) does not have oriented cycles, \(\mu\)-stability is not the correct notion. In general, one needs to include a character \(\chi \colon G \to \C^*\) \cite{kingModuliRepresentationsFinitedimensional1994}, which corresponds to shifting the moment map by a central element of \(\mfk\).

In \cite{ibaneznunezRefinedHarderNarasimhanFiltrations2024}, Ib\'a\~nez N\'u\~nez introduces the notion of a \emph{linearly lit stack}, for which there is an artinian lattice attached to each geometric point. We specialise to the setting of quiver representations. Fix for each \(i \in Q_0\) a rational number \(\theta_i \in \Q\), and \(m_i \in \Q_{> 0}\). These correspond to the character and norm on \(G\) respectively \cite[Section 3.6.3]{ibaneznunezRefinedHarderNarasimhanFiltrations2024} for the iterated balancing filtration, and to a central element of \(\mfk\) and an inner product on \(\mfk\) respectively for the moment map. Suppose \(\phi\) is a \(\theta\)-semistable representation of \(Q\).

\begin{theorem}
    [{Special case of \cite[Theorem 1.6.1]{ibaneznunezRefinedHarderNarasimhanFiltrations2024}}] Under a canonical bijection, the iterated balancing filtration of \(\phi \in [X/G]\) agrees with the HKKP filtration of the lattice of subrepresentations of \(\phi\) with the same slope.
\end{theorem}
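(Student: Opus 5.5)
The plan is to compare the two constructions step by step, since both are built from the same GIT data on \(\Rep(Q,d)\) with the character \(\theta\) and norm \(m\), and then to rely on the fact that each step of the HKKP algorithm is the solution of a convex optimisation problem on the lattice. I will work throughout in the lattice \(L\) of subrepresentations of \(\phi\) of the same \(\theta\)-slope. This lattice is modular and of finite length, and it carries the additive function \(X(\psi) = \sum_i m_i \dim \psi_i\) (up to normalisation). This is exactly the input for the HKKP construction.

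The first step is to identify one-parameter subgroups of \(G\) up to conjugation by \(P(\lambda)\) with \(\R\)-weighted filtrations of \(\phi\) by subrepresentations. A cocharacter \(\lambda\) with \(\lim_{t\to0}\lambda(t)\cdot\phi\) existing corresponds to a filtration \(\phi_{\ge w}\) by weight. By King's computation, the \(\theta\)-twisted Hilbert--Mumford weight is \(\sum_w w\,\theta(\phi_{\ge w}/\phi_{> w})\), and the norm is \(\sum_w w^2 X(\phi_{\ge w}/\phi_{>w})\). Since \(\phi\) is semistable, the relevant filtrations for the first step of the iterated balancing filtration are the ones that do not destabilise. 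In the blown-up picture of \Cref{subsec:iterated-balancing-filtration-construction}, after passing to the slice \(N_\phi\) at the polystable limit \(\gr\phi\), the weight with respect to \(\mcO(-E)\) is computed via \([v]\in\P(V)\). I expect this weight to become, under the identification, a functional on filtrations in \(L\) whose steps are labelled by elements of the lattice of \(\gr\phi\)-components.

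The second step is to show that the optimisation defining Kempf's optimal direction, namely maximising \(-w/\|\lambda\|\) as in \Cref{thm:moment-weight}, coincides with the first HKKP step. HKKP define the first filtration as the unique minimiser of a strictly convex functional on \(\R\)-filtrations of \(L\), given by \(\sum_w w^2 X(\gr_w)\) subject to a linear normalisation. By convex duality this is the same as a normalised linear-over-norm extremum. I will write both functionals explicitly and check that they agree up to a positive constant. Uniqueness on both sides then gives agreement of the first filtrations.

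The third step is the induction. On the GIT side, the next stage replaces the point by its associated graded \(w\), which is semistable for \((G_x)_{T_\xi}/T^c_\xi\) by \Cref{prop:relative-torus-semistable}. On the lattice side, HKKP pass to the product of the intervals \([\phi_{>w},\phi_{\ge w}]\). I need that the centraliser of \(\lambda\) acting on the fixed locus is again a quiver setting, a product of representation spaces of graded pieces. I also need that its subrepresentation lattice of the same slope is that product lattice, with the induced \(\theta\) shifted by the central weight as in \Cref{lem:relative-torus-weight}. The main obstacle is this bookkeeping. The first-step functionals must match with the correct normalisation, including the shift of \(\theta\) by \(\pr_T\) and the factor coming from \(\mcO(-E)\) versus \(\mcO(1)\). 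In addition, the canonical bijection must be compatible with refinement, since HKKP's iterated filtration is labelled by \(\R^\infty\) lexicographically. I would handle this by writing everything in a maximal torus adapted to a splitting \(\phi\cong\bigoplus\) of \(\gr\) and comparing coordinates.
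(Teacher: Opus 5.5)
The paper does not prove this statement. It is quoted from Ib\'a\~nez N\'u\~nez, where it follows from his general theorem for linearly lit stacks. A direct comparison via the slice at \(\gr\phi\) and Kempf's optimisation would be a legitimate and more explicit alternative, but as written your plan has two genuine gaps.

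\textbf{First gap: the functional governing the first step is never identified.} The candidates you name do not work. King's weight \(\sum_w w\,\theta(\phi_{\ge w}/\phi_{>w})\) is identically zero on filtrations by subrepresentations of the same slope (with \(\theta\) normalised so that \(\theta(\phi)=0\)), so it selects nothing. Nor is the first step a norm minimisation under a linear normalisation.

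Write \(\gr\phi=\bigoplus_i S_i\otimes V_i\). The balancing filtration uses cocharacters of \(G_{\gr\phi}=\prod_i\GL(V_i)\) acting on the slice point \(v\in N_{\gr\phi}=\bigoplus_{i,j}\mathrm{Ext}^1(S_i,S_j)\otimes\Hom(V_i,V_j)\), not cocharacters of \(G\) acting on \(\phi\). So you first need a dictionary identifying same-slope subrepresentations of \(\phi\) with subrepresentations of \(v\), viewed as a representation of the Ext-quiver. This is the quiver analogue of \Cref{prop:kuranishi-slice-quiver}.

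With that dictionary, the first step becomes concrete:
\begin{itemize}
\item A cocharacter \(\lambda\) with \(\lim\lambda(t)\cdot v=0\) is, up to \(P(\lambda)\), a filtration \(F\) of \(L\) with complemented graded pieces.
\item The \(\mcO(-E)\)-weight of \(\widehat v\) equals the \(\mcO(1)\)-weight of \([v]\) (\Cref{subsec:proof-iterated-balancing-filtration}). Up to sign, it is the smallest \(\lambda\)-weight occurring in \(v\), i.e.\ the largest \(\delta\) such that every window \([F_{\ge a+\delta},F_{\ge a}]\) is complemented.
\item This is a minimum of linear forms, not a linear functional. Kempf's problem becomes: minimise \(\sum_c c^2X(\gr_cF)\) over filtrations all of whose windows \([F_{\ge a+1},F_{\ge a}]\) are complemented.
\end{itemize}
That constrained minimisation is what must be matched with the first HKKP step; no convex duality is involved.

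\textbf{Second gap: the induction is set up on the wrong object.} The graded intervals \([F_{>a},F_{\ge a}]\) of the first-step filtration are complemented, since the graded pieces are \(\bigoplus_iS_i\otimes V_i^a\). Iterating on their product would therefore always stop after one step and could never produce \(\log\log t\) terms. Likewise, in \(\Rep(Q,d)\) the corresponding degeneration of \(\phi\) is to \(\gr\phi\) itself. So the \(\lambda\)-fixed locus there, a product of representation spaces of graded pieces, has lost all further information.

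As \Cref{subsec:iterated-balancing-filtration-construction} shows, the next stage lives on the exceptional divisor. It is \(w=[v_{\min}]\in\P(N_{\gr\phi})_T\), the part of \(v\) of minimal \(\lambda\)-weight. Normalising that weight to \(1\), \(w\) is a representation of a new quiver:
\begin{itemize}
\item its vertices are pairs \((i,a)\);
\item its arrows \((i,a)\to(j,a+1)\) come from \(\mathrm{Ext}^1(S_i,S_j)\);
\item \(w\) is semistable for \(\prod_{i,a}\GL(V_i^a)/T^c\) with the character produced by the \(\pr_T\)-shift of \Cref{lem:relative-torus-weight}.
\end{itemize}
The lattice for the next step is the lattice of subrepresentations of \(w\) of the same shifted slope. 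This is a sublattice of \(\prod_a[F_{>a},F_{\ge a}]\), cut out by compatibility with these gluing maps. Matching that lattice and its new slope function with HKKP's recursion is the actual content of the inductive step, and your plan does not address it.
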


As a corollary of this, along with \Cref{thm:vector-space,thm:iterated-balancing-filtration}, we recover the following result of Haiden-Katzarkov-Kontsevich-Pandit \cite{haidenSemistabilityModularLattices2023}. The negative gradient flow for the hermitian metric is given by
\[m_i h_i^{-1}\dv{h_i}{t} = \sum_{\alpha \colon i \to j}h_i^{-1}\phi_\alpha^* h_j\phi_\alpha - \sum_{\alpha \colon j \to i}\phi_\alpha h_j^{-1}\phi_\alpha^*h_i - \theta_i.\]
We set \(h = (h_i)_{i \in Q_0} \in G/K\).

\begin{corollary}
    [{\cite[Theorem 5.11]{haidenSemistabilityModularLattices2023}}]
    \label{cor:hkkp-quiver}
    There exists \(v_1, \dots, v_k\), such that
    \[\log(h(t)) = \log(t)v_1 + \dots + \log \cdots \log(t)v_k + O(1),\]
    where \(v_1, \dots, v_k\) are commuting elements of \(\mfk\). Moreover, the \(v_1, \dots, v_k\) can be computed by the HKKP filtration of the lattice of subrepresentations of \(\phi\) with the same slope.
\end{corollary}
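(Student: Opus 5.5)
The corollary follows by specialising \Cref{thm:vector-space} and \Cref{thm:iterated-balancing-filtration} to \(X = \Rep(Q, d)\) and then translating via the special case of \cite[Theorem 1.6.1]{ibaneznunezRefinedHarderNarasimhanFiltrations2024}.

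\textbf{Identifying the flow.} First I would check that the displayed flow for \(h = (h_i)\) is the flow \(-\grad \Phi_\phi\) on \(G/K\) for the shifted moment map. Here the moment map is
\[\mu_\theta(\phi)_i = \sum_{\alpha \colon j \to i}\phi_\alpha\phi_\alpha^* - \sum_{\alpha\colon i \to j}\phi_\alpha^*\phi_\alpha + \theta_i\id,\]
up to sign and \(\frac{i}{2}\) conventions, and \(\mfk\) carries the inner product weighted by \(m_i\). Writing \(h = g^* g\) and using \Cref{lem:group-flow} together with the characterisation of Kempf--Ness flow lines, \(\Im(g^{-1}\dot g) = \mu(g^{-1}\phi)\), the equation for \(h_i^{-1}\dot h_i\) should reduce to the stated ODE. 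The \(m_i\) appear because the gradient with respect to the weighted inner product is \(m_i^{-1}\) times the component. Consequently \(-i\log(h(t))\) agrees with \(2\log(\gamma_\phi(t))\) up to a bounded term.

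\textbf{Applying the main theorem.} Next I would apply \Cref{thm:vector-space} to the \(\theta\)-shifted moment map. There are two cases.
\begin{enumerate}[(i)]
\item If \(\phi\) is \(\theta\)-polystable, the flow converges and there is nothing to prove (\(k=0\)).
\item Otherwise, the shift by a central element does not fit \Cref{thm:vector-space} literally. I would use King's trick instead: pass to the vector space \(X \oplus \C\) with \(G\) acting on \(\C\) by the character \(\chi_\theta^{-1}\), or projectivise as in \Cref{sec:finite-dimensional}. Then \(\theta\)-semistability of \(\phi\) becomes semistability of \([\phi, 1]\) in \(\P(X\oplus\C)\), and the flow is reparametrised by a map \(s(t)\) as in \Cref{lem:reparametrisation}. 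Alternatively, the remark after \Cref{thm:unstable-intro} notes that the local-slice argument handles central shifts, so I would invoke \Cref{thm:main} via the slice reduction of \Cref{sec:normal-forms} at the limit point.
\end{enumerate}
Either route yields commuting \(v_1,\dots,v_k \in \mfk\) with the required iterated-logarithm expansion. Rationality of the inner product holds because the \(m_i\) are rational, by \Cref{lem:lie-rational} applied blockwise.

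\textbf{Identifying the \(v_j\).} By \Cref{thm:iterated-balancing-filtration}, \(v_j = C_j\eta_j\), where \(\eta_j\) are the generators of the iterated balancing filtration of \(\phi \in [X/G]\) for the character \(\theta\) and norm \(m\). By the cited special case of \cite[Theorem 1.6.1]{ibaneznunezRefinedHarderNarasimhanFiltrations2024}, these correspond under the canonical bijection to the HKKP filtration of the lattice of same-slope subrepresentations. So each \(\eta_j\), and hence \(v_j\), is read off from that filtration: it acts on the graded pieces by the scalars given by the HKKP \(\R^\infty\)-labels.

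\textbf{Main obstacle.} I expect the central shift \(\theta\) to be the main difficulty. \Cref{thm:vector-space} and \Cref{prop:reparametrisation} were proven for the unshifted moment map, where \(f\) is homogeneous of degree \(4\) and \(0 \in \overline{G\cdot x}\). With \(\theta \ne 0\), the \(\theta\)-semistable case is not a null-cone situation. I would therefore route the argument through the slice reduction at the polystable limit point, which exists by Harada--Wilkin \cite{haradaMorseTheoryMoment2011}. At that point the stabiliser \(G_x\) acts on \(N_x\) with \(\theta\) restricting to zero on its Lie algebra, because the shift is central and \(\mu_\theta(x)=0\). This returns us to the unshifted linear setting of \Cref{thm:vector-space}, and \Cref{prop:bounded-stabiliser} and \Cref{prop:bounded-linear} control the resulting \(O(1)\) errors.
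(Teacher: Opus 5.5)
Your proposal is correct and follows the paper's argument. The paper simply combines \Cref{thm:vector-space}, \Cref{thm:iterated-balancing-filtration} and Ib\'a\~nez N\'u\~nez's Theorem 1.6.1, handling the central shift through the local-slice reduction described in the remark after \Cref{thm:unstable-intro}; your treatment at the Harada--Wilkin limit, where \(\theta\) pairs trivially with \(\mfk_x\), is exactly what that remark leaves implicit. Keep that slice route rather than the King's-trick/projectivisation option in (ii): that option computes the Kempf--Ness flow of a different (log-norm) functional, not a time-change of the \(\theta\)-shifted flow, so \Cref{lem:reparametrisation} does not apply to it.
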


\begin{remark}
    The techniques used in this paper are very different from those used in \cite{haidenSemistabilityModularLattices2023}, where a key ingredient is the monotonicity formula for the flow \cite[Proposition 5.4]{haidenSemistabilityModularLattices2023}, which also holds for the Yang-Mills flow \cite[Proposition 4.3]{haidenIteratedLogarithmsGradient2018}. We also refer to \cite{haidenIteratedLogarithmsGradient2018} for a more algebraic approach to many of the objects appearing in this paper, stated in the language of \emph{lozenge algebras}.
\end{remark}

\section{Yang-Mills flow}

\label{sec:ym}

In this section, we apply \Cref{thm:vector-space} to the Hermitian-Yang-Mills flow, by reducing to the finite-dimensional setting, following the work of Chen-Sun \cite{chenCalabiFlowGeodesic2014} for the Calabi flow, which we also use in \Cref{sec:calabi-flow}.

Let \((X, \omega)\) be compact K\"ahler, \(E \to X\) a smooth complex vector bundle. Fix a hermitian metric \(h\) on \(E\), and suppose that the Dolbeault operator \(\delbar_0\) is integrable, with \((E, h, \delbar_0)\) Hermitian-Yang-Mills. 

First, we will discuss the moment map interpretation of the Hermitian-Yang-Mills equation, which is due to Atiyah-Bott \cite{atiyahYangMillsEquationsRiemann1983} and Donaldson \cite{donaldsonSelfdualYangMillsConnections1985}. Let \(\mcK = U(E, h)\) denote the unitary gauge group, and \(\mcG = \GL(E)\) the complex gauge group. Let \(\mcD = \mcD(E)\) denote the space of (not necessarily integrable) Dolbeault operators on \(E\). On \(\mcD\), there exists a symplectic form \(\Omega\), and an almost complex structure \(J\), making \((\mcD, \Omega, J)\) into an infinite-dimensional K\"ahler manifold. The \(\mcK\)-action is by holomorphic isometries, and has a moment map
\begin{align*}
    \mu \colon \mcD &\to \Lie(\mcK) \\
    \delbar &\mapsto \Lambda F_{\delbar} + ic\id,
\end{align*}
where \(c\) is a cohomological constant. Hermitian-Yang-Mills connections correspond to zeroes of the moment map, and the Yang-Mills flow is the associated moment map flow. The corresponding flow in \(\mcG/\mcK\) is
\[h^{-1}\pdv{h}{t} = -2\left(i\Lambda_\omega F_{\delbar, h} - c\id\right).\]
\begin{theorem}
    [{\cite{donaldsonSelfdualYangMillsConnections1985}}]
    The flow \(h(t)\) exists for all time.
\end{theorem}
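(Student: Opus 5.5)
The plan is to follow Donaldson's original argument. I would view the equation as a nonlinear heat equation for \(h\) relative to the fixed metric \(h_0\), prove short-time existence, and then show that the solution cannot develop a singularity at any finite time \(T\).

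Writing \(h = h_0 H\) with \(H\) a positive \(h_0\)-hermitian endomorphism, the leading part of \(i\Lambda_\omega F_{\delbar, h}\) is a Laplacian-type operator in \(H\). Concretely,
\[i\Lambda_\omega F_{\delbar, h} = i\Lambda_\omega F_{\delbar, h_0} + i\Lambda_\omega \delbar(H^{-1}\del_{h_0}H),\]
so the flow is strictly parabolic, and standard quasilinear parabolic theory gives a unique smooth solution on some \([0, \epsilon)\). Let \([0, T)\) be the maximal interval of existence with \(T < \infty\); the aim is to derive a contradiction.

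The key a priori estimates come from maximum principles.
\begin{enumerate}[(i)]
    \item Set \(e(t) = \abs{i\Lambda_\omega F_{h(t)} - c\id}^2_{h(t)}\). Differentiating along the flow gives \((\del_t - \Delta)e \le 0\), so \(\sup_X e(t) \le \sup_X e(0)\). Hence \(\abs{h^{-1}\del_t h}\) is uniformly bounded.
    \item Integrating (i) in time on \([0, T)\) shows that \(h(t)\) is uniformly equivalent to \(h_0\), giving \(C^0\) bounds on \(H\) and \(H^{-1}\).
    \item Let \(\sigma(h, k) = \tr(h^{-1}k) + \tr(k^{-1}h) - 2\,\mathrm{rk}(E)\). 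For two solutions \(h, k\), one checks \((\del_t - \Delta)\sigma(h(t), k(t)) \le 0\).
    \item Apply (iii) to \(h(t)\) and its time-translate \(h(t+s)\). Then \(\sup_X \sigma(h(t), h(t+s)) \le \sup_X \sigma(h_0, h(s))\), which tends to \(0\) as \(s \to 0\) by continuity at \(t = 0\).
\end{enumerate}
By (iv), \(h(t)\) is Cauchy in \(C^0\) as \(t \to T\), so it converges uniformly to a continuous metric \(h_T\).

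The main obstacle is upgrading this \(C^0\) convergence to smooth convergence, so that the flow can be restarted from \(h_T\) and contradict maximality of \(T\). I would first aim for a uniform \(C^1\) bound on \(H\), i.e.\ a bound on \(\abs{h_0^{-1}\del_{h_0} h}\), on \([0, T)\).

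The first attempt is Donaldson's blow-up argument. If the \(C^1\) norm blew up, rescale around the points of maximal gradient. In the limit one obtains, on \(\C^n\), a metric with bounded \(C^0\) norm, vanishing \(\Lambda F\) (because \(\Lambda F\) is bounded while the scale shrinks), and gradient of size one. Since \(C^0\) convergence holds uniformly as \(t \to T\), the limiting metric should be flat, i.e.\ constant, which contradicts the gradient normalisation.

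With the \(C^1\) bound in hand, the equation \(i\Lambda_\omega\delbar(H^{-1}\del_{h_0}H) = \text{bounded}\) becomes a linear elliptic equation with \(C^\alpha\) coefficients at each time. Elliptic \(L^p\) estimates give \(W^{2,p}\) bounds, and parabolic Schauder bootstrapping then gives uniform \(C^k\) bounds for all \(k\) on \([T/2, T)\). So \(h(t) \to h_T\) smoothly, \(h_T\) is a smooth metric, and short-time existence from \(h_T\) extends the flow past \(T\). This contradiction shows the flow exists for all time.
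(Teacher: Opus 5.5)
Your proposal is correct, and it is essentially Donaldson's original argument, which the paper cites rather than reproduces: maximum principles for \(\lvert \Lambda_\omega F_h - c\,\id\rvert^2\) and for \(\sigma\), uniform \(C^0\) convergence of \(h(t)\) as \(t \to T\), a blow-up argument for the \(C^1\) bound, and parabolic bootstrapping to restart the flow. In the blow-up step you should state two points explicitly: the limit is constant because the rescaled metrics inherit a uniform modulus of continuity from \(h(t) \to h_T\) with \(h_T\) continuous, and the normalisation \(\lvert \nabla H \rvert = 1\) at the origin passes to the limit because elliptic estimates give uniform \(C^{1,\alpha}\) bounds on the rescaled metrics.
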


Next, there is an elliptic complex
\[\begin{tikzcd}
	{A^0(X, \End E)} & {A^{0,1}(X, \End E)} & {A^{0,2}(X, \End E)}
	\arrow["{\delbar_0}", from=1-1, to=1-2]
	\arrow["{\delbar_0}", from=1-2, to=1-3]
\end{tikzcd}\]
which controls the deformation theory of \((E, h, \delbar_0)\). Let \(K = U(E, h) \cap \Aut(E, \delbar_0)\) and \(G = \Aut(E, \delbar_0)\). Then \(G\) is reductive, with maximal compact subgroup \(K\). Let \(\Delta = \delbar_0^*\delbar_0 + \delbar_0\delbar_0^*\) denote the Laplacian, and \(H^1 = \ker(\Delta) = H^{0, 1}(X, \End E)\) the space of harmonic \((0, 1)\)-forms. On \(H^1\), we have a linear \(G\)-action by conjugation.

Using a \(\mcK\)-invariant metric on \(\Lie(\mcK)\), we can define a projection \(p \colon \Lie(\mcK) \to \Lie(K)\), and \(p \circ \mu \colon \mcD \to \Lie(K)\) is the moment map for the \(K\)-action on \(\mcD\). Let us now fix \(\delbar \in \mcD\) such that the Yang-Mills flow converges to \(\delbar_0\). In particular, \(\delbar_0 \in \overline{\mcG \cdot \delbar}\). We set \(\mcE = (E, \delbar)\).

Finally, the Hermite-Einstein metric \(h\) on \(\mcE\) is not unique, and we assume that \(h\) and \(\omega\) are chosen such that the induced \(K\)-invariant inner product on \(\mfk = \Lie(K)\) is rational.

\begin{theorem}
    \label{thm:ym}
    Let \(h(t)\) denote the solution to the Yang-Mills flow on \(\mcE\). Then there exists \(v_1, \dots, v_k\), such that
    \[\log(h(t)) = \log(t)v_1 + \dots + \log \cdots \log(t)v_k + O(1),\]
    where \(v_1, \dots, v_k\) are commuting elements of \(\mfk\). Moreover, the \(v_1, \dots, v_k\) can be computed by the HKKP filtration of the lattice of subbundles of \(\mcE\) with the same slope.
\end{theorem}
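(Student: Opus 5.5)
The plan is to follow the Chen--Sun strategy, reducing the infinite-dimensional Yang--Mills flow to the finite-dimensional linear problem of \Cref{thm:vector-space} on the harmonic space \(H^1\). We then read off the \(v_i\) via \Cref{thm:iterated-balancing-filtration}, together with an identification of the balancing filtration on the slice with the HKKP filtration of subbundles.

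\textbf{Step 1: Kuranishi slice.} Using the elliptic complex and the Laplacian \(\Delta\), construct a \(K\)-equivariant holomorphic Kuranishi map \(\Psi \colon B \subseteq H^1 \to \mcD\), with \(\Psi(0)=\delbar_0\) and \(\dd\Psi(0)=\id\). We may need to restrict to the zero locus of the obstruction map into \(H^{0,2}\); since \(\mcE\) is integrable it lies in a \(\mcG\)-orbit meeting the image. As in \Cref{prop:move-orbit}, use the convergence of the Yang--Mills flow to \(\delbar_0\) (Jacob, Sibley, assuming \(\gr(\mcE)\) is locally free) to find a point \(\widehat y=\Psi(v)\in\mcG\cdot\delbar\) with \(v\in H^1\) and \(0\in\overline{G\cdot v}\). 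The pullback \(\Psi^*\Omega\) is a real-analytic \(K\)-invariant Kähler form on \(B\), with moment map \(p\circ\mu\circ\Psi\). Replacing the starting point by an element of the same \(\mcG\)-orbit only changes \(\log h(t)\) by a bounded amount, by the distance-nonincreasing property of gradient flows of convex functions on \(\mcG/\mcK\).

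\textbf{Step 2: comparison of flows.} There are two things to prove here.
\begin{enumerate}[(a)]
\item The flow in \(\mcG/\mcK\) starting from \(\widehat y\) stays within bounded distance of the \(G/K\)-flow for the moment map \(p\circ\mu\) restricted to the slice. This is the analogue of \Cref{prop:bounded-stabiliser}.
\item The latter flow stays within bounded distance of the flow for the linearised action on \((H^1,\Omega_0)\), by \Cref{prop:bounded-linear}.
\end{enumerate}
Part (a) is the main obstacle. It requires an infinite-dimensional version of Chen--Sun's argument. The idea is to decompose \(\log h(t)\) into a component in \(\mfk\) and a component orthogonal to it. We would then show that the orthogonal component stays bounded by combining two inputs:
\begin{enumerate}[(i)]
\item the invertibility of the Laplacian on \(\Lie(\mcK)\ominus\mfk\) at \(\delbar_0\), which gives a spectral gap;
\item the Łojasiewicz inequality for \(\abs{\mu}^2\) near \(\delbar_0\), with the decay rates of \Cref{cor:lojasiewicz}.
\end{enumerate}
These combine into an ODE inequality \(\dv{L}{t}\lesssim\) (an integrable error), after parabolic estimates upgrade the smooth convergence. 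I would first try to copy Chen--Sun's proof for the Calabi flow, replacing their fourth-order estimates with the second-order parabolic estimates for the Yang--Mills flow. The rationality of the inner product on \(\mfk\) is used here so that \Cref{thm:vector-space} applies.

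\textbf{Step 3: conclusion and identification.} \Cref{thm:vector-space} applied to \(v\in H^1\) gives the expansion with commuting \(v_i\in\mfk\). \Cref{thm:iterated-balancing-filtration} identifies the \(v_i\), up to positive scaling, with the iterated balancing filtration of \(v\in[H^1/G]\). To match these with the HKKP filtration of subbundles, we use two facts.
\begin{enumerate}[(i)]
\item \(\gr(\mcE)=\bigoplus_j W_j\otimes\mcF_j\) with \(\mcF_j\) stable and mutually non-isomorphic. Hence \(G=\prod\GL(W_j)\), and \(H^1=\bigoplus_{j,k}\Hom(W_j,W_k)\otimes H^{0,1}(\mcF_j^*\otimes\mcF_k)\) is a quiver representation space, the Ext-quiver of the \(\mcF_j\).
\item Subrepresentations of \(v\) correspond, lattice-isomorphically, to saturated subbundles of \(\mcE\) of the same slope. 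This holds because any such subsheaf is an extension of summands of the \(\mcF_j\).
\end{enumerate}
The special case of \cite[Theorem 1.6.1]{ibaneznunezRefinedHarderNarasimhanFiltrations2024} quoted in \Cref{sec:quiver-representations}, with \(\theta=0\), then converts the balancing filtration into the HKKP filtration of this lattice.
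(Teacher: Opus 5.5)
\textbf{Verdict: the overall strategy is right, but step 3(ii) has a genuine gap, and step 2(a) omits the decisive estimate.}

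Your architecture matches the paper's: Kuranishi slice, moving within the $\mcG$-orbit so that $0\in\overline{G\cdot v}$, comparing the full and reduced flows, linearising via \Cref{prop:bounded-linear}, then \Cref{thm:vector-space}, \Cref{thm:iterated-balancing-filtration}, the Ext-quiver and Ib\'a\~nez N\'u\~nez's theorem.

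\textbf{Step 3(ii).} Write $\mcE_0=\bigoplus_i\mcF_i\otimes V_i$; your $W_j$ are these multiplicity spaces. Knowing that a same-slope subsheaf $\mcS\subseteq\mcE$ has Jordan--H\"older factors among the $\mcF_i$ only gives the dimension vector of $\gr\mcS$. It does not produce subspaces $W_i\subseteq V_i$ invariant under the \emph{linear} slice coordinate $v$. The reason is that $\mcE\cong(E,\Psi(v))$ with $\Psi(v)-\delbar_0=v+O(\abs{v}^2)$. Whether $\bigoplus_iF_i\otimes W_i$ is $\Psi(v)$-holomorphic depends on all of $\Psi(v)-\delbar_0$, so a priori the higher-order terms could create or destroy invariant subbundles. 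Nor does your argument give the converse, that every subrepresentation of $v$ yields a same-slope subbundle.

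The missing idea is that the Kuranishi map respects the block structure (\Cref{lem:kuranishi-slice-compatibility,lem:kuranishi-slice-compatibility-2}). It has two parts.
\begin{enumerate}[(i)]
\item Refining Jordan--H\"older filtrations places $\gr\mcS$ inside $\gr\mcE=\mcE_0$ as $\bigoplus_i\mcF_i\otimes W_i$. The smooth subbundle $S=\bigoplus_iF_i\otimes W_i$ is $(\delbar_0+\beta)$-holomorphic if and only if $\beta$ lies in the closed subspace $A_W$ of forms whose $\Hom(W_i,W_j^\perp)$-blocks vanish.
\item The map $\beta\mapsto\beta+G_\Delta\delbar_0^*(\beta\wedge\beta)$, with $G_\Delta$ the Green operator, sends $A_W$ into itself. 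Indeed, $G_\Delta$ and $\delbar_0^*$ act only on the $\Hom(\mcF_i,\mcF_j)$-factors, while in $\beta\wedge\beta$ the $\Hom(V_i,V_j)$-factors compose, and composites of $W$-preserving maps preserve $W$.
\end{enumerate}
Hence the local inverse $\Psi$ restricts to $A_W$, so $v\in A_W$ if and only if $\Psi(v)-\delbar_0\in A_W$. This is the lattice isomorphism you need.

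\textbf{Step 2(a).} This step is stated only as an intention, and the inputs you list do not by themselves give an integrable error. In the linear model the slice flow has $\abs{v(t)}^2\asymp t^{-1}$ (see the proof of \Cref{prop:reparametrisation}), so the moment map along it is of order $t^{-1}$, which is not integrable. When you compare the two flows by convexity of the Donaldson functional, as in \Cref{prop:flow-one-param-limit}, the only term in $\dv{L}{t}$ is the component of the moment map transverse to $\mfk$. You must show this component decays strictly faster than $t^{-1}$.

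The paper does not split $\log h(t)$ into $\mfk$- and $\mfm$-parts. Instead:
\begin{enumerate}[(i)]
\item It builds an infinite-dimensional equivariant normal form $Y=\mcK\times_K(\mfm\oplus N)$, using Nash--Moser and a Moser-trick argument.
\item In this model the moment map is exactly $\Ad(g)(\rho+\mu_N(v))$, so the transverse error is the coordinate $\rho$.
\item The model complex structure gives $\rho$ zero velocity along the flow, so $\dot\rho$ is controlled by how far the pulled-back complex structure deviates from the model one.
\end{enumerate}
If you want to use the spectral gap of $\Delta$ on $\mfm$ instead, it has to become exactly such an improved decay estimate for the transverse component. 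Your proposal does not yet say how.
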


When \(X\) is a compact Riemann surface, this is a result of Haiden-Katzarkov-Kontsevich-Pandit \cite[Theorem 1.1]{haidenIteratedLogarithmsGradient2018}. Their approach uses the formalism of \emph{lozenge algebras}, and is also motivated by the deformation theory of holomorphic vector bundles.

\begin{remark}
    \label{rmk:ym-lattice}
    As explained in \cite[Section 4.1]{haidenIteratedLogarithmsGradient2018} and \cite[Section 1.6]{huybrechtsGeometryModuliSpaces2010}, in general one should consider the lattice of subobjects in \(\Coh_{(1)}^\phi(X)\). Here, we let \(\Coh^{\ge 2}(X)\) denote the full subcategory of \(\Coh(X)\) consisting of sheaves with support of codimension at least \(2\), and
    \[\Coh_{(1)}(X) = \Coh(X) / \Coh^{\ge 2}(X)\]
    is the quotient abelian category. For a fixed phase \(\phi\), we let \(\Coh_{(1)}^\phi(X)\) denote the full subcategory of \(\Coh_{(1)}(X)\) of semistable objects of phase \(\phi\). In the setting of \Cref{thm:ym}, the assumption that \(\gr(\mcE)\) is locally free implies that the lattice of subobjects in \(\Coh_{(1)}^\phi(X)\) coincides with the lattice of subbundles of the same slope of \(\mcE\).
\end{remark}

We state a collection of results, which we will prove in the subsequent subsections, which together imply \Cref{thm:ym}. First, we have an infinite-dimensional analogue of \Cref{prop:move-orbit}. 

\begin{proposition}
    \label{prop:kuranishi-slice-move-orbit}
    There exists \(g \in \mcG\) such that \(\delbar_0 \in \overline{G \cdot (g \cdot \delbar)}\).
\end{proposition}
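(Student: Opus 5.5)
We mimic the finite-dimensional proof of \Cref{prop:move-orbit}, replacing the holomorphic slice theorem of Sjamaar by a Kuranishi slice for the complex gauge group action near \(\delbar_0\). Step one is to build the slice. For \(\sigma\) in a small neighbourhood \(B\) of \(0\) in \(H^1 = \ker\Delta \cap A^{0,1}(X,\End E)\), we construct a \(K\)-equivariant real-analytic (indeed holomorphic) map
\[\Psi \colon B \to \mcD, \qquad \Psi(\sigma) = \delbar_0 + \sigma + Q(\sigma),\]
where \(Q(\sigma) = O(\abs{\sigma}^2)\) is valued in \(\im(\delbar_0^*)\). It is obtained by solving \(\delbar_0^*(\Psi(\sigma) - \delbar_0) = 0\) together with the projection of the integrability equation onto \(\im\delbar_0\), using the implicit function theorem in Sobolev spaces. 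Because \(\Psi\) is \(K\)-equivariant and holomorphic, it complexifies to a \(G\)-equivariant map on a \(G\)-saturated neighbourhood \(S = G\cdot B\). The standard Kuranishi argument then gives the local picture: every integrable \(\delbar'\) sufficiently close to \(\delbar_0\) in \(C^\infty\) is \(\mcG\)-equivalent, by some \(g'\) close to the identity, to \(\Psi(\sigma)\) for some \(\sigma \in B\).

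This local picture uses the ellipticity of the deformation complex and the decomposition
\[A^{0,1}(X,\End E) = \im\delbar_0 \oplus H^1 \oplus \im\delbar_0^*.\]
We apply the implicit function theorem to
\[(\zeta,\sigma) \mapsto \delbar_0^*\bigl(\exp(\zeta)\cdot\Psi(\sigma) - \delbar_0\bigr),\]
with \(\zeta\) ranging over \((\ker\delbar_0)^\perp \subset A^0(X,\End E)\). The finite-dimensional stabiliser directions \(\Lie(G) = \ker\delbar_0\) are exactly the ones removed by the slice, which is why \(G\), rather than \(\mcG\), acts on \(H^1\).

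Step two is to run the flow. By the convergence results of Daskalopoulos–Wentworth, Jacob and Sibley (the hypothesis that the Yang–Mills flow converges to \(\delbar_0\) modulo unitary gauge), there are \(t_j \to \infty\) and \(g_j \in \mcG\) with \(g_j\cdot\delbar \to \delbar_0\) smoothly. By step one, for \(j\) large we may write \(g_j\cdot\delbar = g_j'\cdot\Psi(\sigma_j)\), where \(g_j' \to 1\) and \(\sigma_j \to 0\). Fix one index \(j_0\), and set \(\sigma = \sigma_{j_0}\) and \(g = (g'_{j_0})^{-1}g_{j_0}\), so that \(g\cdot\delbar = \Psi(\sigma)\). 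For every \(j\), the operators \(\Psi(\sigma_j)\) and \(\Psi(\sigma)\) are \(\mcG\)-equivalent. Uniqueness in the Kuranishi slice says that \(\mcG\)-equivalent points of the slice, close to \(\delbar_0\), are related by an element of \(G\) (up to shrinking \(B\)). Hence \(\sigma_j \in G\cdot\sigma\). Since \(\sigma_j \to 0\), we get \(0 \in \overline{G\cdot\sigma}\). Applying \(\Psi\) and using its equivariance and continuity, we conclude \(\delbar_0 = \Psi(0) \in \overline{G\cdot\Psi(\sigma)} = \overline{G\cdot(g\cdot\delbar)}\).

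The main obstacle is the slice uniqueness: that two nearby points \(\Psi(\sigma_1) = h\cdot\Psi(\sigma_2)\) with \(h\in\mcG\) force \(h\in G\) up to a small correction, in the sense that \(\sigma_1 \in G\cdot\sigma_2\). The \(h\) here is not a priori near the identity. My first attempt would be to write \(h = \exp(\zeta)k\) with \(k\in G\) and \(\zeta \perp \ker\delbar_0\), via a polar-type decomposition relative to \(G\). Applying the injectivity part of the implicit function theorem would then give \(\zeta = 0\) once \(\zeta\) is small. To get \(\zeta\) small, I would use elliptic estimates for the equation \(\delbar_0 h = h\,a_2 - a_1 h\) with small \(a_i\): this shows that \(h\) is close to the finite-dimensional space \(\ker\delbar_0\), after normalising. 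Should this fail globally, I would instead use the Kempf–Ness/moment map picture, working with the flow lines themselves, as in Chen–Sun's Calabi-flow argument, to keep the gauge transformations bounded.
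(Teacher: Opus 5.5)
There is a genuine gap, and it is exactly the step you flag. Step 2 needs a \emph{global} uniqueness property of the slice: if \(h\cdot\Psi(\sigma_1)=\Psi(\sigma_2)\) with \(\sigma_1,\sigma_2\in B\) and \(h\in\mcG\) arbitrary, then \(\sigma_2\in G\cdot\sigma_1\). The implicit function theorem only gives the local version, for \(h\) in a fixed neighbourhood of the identity. That is local injectivity of \((\zeta,\sigma)\mapsto\exp(\zeta)\cdot\Psi(\sigma)\) near \((0,0)\), with \(\zeta\perp\ker\delbar_0\).

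In your argument the local version cannot be used. We have \(\sigma\neq 0\) (otherwise \(\delbar\) is polystable) and \(\sigma_j\to 0\). So for large \(j\) the gauge transformation relating \(\Psi(\sigma)\) to \(\Psi(\sigma_j)\) lies outside any fixed neighbourhood of the identity, and any \(k_j\in G\) with \(k_j\sigma=\sigma_j\) leaves every compact subset of \(G\). Shrinking \(B\) does not change this.

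Your elliptic estimate does not close the gap either. From \(\delbar_0 h = h a_1 - a_2 h\) you get \(h=h_0+h_1\) with \(h_0\in H^0(X,\End(\mcE_0))\) and \(\norm{h_1}\lesssim\epsilon\norm{h}\). This error is small relative to \(\norm{h}\), but not relative to the smallest singular value of \(h_0\). Already for \(h\in G\), e.g.\ \(h=\mathrm{diag}(\lambda,1)\) with \(\lambda\to\infty\), we have \(h/\norm{h}\to\mathrm{diag}(1,0)\), while a perturbation of size \(\epsilon\lambda\) dwarfs the singular value \(1\). So you cannot write \(h=\exp(\zeta)k\) with \(k\in G\) and \(\zeta\) small, and the injectivity part of the implicit function theorem never applies. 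The global statement is plausible: it amounts to the Kuranishi germ being a local model for the moduli stack along the whole fibre over the polystable point. But it is a substantial result, not a by-product of the slice construction.

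The paper avoids this by keeping continuity. It takes \(\delbar\) (or \(\alpha(T)\) for \(T\gg 1\)) close enough to \(\delbar_0\) that the whole flow line stays in the region where (v) of \Cref{thm:kuranishi-slice-ym} applies. It then moves \(\alpha(t)\) continuously into the slice, \(\overline\alpha(t)=\Phi(b(t))\), and uses only the infinitesimal property (iv). Since \(\dot{\overline\alpha}(t)\) is tangent to the \(\mcG\)-orbit, \(\dot b(t)\) is tangent to the \(G\)-orbit. Hence \(b(t)\) stays in one \(G\)-orbit while \(b(t)\to 0\), giving \(0\in\overline{G\cdot b(0)}\).

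In your terms: replace the sequence \(t_j\) by the continuous parameter \(t\). Apply the near-identity uniqueness to \(b(t)\) and \(b(t')\) for \(t'\) close to \(t\); the relating gauge transformation is then close to the identity, uniformly in \(t\). Then use connectedness of \([T,\infty)\). Your fallback sentence points this way, but as written the proof rests on the unproved global uniqueness.

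You also do not need, and have not justified, extending \(\Psi\) to a \(G\)-equivariant map on \(G\cdot B\). The local equivariance in (iii) is all your final step uses.
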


Now let \(h(t) \in \mcG/\mcK\) denote the Yang-Mills flow starting from \(g \cdot \delbar\).

\begin{definition}
    The \emph{reduced Yang-Mills flow} is the corresponding flow \(\widehat h(t)\) in \(G/K\).
\end{definition}

We also have an infinite-dimensional analogue of \Cref{prop:bounded-stabiliser}.

\begin{proposition}
    \label{prop:kuranishi-slice-reduction}
    After embedding \(G/K\) into \(\mcG/\mcK\) as a totally geodesic submanifold,
    \[d(h(t), \widehat h(t)) = O(1).\]
\end{proposition}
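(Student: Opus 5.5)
We prove \Cref{prop:kuranishi-slice-reduction} by adapting the proof of \cite[Proposition 4.10]{chenCalabiFlowGeodesic2014}, using the Kuranishi slice in place of the finite-dimensional holomorphic slice. The starting point is \Cref{prop:kuranishi-slice-move-orbit}: after replacing \(\delbar\) by \(g \cdot \delbar\), which only changes \(h(t)\) by a bounded amount, we may assume \(\delbar = \Phi(v)\). Here \(\Phi\) is the \(K\)-equivariant Kuranishi map from a neighbourhood of \(0 \in H^1\) into \(\mcD\), and \(v \in H^1\) satisfies \(0 \in \overline{G \cdot v}\). The reduced flow \(\widehat h(t)\) is then the Kempf-Ness flow in \(G/K\) for the \(K\)-action on the slice, with moment map \(p \circ \mu \circ \Phi\).

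We compare the two flows with the distance argument already used in \Cref{prop:flow-one-param-limit}. Set \(L(t) = d(h(t), \widehat h(t))\) in the nonpositively curved space \(\mcG/\mcK\). Geodesic convexity of the Kempf-Ness functional \(\Phi_{\delbar}\) on \(\mcG/\mcK\), and the fact that \(G/K\) is totally geodesic, give
\[\dv{L}{t} \le \abs{\grad\Phi_{\delbar}(\widehat h(t)) - \grad\widehat\Phi(\widehat h(t))}.\]
If \(\widehat h(t) = [\widehat g(t)]\), the right hand side is the norm of the component of \(\mu(\widehat g(t)^{-1}\cdot\delbar)\) orthogonal to \(\Lie(K)\), that is, \((1-p)\mu\) evaluated along the reduced orbit. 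The reduced orbit \(\widehat g(t)^{-1} \cdot v\) stays inside the slice and converges to \(0\). It therefore suffices to show that \(\abs{(1-p)\mu(\Phi(w))}\) is integrable in \(t\) along \(w(t) = \widehat g(t)^{-1}\cdot v\).

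The central estimate is that the Kuranishi map can be chosen so that \((1-p)\mu\circ\Phi\) vanishes to high order at \(0\). Following Chen-Sun, we choose \(\Phi\) so that \(\mu(\Phi(w)) \in \Lie(K)\) up to \(O(\abs{w}^N)\); the construction solves for the gauge correction using the implicit function theorem on the complement of \(\ker\Delta\). Alternatively, \(\Phi\) can be chosen so that \((1-p)\mu\circ\Phi \equiv 0\) exactly, by solving \(\mu(\exp(\sigma)\cdot(\delbar_0 + w + \dots)) \in \Lie(K)\) for \(\sigma \perp \Lie(K)\) in the Sobolev completion. Ellipticity of the complex and analyticity of \(\mu\) make both routes work. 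If the vanishing is exact, then \(\dv{L}{t} \le 0\) and we are done. Otherwise we bound \(\abs{w(t)}^N\) using the finite-dimensional flow: \(\abs{w(t)}^2\) is the quantity \(r\) from \Cref{prop:reparametrisation}, so \(\abs{w(t)}^2 \lesssim t^{-1}\). Taking \(N \ge 4\) then makes the error integrable.

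I expect the main obstacle to be exactly this step. Producing the slice with the required vanishing of the normal moment map component needs care with the infinite-dimensional implicit function theorem. One must also check that the reduced flow, which is analytic but only approximately the standard linear model, still satisfies the decay \(\abs{w(t)}^2 \lesssim 1/t\). For the latter, I would combine \Cref{prop:bounded-linear} with the estimate \(r(t)\asymp 1/t\) proved above.
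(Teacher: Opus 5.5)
Your reduction to a starting point \(\Phi(v)\) in the Kuranishi slice is fine, and so is the comparison inequality \(\dv{L}{t}\le\abs{(1-p)\mu}\) along the reduced orbit; the paper begins the same way. The gap is your central estimate, which is false in general. For the reduced orbit \(G\cdot\Phi(v)\) to stay inside \(\Phi(B)\), the slice has to stay holomorphic and locally \(G\)-equivariant. For such a slice tangent to \(H^1\), the normal component of \(\mu\) is generically exactly of order \(\abs{w}^2\). In Kuranishi gauge (\(\delbar_0^*\Phi(w)=0\)) the linear term of \(\mu\circ\Phi\) vanishes. The quadratic term is, up to normalisation, \(\Lambda(w\wedge w^*+w^*\wedge w)\), which is Hermitian in \(w\). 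A holomorphic change of slice, or a gauge correction depending holomorphically on \(w\), only adds terms of type \((2,0)+(0,2)\) in \(w\) at quadratic order, so this part cannot be removed, and its component orthogonal to \(\Lie(K)\) is nonzero in general. For example, take \(E=\mcO^{\oplus 2}\) with the flat metric over a curve of genus at least two. Then \(\Phi(w)=\delbar_0+w\) exactly. For \(w=A\bar\alpha\), with \(\alpha\) a holomorphic \(1\)-form, \((1-p)\mu(\Phi(w))\) is a nonzero multiple of \([A,A^*]\bigl(\abs{\alpha}^2_\omega-\text{mean}\bigr)\). If \(A\) is nilpotent, so that \(0\in\overline{G\cdot w}\), the reduced flow is \(A(t)=a(t)A\) with \(a(t)^2\asymp t^{-1}\). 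Hence \(\abs{(1-p)\mu}\asymp t^{-1}\) along it, and your inequality gives only \(L(t)=O(\log t)\). Your exact-vanishing variant does not help. There \(\sigma(w)\) depends on \(\bar w\), and for \(g\in G\setminus K\) the element \(g e^{\sigma(w)}g^{-1}\) is not of the form \(e^{\sigma'}\) with \(\sigma'\) in your complement. So the corrected map is neither holomorphic nor \(G\)-equivariant, the reduced orbit leaves its image, and vanishing on the image tells you nothing along the flow.

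In fact no choice of starting point makes this comparison close, even in finite dimensions. Let \(T^2\) act on \(\C^4\) with weights \((1,0),(-1,0),(0,1),(1,1)\), and take \(x=(1,1,0,0)\), \(G_x=\{1\}\times\C^*\) and \(y=(1,1,b,c)\) with \(bc\neq 0\). Every point of \(G\cdot y\) whose \(G_x\)-flow tends to \(\mu^{-1}(0)\) has normal moment map component \(\asymp t^{-1}\) along that flow. Yet a direct computation shows that the \(G\)- and \(G_x\)-flows stay at bounded distance, with the normal coordinate of the \(G\)-flow of size \(O(t^{-1})\). What is missing is the damping that your Cauchy--Schwarz step throws away. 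Near \(\delbar_0\) the Donaldson functional is uniformly strictly convex in the directions normal to \(G/K\): its Hessian at \(\delbar_0\) is \(\langle\Delta\zeta,\zeta\rangle\) for \(\zeta\perp\ker\Delta\). So a normal forcing of size \(t^{-1}\) produces only a normal displacement of size \(O(t^{-1})\), and a complete proof has to exploit this. The paper proceeds differently from you. It does not modify the slice; instead it passes to the symplectic normal form on \(Y=\mcK\times_K(\mfm\oplus N)\), where the normal component equals \(g(t)\rho(t)g(t)^{-1}\), and it bootstraps decay of \(\rho(t)\) from the flow equation. Note, however, that in the curve example this normal component, and hence \(\abs{\rho(t)}\), is \(\asymp t^{-1}\) rather than the asserted \(O(t^{-3/2})\). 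That route therefore needs the same transverse-convexity input.
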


We have now reduced from the setting of an infinite-dimensional group acting on an infinite-dimensional manifold, to a finite-dimensional group acting on an infinite-dimensional manifold. The next step is to reduce to the case of a finite-dimensional manifold.

\begin{theorem}
    [Kuranishi slice] 
    \label{thm:kuranishi-slice-ym}
    There exists a neighbourhood \(B\) of \(0\) in \(H^1\) and a \(K\)-equivariant holomorphic embedding \(\Phi \colon B \to \mcD\), such that
    \begin{enumerate}[(i)]
        \item \(\Phi(0) = \delbar_0\),
        \item \(\dd\Phi_0 = \id\),
        \item For \(g \in G\) and \(b \in B\), if \(g \cdot b \in B\), then \(\Phi(g \cdot b) = g \cdot \Phi(b)\),
        \item If \(\Phi(b)\) is integrable, and \(\dd\Phi_b(u)\) is tangent to the \(\mcG\)-orbit through \(\Phi(b)\), then \(u\) is tangent to the \(G\)-orbit through \(b\),
        \item for any sufficiently small deformation \(\delbar\) of \(\delbar_0\), there exists \(g \in \mcG\) such that \(g \cdot \delbar \in \Phi(B)\).
    \end{enumerate}
\end{theorem}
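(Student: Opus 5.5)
We will construct \(\Phi\) by the standard Kuranishi method, adapted to be equivariant, following Chen-Sun and Székelyhidi's treatment of the cscK case. First we set up the analytic framework. We work with Sobolev completions \(L^2_k\) of \(\mcD = \delbar_0 + A^{0,1}(X, \End E)\) and of \(\mcG\), with \(k\) large. Elliptic regularity then shows that the objects we construct are smooth. We write a nearby operator as \(\delbar_0 + a\) with \(a \in A^{0,1}(X, \End E)\). Integrability is the condition \(\delbar_0 a + a \wedge a = 0\), and the linearised gauge action at \(\delbar_0\) is \(\xi \mapsto -\delbar_0 \xi\). We use the Hodge decomposition
\[A^{0,1}(X,\End E) = H^1 \oplus \im \delbar_0 \oplus \im \delbar_0^*.\]
Let \(G_\Delta\) denote the Green operator and \(\pi_H\) the harmonic projection.

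To define \(\Phi\), we solve for \(a\) in the Kuranishi gauge \(\delbar_0^* a = 0\) together with the equation \(a + \delbar_0^* G_\Delta(a\wedge a) = b\) for \(b \in H^1\). The map \(a \mapsto a + \delbar_0^*G_\Delta(a \wedge a)\) has derivative the identity at \(0\). The implicit function theorem therefore gives a unique small solution \(a = a(b)\), depending holomorphically on \(b\), and we set \(\Phi(b) = \delbar_0 + a(b)\). This gives (i) and (ii). Since \(G\) acts on \(\End E\) by conjugation by holomorphic automorphisms of \((E,\delbar_0)\), the operators \(\delbar_0\) and \(\pi_H\) are \(G\)-equivariant. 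The operators \(\delbar_0^*\) and \(G_\Delta\) are only \(K\)-equivariant, since they depend on \(h\). Uniqueness in the implicit function theorem then gives \(K\)-equivariance.

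For (iii) we must upgrade to \(G\)-equivariance where defined. This is the main obstacle. The gauge condition \(\delbar_0^* a = 0\) uses \(h\), so \(g\cdot\Phi(b)\) for \(g \in G\setminus K\) need not satisfy it. The first approach we will try is Székelyhidi's trick: modify the slice so that \(\Phi\) is defined through a \(G\)-equivariant condition. We can either take \(\Phi(b)\) as the unique point on \(\delbar_0 + b + \im\delbar_0^*\) that is gauge-equivalent to the above, or, more cleanly, argue by holomorphicity. For fixed \(b\), the maps \(g \mapsto \Phi(g\cdot b)\) and \(g \mapsto g\cdot\Phi(b)\) are holomorphic on a connected neighbourhood of \(K\) in \(G\) and agree on \(K\). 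Since \(K\) is totally real of maximal dimension in \(G\), they agree on the connected component of \(\{g : g\cdot b \in B\}\) containing the identity. For this we need to arrange that \(\Phi\), and hence \(a(b)\), is holomorphic in \(b\). We then shrink \(B\) so that the relevant sets are connected.

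For (iv), suppose \(\Phi(b)\) is integrable and \(\dd\Phi_b(u) = -\delbar_{\Phi(b)}\xi\). We decompose \(\xi = \xi_0 + \xi_1\) with \(\xi_0 \in \ker\delbar_0 = \Lie(G)\) and \(\xi_1 \perp \ker \delbar_0\). Applying \(\pi_H\) to the defining equation shows that \(u = \pi_H\dd\Phi_b(u)\). By equivariance, the \(\xi_0\) part contributes the vector \(\xi_0\cdot b\), which is tangent to the \(G\)-orbit. For the \(\xi_1\) part, the gauge condition and the integrability of \(\Phi(b)\), via an estimate \(\|\delbar_{\Phi(b)}\xi_1\| \gtrsim \|\xi_1\|\) for small \(b\), force \(\xi_1=0\). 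Finally, (v) follows from the standard slice argument. Given \(\delbar\) close to \(\delbar_0\), we first use the implicit function theorem on \(\xi \mapsto \delbar_0^*(\exp(\xi)\cdot\delbar - \delbar_0)\), restricted to \(\xi\perp\ker\delbar_0\), to find a complex gauge transformation putting \(\delbar\) into Coulomb gauge. We then check that the harmonic part \(b\) satisfies \(\Phi(b) = g\cdot\delbar\). This last step uses integrability of \(\delbar\) to kill the component in \(\im\delbar_0^*\), with the obstruction living in \(H^2\).
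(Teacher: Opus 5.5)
Your proposal is correct and follows essentially the route the paper indicates. As in the paper, \(\Phi\) is the restriction to \(H^1\) of the local inverse of the Kuranishi map \(F(\beta)=\beta+G\delbar_0^*(\beta\wedge\beta)\); (i), (ii) and (v) come from Kuranishi's argument (Coulomb gauge plus integrability), and (iii), (iv) are adapted from Chen--Sun's Lemma 6.1.

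One correction removes what you call the main obstacle. The Hermite--Einstein metric splits as \(\bigoplus_i h_{\mcF_i}\otimes\langle\cdot,\cdot\rangle_{V_i}\), and \(G=\prod_i\GL(V_i)\) acts only on the multiplicity spaces. Hence \(\delbar_0^*\) and the Green operator commute with all of \(G\), not just \(K\); this is exactly the fact used in the proof of \Cref{lem:kuranishi-slice-compatibility}. It follows that \(H^1\) is \(G\)-invariant, \(F\) is \(G\)-equivariant, and (iii) follows directly from the \(G\)-equivariance of the Taylor coefficients of \(\Phi\) on a ball. If you keep your holomorphic-continuation argument instead, the connectedness of \(\{g : g\cdot b\in B\}\) does not come from shrinking \(B\). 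It comes from taking \(B\) to be a \(K\)-invariant ball: then this set equals \(K\exp(iS)\), with \(S\) star-shaped by convexity of \(t\mapsto\abs{\exp(it\xi)\cdot b}^2\).
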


\begin{remark}
    The original theorem of Kuranishi \cite{kuranishiNewProofExistence1965} is for deformations of complex manifolds, though the same proof applies for vector bundles, and proves (i), (ii) and (v). The above statement is the vector bundle analogue of \cite[Lemma 6.1]{chenCalabiFlowGeodesic2014}, which is for cscK metrics, and the proof for cscK metrics can be adapted to the Hermitian-Yang-Mills setting. Related results can also be found in \cite{buchdahlPolystableBundlesRepresentations2022}.
\end{remark}

Now as the Kuranishi slice is a locally \(G\)-equivariant holomorphic embedding, we can find \(v \in B\) such that \(\Phi(v) = g \cdot \delbar\). Thus, after pulling back the K\"ahler metric and moment map to \(B\), we may view the Kuranishi slice as a finite-dimensional K\"ahler manifold with a \(G\)-action, and the reduced Yang-Mills flow corresponds to the moment map flow on \(B\). We are now in the setting where we can apply \Cref{thm:vector-space}. To relate the flow to the HKKP filtration of the lattice of subbundles of \(\mcE\) with the same slope, we have the following result.

\begin{proposition}
    \label{prop:kuranishi-slice-quiver}

    There exists a quiver \(Q\) and a dimension vector \(d\), such that we can identify \(H^1 = \Rep(Q, d)\) and \(G = G(d)\), such that the \(G\)-action on \(H^1\) is the same as the \(G(d)\)-action on \(\Rep(Q, d)\). Moreover, for \(v \in B\), we can identify the lattices
    \begin{enumerate}[(i)]
        \item of subrepresentations of \(v\) in \(\Rep(Q, d)\),
        \item of subbundles of the same slope of \((E, \Phi(v))\).
    \end{enumerate}
\end{proposition}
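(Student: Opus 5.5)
Write $\gr(\mcE) = \bigoplus_{a=1}^m \mcF_a^{\oplus d_a}$, where the $\mcF_a$ are pairwise non-isomorphic slope stable bundles, all of the same slope, each carrying the Hermite-Einstein metric induced from $h$. By Schur's lemma for stable bundles, $H^0(\Hom(\mcF_a,\mcF_b)) = 0$ for $a \ne b$ and $= \C$ for $a=b$. Hence
\[G = \Aut(\gr(\mcE)) = \prod_a \GL(d_a,\C), \qquad K = \prod_a U(d_a).\]
Here $E = \bigoplus_a \mcF_a \otimes \C^{d_a}$ with $G$ acting on the multiplicity spaces.

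The quiver $Q$ will have vertex set $\{1,\dots,m\}$ and $\dim H^{0,1}(X,\Hom(\mcF_a,\mcF_b))$ arrows from $a$ to $b$, with dimension vector $d = (d_a)$. Decomposing $\End E$, we get
\[H^1 = H^{0,1}(\End E) = \bigoplus_{a,b} \Hom(\C^{d_a},\C^{d_b}) \otimes H^{0,1}(\Hom(\mcF_a,\mcF_b)).\]
After choosing a basis of each $H^{0,1}(\Hom(\mcF_a,\mcF_b))$, this is exactly $\Rep(Q,d)$. The conjugation action $g\cdot\beta = g\beta g^{-1}$ becomes $(g_b A g_a^{-1})$, which is the $G(d)$-action. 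Loops at $a$ correspond to $H^{0,1}(\End \mcF_a)$. This settles the first part.

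For the lattices, I would define maps in both directions. Given a subrepresentation $W = (W_a \subseteq \C^{d_a})$ of $v$, the smooth subbundle $\bigoplus_a \mcF_a\otimes W_a$ is preserved by $\Phi(v)$. This uses the $K$-equivariance and the $G$-equivariance of $\Phi$ from \Cref{thm:kuranishi-slice-ym}(iii): the parabolic subgroup stabilising the flag $W$ contains the one-parameter subgroup $\lambda$ scaling $W$, and $\lambda(t)\cdot v \to$ a point of $\Rep$ of block-triangular form. The map $\Phi$ should be constructed so that $\Phi(v) - \delbar_0$ lies in the same block-triangular part. Concretely, the Kuranishi map is obtained by solving $\delbar_0$-equations built from the harmonic part via $\delbar_0^*$ and Green's operators. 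These operators commute with the $G$-action, so they preserve the $\Hom(\mcF_a\otimes W_a,\, \mcF_b\otimes W_b^\perp)$-type decomposition. The resulting subbundle has slope equal to that of $\mcE$, since it is an extension of copies of the $\mcF_a$.

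Conversely, let $\mcS \subseteq (E,\Phi(v))$ be a subbundle of the same slope. Its Jordan–Hölder factors are among the $\mcF_a$, and $\mcS$ is semistable. The plan is to show that $\mcS$ is $\mcG$-conjugate to one of the above form, by arguing on the graded pieces. Take the associated graded $\gr(\mcS) \hookrightarrow \gr(\mcE)$. By Schur this inclusion is of the form $\bigoplus_a \mcF_a\otimes W_a$. Then lift it, using that $\Hom(\mcF_a,\mcF_b)$ has only scalar sections.

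Finally, I would check that the two maps are mutually inverse and order-preserving, and that they respect sums and intersections. This makes them isomorphisms of lattices.

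The main obstacle is this converse direction together with the compatibility of $\Phi$. We need a subbundle of $(E,\Phi(v))$ that was not a priori aligned with the splitting $E = \bigoplus \mcF_a\otimes\C^{d_a}$ to correspond to an honest subrepresentation. I would first attempt an inductive argument on the length of a Jordan–Hölder filtration, using $\Hom(\mcF_a,(E,\Phi(v)))$. Its dimension should be computed as $\{w \in \C^{d_a} : v\text{-arrows out of } w \text{ vanish}\}$, by smallness of $v$ and upper semicontinuity, mimicking the lozenge-algebra argument of Haiden–Katzarkov–Kontsevich–Pandit for curves.
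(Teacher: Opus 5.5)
The quiver identification of $H^1$ with $\Rep(Q,d)$ and of $G$ with $G(d)$ is the paper's. Your direction from subrepresentations to same-slope subbundles is essentially \Cref{lem:kuranishi-slice-compatibility,lem:kuranishi-slice-compatibility-2}. However, you should state the point that actually carries it. We have $\Phi(v)-\delbar_0=F^{-1}(v)$ with $F(\beta)=\beta+G\delbar_0^*(\beta\wedge\beta)$. Here $G\delbar_0^*$ acts only on the $A^{0,*}(X,\Hom(\mcF_i,\mcF_j))$ factors, while $\beta\wedge\beta$ \emph{composes} the $\Hom(V_i,V_j)$ factors. Since block-triangular maps are closed under composition, $F$ preserves the closed subspace $A_W$ of $W$-compatible forms, and hence so does its local inverse.

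Your one-parameter-subgroup remark can also be made into a proof. Suppose $W$ is a subrepresentation of $v$, and let $\lambda(s)\in G$ act by $1$ on $W_a$ and by $s$ on $U_a=W_a^\perp$. Then $\lambda(s)\cdot v$ stays in a small ball $B$ for $s\ge 1$. By \Cref{thm:kuranishi-slice-ym}(iii), $\delbar_0+\lambda(s)\alpha\lambda(s)^{-1}=\Phi(\lambda(s)\cdot v)$ therefore stays bounded. This forces the $W\to U$ block of $\alpha=\Phi(v)-\delbar_0$ to vanish.

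Note that the paper uses both lemmas as equivalences. You only argue one implication, and the reverse implications are exactly what the converse needs.

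The genuine gap is the converse, which you leave as a plan. Showing that $\mcS$ is $\mcG$-conjugate to a standard subbundle is not the right target: a gauge transformation moves $\Phi(v)$ off the slice, whereas (ii) concerns subbundles of the fixed bundle $(E,\Phi(v))$. Also, ``lifting'' $\gr(\mcS)\hookrightarrow\gr(\mcE)$ is not a well-defined step, since that inclusion only exists up to isomorphism.

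What must be shown is that the smooth subbundle $S\subseteq E$ underlying $\mcS$ is literally $\bigoplus_a F_a\otimes W_a$ for some $W_a\subseteq V_a$. Once that is known, the argument closes quickly. Because $\delbar_0$ preserves $S$, $S$ is $\Phi(v)$-invariant iff $\alpha$ is $W$-compatible, iff $v=F(\alpha)$ is, i.e.\ iff $W$ is a subrepresentation. The paper obtains this identification from the Jordan--H\"older argument you also mention. It combines Jordan--H\"older filtrations of $\mcS$ and $\mcE/\mcS$, so that $\gr(\mcS)$ is a same-slope subbundle of $\mcE_0$, hence of the form $\bigoplus_i\mcF_i\otimes W_i$ by Schur, which it identifies with $(S,\delbar_0|_S)$.

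Your inductive fallback is a sound way to make that identification explicit, but it needs two ingredients you have not supplied.
\begin{enumerate}[(i)]
    \item Semicontinuity only gives $h^0(\Hom(\mcF_a,(E,\Phi(v))))\le d_a$. To get exactly $\{w\in V_a : A_ew=0 \text{ for every arrow } e \text{ out of } a\}$, you must solve $\delbar_0\phi+\alpha\phi=0$ perturbatively starting from $\id_{F_a}\otimes w$. You then need that every multiplicity factor occurring in $\alpha$ is a path of length at least one in $Q$. Consequently the obstruction map $V_a\to H^{0,1}(X,\Hom(\mcF_a,\mcE_0))$ is a small perturbation of the isomorphism given by your basis of harmonic forms, composed with $w\mapsto(A_ew)_e$.
    \item To induct, you need the quotient of $(E,\Phi(v))$ by $F_a\otimes\C w$ to be again in Kuranishi form for the quotient representation. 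This holds because, for $W$-compatible $\beta$, the $U\to U$ block of $F(\beta)$ is the Kuranishi map of $\bigoplus_a\mcF_a\otimes U_a$ applied to the $U\to U$ block of $\beta$.
\end{enumerate}
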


The proof of \Cref{thm:ym} follows from the above propositions, together with \Cref{cor:hkkp-quiver}. In the subsequent subsections, we prove the above propositions.

\subsection{Infinite-dimensional normal forms}

\label{subsec:ym-normal-form}

In this subsection, we prove \Cref{prop:kuranishi-slice-move-orbit} and \Cref{prop:kuranishi-slice-reduction}, following the arguments in \cite{chenCalabiFlowGeodesic2014} for the Calabi flow. There are however some simplifications passing from the Calabi flow to the Yang-Mills flow. 

Throughout, we will appeal to the Nash-Moser implicit function theorem. We will now briefly review tame Fr\'echet spaces and the Nash-Moser implicit function theorem, and refer to \cite{hamiltonInverseFunctionTheorem1982} for details. Let \(V\) be a Fr\'echet space. A sequence \(\abs{\cdot}_n\) of seminorms on \(V\) is called a \emph{grading} if
\[\abs{v}_0 \le \abs{v}_1 \le \cdots\]
and if the seminorms define the topology. One example is given by \(\abs{\cdot}_k = \abs{\cdot}_{C^k}\). 

A linear map \(L \colon V \to W\) between two graded Fr\'echet spaces is \emph{tame} if there exists \(r, n_0 \in \Z_{\ge 0}\) and \(C > 0\) such that
\[\abs{L(v)}_n \le C\abs{v}_{n + r}\]
for all \(n \ge n_0\).

Let \(B\) be a Banach space, and let \(\Sigma(B)\) denote the space of sequences \((x_k)\) in \(B\), with
\[\abs{(x_k)}_n = \sum_{k=0}^\infty e^{nk}\norm{x_k} < \infty.\]
The seminorms \(\abs{\cdot}_n\) define a grading on \(\Sigma(B)\). A graded Fr\'echet space \(V\) is \emph{tame} if there exists a Banach space \(B\), and tame linear maps \(L \colon V \to \Sigma(B), M \colon \Sigma(B) \to V\), such that \(M \circ L = \id_V\).

Finally, let \(V, W\) be graded Fr\'echet spaces, \(U \subseteq V\) open, \(F \colon U \to W\) a map. We say that \(F\) is \emph{tame} if it is continuous, and if there exists \(r, n_0 \in \Z_{\ge 0}\) and \(C > 0\) such that
\[\abs{F(v)}_n \le C(1 + \abs{v}_{n + r})\]
for all \(v \in U\) and \(n \ge n_0\). We say that \(F\) is \emph{smooth tame} if it is smooth, and all of its derivatives are tame. A \emph{tame manifold} is a space locally modelled on a tame Fr\'echet space, with smooth tame transition maps.

\begin{theorem}
    [Nash-Moser implicit function theorem, {\cite[Theorem III.1.1.1]{hamiltonInverseFunctionTheorem1982}}] Let \(V, W\) be tame Fr\'echet spaces, \(U \subseteq V\) open, \(F \colon U \to W\) a smooth tame map. Suppose the equation \(D F(v)x = w\) has a unique solution \(x = Q(v)w\) for all \(v \in U\), \(x \in V\) and \(w \in W\). Moreover, suppose \(Q \colon U \times W \to V\) is a smooth tame map. Then \(F\) is locally invertible, and each local inverse is a smooth tame map.
\end{theorem}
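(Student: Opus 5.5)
The statement is the Nash-Moser inverse function theorem as formulated by Hamilton, and the paper only quotes it. If I had to prove it, I would run a Newton iteration with smoothing operators. Fix \(v_0 \in U\) and set \(w_0 = F(v_0)\). The goal is to solve \(F(v) = w\) for every \(w\) close to \(w_0\) in a suitable low seminorm, and then to show that the resulting local inverse is smooth tame.

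\textbf{Step 1: reduce to \(\Sigma(B)\).} Tameness of \(V\) and \(W\) means both are tame direct summands of spaces \(\Sigma(B)\). On \(\Sigma(B)\) there are explicit smoothing operators \(S_\theta\): truncate the sequence \((x_k)\) at \(k \le \log\theta\). These satisfy
\[\abs{S_\theta x}_{n+r} \le C\theta^r\abs{x}_n, \qquad \abs{(1 - S_\theta)x}_n \le C\theta^{-r}\abs{x}_{n+r}.\]
Conjugating by the tame maps \(L\) and \(M\) transfers them to \(V\). The same structure gives the interpolation inequalities
\[\abs{x}_m \lesssim \abs{x}_l^{(n-m)/(n-l)}\,\abs{x}_n^{(m-l)/(n-l)}, \qquad l \le m \le n.\]

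\textbf{Step 2: the modified Newton scheme.} Set
\[v_{k+1} = v_k + S_{\theta_k}\, Q(v_k)\bigl(w - F(v_k)\bigr), \qquad \theta_k = \theta_0^{(3/2)^k}.\]
The new error \(w - F(v_{k+1})\) splits into two pieces.
\begin{enumerate}[(i)]
    \item A quadratic Taylor remainder. It is controlled by the tame bound on \(D^2F\) and is quadratic in the step.
    \item A smoothing error \(DF(v_k)(1 - S_{\theta_k})Q(v_k)(w - F(v_k))\). It is small in low norms at the expense of high norms.
\end{enumerate}
By induction on \(k\) I would prove simultaneously:
\begin{itemize}
    \item a low-norm decay \(\abs{w - F(v_k)}_{n_0} \lesssim \theta_k^{-a}\);
    \item a high-norm growth \(\abs{v_k - v_0}_{N} \lesssim \theta_k^{b}\), with \(b < a\) and a large fixed \(N\).
\end{itemize}
Interpolation then makes \(v_k\) Cauchy in every intermediate seminorm. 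Letting \(N \to \infty\) and shrinking \(w - w_0\) gives convergence in every seminorm, and continuity of \(F\) shows the limit \(v\) satisfies \(F(v) = w\).

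\textbf{Step 3: properties of the inverse.} Local injectivity follows from a standard mean value argument. It uses invertibility of \(DF\) near \(v_0\), via \(Q\), together with the tame bounds, and shows \(\abs{v - v'}_n \lesssim \abs{F(v) - F(v')}_{n+r}\) for nearby \(v, v'\). The same estimate run with tame constants gives continuity and tameness of \(F^{-1}\). Differentiating \(F \circ F^{-1} = \id\) then shows \(D(F^{-1})(w) = Q(F^{-1}(w))\). This is a composition of smooth tame maps, so it is smooth tame, and induction on the order of derivatives makes \(F^{-1}\) smooth tame.

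\textbf{Main obstacle.} The hard part is Step 2: choosing the exponents \(a\), \(b\), \(N\) and the growth rate of \(\theta_k\) so that the loss of \(r\) derivatives in \(Q\), \(DF\) and \(D^2F\) at each step is beaten by the quadratic gain. I would first fix the loss \(r\) and the exponent \(3/2\), then choose \(a\) large relative to \(r\), and finally choose \(N\) large enough that the smoothing error \(\theta_k^{-(N - n_0 - r)}\,\theta_k^{b}\) is dominated by \(\theta_{k+1}^{-a}\).
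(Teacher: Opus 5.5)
The paper does not prove this statement; it quotes Hamilton's theorem. Your discrete smoothed Newton scheme, with smoothing transported to \(V\) as \(MS_\theta L\), is the standard strategy behind that result. However, your outline never uses what distinguishes \emph{tame} maps: their estimates are linear in the top seminorm, with coefficients depending only on a fixed low seminorm. Without this, two steps fail.

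The first is ``letting \(N \to \infty\) and shrinking \(w - w_0\)''. If each larger \(N\) needs a smaller set \(\{\abs{w - w_0}_{N+s} < \epsilon_N\}\), then in a non-normable space such as \(C^\infty\) the intersection over \(N\) is not a neighbourhood of \(w_0\). So you get no local inverse. Instead, fix \(a\), \(b\), \(N\), \(\theta_0\) and one neighbourhood, and prove the low-norm decay once. Then for every \(n\) the quantity \(X_k = \abs{v_k - v_0}_n\) satisfies a \emph{linear} recursion \(X_{k+1} \le (1 + C_n\theta_k^{c})X_k + C_n\theta_k^{c}(1 + \abs{w - w_0}_{n+s})\). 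Here \(c\) depends only on the derivative losses, which \(S_{\theta_k}\) pays for with the factor \(\theta_k^{c}\). Since \(\sum_{j<k}\log\theta_j \le 2\log\theta_k\), this gives \(X_k \lesssim_n (1 + \abs{w - w_0}_{n+s})\theta_k^{b}\), with a single \(b\) for all \(n\) and no further smallness. This \(n\)-independence of \(b\) is also what your ``Main obstacle'' paragraph tacitly assumes when it chooses \(N\) after \(b\).

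The second failure is in Step 3: the injectivity estimate cannot yield tameness of \(F^{-1}\). Write \(v' - v = Q(v)\bigl(F(v') - F(v)\bigr) - Q(v)R\) with \(R = F(v') - F(v) - DF(v)(v' - v)\). The remainder contributes \(\abs{v' - v}_{n+2r}\) to the bound for \(\abs{v' - v}_n\), so it cannot be absorbed. At the lowest level you handle it by interpolating against a fixed higher norm in which \(v, v'\) are close; that, rather than a plain mean value argument, is how injectivity is proved. For general \(n\), however, the bound \(\abs{v - v'}_n \lesssim \abs{F(v) - F(v')}_{n+r}\) is not available. Interpolating the iteration's bounds does not suffice either, since it controls \(\abs{F^{-1}(w)}_n\) only by \(\abs{w}_{n+s(n)}\), with \(s(n)\) growing linearly in \(n\).

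The fixed-loss estimate must come from the iteration itself. For each \(n\), prove \(\abs{w - F(v_k)}_n \lesssim_n (1 + \abs{w - w_0}_{n+s})\theta_k^{-a}\). Control the smoothing error at level \(n\) using the growth bound at the fixed higher level \(n + s'\) from the first step, and treat the finitely many \(k\) below an \(n\)-dependent threshold crudely. Then sum the increments. Continuity of \(F^{-1}\) in every seminorm follows from the low-norm injectivity estimate and interpolation. Finally, differentiability of \(F^{-1}\) must be proved directly from the quadratic remainder before you can differentiate \(F \circ F^{-1} = \id\) and bootstrap.
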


First, we let \(N\) denote the orthogonal complement of the image of \(\delbar_0 \colon A^0(X, \End(E)) \to A^{0, 1}(X, \End(E))\). Let \(\mfk = \Lie(\mcK)\) and \(\mfk_0 = \Lie(K)\). Then we have an \(L^2\)-orthogonal decomposition
\[\mfk = \mfk_0 \oplus \mfm.\]
The model space is
\[Y = \mcK \times_K (\mfm \oplus N).\]

We note that \(\mcK\) is a smooth tame Lie group, and the action of \(K\) on \(\mcK \times (\mfm \oplus N)\), given by
\[k \cdot (g, \rho, v) = (gk^{-1}, k \cdot \rho, k \cdot v)\]
is smooth tame and free, and so the quotient \(Y\) is a smooth tame manifold.

The \(K\)-action on \(N\) is linear, and so we have a moment map given by
\begin{align*}
    \mu_N \colon N &\to \mfk_0 \\
    \langle \mu_N(v), \xi\rangle &= \frac{1}{2}\Omega(\xi \cdot v, v).
\end{align*}
Since \(\mfk_0\) is finite dimensional, this is a well-defined tame map. As in the finite-dimensional case, we have a canonically defined symplectic form on \(Y\), given by
\begin{align*}
    &\Omega_Y\vert_{[g, \rho, v]}((L_g\xi_1, \rho_1, v_1), (L_g\xi_2, \rho_2, v_2)) \\
    &\quad = \langle \rho_2 + \dd_v\mu_N(v_2), \xi_1\rangle - \langle \rho_1 + \dd_v\mu_N(v_1), \xi_2 \rangle \\
    &\quad\quad + \langle \rho + \mu_N(v), [\xi_1, \xi_2]\rangle + \Omega(v_1, v_2), \\
    &\quad = \langle \xi_1, \rho_2\rangle \\
    &\quad\quad -\langle \rho_1 + \dd_v\mu_N(v_1) - [\rho + \mu_N(v), \xi_1], \xi_2\rangle \\
    &\quad\quad + \Omega_0(v_1, v_2) + \langle \xi_1, \dd_v\mu_N(v_2)\rangle.
\end{align*}

The left \(\mcK\)-action on \(Y\) is Hamiltonian, and has moment map
\begin{align*}
    \mu_Y \colon Y &\to \mfk, \\
    \mu_Y[g, \rho, v] &= \Ad(g)(\rho + \mu_N(v)).
\end{align*}

Next, we define a map \(\Theta \colon \mfm \to i\mfk \cdot\delbar_0\) by
\[\Theta(\rho) = -i\delbar_0(\Delta^{-1}\rho),\]
where \(\Delta = \delbar_0^*\delbar_0 + \delbar_0\delbar_0^*\) is the Laplacian. We define
\begin{align*}
    \Phi \colon Y &\to \mcD, \\
    [g, \rho, v] &\mapsto g \cdot (\delbar_0 + \Theta(\rho) + v).
\end{align*}
This can be rewritten as
\[\Phi[g, \rho, v] = \delbar_0 + g\delbar_0(g^{-1}) + g(\Theta(\rho) + v)g^{-1}.\]

\begin{lemma}
    \(\Phi\) is smooth tame, with a local smooth tame inverse around \([\id, 0, 0]\).
\end{lemma}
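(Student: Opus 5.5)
We will apply the Nash--Moser implicit function theorem to \(\Phi\) near \([\id, 0, 0]\). Smooth tameness is routine. On the chart \((\xi, \rho, v) \mapsto [\exp(\xi), \rho, v]\), with \(\xi \in \mfm\) (a local slice for the free \(K\)-action), \(\Phi\) is a composition of smooth tame maps. These are multiplication and inversion in \(\mcK\), the differential operator \(\delbar_0\), the map \(\Theta = -i\delbar_0\circ\Delta^{-1}\) (tame because \(\Delta^{-1}\) is an elliptic pseudodifferential operator of order \(-2\) on \(\mfm\), where \(\Delta\) is invertible), and the linear inclusion of \(N\). Hence \(\Phi\) and all its derivatives satisfy tame estimates in the \(C^k\) gradings.

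The main work is the linearisation. At a general point \(y = [g, \rho, v]\) near \([\id, 0, 0]\) we would compute
\[D\Phi_y(L_g\xi, \rho_1, v_1) = g\cdot\bigl(-\delbar_{y'}(\xi) + \Theta(\rho_1) + v_1\bigr)g^{-1},\qquad \delbar_{y'} = \delbar_0 + \Theta(\rho) + v.\]
Here the infinitesimal unitary gauge action of \(\xi\) on \(\delbar_{y'}\) is \(\pm\delbar_{y'}\xi\), up to the \(\xi^*=-\xi\) convention. At \(y=[\id,0,0]\) this is \((\xi, \rho_1, v_1) \mapsto -\delbar_0\xi - i\delbar_0\Delta^{-1}\rho_1 + v_1\).

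Use the Hodge decomposition \(A^{0,1}(\End E) = \im\delbar_0 \oplus N\). Write \(A^0(\End E) = \mfk \oplus i\mfk\), with \(\ker\delbar_0 = \mfk_0 \oplus i\mfk_0\). Then \(\delbar_0\) maps \(\mfm \oplus i\mfm\) isomorphically onto \(\im\delbar_0\). Hence the map \((\xi, \rho_1) \mapsto -\delbar_0(\xi + i\Delta^{-1}\rho_1)\) from \(\mfm \oplus \mfm\) is an isomorphism onto \(\im\delbar_0\), since \(\Delta^{-1}\) is an isomorphism of \(\mfm\). The inverse is given explicitly by \(\delbar_0^*\), \(\Delta^{-1}\) and the projections to \(\mfk\) and \(i\mfk\). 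Therefore \(D\Phi\) at the base point is a bijection \(\mfm\oplus\mfm\oplus N \to A^{0,1}(\End E)\), with inverse built from elliptic operators, hence tame.

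The expected obstacle is the Nash--Moser hypothesis that \(D\Phi_y\) is invertible for \emph{all} \(y\) in a neighbourhood, with inverse \(Q(y,w)\) smooth tame jointly. We would handle this by writing \(D\Phi_y = g\,(D\Phi_0 + P_y)\,g^{-1}\), with \(P_y\) the first-order perturbation \((\xi,\rho_1)\mapsto -[\Theta(\rho)+v,\xi]\) (with \(v\) smooth, since \(N\) is a closed subspace of smooth forms). Then we recast \(D\Phi_y\) as an elliptic system: composing with \(\delbar_0^*\) on the gauge part gives an operator \(\Delta + \text{lower order}\) on \(\mfm\oplus i\mfm\), coupled to the finite-rank projection onto \(N\) and the finite-dimensional \(\mfk_0\) directions.

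For \(y\) close to \([\id, 0, 0]\) in a fixed low \(C^k\) norm, this elliptic system is a small perturbation of the invertible one. So it remains invertible by a Neumann series in Sobolev spaces, and elliptic regularity gives tame estimates \(\abs{Q(y,w)}_n \le C(\abs{w}_{n+r} + \abs{y}_{n+r}\abs{w}_{r})\), uniformly in \(y\). Smoothness of \(Q\) in \(y\) follows from the formula \(DQ = -Q\,(D^2\Phi)\,Q\). The finite-dimensional pieces, \(\mfk_0\) and the harmonic space \(H^1 \subseteq N\), need care: we would check that near the base point the cokernel stays trivial, using that \(N\) exactly complements \(\im\delbar_0\) and that the kernel of the gauge part is only \(\mfk_0\), which was quotiented out. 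Nash--Moser then gives the local smooth tame inverse.
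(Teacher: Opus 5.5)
Your proposal is correct and is essentially the paper's argument. You apply Nash--Moser and get invertibility of \(D\Phi_y\) near \([\id,0,0]\) by applying \(\delbar_0^*\), which reduces the problem to a small perturbation of \(\Delta\) on \(\mfm\) (the paper's \(L_\delta\xi = \Re\delbar_0^*\delbar_\delta\xi\)); the \(\Theta\)- and \(N\)-components are then read off from the Hodge decomposition. One correction: \(N = \ker\delbar_0^* \cap A^{0,1}(X,\End E)\) contains \(\delbar_0^*A^{0,2}(X,\End E)\), not just \(H^1\), so it is infinite-dimensional whenever \(\dim X \ge 2\) and is not a finite-rank piece. This is harmless, because \(v_1\) enters \(D\Phi_y\) through the identity, so once \((\xi,\rho_1)\) are solved for, \(v_1\) is determined directly.
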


\begin{proof}
    By definition, \(\Phi\) is smooth and tame. To be able to apply the inverse function theorem, we need to study the derivative of \(\Phi\) near \([\id, 0, 0]\). At \(\delta = [g, \rho, v]\),
    \begin{align*}
        D_\delta\Phi \colon \mfm \oplus \mfm \oplus N &\to A^{0, 1}(X, \End(E)) \\
        (\xi, \psi, u)&\mapsto g(-\delbar_0\xi + [\xi, \Theta(\rho) + v] + \Theta(\psi) + u)g^{-1}.
    \end{align*}
    To show invertibility, it suffices to consider the case when \(g = \id\). Set \(\delbar_\delta = \Phi(\delta) = \delbar_0 + \Theta(\rho) + v\), and so
    \[D_\delta\Phi(\xi, \psi, u) = -\delbar_\delta\xi + \Theta(\psi) + u.\]

    Define \(L_\delta \colon \mfm \to \mfm\) by
    \[L_\delta(\xi) = \Re\delbar_0^*\delbar_\delta(\xi) = \Delta\xi + \Re\delbar_0^*[\Theta(\rho) + v, \xi],\]
    where
    \[\Re(\xi) = \frac{\xi - \xi^*}{2}\]
    is the skew-hermitian part of \(\xi\). When \(\delta = [\id, 0, 0]\), \(L_\delta = \Delta\), and so \(L_\delta\) is invertible for \(\delta\) sufficiently small.

    Suppose \(D_\delta\Phi(\xi, \psi, u) = \nu\). Then
    \[L_\delta\xi = -\Re\delbar_0^*(D_\delta\Phi(\xi, \psi, u)) = -\Re\delbar_0^*\nu,\]
    and so we must have that \(\xi = -L_\delta^{-1}\Re\delbar_0^*\nu\). We claim that this choice of \(\xi\) defines the inverse to \(D_\delta\Phi\).

    By construction, we have a decomposition
    \[A^{0, 1}(X, \End(E)) = \im(\delbar_0\vert_{\mfm}) \oplus \im(\delbar_0\vert_{i\mfm}) \oplus N.\]
    If we set \(\xi = -L_\delta^{-1}\Re\delbar_0^*\nu\), and \(\eta = \nu + \delbar_\delta\xi\), then
    \[\Re\delbar_0^*\eta = \Re\delbar_0^*\nu - \Re\delbar_0^*\delbar_\delta L_\delta^{-1}\Re\delbar_0^*\nu = \Re\delbar_0^*\nu - L_\delta L_\delta^{-1}\Re\delbar_0^*\nu = 0.\]
    Thus, \(\eta \in \ker(\Re\delbar_0^*) = \im(\delbar_0\vert_{i\mfm}) \oplus N\). We may then write \(\eta = \delbar_0\psi + u\), for some \(\psi \in \mfm\) and \(u \in N\), by elliptic theory for \(\Delta\). Observe that the inverse map is smooth tame, and so we can apply the Nash-Moser inverse function theorem \cite{hamiltonInverseFunctionTheorem1982}.
\end{proof}

By the above lemma, we may pullback the symplectic form and complex structure from \(\mcD\) to a neighbourhood \(U\) of \([\id, 0, 0]\) in \(Y\). Let us write \(\Omega' = \Phi^*\Omega\) and \(J' = \Phi^*J\). On the other hand, there is also a canonical almost complex structure \(J_0\) on \(U\), given by
\begin{align*}
    J_0 \colon \mfm \oplus \mfm \oplus N &\to \mfm \oplus \mfm \oplus N, \\
    (\xi, \rho, v) &\mapsto (-\Delta^{-1}\rho, \Delta\xi, iv).
\end{align*}
Observe that \(J' = J_0\) at \([\id, 0, 0]\).

\begin{proposition}
    There exist neighbourhoods \(U_1, U_2, V_1, V_2\) of \([\id, 0, 0]\) in \(Y\), with \(U_2 \subseteq U_1\), and \(\mcK\)-equivariant smooth tame maps
    \begin{align*}
        \Sigma_1 \colon U_1 &\to V_1, \\
        \Sigma_2 \colon V_2 &\to U_2
    \end{align*}
    fixing the \(\mcK\)-orbit of \([\id, 0, 0]\), such that \(\Sigma_1 \circ \Sigma_2 = \id\), and
    \begin{align*}
        \Sigma_1^*\Omega_Y &= \Omega', \\
        \Sigma_2^*\Omega' &= \Omega_Y.
    \end{align*}
    Moreover, there exist \(k, s\) such that for any \(X \in N\), and \([g, \rho, v] \in V_2\),
    \[\abs{(D\Sigma_1)\circ J' \circ (D\Sigma_2)(X) - J_0(X)}_k \le C_{k, s}\abs{X}_{k + s}r_{k+s}^2,\]
    and at \([\id, 0, 0]\),
    \[(D\Sigma_1)\circ J' \circ (D\Sigma_2) = J_0.\]

    Here, the estimate holds in the tame sense, \(r_{k+s}\) is the distance between \([g, \rho, v]\) and \([\id, 0, 0]\) in the \(C^{k+s}\)-norm.
\end{proposition}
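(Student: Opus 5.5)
This is an equivariant Darboux (Moser) argument in the tame Fréchet category, modelled on the finite-dimensional Marle--Guillemin--Sternberg normal form and on the analogous step in \cite{chenCalabiFlowGeodesic2014}.

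\textbf{Step 1: agreement at the orbit.} We first check that \(\Omega'\) and \(\Omega_Y\) agree along the \(\mcK\)-orbit \(\mcK\cdot[\id,0,0]\). At \([\id,0,0]\) the derivative \(D\Phi\) sends \((\xi,\psi,u)\) to \(-\delbar_0\xi+\Theta(\psi)+u\). Evaluating \(\Omega\) on two such vectors, and using that \(\mu(\delbar_0)\) is central together with the definition of \(\Theta\) via \(\Delta^{-1}\), we expect to recover exactly
\[\langle\xi_1,\rho_2\rangle-\langle\rho_1,\xi_2\rangle+\Omega_0(v_1,v_2),\]
which is \(\Omega_Y\) at that point. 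The same computation shows \(J'=J_0\) there. Both forms are \(\mcK\)-invariant, so they agree along the whole orbit.

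\textbf{Step 2: equivariant Moser isotopy.} Set \(\Omega_\tau=\Omega_Y+\tau(\Omega'-\Omega_Y)\) for \(\tau\in[0,1]\). Each \(\Omega_\tau\) is closed, \(\mcK\)-invariant, and nondegenerate near the orbit, because nondegeneracy means the operator \(L_\delta\) and the \(N\)-component stay invertible, exactly as in the preceding lemma.

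Next we construct a \(\mcK\)-invariant primitive \(\beta\) of \(\Omega'-\Omega_Y\) that vanishes to second order along the orbit. For this we use the homotopy operator for the radial contraction \((g,\rho,v)\mapsto(g,t\rho,tv)\) on the fibres of \(Y\to\mcK/K\); it commutes with the \(\mcK\)-action, and it is tame because it is an integral of pullbacks.

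We then solve \(\iota_{X_\tau}\Omega_\tau=-\beta\). This requires inverting \(\Omega_\tau\) as a map \(TY\to T^*Y\) with tame estimates. In block form the inversion reduces to inverting \(L_\delta\) and operators of the form \(\Delta+\text{small}\), so the Nash--Moser framework of the previous lemma (inversion with smooth tame inverse) applies.

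The time-one flow of \(X_\tau\) gives \(\Sigma_1\) with \(\Sigma_1^*\Omega_Y=\Omega'\). Running the flow backwards, on a smaller neighbourhood, gives \(\Sigma_2\) with \(\Sigma_1\circ\Sigma_2=\id\). Because \(X_\tau\) vanishes along the orbit, both maps fix the orbit, and \(\mcK\)-equivariance follows from uniqueness of the flow.

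\textbf{Step 3: the complex-structure estimate.} Since \(\beta=O(r^2)\), the vector field \(X_\tau\) is \(O(r^2)\), and therefore \(D\Sigma_i=\id+O(r)\). We need a sharper estimate than this naively gives, so the plan is to compare \(\Phi^*\) of the linear structure on \(\mcD\), which is flat, with \(J_0\). The difference \(J'-J_0\) comes from the nonlinear terms in \(\Phi\), namely \(g\delbar_0(g^{-1})\) and conjugation. Restricted to directions \(X\in N\) (the fibre directions), these terms are quadratic in the displacement in the \(N\) and \(\mfm\) directions, modulo pieces tangent to the orbit. Combining this with \(D\Sigma_i-\id=O(r)\) and the vanishing of \(D\Sigma_i-\id\) in the orbit directions at the base point, we expect a bound of order \(r^2\) with a fixed loss \(s\) of derivatives.

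\textbf{Main obstacle.} The hardest step will be solving the Moser equation \(\iota_{X_\tau}\Omega_\tau=-\beta\) tamely and uniformly in \(\tau\), and on a uniform neighbourhood. \(\Omega_\tau\) is only weakly nondegenerate: it pairs \(\mfm\) with \(\mfm\) via \(\langle\cdot,\cdot\rangle_{L^2}\), not via \(\Delta\). Consequently \(\beta\) must lie in the image, which is why the primitive must be built compatibly with the \(\mfm\)-dual structure. The first thing to try is to write \(\beta\) in the \((\xi,\rho,v)\) splitting and verify, using the explicit formula for \(\Omega_Y\), that the \(\xi\)-component can always be absorbed by the \(\rho\)-coordinate of \(X_\tau\). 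The invertibility of \(L_\delta\) near \(\delta=0\) then supplies the tame inverse needed for Nash--Moser.
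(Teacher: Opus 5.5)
Your strategy matches the paper's: the radial homotopy \(f_t[g,\rho,v]=[g,t\rho,tv]\) to build a primitive of \(\Omega'-\Omega_Y\), the linear path \(\Omega_\tau\), and elliptic inversion of the Moser equation. In the paper that inversion is the easy part. At \([\id,0,0]\) the system decouples into prescribing \(\Delta\rho_1\), \(\Delta\xi_1\) and \(v_1\), and nearby it follows from the Banach implicit function theorem plus elliptic regularity.

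\textbf{Gap 1: integrating the Moser field.} The gap is the sentence ``the time-one flow of \(X_\tau\) gives \(\Sigma_1\)''. On a tame Fr\'echet manifold there is no Picard--Lindel\"of theorem. A smooth tame time-dependent vector field need not have integral curves, nor unique ones. As written, you therefore get neither the maps \(\Sigma_1,\Sigma_2\), nor their tameness, nor the equivariance you deduce from uniqueness of the flow. (With your sign \(\iota_{X_\tau}\Omega_\tau=-\beta\), the time-one map pulls \(\Omega'\) back to \(\Omega_Y\), so it is \(\Sigma_2\), not \(\Sigma_1\); this is cosmetic.)

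This is exactly what the paper names as the main difficulty, and it handles it as follows.
\begin{enumerate}[(i)]
\item It sets \(H(\rho(\cdot),v(\cdot))=\bigl((\rho(0),v(0)),\,(\dot\rho-\rho_1,\dot v-v_1)\bigr)\) on \(C^\infty([0,1],\mfm\oplus N)\), where \((\rho_1,v_1)\) is the fibre part of the Moser field, and inverts \(H\) by Nash--Moser.
\item Invertibility of \(DH\) is a linear Cauchy problem in which \(\dot u\) depends on \(\dot\alpha\) through a first-order operator \(C_1\).
\item Substituting \(w=u-C_1\alpha\) turns this into \(\frac{d}{dt}(\alpha,w)=P_k(\alpha,w)+(\text{forcing})\), with \(P_k\) bounded on the Banach space \(\mfm_k\oplus N_{k-1}\). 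Note the shifted Sobolev indices.
\item This is solved by Picard iteration, and independence of \(k\) gives smoothness and tame dependence.
\item The \(\mcK\)-component is obtained separately by integrating \(\dot g=L_g\xi_1\), using \(\mcK\)-invariance.
\end{enumerate}
You need this, or an equivalent argument that the Moser field is locally Lipschitz on suitably chosen Sobolev completions, before Step 2 produces any maps.

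\textbf{Gap 2: the complex-structure estimate.} Step 3 does not close as written. From \(X_\tau=O(r^2)\) you only get \(D\Sigma_i=\id+O(r)\). Since \(D\Sigma_1\circ D\Sigma_2=\id\), the quantity \((D\Sigma_1)J'(D\Sigma_2)-J_0\) equals \([J_0,D\Sigma_2-\id]+(J'-J_0)+O(r^2)\). The commutator term is a priori only \(O(r)\).

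Moreover, on \(N\)-directions \(J'\) already equals \(J_0\) exactly, because \(D\Phi(0,0,u)=u\) and \(N=\ker\delbar_0^*\) is \(i\)-invariant. So your ``quadratic nonlinear terms of \(\Phi\)'' are not the issue. What you need is second-order control of \(D\Sigma_i-\id\) on these directions, which the generic \(O(r^2)\) bound on the primitive cannot give.

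The paper's input is a sharper structural fact read off from the Moser system: the \(N\)-component of the Moser field is cubic,
\[\abs{v_1(t)}_k\le C_k\bigl(\abs{\rho(t)}_{k+s_1}+\abs{v(t)}_{k+s_1}\bigr)^3.\]
Hence \(\abs{v(1)-v(0)}_k\le C\bigl(\abs{\rho(0)}_{k+s}+\abs{v(0)}_{k+s}\bigr)^3\) after accounting for the derivative loss along the flow. You should look for extra vanishing of this kind.
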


\begin{proof}
    The idea of the proof is the same as the finite-dimensional case, which relies on Moser's trick. The main difficulty is that we now need to solve an ordinary differential equation in infinite dimensions.

    Let \(\alpha = \Omega' - \Omega_Y\). Define an isotopy \(f_t[g, \rho, v] = [g, t\rho, tv]\) of \(Y\), which is given by the vector field \(X_t[g, \rho, v] = [0, \rho, v]\). Using this, we may write \(\alpha = \dd\theta\), where
    \[\theta = \int_0^1f_t^*(\iota_X\alpha)\dd t.\]

    Next, for the Moser trick, we set \(\Omega_t = (1 - t)\Omega_Y + t\Omega'\), and we seek a vector field \(Y_t\) such that 
    \[\iota_{Y_t}\Omega_t = \theta.\] 
    In finite dimensions, non-degeneracy of \(\Omega_t\) implies the existence of such a vector field, however in infinite dimensions this is now an equation we need to solve. The left hand side is given by
    \[(1 - t)\iota_{Y_t}\Omega_Y + t\iota_{Y_t}\Omega',\]
    and the right hand side is given by
    \[\theta_{[g, \rho, v]}(Z) = \int_0^1(\Omega' - \Omega_Y)_{[g, s\rho, sv]}((0, \rho, v), f_{s*}Z) \dd s.\]
    We now compute all of the terms individually.

    Note that \(Y_t\) is \(\mcK\)-invariant, and so we may assume \(g = \id\). Recall that
    \begin{align*}
        &\Omega_Y\vert_{[\id, \Delta\rho, v]}((\xi_1, \Delta\rho_1, v_1), (\xi_2, \Delta\rho_2, v_2)) \\
        &\quad = \langle \Delta\xi_1, \rho_2\rangle_{L^2} \\
        &\quad\quad -\langle \Delta\rho_1 + \dd_v\mu_N(v_1) - [\Delta\rho + \mu_N(v), \xi_1], \xi_2\rangle_{L^2} \\
        &\quad\quad + \Omega(v_1, v_2) + \langle \xi_1, \dd_v\mu_N(v_2)\rangle_{L^2}.
    \end{align*}
    The change of variable to \(\Delta\rho\) is to simplify the formula for \(\Omega'\). We can rewrite the last term as
    \[\langle\xi_1, \dd_v\mu_N(v_2)\rangle_{L^2} = \langle (\dd_v\mu_N)^*(\xi_1), v_2\rangle_{L^2} = -\Omega(i(\dd_v\mu_N)^*(\xi_1), v_2).\]
    
    Next, for \(\delta = [\id, \Delta\rho, v]\), we have
    \begin{align*}
        &\Omega'\vert_{[\id, \Delta\rho, v]}((\xi_1, \Delta\rho_1, v_1), (\xi_2, \Delta\rho_2, v_2)) \\
        &\quad = \Omega(-\delbar_\delta\xi_1 - i\delbar_0\rho_1 + v_1, -\delbar_\delta\xi_2 - i\delbar_0\rho_2 + v_2), \\
        &\quad = -\Im\langle-\delbar_\delta\xi_1 - i\delbar_0\rho_1 + v_1, -\delbar_\delta\xi_2 - i\delbar_0\rho_2 + v_2\rangle_{L^2}, \\
        &\quad = -\Im\langle \delbar_\delta^*\delbar_\delta\xi_1 + i\delbar_\delta^*\delbar_0\rho_1 - \delbar_\delta^*v_1, \xi_2\rangle_{L^2} \\
        &\quad\quad -\Im\langle -i\delbar_0^*\delbar_\delta\xi_1 + \delbar_0^*\delbar_0\rho_1 + i\delbar_0^*v_1, \rho_2\rangle_{L^2} \\
        &\quad\quad -\Im\langle -\delbar_\delta\xi_1 - i\delbar_0\rho_1 + v_1, v_2\rangle_{L^2}.
                                            \end{align*}
    Let us set
    \[\Im(\xi) = \frac{\xi + \xi^*}{2i},\]
    to be \(-i\) times the hermitian part of \(\xi\), so that for any \(\xi \in A^0(X, \End(E))\), we have a decomposition
    \[\xi = \Re(\xi) + i\Im(\xi).\]

    This gives
    \begin{align*}
        &\Omega'\vert_{[\id, \Delta\rho, v]}((\xi_1, \Delta\rho_1, v_1), (\xi_2, \Delta\rho_2, v_2)) \\
        &\quad = \langle -\Im \delbar_\delta^*\delbar_\delta\xi_1 - \Re \delbar_\delta^*\delbar_0\rho_1 + \Im \delbar_\delta^*v_1, \xi_2\rangle_{L^2} \\
        &\quad\quad + \langle \Re\delbar_0^*\delbar_\delta\xi_1, \rho_2\rangle_{L^2} \\
        &\quad\quad + \Omega(-\delbar_\delta\xi_1 - i\delbar_0\rho_1 + v_1, v_2).
    \end{align*}
    Note that we used the fact that \(\Im(\delbar_0^*\delbar_0\rho_1) = 0\), and that \(\delbar_0^*v_1 = 0\).

    We now set \(Y_t = (\xi_1(t), \Delta\rho_1(t), v_1(t))\) and \(Z = (\xi_2, \Delta\rho_2, v_2)\). Set \(\delta_s = [\id, s\Delta\rho, sv]\) and \(\delbar_{\delta_s} = \Phi(\delta_s)\). Then
    \[\Omega_Y\vert_{\delta_s}((0, \Delta\rho, v), f_{s*}Z) = -\langle\Delta\rho + \dd_{sv}\mu_N(v), s\xi_2\rangle_{L^2} + \Omega(v, sv_2).\]
    Similarly,
    \begin{align*}
        \Omega'\vert_{\delta_s}((0, \Delta\rho, v), f_{s*}Z) &= \langle -\Re\delbar_{\delta_s}^*\delbar_0\rho + \Im\delbar_{\delta_s}^*v, s\xi_2\rangle_{L^2} \\
                        &\quad + \Omega(-i\delbar_0\rho + v, sv_2).
    \end{align*}

    Thus,
    \begin{align*}
        \theta\vert_\delta(Z) &= \int_0^1(\Omega' - \Omega_Y)\vert_{\delta_s}((0, \Delta\rho, v), f_{s*}Z) \dd s \\
        &= \int_0^1 s\langle -\Re\delbar_{\delta_s}^*\delbar_0\rho + \Im\delbar_{\delta_s}^*v + \Delta\rho + \dd_{sv}\mu_N(v), \xi_2\rangle_{L^2} \dd s, \\
                &\quad + \int_0^1s\Omega(-i\delbar_0\rho, v_2)\dd s.
    \end{align*}
    Each component of the equation \(\iota_{Y_t}\Omega_t = \theta\) yields an equation. More precisely, we obtain
    \begin{equation}
        \label{eq:moser-1}
        \begin{split}
            &(1 - t)(-\Delta\rho_1 - \dd_v\mu_N(v_1) + [\Delta\rho + \mu_N(v), \xi_1]) \\
            &\quad + t(-\Im\delbar_\delta^*\delbar_\delta\xi_1 - \Re\delbar_\delta^*\delbar_0 \rho_1 + \Im\delbar_\delta^*v_1) \\
            &= \int_0^1 s(-\Re \delbar_{\delta_s}^*\delbar_0\rho + \Im \delbar_{\delta_s}^*v + \Delta\rho + \dd_{sv}\mu_N(v)) \dd s \mod \mfk_0,
        \end{split}
    \end{equation}
    \begin{equation}
        \label{eq:moser-2}
            (1 - t)\Delta\xi_1 + t\Re \delbar_0^*\delbar_\delta\xi_1 = 0 \mod \mfk_0,
    \end{equation}
    and
    \begin{equation}
        \label{eq:moser-3}
            (1 - t)(v_1 - i(\dd_v\mu_N)^*(\xi_1)) + t(-\delbar_\delta\xi_1 - i\delbar_0\rho_1 + v_1) = \int_0^1si\delbar_0\rho\dd s \mod \im(\delbar_0).
    \end{equation}

    We claim that the above system of equations admits a unique smooth solution when \([\rho, v]\) is sufficiently close to \([0, 0]\) in \(C^\infty\). To see this, notice when \([\rho, v] = [0, 0]\), it reduces to the system
    \begin{align*}
        \Delta\rho_1 &= \alpha, \\
        \Delta\xi_1 &= \beta, \\
        v_1 &= \gamma,
    \end{align*}
    which has a unique solution by elliptic theory. For \([\rho, v]\) small, unique solvability follows from the implicit function theorem for Banach spaces, and again by ellipticity, the solutions are smooth and with (tame) estimates.

    Next, we prove that there are two neighbourhoods \(N_1, N_2\) of \(0\) in \(\mfm \oplus N\), and a smooth tame map \(F \colon N_1 \to C^\infty([0, 1], \mfm \oplus N)\), such that the time 1 evaluation \(F_1\) defines a smooth tame map \(N_1 \to N_2\), and for any \([\rho, v] \in N_1\),
    \[\begin{cases}
        \dv{t}F_t(\rho, v) &= (\rho_1(t), v_1(t)) \\
        F_0(\rho, v) &= (\rho, v).
    \end{cases}\]
    To prove this claim, we use the Nash-Moser implicit function theorem. Define a map
    \begin{align*}
        H \colon C^\infty([0, 1], \mfm \oplus N) &\to (\mfm \oplus N) \times C^\infty([0, 1], \mfm \oplus N) \\
        (\rho(t), v(t)) &\mapsto (\rho(0), v(0)) \times (\dot\rho(t) - \rho_1(t), \dot v(t) - v_1(t)).
    \end{align*}
    The functions \(\rho_1(t), v_1(t)\) are defined implicitly by the system of \cref{eq:moser-1,eq:moser-2,eq:moser-3} from \(\rho, v\), and so \(H\) is a smooth tame map, and \(H(0) = 0\). We need to show that for \(x = (\rho(t), v(t))\) close to zero, the derivative of \(H\) at \(x\) is invertible, with smooth tame inverse. Let \(\delta x = (\widetilde\rho, \widetilde v)\). Then the derivative of \(H\) at \(x\) along \(\delta x\) is given by \((\widetilde \rho(0), \widetilde v(0)) \times (\dot{\widetilde\rho}(t) - \delta\rho_1(\widetilde \rho, \widetilde v), \dot{\widetilde v}(t) - \delta v_1(\widetilde \rho, \widetilde v))\). Thus, the invertibility of \(D_xH\) is equivalent to the solvability of the Cauchy problem of the following linear system along \((\rho(t), v(t))\):
    \[\begin{cases}
        \dv{t}(\alpha, u) &= (\delta\rho_1(\alpha, u), \delta v_1(\alpha, u)) + (\beta, q),\\
        (\alpha(0), u(0)) &= (\widetilde\rho(0), \widetilde v(0)).
    \end{cases}\]
    Thus, we need to linearise \cref{eq:moser-1,eq:moser-3}. By collecting highest order terms, we obtain the following.
    \[\begin{cases}
        \Delta\dot\alpha(t) &= A_2(\rho(t), v(t))\alpha(t) + B_1(\rho(t), v(t))u(t) + \beta(t), \\
        \dot u(t) &= C_1(\rho(t), v(t))\dot\alpha(t) + D_1(\rho(t), v(t))\alpha(t) + E_0(\rho(t), v(t))u(t) + q(t), \\
        (\alpha(0), u(0)) &= (\widetilde\rho(0), \widetilde v(0)),
    \end{cases}\]
    where \(A_2, B_1, C_1, D_1, E_0\) are differential operators, with order indicated by the subscript, depending smoothly on \((\rho(t), v(t))\). Let
    \[w(t) = u(t) - C_1(\rho(t), v(t))\alpha(t).\]
    Then the above system can be rewritten as
    \[\begin{cases}
        \dot\alpha(t) &= \Delta^{-1}\widehat A_2(\rho(t), v(t))\alpha(t) + \Delta^{-1}\widehat B_1(\rho(t), v(t))w(t) + \beta(t), \\
        \dot w(t) &= \widehat D_1(\rho(t), v(t))\alpha(t) + \widehat E_0(\rho(t), v(t))w(t) + \widehat q(t), \\
        (\alpha(0), w(0)) &= (\widetilde\rho(0), \widetilde v(0) - C_1\widetilde\rho(0)).
    \end{cases}\]
    Let us now pass to a Sobolev completion. For all \(k\) sufficiently large, we have an \(L^2\)-orthogonal decomposition
    \[L^2_k(X, \End_{SH}(E, h)) = \mfk_0 \oplus \mfm_k,\]
    as well as
    \[L^2_{k-1}(X, \Lambda^{0, 1} \otimes \End(E)) = \im(\delbar_0) \oplus N_{k-1}.\]
    We note that \(\Delta\) defines an invertible operator \(\mfm_k \to \mfm_{k-2}\). The spaces \(\mfm_k\) and \(N_{k-1}\) can be written as kernels of continuous linear operators, and hence are Banach. 
    
    Next, we define an operator
    \begin{align*}
        P_k \colon \mfm_k \oplus N_{k-1} &\to \mfm_k \oplus N_{k-1} \\
        (\alpha, w) &\mapsto (\Delta^{-1}\widehat A_2\alpha + \Delta^{-1}\widehat B_1w, \widehat D_1\alpha + \widehat E_0w).
    \end{align*}
    This defines a continuous linear operator which depends smoothly on \(t\), and so we have a bound on \(\norm{P_k}\).
    
    The Cauchy problem is the system
    \[\begin{cases}
        \dv{t}(\alpha, w) &= P_k(\alpha, w) + (\beta, \widehat q), \\
        (\alpha(0), w(0)) &= (\widetilde\rho(0), \widetilde v(0) - C_1\widetilde\rho(0)).
    \end{cases}\]
    By the above discussion, a unique solution exists by Picard iteration.
    
    By uniqueness, the solution is independent of \(k\), and so is smooth. Moreover, the solution depends tamely on \((\rho(t), v(t)), (\beta(t), q(t))\) and the initial condition. Thus, by the Nash-Moser implicit function theorem \cite{hamiltonInverseFunctionTheorem1982}, \(H\) has a local smooth tame inverse. We let \(F = H^{-1}(\cdot, 0)\).

    Now for \([g, \rho, v]\) close to \([\id, 0, 0]\), we obtain a path \((\rho(t), v(t)) = F_t(\rho, v)\). Then we can solve the ordinary differential equation
    \[\dot g(t) = L_{g(t)}\xi_1(t),\]
    where \(\xi_1(t)\) is determined by \((\rho(t), v(t))\). Then by \(\mcK\)-invariance, \((g(t), \rho(t), v(t))\) is an integral curve of \(Y_t\). Now define
    \begin{align*}
        \Sigma_2 \colon V_2 &\to U_2 \\
        [g, \rho, v] &\mapsto [g(1), F_1(\rho, v)].
    \end{align*}
    Then by the previous arguments, we know that \(\Sigma_2\) is smooth tame and fixes \(\mcK \cdot [\id, 0, 0]\).

    It follows from \cref{eq:moser-1,eq:moser-2,eq:moser-3} that we have a tame estimate
    \[\abs{v_1(t)}_k \le C_k \left(\abs{\rho(t)}_{k + s_1} + \abs{v(t)}_{k + s_1}\right)^3.\]
    Now \(\abs{(\rho(t), v(t))}_{k + s_1} \le C_k \abs{(\rho(0), v(0))}_{k + s}\) for some \(s > s_1\), and so
    \[\abs{v(1) - v(0)}_k \le C\left(\abs{\rho(0)}_{k+s} + \abs{v(0)}_{k+s}\right)^3.\]

    By symmetry, one can obtain the map \(\Sigma_1\), and check that the required properties hold.
\end{proof}

We are now in a position to prove \Cref{prop:kuranishi-slice-reduction}.

\begin{proof}
    [Proof of \Cref{prop:kuranishi-slice-reduction}]
    We use the notation as in the statement. Let \(\alpha(t) \in \mcD\) denote the Yang-Mills flow, and let \(\widetilde\alpha(t) \in B \subseteq H^1\) denote the flow in the Kuranishi slice. Set \(\widehat\alpha(t) = \Phi(\widetilde\alpha(t))\).
    
    Using the above result, we may work instead with \((V_2, \Omega_Y, \widehat J)\), where \(\widehat J - J_0 = O(r^2)\), in the tame sense. Let \(\psi_t(s)\), \(s \in [0, 1]\), denote the geodesic in \(\mcG/\mcK\) connecting \(h(t)\) to \(\widehat h(t)\), and let \(L(t)\) denote the length of \(\psi_t\). Using convexity of the Donaldson functional, as in the proof of \Cref{prop:flow-one-param-limit}, we obtain that
    \[\dv{t}L(t) \le \abs{\mu(\widehat \alpha(t)) - \widehat\mu(\widehat \alpha(t))}_{L^2} \lesssim \abs{\mu(\widehat\alpha(t)) - \widehat\mu(\widehat\alpha(t))}_0,\]
    where we used H\"older's inequality for the second inequality.
    
    Let us write \(\Phi(\widehat\alpha(t)) = [g(t), \rho(t), v(t)]\). The curves \(g(t)\) and \(\rho(t)\) are uniquely determined by the value at \(t = 0\), if we assume that \(g(t)^{-1}\dot g(t) \in \mfm\). Moreover, by the finite-dimensional result, \(\widetilde\alpha(t) \to 0\), with \(\abs{\widetilde\alpha(t)} = O(t^{-1/2})\).
    
    Then
    \[\mu(\widehat \alpha(t)) - \widehat\mu(\widehat \alpha(t)) = g(t)\rho(t)g(t)^{-1}.\]
    Let \(\widehat f = \abs{\widehat\mu}^2\). Then
    \begin{align*}
        \grad \widehat f &= \widehat J[0, \ad_{\mu_N(v)}\rho, \mu_N(v) \cdot v] \\
        &= [-\Delta^{-1}\ad_{\mu_N(v)}\rho, 0, i\mu_N(v) \cdot v] + (\widehat J - J_0)[0, \ad_{\mu_N(v)}\rho, \mu_N(v) \cdot v].
    \end{align*}
    By considering the second factor only, we obtain that
    
    \begin{align*}
        \abs{\dot{\widehat \alpha}(t)}_k &= \abs{\grad \widehat f(\widehat \alpha(t))}_k \\
        &\le C\left(d_{k+s}(\widehat \alpha(t), \delbar_0)\abs{\mu_N(v(t))}_{k+s}\abs{\rho(t)}_{k+s} + d_{k+s}(\widehat\alpha(t), \delbar_0)^2\abs{\mu_N(v(t)) \cdot v(t)}_{k+s}\right).
    \end{align*}
    
    From the finite-dimensional setting, see for instance the results in \Cref{sec:finite-dimensional} or \cite{chenCalabiFlowGeodesic2014}, \(d_l(\widehat \alpha(t), \delbar_0), \abs{\rho(t)}_l, \abs{v(t)}_l = O(t^{-1/2})\). It then follows that
    \[\abs{\dot{\widehat \alpha}(t)}_k \le C\left(t^{-3/2}\abs{\rho(t)}_{k+s} + t^{-5/2}\right).\]
    Since \(\lim_{t \to \infty}\rho(t) = 0\), it follows that \(\abs{\rho(t)} \le Ct^{-3/2}\). To see this, \(\abs{\dot\rho(t)}_k \le \abs{\dot{\widehat\alpha}(t)}_k\), and so
    \[\abs{\dot\rho(t)}_k \le C\left(t^{-3/2}\abs{\rho(t)}_{k+s} + t^{-5/2}\right) \le Ct^{-2}.\]
    Using the bound \(\abs{\rho(t)}_{k+s} \le Ct^{-1/2}\) and the mean value inequality, we see that \(\abs{\rho(t)}_k \le Ct^{-1}\). Iterating, we obtain \(\abs{\rho(t)}_k \le Ct^{-3/2}\). Thus, \(L(t)\) is bounded.
\end{proof}

Finally, we prove \Cref{prop:kuranishi-slice-move-orbit}.

\begin{proof}
    [Proof of \Cref{prop:kuranishi-slice-move-orbit}]

    Let \(\delbar\) be an integrable Dolbeault operator near \(\delbar_0\), such that the Yang-Mills flow starting at \(\delbar\) converges to \(\delbar_0\). Let \(\alpha(t)\) denote the Yang-Mills flow. By property (v) of \Cref{thm:kuranishi-slice-ym}, we may perturb \(\alpha(t)\) to \(\overline\alpha(t)\) within its \(\mcG\)-orbit, such that \(\overline\alpha(t) = \Phi(b(t))\). Since \(\dot{\overline\alpha}(t)\) is tangent to the \(\mcG\)-orbit, it follows that \(\dot b(t)\) is tangent to the \(G\)-orbit, by (iv) of \Cref{thm:kuranishi-slice-ym}. By assumption, \(b(t) \to 0\), and \(b(t)\) lies in a fixed \(G\)-orbit. Thus, \(0 \in \overline{G \cdot b(0)}\), and the result follows by the local \(G\)-equivariance of \(\Phi\).
\end{proof}

\subsection{Quiver representations}

In this subsection, we prove \Cref{prop:kuranishi-slice-quiver}. Roughly, this follows \cite{haidenSemistabilityModularLattices2023,haidenIteratedLogarithmsGradient2018}, where we view \(H^{0, 1}(X, \End(\mcE_0))\) as a \(H^0(X, \End(\mcE_0))\)-bimodule. Decomposing the bimodule into simple pieces defines a quiver. We refer to \cite[56]{haidenSemistabilityModularLattices2023} and \cite[Lemma 7.4.3]{ibaneznunezRefinedHarderNarasimhanFiltrations2024} for a statement in terms of bimodules, where in \cite[Theorem 7.4.10]{ibaneznunezRefinedHarderNarasimhanFiltrations2024} it is applied to study the moduli stack of objects in an abelian category \cite{artinAbstractHilbertSchemes2001,alperExistenceModuliSpaces2023}. Similar ideas have also been considered in \cite{kirwanModuliSpacesBundles2004}, though not using the language of quivers.

The main results are \Cref{lem:kuranishi-slice-compatibility,lem:kuranishi-slice-compatibility-2}, which allow us to relate subrepresentations of the quiver representation to subbundles of the deformation.

Let \((X, \omega)\) be compact K\"ahler, \(E \to X\) a smooth complex vector bundle, and \(\delbar_0\) an integrable Dolbeault operator on \(E\), such that \((E, h, \delbar_0)\) is Hermitian-Yang-Mills. Let us write \(\mcE_0 = (E, \delbar_0)\), and we let \(\mu = \mu(E)\) denote the slope of \(E\). Since \(\mcE_0\) is slope polystable,
\[\mcE_0 = \bigoplus_{i=1}^s \mcF_i \otimes V_i,\]
where \(\mcF_i\) are pairwise non-isomorphic stable bundles of slope \(\mu\), and \(V_i\) is a finite-dimensional complex vector space. Let \(F_i\) denote the underlying smooth vector bundle of \(\mcF_i\), so that \(\mcF_i = (F_i, \delbar_0\vert_{F_i})\). In particular, 
\[\End(\mcE_0) = \bigoplus_{i, j}\Hom(\mcF_i, \mcF_j) \otimes \Hom(V_i, V_j).\]
Taking cohomology, we obtain that
\begin{align*}
    H^0(X, \End(\mcE_0)) &= \bigoplus_{i=1}^s \C \id_{F_i} \otimes \End(V_i), \\
    H^{0, 1}(X, \End(\mcE_0)) &= \bigoplus_{i, j}H^{0, 1}(X, \Hom(\mcF_i, \mcF_j)) \otimes \Hom(V_i, V_j).
\end{align*}
By the above description,
\[G = \prod_i \GL(V_i),\]
and that the \(G\)-action on \(A^{0, 1}(X, \End(\mcE_0))\) is given by
\[(g_i) \cdot \left(\alpha_{ij} \otimes A_{ij}\right) = \alpha_{ij} \otimes g_j A_{ij}g_i^{-1},\]
which restricts to an action on \(H^{0, 1}(X, \End(\mcE_0))\). Thus, we define a quiver \(Q = (Q_0, Q_1)\) as follows.
\begin{enumerate}[(i)]
    \item the vertices \(Q_0\) are given by \(\{1, \dots, s\}\),
    \item between \(i\) and \(j\), we have \(\dim H^{0, 1}(X, \Hom(\mcF_i, \mcF_j))\) arrows.
\end{enumerate}
We let \(d_i = \dim(V_i)\). Choose a basis \(\alpha_{ij}^k\) of \(H^{0, 1}(X, \Hom(\mcF_i, \mcF_j))\), and isomorphisms \(V_i \cong \C^{d_i}\). We define a map
\begin{align*}
    \theta \colon H^{0, 1}(X, \End(\mcE_0)) &\to \Rep(Q, d) \\
    \sum_{i,j, k} \alpha_{ij}^k \otimes A_{ij}^k &\mapsto (A_{ij}^k).
\end{align*}
The isomorphism \(V_i \cong \C^{d_i}\) induces an isomorphism
\[G = \prod_i \GL(V_i) \cong \prod_i \GL(d_i, \C) = G(d),\]
under which \(\theta\) is \(G = G(d)\)-equivariant.

Now suppose \(\mcE = (E, \delbar_0 + \alpha)\) is a deformation of \(\mcE_0 = (E, \delbar_0)\), and that \(\mcS = (S, (\delbar_0 + \alpha)\vert_S)\) is a subbundle of \(\mcE\), with the same slope. Then \(\mcS\) and \(\mcE/\mcS\) are semistable bundles with the same slope. By combining their Jordan-H\"older filtrations, we obtain a Jordan-H\"older filtration of \(\mcE\), with \(\mcS\) being one of the terms. Thus, \(\gr(\mcS)\) is a subbundle of \(\gr(\mcE) = \mcE_0\), of the same slope, and so
\[\mcS_0 = \gr(\mcS) = (S, \delbar_0\vert_S) = \bigoplus_i \mcF_i \otimes W_i,\]
for some subspaces \(W_i \subseteq V_i\).

Suppose \(\delbar_0 + \alpha = \Phi(v)\), for some \(v \in B\). We claim that \((W_i)\) define a subrepresentation of \(\theta(v)\). More precisely, we need to show that if \(\theta(v) = (A_{ij}^k)\), then \(A_{ij}^k(W_i) \subseteq W_j\) for all \(i, j, k\).

By choosing an orthogonal splitting \(V_i = W_i \oplus U_i\), we obtain a splitting \(\mcE_0 = \mcS_0 \oplus \mcQ\), where
\[\mcQ = \bigoplus_i \mcF_i \otimes U_i.\]
We may decompose
\begin{equation}
    \label{eq:hom-decomposition}
    \Hom(V_i, V_j) = \Hom(W_i, W_j) \oplus \Hom(W_i, U_j) \oplus \Hom(U_i, W_j) \oplus \Hom(U_i, U_j).
\end{equation}
Thus, for \((A_{ij}^k) \in \Rep(Q, d)\), \(A_{ij}^k\) can be decomposed as
\[A_{ij}^k = (A_{ij}^k)^{WW} + (A_{ij}^k)^{WU} + (A_{ij}^k)^{UW} + (A_{ij}^k)^{UU},\]
and that \(A_{ij}^k(W_i) \subseteq W_j\) if and only if \((A_{ij}^k)^{WU} = 0\).

Next, for any \(\beta \in A^{0, 1}(X, \End(\mcE_0))\), we may write
\[\beta = \sum_{i,j,k} \beta_{ij}^k \otimes B_{ij}^k,\]
where \(\beta_{ij}^k \in A^{0, 1}(X, \Hom(\mcF_i, \mcF_j))\) and \(B_{ij}^k \in \Hom(V_i, V_j)\). Also by the above decomposition, we can make sense of \(B_{ij}^k(W_i) \subseteq W_j\), even in this infinite-dimensional setting.

\begin{definition}
    Given subspaces \(W = (W_i)\) of \(V = (V_i)\), we say that \(\beta \in A^{0, 1}(X, \End(\mcE_0))\) is \emph{compatible} with \(W\) if \(B_{ij}^k(W_i) \subseteq W_j\) for all \(i, j, k\).
\end{definition}

\begin{lemma}
    This is a well-defined notion, namely it does not depend on the choice of decomposition
    \[\beta = \sum_{i,j,k} \beta_{ij}^k \otimes B_{ij}^k.\]
\end{lemma}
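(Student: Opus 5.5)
The plan is to express compatibility intrinsically, without reference to any particular decomposition. Fix the orthogonal splittings \(V_i = W_i \oplus U_i\). These give the decomposition \eqref{eq:hom-decomposition}, and hence a direct sum decomposition of the whole space
\[A^{0,1}(X, \End(\mcE_0)) = \bigoplus_{i,j} A^{0,1}(X, \Hom(\mcF_i, \mcF_j)) \otimes \Hom(V_i, V_j)\]
into the four pieces of type \(WW\), \(WU\), \(UW\), \(UU\). Concretely, the \(WU\)-piece is
\[\mcA^{WU} = \bigoplus_{i,j} A^{0,1}(X, \Hom(\mcF_i, \mcF_j)) \otimes \Hom(W_i, U_j),\]
and \(\Pi^{WU}\) denotes the projection onto it. This projection is simply the map \(\id \otimes \pi^{WU}_{ij}\) on each \((i,j)\) summand, where \(\pi^{WU}_{ij}(B) = \pr_{U_j} \circ B \circ \iota_{W_i}\) is the linear projection of \(\Hom(V_i, V_j)\) onto \(\Hom(W_i, U_j)\). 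I would then prove the claim that \(\beta\) is compatible with \(W\) for some (equivalently, every) decomposition \(\beta = \sum \beta_{ij}^k \otimes B_{ij}^k\) if and only if \(\Pi^{WU}(\beta) = 0\). Since \(\Pi^{WU}(\beta)\) depends only on \(\beta\), this gives well-definedness.

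For the direction where \(\beta\) is compatible with respect to a given decomposition, I apply \(\Pi^{WU}\) termwise: \(\Pi^{WU}(\beta) = \sum \beta_{ij}^k \otimes (B_{ij}^k)^{WU}\). Every term vanishes, because \(B_{ij}^k(W_i) \subseteq W_j\) is equivalent to \((B_{ij}^k)^{WU} = 0\). So compatibility with respect to any decomposition forces \(\Pi^{WU}(\beta) = 0\).

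The converse is the main obstacle. A given decomposition may contain terms \(B_{ij}^k\) with nonzero \(WU\)-part that cancel after tensoring, so the formulation "for all \(i,j,k\)" cannot mean every term of an arbitrary decomposition. I would resolve this by reducing each \((i,j)\) summand to a normal form. First group the terms by \((i,j)\). Then choose a basis \(e_1, \dots, e_N\) of \(\Hom(V_i, V_j)\) adapted to the four-fold splitting. Any \(\beta_{ij} = \sum_k \beta_{ij}^k \otimes B_{ij}^k\) can then be rewritten uniquely as \(\sum_l \gamma_l \otimes e_l\), and the coefficients \(\gamma_l\) for the \(WU\) basis vectors are exactly the components of \(\Pi^{WU}(\beta)\). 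This uses that for a finite-dimensional vector space \(H\) the map \(A^{0,1}(X,\Hom(\mcF_i,\mcF_j)) \otimes H \cong A^{0,1}(X,\Hom(\mcF_i,\mcF_j))^{\oplus \dim H}\) is an isomorphism, so no completion issues arise.

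With the normal form in hand, the definition should be read with respect to a reduced decomposition, meaning one in which the \(\beta_{ij}^k\) for fixed \((i,j)\) are linearly independent. I would note that all reduced decompositions have the same span \(\operatorname{span}_k\{B_{ij}^k\} \subseteq \Hom(V_i, V_j)\). This span is the image of \(\beta_{ij}\) viewed as a map \(A^{0,1}(X,\Hom(\mcF_i,\mcF_j))^* \to \Hom(V_i, V_j)\), and it is therefore intrinsic. Compatibility says this span lies in the subspace \(\{B : B(W_i) \subseteq W_j\} = \ker \pi^{WU}_{ij}\), which is intrinsic as well.

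Finally, I would observe that this span lies in \(\ker \pi^{WU}_{ij}\) exactly when \((\id \otimes \pi^{WU}_{ij})(\beta_{ij}) = 0\). This follows by linear independence of the \(\beta_{ij}^k\), and it identifies the notion with \(\Pi^{WU}(\beta) = 0\), which completes the argument.
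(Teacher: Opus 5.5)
Your proposal is correct and takes the same route as the paper. The paper also decomposes each block $\eta_{ij} \in A^{0,1}(X, \Hom(\mcF_i, \mcF_j)) \otimes \Hom(V_i, V_j)$ according to the four-fold splitting of $\Hom(V_i, V_j)$, and identifies compatibility with the intrinsic condition $\eta_{ij}^{WU} = 0$, which is exactly your $\Pi^{WU}(\beta) = 0$. You also observe that cancellations make the literal ``for all $k$'' condition depend on the decomposition unless it is read for reduced decompositions (or as ``for some decomposition''); the paper's one-line proof passes over this, but you should drop the ``(equivalently, every)'' from your opening claim, since you yourself show it fails for non-reduced decompositions.
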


\begin{proof}
    The result follows from the fact that being compatible with \(W\) can be defined in a coordinate-free manner. More precisely, we may write
    \[\beta = \sum_{i, j} \eta_{ij}\]
    where
    \[\eta_{ij} \in A^{0, 1}(X, \Hom(\mcF_i, \mcF_j)) \otimes \Hom(V_i, V_j).\]
    We may decompose \(\Hom(V_i, V_j)\) as in \Cref{eq:hom-decomposition}, to write
    \[\eta_{ij} = \eta_{ij}^{WW} + \eta_{ij}^{WU} + \eta_{ij}^{UW} + \eta_{ij}^{UU}.\]
    The condition that \(\beta\) is compatible with \(W\) is equivalent to the condition that \(\eta_{ij}^{WU} = 0\) for all \(i, j\).
\end{proof}

For fixed \(W\), being compatible with \(W\) is a linear condition on \(\beta\). Thus, we may define a linear subspace \(A_W\) of \(A^{0, 1}(X, \End(\mcE_0))\), consisting of those \(\beta\) which are compatible with \(W\).

\begin{lemma}
    \label{lem:kuranishi-slice-compatibility}
    \(v \in H^{0,1}(X, \End(\mcE_0))\) is compatible with \(W\) if and only if \(\Phi(v) - \delbar_0\) is compatible with \(W\).
\end{lemma}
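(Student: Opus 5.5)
The plan is to use the standard Kuranishi construction of \(\Phi\) and check that every operation in it preserves the linear subspace \(A_W\). Concretely, I take \(\Phi\) to be built in the usual way: \(\Phi(v) = \delbar_0 + \beta(v)\), where \(\beta(v)\) is the unique small solution of
\[\beta = v - \tfrac12\delbar_0^* G[\beta,\beta],\]
with \(G\) the Green operator of \(\Delta\) (or any variant obtained by fixed-point iteration). In each variant, \(\beta(v) = \lim_n \beta_n\) with \(\beta_0 = v\) and \(\beta_{n+1} = v - \tfrac12\delbar_0^*G[\beta_n,\beta_n]\).

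The direction ``\(\Phi(v)-\delbar_0\) compatible \(\Rightarrow v\) compatible'' would follow from the other direction and projection. Since \(v = H(\Phi(v)-\delbar_0)\), where \(H\) is the harmonic projection, it suffices that \(H\) preserves \(A_W\).

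The key structural step is to show that \(A_W\) is preserved by the operators appearing in the construction. These are \(\delbar_0\), \(\delbar_0^*\), \(\Delta\), \(G\), \(H\), and the bracket \([\cdot,\cdot]\) (with the appropriate notion of compatibility on \(A^0\) and \(A^{0,2}\), defined identically via the \(\Hom(V_i,V_j)\) factors).

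\begin{itemize}
\item For the linear operators, I use that \(\delbar_0\) and the metric are block-diagonal with respect to \(\mcE_0=\bigoplus \mcF_i\otimes V_i\). Here \(h\) is Hermite-Einstein and, by choice, the product of metrics on \(\mcF_i\) and hermitian inner products on \(V_i\). So each of these operators acts as \(D\otimes \id_{\Hom(V_i,V_j)}\) on \(A^{\bullet}(\Hom(\mcF_i,\mcF_j))\otimes\Hom(V_i,V_j)\), and hence preserves the \(WU\)-component decomposition of \Cref{eq:hom-decomposition}.
\item For the bracket, compatible elements are exactly those whose \(\Hom(V,V)\) factors lie in the parabolic subalgebra \(\mathfrak p_W=\{B: B(W)\subseteq W\}\) (blockwise). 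Composition of forms tensored with composition of matrices stays in \(\mathfrak p_W\), since \(\mathfrak p_W\) is closed under composition. Thus \([A_W,A_W]\subseteq A_W\).
\end{itemize}

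By induction, each \(\beta_n\in A_W\) whenever \(v\in A_W\). Since \(A_W\) is closed (it is the kernel of a continuous projection onto the \(WU\) blocks), the limit \(\beta(v)\) lies in \(A_W\), giving the forward direction.

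The main obstacle is that \(\Phi\) in \Cref{thm:kuranishi-slice-ym} is only asserted to exist with properties (i)--(v), and is not given by an explicit formula. A slice satisfying (iii)--(iv), obtained via the Chen--Sun type construction with a gauge correction, might not be literally the fixed-point map above. I would handle this by fixing the construction of \(\Phi\) to be the Kuranishi map, possibly composed with a \(K\)-equivariant correction built from the same block-diagonal operators. I would then note that any correction term is again produced from \(v\) by block-diagonal linear operators, brackets, and limits, so the same closure argument applies. The remaining points, namely that \(G\) and \(H\) commute with the \(\Hom(V_i,V_j)\) tensor factors, reduce to \(\Delta\) being block-diagonal, which is immediate from the choice of product metric.
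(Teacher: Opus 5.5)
Your proposal is correct and is essentially the paper's argument. The paper likewise takes \(\Phi\) to be the local inverse of \(F(\beta) = \beta + G\delbar_0^*(\beta\wedge\beta)\), which equals your \(\beta + \tfrac12\delbar_0^*G[\beta,\beta]\), restricted to harmonic forms, shows \(F\) preserves the closed subspace \(A_W\) by block-diagonality of \(G\delbar_0^*\) and closure of compatible matrices under composition, and concludes by uniqueness of the local inverse on \(A_W\) where you use the contraction iterates. Your converse via the harmonic projection is fine; the paper leaves this direction implicit, and it also follows directly from \(v = F(\Phi(v)-\delbar_0)\) together with \(F(A_W)\subseteq A_W\).
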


To be able to prove this, we need a better understanding of the Kuranishi map. Referring to \cite[Lemma 6.1]{chenCalabiFlowGeodesic2014} and \cite{kuranishiNewProofExistence1965} for details, we let \(G\) denote the Green's operator for the Laplacian \(\Delta = \delbar_0^*\delbar_0 + \delbar_0\delbar_0^*\), and define
\[F(\beta) = \beta + G\delbar_0^*(\beta \wedge \beta).\]
Observe that \(F(0) = 0\) and that \(\dd F(0) = \id\). Thus, after passing to a Sobolev completion, we may find a local inverse. The map \(\Phi\) is the restriction of this local inverse to \(H^{0, 1}(X, \End(\mcE_0))\).

\begin{proof}
    [Proof of \Cref{lem:kuranishi-slice-compatibility}]

    We first show that if \(\beta\) is compatible with \(W\), then so is \(F(\beta)\). Since being compatible with \(W\) is a linear condition, it suffices to verify that if \(\beta\) is compatible with \(W\), then so is \(G\delbar_0^*(\beta \wedge \beta)\). If we write
    \[\beta = \sum_{i,j,k}\beta_{ij}^k \otimes B_{ij}^k,\]
    then
    \[\beta \wedge \beta = \sum_{i,j,k,p,r}\left(\beta^k_{ij} \wedge \beta^r_{pi}\right) \otimes \left(B_{ij}^k \circ B_{pi}^r\right).\]

    Moreover, as we have a holomorphic and orthogonal decomposition
    \[\mcE_0 = \bigoplus_i \mcF_i \otimes V_i,\]
    it follows that the Green's operator \(G\) and the adjoint \(\delbar_0^*\) only act on the \(A^{0, *}(X, \Hom(\mcF_i, \mcF_j))\) part. Thus,
    \[G\delbar_0^*(\beta \wedge \beta) = \sum_{i,j,k,p,r}\gamma_{ijp}^{kr} \otimes \left(B_{ij}^k \circ B_{pi}^r\right),\]
    for some \(\gamma_{ijp}^{kr} \in A^{0, 1}(X, \Hom(\mcF_i, \mcF_j))\). Thus,
    \[B_{ij}^k \circ B_{pi}^r(W_p) \subseteq B_{ij}^k(W_i) \subseteq W_j.\]
    Thus, \(F(\beta)\) is compatible with \(W\). Moreover, by \Cref{eq:hom-decomposition}, the Sobolev completion of \(A_W\) defines a closed subspace of the Sobolev completion of \(A^{0, 1}(X, \End(\mcE_0))\). With this in mind, we obtain an inverse \(\Phi_W \colon A_W \to A_W\), but this must necessarily be the restriction of \(\Phi\) to \(A_W\). Thus, if \(\beta\) is compatible with \(W\), then so is \(\Phi(\beta)\).
\end{proof}

It remains to show the following.

\begin{lemma}
    \label{lem:kuranishi-slice-compatibility-2}
    \(\beta \in A^{0,1}(X, \End(\mcE_0))\) is compatible with \(W\) if and only if
    \[S = \bigoplus_i F_i \otimes W_i\]
    defines a holomorphic subbundle of \((E, \delbar_0 + \beta)\).
\end{lemma}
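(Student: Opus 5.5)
The plan is to reduce the statement to a pointwise computation with the second fundamental form of \(S\) in \(E\). Recall the standard criterion: a smooth subbundle \(S \subseteq E\) is holomorphic for a Dolbeault operator \(\delbar\) if and only if \(\delbar(\Gamma(S)) \subseteq A^{0,1}(X, S)\). Equivalently, the composite
\[\Gamma(S) \xrightarrow{\delbar} A^{0,1}(X, E) \to A^{0,1}(X, E/S)\]
vanishes. This composite is \(C^\infty\)-linear, since it is a \((0,1)\)-form with values in \(\Hom(S, E/S)\). We then apply the criterion to \(\delbar = \delbar_0 + \beta\), using the fixed orthogonal splittings \(V_i = W_i \oplus U_i\) to identify \(E/S\) with \(Q = \bigoplus_i F_i \otimes U_i\).

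First, \(\delbar_0\) contributes nothing to this composite. By construction, \(\mcS_0 = \bigoplus_i \mcF_i \otimes W_i\) is a \(\delbar_0\)-holomorphic subbundle. Indeed, \(\delbar_0\) is diagonal with respect to the decomposition \(\mcE_0 = \bigoplus_i \mcF_i \otimes V_i\) and acts as \(\delbar_0|_{F_i} \otimes \id_{V_i}\), so it preserves \(F_i \otimes W_i\). Hence the second fundamental form of \(S\) with respect to \(\delbar_0 + \beta\) equals the \(\Hom(S,Q)\)-component of \(\beta\), namely \(\pr_Q \circ \beta|_S\).

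Second, we compute this component blockwise. As in the proof that compatibility is well defined, write \(\beta = \sum_{i,j}\eta_{ij}\) with \(\eta_{ij} \in A^{0,1}(X, \Hom(\mcF_i, \mcF_j)) \otimes \Hom(V_i, V_j)\). Using \Cref{eq:hom-decomposition}, the restriction of \(\eta_{ij}\) to \(F_i \otimes W_i\), followed by projection to \(F_j \otimes U_j\), is exactly \(\eta_{ij}^{WU}\). This works because \(\Hom(F_i \otimes W_i, F_j \otimes U_j) = \Hom(F_i, F_j) \otimes \Hom(W_i, U_j)\), and the tensor decomposition is compatible with the splitting. Therefore
\[\pr_Q \circ \beta|_S = \sum_{i,j}\eta_{ij}^{WU},\]
and the summands lie in the distinct direct summands \(A^{0,1}(X, \Hom(F_i,F_j)) \otimes \Hom(W_i, U_j)\). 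So this expression vanishes if and only if every \(\eta_{ij}^{WU}\) vanishes, which is exactly compatibility with \(W\). Both directions of the equivalence follow at once.

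The only real subtlety, and the main obstacle, is justifying the holomorphicity criterion together with the claim that the \(\delbar_0\) part drops out. One has to check that \(\delbar_0\) really has no off-diagonal \(\Hom(S,Q)\)-component, including the components between different \(i\). This follows from the orthogonal holomorphic splitting of \(\mcE_0\) and the fact that \(\delbar_0\) acts trivially on the \(V_i\) factors. Integrability of \(\delbar_0+\beta\) is not needed for the equivalence itself. If it is assumed, as when \(\delbar_0+\beta=\Phi(v)\), then \(S\) acquires the structure of a holomorphic subbundle in the usual sense via Newlander–Nirenberg / Koszul–Malgrange.
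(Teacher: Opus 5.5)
Your proof is correct and follows essentially the same route as the paper. You use that \(S\) is already \(\delbar_0\)-holomorphic to reduce the condition to \(\beta\bigl(A^0(X,S)\bigr) \subseteq A^{0,1}(X,S)\), then read this off as the vanishing of the \(\Hom(W_i,U_j)\)-components \(\eta_{ij}^{WU}\); this just makes explicit the paper's final step ``after decomposing \(\beta\)'', using the coordinate-free description from the well-definedness lemma.
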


\begin{proof}
    \(S\) defines a holomorphic subbundle of \((E, \delbar_0 + \beta)\) if and only if for any \(s \in A^0(X, S)\), we have that \((\delbar_0 + \beta)s \in A^{0, 1}(X, S)\). Since \(S\) is a holomorphic subbundle of \((E, \delbar_0)\) by construction, it is thus equivalent to showing that for any \(s \in A^0(X, S)\), we have that \(\beta s \in A^{0, 1}(X, S)\). After decomposing \(\beta\), we see that this is equivalent to \(\beta\) being compatible with \(W\). 
\end{proof}

\section{Calabi flow}

\label{sec:calabi-flow}

Finally, we will apply \Cref{thm:vector-space} to the Calabi flow. The reduction to the Kuranishi slice is due to Chen-Sun \cite{chenCalabiFlowGeodesic2014}, and we include the statements of the results. The proofs can be found in Chen-Sun \cite{chenCalabiFlowGeodesic2014}, which are closely related to the results for the Yang-Mills flow in the previous section. 

Let \((M, \omega, J_0)\) be a compact K\"ahler manifold. Suppose the class \([\omega]\) is integral. Let \(\mcJ\) denote the space of almost complex structures on \(M\) compatible with \(\omega\), and let \(\mcJ^{\text{int}}\) denote the subspace of integrable almost complex structures. Let \(\mcK = \Ham(M, \omega)\) denote the group of Hamiltonian diffeomorphisms of \((M, \omega)\), which acts on \(\mcJ\) by pullback, keeping \(\mcJ^{\text{int}}\) invariant. The Lie algebra of \(\mcK\) is \(C^\infty_0(M, \R)\). Donaldson \cite{donaldsonRemarksGaugeTheory1997} and Fujiki \cite{fujikiModuliSpaceExtremal1990} showed that the \(\mcK\)-action on \(\mcJ\) has moment map
\[\mu(J) = S(J) - \overline{S},\]
where \(S(J)\) is the hermitian scalar curvature functional, and \(\overline{S}\) is the average scalar curvature.

In this setting, there may not exist a genuine complexification of \(\mcK\), but we can still make sense of ``\(\mcG/\mcK\)'' in this setting. This is given by
\[\mcH = \{\phi \in C^\infty(M, \R) \mid \omega + i\del\delbar\phi > 0\}.\]
Then \(\mcH/\R\) is formally the ``dual'' symmetric space of \(\mcK\).

Analogous to the finite-dimensional setting, we have a functional on \(\mcH\) called the \emph{Mabuchi functional}, which plays the role of the Kempf-Ness functional. It is given variationally by
\[\dd E_\phi(\psi) = -\int_M (S(\phi) - \overline S)\psi \dd\mu_\phi.\]

The norm squared of the moment map is the \emph{Calabi functional}, given by
\[\Cal(J) = \int_M (S(J) - \overline{S})^2 \dd\mu_\omega.\]
Under the natural metric on \(\mcJ\), the gradient of the Calabi functional is given by
\[\grad\Cal(J) = \frac{1}{2}J\mcD_J S(J),\]
where \(\mcD_J\) is the Lichnerowicz operator. The \emph{Calabi flow} in \(\mcJ\) is the gradient flow
\[\dv{t}J(t) = -\frac{1}{2}J(t)\mcD_{J(t)}S(J(t)).\]
If \(J(t)\) is integrable for all \(t\), then as in the finite-dimensional setting, we can lift it to a flow on \(\mcH\), which is given by
\[\dv{t}\phi(t) = S(\phi(t)) - \overline{S}.\]
Our goal will be to study the Calabi flow near a cscK complex structure \(J_0\).

\begin{theorem}
    [{\cite[Theorem 5.3]{chenCalabiFlowGeodesic2014}}] Suppose \(J_0 \in \mcJ\) is cscK. Then there exists \(k \gg 1\), and a \(C^{k, \lambda}\)-neighbourhood \(\mcU\) of \(J_0\) in \(\mcJ^{\text{int}}\), such that the Calabi flow \(J(t)\) starting from any \(J \in \mcU\) exists globally, and converges polynomially fast to a cscK complex structure \(J_\infty \in \mcJ\) in the \(C^{k, \lambda}\)-topology. Up to a Hamiltonian diffeomorphism, we may assume that \(J_\infty\) is smooth. Then the convergence is also in \(C^\infty\).
\end{theorem}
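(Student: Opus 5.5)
The plan is to follow the strategy of Chen-Sun: reduce the analytic input to a finite-dimensional Łojasiewicz inequality via a Kuranishi slice, and then run a continuity argument using parabolic smoothing for the fourth-order Calabi flow. Concretely, I would proceed in four steps, as follows.

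First, I would construct the Kuranishi slice at \(J_0\). This is the cscK analogue of \Cref{thm:kuranishi-slice-ym}: set \(K = \Ham(M,\omega)\cap\Aut(M,J_0)\) with complexification \(G\), and let \(H^1\) be the harmonic part of the deformation complex of \(J_0\). I then want a \(K\)-equivariant holomorphic map \(\Phi \colon B \to \mcJ\), with \(\Phi(0)=J_0\) and \(\dd\Phi_0=\id\), locally \(G\)-equivariant, such that every integrable \(J\) that is \(C^{k,\lambda}\)-close to \(J_0\) can be moved into \(\Phi(B)\) by a Hamiltonian diffeomorphism close to the identity. The proof is by the implicit function theorem on Sobolev/Hölder completions, exactly as in the construction of \(F(\beta)\) in the vector bundle case.

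Second, I would prove a Łojasiewicz inequality for the Calabi functional near \(J_0\) on \(\mcJ^{\text{int}}\):
\[\abs{\grad\Cal(J)}_{L^2} \ge C\,\Cal(J)^{\alpha},\qquad \alpha\in[1/2,1).\]
I would obtain it by a Lyapunov-Schmidt reduction. In the directions transverse to the slice, namely \(\mcK\)-orbit directions and their \(J\)-images, the linearised operator is the Lichnerowicz operator \(\mcD^*\mcD\), which is invertible on the complement of \(\Lie(K)\). So after moving \(J\) into normal form \(\Phi(b)\), \(\Cal(J)\) is comparable to a real-analytic function of \(b\in B\) (essentially \(\abs{\widehat\mu}^2\) for the induced finite-dimensional moment map), up to higher-order error. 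One then applies the classical Łojasiewicz inequality from \Cref{sec:lojasiewicz} to this finite-dimensional function. The gradient comparison uses ellipticity of the Lichnerowicz operator to control the transverse components.

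Third, I would set up short-time existence with smoothing estimates. By parabolic theory for the fourth-order flow, there are \(\epsilon,T>0\) such that if \(\norm{J-J_0}_{C^{k,\lambda}}<\epsilon\), the flow exists on \([0,T]\). On \([\tau,\tau+T]\), the \(C^{k,\lambda}\) norm of \(\dot J\) should be controlled by its \(L^2\) norm on \([\tau-T,\tau+T]\), via interpolation.

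Fourth, I would close a continuity argument. Let \(t^*\) be the maximal time for which \(J(t)\) stays in a \(2\epsilon\)-ball. On \([0,t^*)\), the Łojasiewicz inequality gives
\[-\dv{t}\Cal^{1-\alpha} \gtrsim \abs{\grad\Cal},\]
so the \(L^2\)-length of the path is at most \(C\,\Cal(J(0))^{1-\alpha}\), which is small. Here I would use that \(\Cal\ge 0\) and \(\Cal(J_0)=0\); critical values near \(J_0\) are all \(0\) by Łojasiewicz. The smoothing estimates then upgrade this to a small \(C^{k,\lambda}\) displacement, so the flow never leaves the ball. Hence \(t^*=\infty\), the flow exists globally, and it converges to some \(J_\infty\) with \(\Cal(J_\infty)=0\), i.e.\ \(J_\infty\) is cscK. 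The polynomial rates follow exactly as in \Cref{cor:lojasiewicz}. Finally, since \(J_\infty\) lies in the slice up to a Hamiltonian diffeomorphism, one can pull back by that diffeomorphism so that \(J_\infty\) is smooth, and elliptic regularity for the cscK equation bootstraps the convergence to \(C^\infty\).

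The main obstacle is the second step. The Łojasiewicz inequality is an infinite-dimensional statement, and one must check that the Lyapunov-Schmidt reduction produces a genuinely real-analytic finite-dimensional function whose gradient controls \(\grad\Cal\) uniformly in \(C^{k,\lambda}\). I would first try to prove it by splitting \(\grad\Cal\) into its slice and transverse components, and bounding the transverse part below by the first nonzero eigenvalue of \(\mcD^*\mcD\) at \(J_0\).
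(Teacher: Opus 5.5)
The paper gives no proof of this statement; it is imported from Chen--Sun. Your overall architecture is the standard one: a {\L}ojasiewicz inequality for \(\Cal\) on \(\mcJ^{\text{int}}\) near \(J_0\), uniform short-time existence with smoothing, and a continuity argument giving global existence and polynomial convergence. However, the reduction in your first two steps, which you rightly flag as the crux, rests on a false normal form.

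Step 1 asserts that every integrable \(J\) near \(J_0\) can be moved into \(\Phi(B)\) by a Hamiltonian diffeomorphism. This cannot hold, because \(B \subseteq H^1\) is finite-dimensional. For small \(\varphi\), put \(\omega_\varphi = \omega + i\del\delbar\varphi\) on \((M,J_0)\), take a Moser diffeomorphism \(F_\varphi\) with \(F_\varphi^*\omega_\varphi = \omega\), and set \(J_\varphi = F_\varphi^*J_0 \in \mcJ^{\text{int}}\). Then \(J_\varphi\) and \(J_\psi\) are \(\Ham\)-equivalent only if \(\omega_\varphi = h^*\omega_\psi\) for some \(h \in \Aut(M,J_0)\). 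So these structures alone form an infinite-dimensional family modulo \(\Ham\), and they cannot all be \(\Ham\)-equivalent to points of \(\Phi(B)\). What a Kuranishi slice actually gives is (iv) of \Cref{thm:kuranishi-slice-calabi}: \(J\) lies in the \(\mcK_\C\)-leaf of some \(\Phi(b)\).

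Since \(\Cal\) is \(\Ham\)-invariant but not constant along leaves, your Step 2 claim fails: \(\Cal(J)\) is not a real-analytic function of \(b\) up to higher-order error. Every \(J_\varphi\) lies in the leaf of \(\Phi(0) = J_0\). Yet \(\Cal(J_\varphi) = \int_M (S(\omega_\varphi) - \overline S)^2\,\omega_\varphi^n/n!\) is non-zero unless \(\omega_\varphi\) is cscK. Its leading term is \(\norm{\mcD_{J_0}^*\mcD_{J_0}\varphi}_{L^2}^2\) (up to normalisation), quadratic in the leaf displacement, whereas \(\abs{\widetilde\mu(b)}^2 = O(\abs{b}^4)\). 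The leaf direction is a leading-order contribution, not an error term.

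The reduction therefore has to keep the infinite-dimensional leaf variable. Write \(\mfm\) for the \(L^2\)-orthogonal complement of \(\mfk\) in \(C^\infty_0(M,\R)\). Then exactly \(\Cal = \norm{\pr_\mfm(S - \overline S)}^2_{L^2} + \norm{\pr_\mfk(S - \overline S)}^2_{L^2}\), and what must be proved on \(\mcJ^{\text{int}}\) is \(\norm{\mcD_J S}_{L^2} \gtrsim \norm{\pr_\mfm(S - \overline S)}_{L^2} + \norm{\pr_\mfk(S - \overline S)}^{2\alpha}_{L^2}\).

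\begin{itemize}
\item The \(\mfm\)-term is where the spectral gap of the Lichnerowicz operator on \(\mfm\) enters, acting along the leaf.
\item For the \(\mfk\)-term, apply the finite-dimensional inequality to \(\abs{\widetilde\mu}^2\), with \(\widetilde\mu = \Phi^*\pr_\mfk(S - \overline S)\). This is real-analytic on all of \(B\). Its gradient \(JL_b\widetilde\mu(b)\) is a \(G\)-orbit direction, hence by (iii) a leafwise one, which is all that \(\grad\Cal = \frac12 J\mcD_JS\) sees on \(\mcJ^{\text{int}}\).
\end{itemize}

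The step your sketch does not address is the coupling of these two pieces. Once \(J \neq J_0\), \(\mcD_J\) no longer kills the \(\mfk\)-part of \(S - \overline S\). The \(\mfk\)-part at \(J\) also differs from \(\widetilde\mu(b)\) by terms depending on the leaf displacement. The cross terms in \(\norm{\mcD_J S}^2_{L^2}\) must then be controlled relative to the possibly much smaller quantity \(\abs{L_b\widetilde\mu(b)}^2\). This is exactly what an equivariant normal form is for, one in which the moment map becomes \(\Ad(g)(\rho + \mu_N(v))\), as in the Yang--Mills section following Chen--Sun.

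You cannot shortcut this with an abstract {\L}ojasiewicz--Simon inequality for \(\Cal\) on all of \(\mcJ\). At \(J_0\) the Hessian of \(\Cal\) is \(X \mapsto 2\norm{\dd\mu(X)}^2_{L^2}\), which vanishes on all non-integrable directions, an infinite-dimensional space when \(\dim_\C M \ge 2\). So it is not Fredholm transverse to the orbit.

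A smaller point: \(J(t)\) evolves by pullback under the diffeomorphisms \(f_t\), so the flow is not parabolic in \(J\). The uniform short-time existence and smoothing in your third step must be derived from the fourth-order equation for the potential on \((M,J)\), uniformly in \(J\) near \(J_0\). They are then transported back to \(\mcJ\) at a loss of derivatives.
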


It follows that \((M, J)\) is K-semistable, with a smooth cscK K-polystable degeneration. We also refer to related results in \cite{ortuMomentMapsStability2025}.

Let \(K = \Stab_\mcK(J_0)\) denote the group of Hamiltonian isometries of \((M, \omega, J_0)\), where \(J_0\) is cscK. The assumption that the K\"ahler class \([\omega]\) is integral implies that the \(K\)-invariant inner product on \(\mfk\) is rational \cite{futakiBilinearFormsExtremal1995}. The complexification \(G\) of \(K\) is a subgroup of the group of biholomorphisms of \((M, J_0)\). As \(K\) is a subgroup of \(\mcK\), it acts on \(\mcJ\) with moment map
\[\overline\mu = \pr_\mfk(S - \overline S).\]
We consider the gradient flow of \(\abs{\overline\mu}^2\), which is given by
\[\dv{t}J(t) = -\frac{1}{2}J\mcD_J\overline\mu(J).\]
If we have a solution such that \(J(t)\) is integrable for all \(t\), then we have an associated flow in \(\mcH\), given by
\[\dv{t}\phi(t) = \pr_{f_t^*\mfk}(S(\phi) - \overline S),\]
where \(f_t\) is the family of diffeomorphisms satisfying
\[\dv{t}f_t = -\frac{1}{2}J X_{S(J)}.\]
This is the \emph{reduced Calabi flow} \cite{chenCalabiFlowGeodesic2014}. We now reduce to a finite-dimensional slice. The corresponding elliptic complex is given by

\[\begin{tikzcd}
	{C^\infty_0(M, \C)} & {T_{J_0}\mcJ} & {\Omega^{0,2}(T^{1,0}M)}
	\arrow["{P_\C}", from=1-1, to=1-2]
	\arrow["\delbar", from=1-2, to=1-3]
\end{tikzcd}\]
Let \(H^1\) denote the cohomology of this complex. Then we have the following Kuranishi slice for the \(\mcK\)-action on \(\mcJ\).

\begin{theorem}
    [{\cite[Lemma 6.1]{chenCalabiFlowGeodesic2014}}] 
    \label{thm:kuranishi-slice-calabi}
    There exists a neighbourhood \(B\) of \(0\) in \(H^1\), and a \(K\)-equivariant holomorphic embedding \(\Phi \colon B \to \mcJ\), such that
    \begin{enumerate}[(i)]
        \item \(\Phi(0) = J_0\),
        \item \(\dd\Phi_0 = \id\),
        \item if \(v_1, v_2 \in B\) are in the same \(G\)-orbit, and \(\Phi(v_1)\) is integrable, then \(\Phi(v_2)\) is also integrable, and \(\Phi(v_1), \Phi(v_2)\) are in the same \(\mcK_\C\)-leaf. Conversely, if \(\Phi(v)\) is integrable and \(\dd\Phi_v(u)\) is tangent to the \(\mcK_\C\)-leaf through \(\Phi(v)\), then \(u\) is tangent to the \(G\)-orbit through \(v\).
        \item Any integrable \(J\) in a sufficiently small neighbourhood of \(J_0\) is in the \(\mcK_\C\)-leaf of some \(\Phi(v)\).
    \end{enumerate}
\end{theorem}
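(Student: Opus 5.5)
The plan is to follow Kuranishi's construction, adapted to the $\mcK$-action on $\mcJ$ exactly as in the vector bundle case (the map $F(\beta) = \beta + G\delbar_0^*(\beta\wedge\beta)$ of \Cref{sec:ym}).

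\textbf{Setting up the slice map.} Near $J_0$, parametrise compatible almost complex structures by their $(0,1)$-deformation tensor $\varphi \in \Omega^{0,1}(T^{1,0}M)$, using the standard holomorphic Cayley-type chart. In this chart the Kähler structure of $\mcJ$ at $J_0$ is the flat one, up to real-analytic corrections. Integrability is the Maurer--Cartan equation $\delbar\varphi + \tfrac12[\varphi,\varphi] = 0$. Let $G$ now denote the Green's operator of the Laplacian of the elliptic complex (not the group). Define
\[F(\varphi) = \varphi + \tfrac12\delbar^* G[\varphi,\varphi].\]
Then $F(0)=0$ and $\dd F(0) = \id$, and $F$ is $K$-equivariant because $\delbar$, $\delbar^*$, $G$ and the bracket are all $K$-invariant. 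After passing to Sobolev (or Hölder) completions, the inverse function theorem gives a local inverse. Set $\Phi = F^{-1}\vert_{B}$ for a small ball $B\subset H^1$, where $H^1 = \ker\delbar\cap\ker P_\C^*$ is the space of harmonic representatives. Properties (i), (ii) and $K$-equivariance are then immediate. Holomorphicity holds because $F$ is complex-analytic in the chart. Elliptic regularity makes $\Phi(v)$ smooth.

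\textbf{Properties (iii) and (iv).} First I would show the standard Kuranishi fact: $\Phi(v)$ is integrable if and only if the obstruction $\mathcal H[\Phi(v),\Phi(v)]$ vanishes, where $\mathcal H$ is the harmonic projection into $H^2$. This obstruction map is $G$-equivariant wherever defined. The part of (iii) saying that the $G$-orbit preserves integrability should then follow from $G$ acting by biholomorphisms of $(M,J_0)$, commuting with the whole complex. To match $G$-orbits in $B$ with $\mcK_\C$-leaves, I would use that $\Phi(B)$ lies in the gauge slice $\{P_\C^*\varphi = 0\}$. At an integrable $\Phi(v)$, the tangent space to the $\mcK_\C$-leaf is the image of the perturbed operator $P_{\C,\Phi(v)}$. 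Since $\ker P_\C = \Lie(G)\oplus\text{constants}$, a perturbation argument shows that a vector $\dd\Phi_v(u)$ in this image must come from $\Lie(G)\cdot v$, giving the converse in (iii). For (iv), given an integrable $J$ near $J_0$, I would solve for $f\in C^\infty_0(M,\C)$ orthogonal to $\Lie(G)$ such that the complexified Hamiltonian flow generated by $f$ moves $J$ into the gauge $P_\C^*\varphi = 0$. This is an implicit function theorem argument: the linearisation is $P_\C^*P_\C$, which is invertible on $\Lie(G)^\perp$. Integrability then forces $F(\varphi)$ to be harmonic, so $\varphi = \Phi(F(\varphi))$ with $F(\varphi)\in B$.

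\textbf{Main obstacle.} The hard step is making sense of the $\mcK_\C$-leaf, since there is no complexified group. One must work with the distribution $J\cdot P(C^\infty)$ and flows of diffeomorphisms, and show that the gauge-fixing map in (iv) and the transversality in (iii) are genuinely smooth with tame estimates. I would first try to imitate the Nash--Moser framework used in the proof of the normal form in \Cref{subsec:ym-normal-form}: pass to Sobolev completions for the elliptic inversions, then recover smoothness by elliptic regularity. I expect this to suffice, as in Chen--Sun.
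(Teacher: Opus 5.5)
Your overall route (Kuranishi map \(F\), inverse function theorem on a completion, gauge-fixing by the complexified action for (iv)) is the standard one behind Chen--Sun's Lemma 6.1, which the paper simply cites. However, there is a genuine gap in (iii).

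\textbf{The justification for the forward direction of (iii) is false.} You derive \(G\)-invariance of integrability, and implicitly the passage from \(G\)-orbits to \(\mcK_\C\)-leaves, from \(G\) ``acting by biholomorphisms of \((M,J_0)\), commuting with the whole complex''. But only \(K\) preserves \(\omega\). So \(G\) does not act on \(\mcJ\) at all. The operators \(P_\C\), \(P_\C^*\), \(\delbar^*\) on \(T_{J_0}\mcJ\), the Green's operators and the harmonic projections all depend on \(\omega\), and are only \(K\)-equivariant. The \(G\)-action on \(H^1\) is just the complexification of the \(K\)-representation. Moreover, \(\Phi(g\cdot v)\) bears no direct relation to \(g^*\Phi(v)\), which is not even \(\omega\)-compatible.

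\textbf{The missing idea.} Holomorphicity plus \(K\)-equivariance of \(\Phi\) intertwine the two infinitesimal actions. For \(\xi\in\mfk\), up to sign conventions, \(\dd\Phi_v(\xi\cdot v)=\xi\cdot\Phi(v)=P_{\Phi(v)}(h_\xi)\), where the Hamiltonian \(h_\xi\) depends only on \(\omega\) and not on the point of \(\mcJ\). Complex-linearity of \(\dd\Phi_v\), for the structure \(A\mapsto JA\) on \(T_J\mcJ\), upgrades this to \(\dd\Phi_v(\zeta\cdot v)=P_{\C,\Phi(v)}(h_\zeta)\) for all \(\zeta=\xi_1+i\xi_2\in\mfg\), where \(h_\zeta=h_{\xi_1}+ih_{\xi_2}\).

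\textbf{How this repairs the forward direction.} Join \(v_1\) to \(v_2\) inside \(B\cap G\cdot v_1\). This is possible because \(\{g : g\cdot v_1\in B\}\) is connected, by geodesic convexity of \(hK\mapsto\log\abs{h^{-1}v_1}^2\) on \(G/K\). The image curve is then tangent to the complexified distribution. It stays integrable because your obstruction map is holomorphic and \(K\)-equivariant into a finite-dimensional representation, hence \(G\)-equivariant along such paths by analytic continuation (a holomorphic map on a connected open subset of \(G\) vanishing on \(K\) vanishes). So the curve lies in a single leaf. This analytic continuation, not any action of \(G\) on the complex, is the correct reason your obstruction-map claim holds.

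\textbf{The converse also depends on this identity.} Write \(\dd\Phi_v(u)=P_{\C,\Phi(v)}(f_0+f_1)\) with \(f_0\in\ker P_\C\) and \(f_1\perp\ker P_\C\). On \(C^\infty_0\) there are no constants, and \(\ker P_\C\cong\mfg\) is Matsushima--Lichnerowicz for the cscK structure \(J_0\). The identity gives \(P_{\C,\Phi(v)}(f_0)=\dd\Phi_v(f_0\cdot v)\in\ker P_\C^*\). Since \(\dd\Phi_v\) also takes values in \(\ker P_\C^*\), you get \(P_\C^*P_{\C,\Phi(v)}f_1=0\). Hence \(f_1=0\) for \(v\) small, and injectivity of \(\dd\Phi_v\) gives \(u=f_0\cdot v\). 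Without the identity, \(P_{\C,\Phi(v)}(f_0)\) is only \(O(\abs{v})\), and perturbation alone does not place \(\dd\Phi_v(u)\) in \(\dd\Phi_v(\mfg\cdot v)\).

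\textbf{A technical point.} \(P_\C\) is second order while \(\delbar\) is first order, so \(P_\C P_\C^*+\delbar^*\delbar\) is not elliptic. You need something like \(P_\C P_\C^*+(\delbar^*\delbar)^2\). The degree-two Hodge theory behind your integrability criterion must also be set up so that \(F\) still takes values in \(\omega\)-compatible tensors.
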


Let \(J\) be an integrable almost complex structure sufficiently close to \(J_0\), such that the Calabi flow starting from \(J\) converges to \(J_0\). By (iv) in \Cref{thm:kuranishi-slice-calabi}, we can find \(v \in B\) such that \(J\) is in the \(\mcK_\C\)-leaf of \(\Phi(v)\). In addition, \(0 \in \overline{G \cdot v}\), by the same proof as for \Cref{prop:kuranishi-slice-move-orbit}.

Thus, we may pull back the K\"ahler structure from \(\mcJ\) to \(B\), which we denote by \(\widetilde\Omega\). Moreover, we know that \(K\) acts on \((B, \widetilde\Omega, J)\) holomorphically and isometrically, with moment map \(\widetilde\mu = \Phi^*\overline\mu\). Let \(\widehat\phi(t)\) denote the flow in \(G/K\) corresponding to \(v\).

\begin{lemma}
    [{\cite[Lemma 6.2]{chenCalabiFlowGeodesic2014}}]
    Let \(\widehat\phi(t)\) denote the flow in \(G/K\) corresponding to \(v\), and let \(\phi(t)\) denote the Calabi flow starting from \(J\). Then there exists \(C > 0\) such that for all \(t\),
    \[d(\phi(t), \widehat\phi(t)) \le C.\]
\end{lemma}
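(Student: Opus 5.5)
The plan is to follow the argument for \Cref{prop:kuranishi-slice-reduction} in the Hermitian-Yang-Mills setting, with the gauge group replaced by \(\mcK = \Ham(M,\omega)\) and Dolbeault operators replaced by almost complex structures. Let \(J(t)\) be the Calabi flow from \(J\) and \(\widehat J(t) = \Phi(\widetilde v(t))\), where \(\widetilde v(t)\) is the flow of \(-\grad\abs{\widetilde\mu}^2\) on \(B\) starting at \(v\). Let \(L(t)\) be the length of the geodesic in \(\mcH\) joining \(\phi(t)\) and \(\widehat\phi(t)\); here \(G/K\) embeds in \(\mcH\) via the \(G\)-action on potentials. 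Since geodesics in \(\mcH\) may only be \(C^{1,1}\), I would work with \(\epsilon\)-geodesics, or with the distance function directly using Calabi's/Chen's results on the metric structure of \(\mcH\).

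First I would differentiate \(L(t)\). The first variation formula together with geodesic convexity of the Mabuchi functional \(E\) should give, exactly as in \Cref{prop:flow-one-param-limit},
\[\dv{L}{t} \le \abs{(S - \overline S)(\widehat\phi(t)) - \pr_{\mfk}(S-\overline S)(\widehat\phi(t))}_{L^2},\]
that is, the \(L^2\) norm of the component of the scalar curvature of the reduced flow that is orthogonal to \(\mfk\). It is therefore enough to show that this quantity is integrable in \(t\). A bound of order \(t^{-3/2}\) would suffice.

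Next, in the Chen--Sun model, which is the Calabi-flow analogue of \(Y = \mcK\times_K(\mfm\oplus N)\), the orthogonal component is the \(\rho\)-coordinate of \(\widehat J(t)\). By the Moser-trick normal form, the complex structure agrees with the model \(J_0\) up to an error of order \(r^2\) in the tame sense. The finite-dimensional results of \Cref{sec:finite-dimensional} give \(\abs{\widetilde v(t)} = O(t^{-1/2})\), and so \(r(t),\abs{\rho(t)}_l = O(t^{-1/2})\) and \(\abs{\mu_N(v(t))} = O(t^{-1})\). Following the estimate above, I would then bound \(\abs{\dot\rho}_k\) by \(t^{-3/2}\abs{\rho}_{k+s} + t^{-5/2}\). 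Integrating backwards from \(\rho(\infty)=0\) and iterating twice improves the bound to \(\abs{\rho(t)}_k \lesssim t^{-3/2}\). This makes \(\dv{L}{t}\) integrable, so \(L\) is bounded.

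The main obstacle is that the Calabi flow is fourth order and fully nonlinear, and that \(\mcK\) has no genuine complexification. Both the normal form (Nash--Moser for the Moser isotopy, with tame fourth-order elliptic estimates for the Lichnerowicz operator \(\mcD^*\mcD\) in place of \(\Delta\)) and the comparison of the flow on \(\mcH\) with the flow on \(\mcJ\) via the diffeomorphisms \(f_t\) require care. I would take the short-time and long-time regularity estimates of \cite[Theorem 5.3]{chenCalabiFlowGeodesic2014}, namely uniform \(C^{k,\lambda}\) control near \(J_0\), as the input. With these, the tame bounds used above hold uniformly in \(t\).
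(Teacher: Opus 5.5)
Your proposal is correct and follows essentially the route the paper intends. The paper defers this lemma to Chen--Sun but says the argument mirrors its proof of \Cref{prop:kuranishi-slice-reduction}: convexity of the Mabuchi functional bounds \(\dv{L}{t}\) by the \(L^2\) norm of the part of \(S - \overline S\) orthogonal to (a conjugate of) \(\mfk\) along the reduced flow; this part is the \(\rho\)-coordinate in the infinite-dimensional normal form, where the pulled-back complex structure agrees with the model one up to \(O(r^2)\); and the \(O(t^{-1/2})\) finite-dimensional decay is bootstrapped to \(\abs{\rho(t)}_k \lesssim t^{-3/2}\).

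The one point you state too glibly is that \(G/K\) sits in \(\mcH\) `via the \(G\)-action on potentials'. \(G\) acts by biholomorphisms of \(J_0\), not of \(J\), so the embedding must instead be built from the slice and the \(\mcK_\C\)-leaf, which is exactly the non-trivial step the paper also delegates to Chen--Sun.
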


\begin{remark}
    The fact that there is an embedding from \(G/K\) to \(\mcH\) is non-trivial, and we refer to \cite{chenCalabiFlowGeodesic2014} for details.
\end{remark}

Thus, we have now proven the following result.

\begin{theorem}
    \label{thm:calabi-flow} Let \(\phi(t)\) denote the Calabi flow starting from \(J\). Then there exists \(v_1, \dots, v_k\), such that
    \[\phi(t) = \exp(i\log(t)v_1 + \dots + i\log\cdots\log(t)v_k) + O(1),\]
    where \(v_1, \dots, v_k\) are commuting elements of \(\mfk\), and \(\exp \colon i\mfk \to G/K\) is the exponential map.
\end{theorem}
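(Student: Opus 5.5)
The plan is to chain the reductions already established for the Calabi flow with the finite-dimensional result, \Cref{thm:vector-space}, applied on the Kuranishi slice. Since the Calabi flow starting from \(J\) converges to \(J_0\), property (iv) of \Cref{thm:kuranishi-slice-calabi} gives \(v \in B\) with \(J\) in the \(\mcK_\C\)-leaf of \(\Phi(v)\). Arguing exactly as in the proof of \Cref{prop:kuranishi-slice-move-orbit}, using (iii) in place of (iv) of \Cref{thm:kuranishi-slice-ym}, we get \(0 \in \overline{G \cdot v}\). We may assume \(v \neq 0\); otherwise \(J\) is already cscK and the flow is bounded, which is the case \(k=0\).

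Next we pass to the finite-dimensional problem. On \((B, \widetilde\Omega, J)\) the group \(K\) acts holomorphically and isometrically with real-analytic moment map \(\widetilde\mu = \Phi^*\overline\mu\), and \(\widetilde\mu(0)=0\) since \(J_0\) is cscK. By \Cref{prop:bounded-linear}, applied in the linear setting of \Cref{sec:normal-forms} on a neighbourhood of \(0\) in \(H^1\), the Kempf-Ness flow \(\widehat\phi(t)\) in \(G/K\) stays within bounded distance of the flow \(\widehat\gamma_v(t)\) for the linearised standard action on \((H^1, J_0, \Omega_0)\).

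This is where analyticity and a small domain are needed: the flow of \(v\) must stay in \(B\), which holds after shrinking \(B\), because \(|\widetilde\mu|^2\) decreases along the flow and the \(G\)-orbit of \(v\) accumulates at \(0\), as in \cite{chenCalabiFlowGeodesic2014}. Rationality of the inner product on \(\mfk\) comes from integrality of \([\omega]\) via \cite{futakiBilinearFormsExtremal1995}, so \Cref{thm:vector-space} applies to \(\widehat\gamma_v\). It gives commuting \(v_1,\dots,v_k \in \mfk\) with
\[\log(\widehat\gamma_v(t)) = \log(t)v_1 + \dots + \log\cdots\log(t)v_k + O(1).\]
Equivalently, \(\widehat\gamma_v(t) = \exp(i\log(t)v_1 + \dots) \cdot \exp(O(1))\). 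Because \(G/K\) is nonpositively curved and the \(v_i\) commute, the distance from \(\widehat\gamma_v(t)\) to \(\exp(i\sum_j \log^{(j)}(t) v_j)\) equals the norm of the bounded remainder.

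Finally, we combine the three bounded-distance statements by the triangle inequality. These are \Cref{prop:bounded-linear}, the Chen-Sun comparison \(d(\phi(t),\widehat\phi(t)) \le C\) (Lemma 6.2 of \cite{chenCalabiFlowGeodesic2014}), and the expansion above. The embedding \(G/K \hookrightarrow \mcH\) is totally geodesic, so the Mabuchi distance in \(\mcH\) restricts to the symmetric space distance, and \(\phi(t)\) is then within \(O(1)\) of \(\exp(i\log(t)v_1 + \dots + i\log\cdots\log(t)v_k)\).

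The main obstacle is justifying that the Chen-Sun comparison and the linearisation step compose uniformly in \(t\). The reduced Calabi flow lives on the curved slice, and the comparison must control the difference between the full and projected moment maps by an integrable rate. As in the proof of \Cref{prop:kuranishi-slice-reduction}, I would use the \(O(t^{-1/2})\) decay of the slice flow to bootstrap this difference to \(O(t^{-3/2})\), relying on Chen-Sun's tame estimates for this fourth-order setting.
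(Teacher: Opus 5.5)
Your route is the paper's. You use \Cref{thm:kuranishi-slice-calabi}(iv) to produce \(v\), the orbit-closure argument of \Cref{prop:kuranishi-slice-move-orbit}, Lemma 6.2 of \cite{chenCalabiFlowGeodesic2014}, and the finite-dimensional theorem on the slice. The paper simply cites Lemma 6.2, so your closing bootstrap paragraph is optional. Making explicit the passage through \Cref{prop:bounded-linear}, from the curved slice to the linear model, is a useful addition that the paper leaves implicit.

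\textbf{The gap.} The step that fails is your conversion of the output of \Cref{thm:vector-space} into a distance bound. That theorem gives \(\log(\widehat\gamma_v(t)) = A(t) + R(t)\), with \(A(t) = \sum_j \log^{(j)}(t)\, v_j\) (iterated logarithms) and \(R(t) \in \mfk\) bounded. Nothing makes \(R(t)\) commute with the \(v_j\), and commutativity of the \(v_j\) alone does not put \(\exp(i(A+R))K\) and \(\exp(iA)K\) in a common flat.

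Nonpositive curvature gives the \emph{opposite} inequality to the one you want. The map \(\xi \mapsto \exp(i\xi)K\) is distance non-decreasing, so \(d(\exp(i(A+R))K, \exp(iA)K) \ge \abs{R}\), and the left side can be unbounded. Take a totally geodesic hyperbolic plane inside \(SL(2,\C)/SU(2)\), normalised to curvature \(-1\). Two geodesic segments of length \(a\) leaving a common point at angle \(1/a\) have \(\log\)-coordinates differing by a vector of norm about \(1\). Yet their endpoints are at distance \(2a - 2\log a + O(1)\), which diverges when \(a \sim \log t\). For the same reason, writing \(\widehat\gamma_v(t) = \exp(iA(t))\exp(iR'(t))K\) with \(R'\) bounded is not equivalent to the \(\log\) expansion. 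So the triangle inequality with the Mabuchi-distance bound of Lemma 6.2 does not close as written.

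\textbf{How to repair it.} The valid implication runs the other way: bounded distance implies bounded \(\log\)-error, since \(\log\) is \(1\)-Lipschitz. That is how the comparisons feed \emph{into} the \(\log\) form, not out of it. You have two options.

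First, you can state the conclusion in the form the combination literally gives: \(\phi(t)\) is within bounded Mabuchi distance of some \(\widehat\phi(t) \in G/K\) with \(\log(\widehat\phi(t)) = A(t) + O(1)\). This is all that the paper's juxtaposition of Lemma 6.2 with the finite-dimensional theorem records.

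Second, you can prove the stronger statement \(d(\gamma_x(t), \exp(iA(t))K) = O(1)\) in \Cref{thm:vector-space} by rerunning its proof. This works for the following reasons.
\begin{enumerate}[(i)]
\item \Cref{prop:bounded-linear,prop:bounded-stabiliser,prop:flow-one-param-limit} are already distance bounds.
\item The splitting in \Cref{prop:relative-torus-flow} is left translation by \(\exp(-is\xi_q)\), an isometry commuting with the inductive part.
\item Replacing \(s(t)\) by \(C\log t\) moves points by a bounded amount inside a flat spanned by commuting elements.
\item The base-point change \(y = g\widehat y\) behind \Cref{prop:move-orbit} costs bounded distance after left translation by \(g\). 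The resulting \(g\exp(iA(t))K\) is handled by writing \(g = kp\) via \(G = KP\), with \(k \in K\) and \(p\) in the parabolic subgroup attached to the lexicographic order of \(v_1, v_2, \dots\). For this \(p\), \(\exp(-iA(t))\,p\,\exp(iA(t))\) stays bounded, and the only cost is replacing each \(v_j\) by \(\Ad(k)v_j\), which still commute.
\end{enumerate}
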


\printbibliography

\end{document}